\newtheorem{sat}{Theorem}[section]		
\newtheorem{lem}[sat]{Lemma}
\newtheorem{kor}[sat]{Corollary}			
\newtheorem{prop}[sat]{Proposition}
\newtheorem*{defi*}{Definition}			
\newtheorem*{bei*}{Example}
\newtheorem*{sat*}{Theorem}				
\newtheorem*{kor*}{Corollary}
\newtheorem*{rmk*}{Remark}				
\newtheorem*{quest*}{Question}	
\newtheorem*{fact}{Fact}	
\newtheorem{claim}{Claim}	
\newtheorem*{claim*}{Claim}
\let\ssection=\section
\renewcommand{\section}{\setcounter{equation}{0}\ssection}
\newtheorem*{namedtheorem}{\theoremname}
\newcommand{\theoremname}{testing}
\newenvironment{named}[1]{\renewcommand{\theoremname}{#1}\begin{namedtheorem}}{\end{namedtheorem}}
\theoremstyle{remark}
\newtheorem*{bem}{Remark}
\newtheorem*{namedtheoremr}{\theoremnamer}
\newcommand{\theoremnamer}{testing}
			\newcommand{\BF}{\mathbb F}
			\newcommand{\BH}{\mathbb H}
\newcommand{\BI}{\mathbb I}			
			\newcommand{\BN}{\mathbb N}
			\newcommand{\BR}{\mathbb R}
\newcommand{\BS}{\mathbb S}			\newcommand{\BT}{\mathbb T}
			\newcommand{\BZ}{\mathbb Z}
\newcommand{\BFA}{\mathbf A} 		\newcommand{\BFB}{\mathbf B} 
\newcommand{\BFC}{\mathbf C} 		 
\newcommand{\BFG}{\mathbf G}
 		\newcommand{\BFP}{\mathbf P}
 		\newcommand{\BFX}{\mathbf X}
\newcommand{\CG}{\mathcal G}			
			\newcommand{\CJ}{\mathcal J}
			\newcommand{\CL}{\mathcal L}
\newcommand{\CO}{\mathcal O}			
			\newcommand{\CT}{\mathcal T}
			\newcommand{\CZ}{\mathcal Z}
\newcommand{\actson}{\curvearrowright}
\newcommand{\D}{\partial}
\newcommand{\bs}{\backslash}
\DeclareMathOperator{\SL}{SL}		
\DeclareMathOperator{\PSL}{PSL}		
\DeclareMathOperator{\Id}{Id}		
\DeclareMathOperator{\vol}{vol}		
\DeclareMathOperator{\diam}{diam}
\newcommand{\comment}[1]{}
\DeclareMathOperator{\SO}{SO}
\DeclareMathOperator{\Aut}{Aut}
\DeclareMathOperator{\bconst}{\mathbf{const}}
\DeclareMathOperator{\fold}{Fold}
\DeclareMathOperator{\Surf}{Surf}
\DeclareMathOperator{\ver}{\mathbf{vert}}
\DeclareMathOperator{\edg}{\mathbf{edge}}
\DeclareMathOperator{\hal}{\mathbf{half}}
\DeclareMathOperator{\neigh}{\mathbf{neigh}}
\DeclareMathOperator{\wire}{\mathbf{wire}}
\DeclareMathOperator{\crit}{crit}
\DeclareMathOperator{\spine}{spine}
\DeclareMathOperator{\imm}{imm}
\newcommand{\fsubd}{\mathrel{{\scriptstyle\searrow}\kern-1ex^d\kern0.5ex}}
\newcommand{\bsubd}{\mathrel{{\scriptstyle\swarrow}\kern-1.6ex^d\kern0.8ex}}
\renewcommand{\epsilon}{\varepsilon}
\renewcommand{\le}{\leqslant}
\renewcommand{\ge}{\geqslant}
\renewcommand{\emptyset}{\varnothing}
\begin{document}

\title[]{Counting geodesics of given commutator length}
  \author{Viveka Erlandsson}
  \address{School of Mathematics, University of Bristol \\ Bristol BS8 1UG, UK {\rm and}  \newline ${ }$ \hspace{0.2cm} Department of Mathematics and Statistics, UiT The Arctic University of  \newline ${ }$ \hspace{0.2cm} Norway}
  \email{v.erlandsson@bristol.ac.uk}
\thanks{The first author gratefully acknowledges support from EPSRC grant EP/T015926/1 and UiT Aurora Center for Mathematical Structures in Computations (MASCOT). No data was used in this research.}
\author{Juan Souto}
\address{UNIV RENNES, CNRS, IRMAR - UMR 6625, F-35000 RENNES, FRANCE}
\email{jsoutoc@gmail.com}

\begin{abstract}
Let $\Sigma$ be a closed hyperbolic surface. We study, for fixed $g$, the asymptotics of the number of those periodic geodesics in $\Sigma$ having at most length $L$ and which can be written as the product of $g$ commutators. The basic idea is to reduce these results to being able to count critical realizations of trivalent graphs in $\Sigma$. In the appendix we use the same strategy to give a proof of Huber's geometric prime number theorem.
\end{abstract}
\maketitle

\section{Introduction}
In this paper we will be interested in studying the growth of the number of certain classes of geodesics in a closed, connected and oriented hyperbolic surface $\Sigma=\Gamma\bs\BH^2$, which we assume to be fixed throughout the paper. 

In \cite{Huber} Huber proved that the cardinality of the set $\BFC(L)$ of non-trivial oriented periodic geodesics $\gamma\subset\Sigma$ of length $\ell_\Sigma(\gamma)\le L$ behaves like 
\begin{equation}\label{eq huber}
\vert\BFC(L)\vert\sim \frac{e^L}L\text{ as }L\to\infty.
\end{equation}
Here $\sim$ means that the ratio between both quantities tends to $1$ as $L\to\infty$. Huber's result has been generalized in all possible directions, allowing for example for surfaces of finite area \cite{Hejhal} and for higher dimensional negatively curved manifolds or replacing periodic geodesic by orbits of Anosov flows \cite{Margulis-thesis}. Variations of Huber's result have also been obtained when one imposes additional restrictions on the geodesics one is counting. Here one should mention Mirzakhani's work on counting simple geodesics \cite{Maryam1} or more generally geodesics of given type \cite{Maryam2,book}, but what is closer to what we will care about in this paper are earlier results of Katsuda-Sunanda \cite{Katsuda-Sunada} and Phillips-Sarnak \cite{Phillips-Sarnak} giving the asymptotic behavior of the number of geodesics with length at most $L$ and satisfying some homological condition. For example,  Katsuda-Sunada \cite{Katsuda-Sunada} proved that 
\begin{equation}\label{eq homology huber}
\vert\{\gamma\in\BFC(L)\text{ homologically trivial}\}\vert\sim (g_\Sigma-1)^{g_\Sigma}\cdot\frac{e^L}{L^{g_\Sigma+1}}
\end{equation}
where $g_\Sigma$ is the genus of $\Sigma$. Phillips-Sarnak \cite{Phillips-Sarnak} get this same result with an estimate on the error term. 

Here we will be counting certain kinds of homologically trivial curves. Every homologically trivial curve $\gamma$ can be represented, up to free homotopy, as the product of commutators in $\pi_1(\Sigma)$. The smallest number of commutators needed is the {\em commutator length} of $\gamma$. This quantity agrees with the genus of the smallest connected oriented surface $S$ with connected boundary $\D S$ for which there is a (continuous) map $S\to\Sigma$ sending $\D S$ to $\gamma$. This explains why we think of the commutator length $cl(\gamma)$ of $\gamma$ as the {\em genus of $\gamma$}.

In this paper we study, for fixed but otherwise arbitrary $g\ge 1$, the asymptotic behavior of the cardinality of the set 
\begin{equation}\label{eq set we want to count}
\BFB_{g}(L)=\left\{\gamma\subset\Sigma\   \middle\vert \begin{array}{l} \text{closed geodesic with }
\ell_\Sigma(\gamma)\le L\text{ and}\\ \text{commutator length }cl(\gamma)=g
\end{array}  \right\}
\end{equation}
This is our main result:

\begin{sat}\label{thm counting curves}
Let $\Sigma$ be a closed, connected, and oriented hyperbolic surface and for $g\ge 1$ and $L>0$ let $\BFB_g(L)$ be as in \eqref{eq set we want to count}. We have 
$$\vert\BFB_g(L)\vert\sim \frac {2}{12^g\cdot g!\cdot(3g-2)!\cdot\vol(T^1\Sigma)^{2g-1}}\cdot L^{6g-4}\cdot e^{\frac L2}$$
as $L\to\infty$.
\end{sat}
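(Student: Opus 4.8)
The plan is to reduce the count of geodesics of commutator length exactly $g$ to a count of critical realizations of trivalent graphs, exactly as advertised in the abstract. A geodesic $\gamma$ with $cl(\gamma)=g$ bounds a genus-$g$ surface $S$ with one boundary component mapping to $\gamma$; spine considerations let us replace $S$ by a wedge of $2g$ loops, i.e.\ by a trivalent graph $\Gamma$ with one distinguished ``boundary'' cycle, having first Betti number $2g$. Concretely, I would fix the combinatorial type of the relevant trivalent graphs (these have $4g-2$ vertices and $6g-3$ edges, with an Euler-characteristic bookkeeping giving the exponents $6g-4$ and $2g-1$ appearing in the statement), and set up a correspondence between geodesics $\gamma\in\BFB_g(L)$ and ``critical'' geodesic realizations of such graphs in $\Sigma$ whose boundary cycle has length $\le L$. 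The word ``critical'' is doing the work here: among all ways of realizing the graph with geodesic edges, one wants those realizing a local minimum of total (boundary) length, so that each such realization determines, and is determined by, a genuine geodesic $\gamma$ of commutator length exactly $g$, with the right multiplicity.

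Next I would invoke the counting result for critical realizations of trivalent graphs — the technical heart of the paper, which I am assuming as a black box here — to get that the number of critical realizations of a \emph{fixed} trivalent graph type $\Gamma$ with boundary length $\le L$ grows like
$$
c(\Gamma)\cdot L^{b_1(\Gamma)\cdot\text{(something)}}\cdot e^{L/2},
$$
where the exponential rate $L/2$ arises because the boundary cycle traverses each edge of $\Gamma$ roughly \emph{twice} (it is a product of commutators, so each generator and its inverse both appear), so a boundary geodesic of length $L$ corresponds to a graph whose total edge length is about $L/2$, and the volume-entropy/equidistribution input on $T^1\Sigma$ contributes the factor $e^{L/2}$ together with the appropriate power of $\vol(T^1\Sigma)$. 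The polynomial exponent $6g-4$ should come out as (number of internal degrees of freedom of a critical realization) $= 2\cdot(\#\text{edges}) - (\#\text{vertices}) - \dots$, and I would pin it down by a careful dimension count: the space of realizations of $\Gamma$ has dimension equal to the number of edges $6g-3$, minus one for the overall scaling by length, giving $6g-4$.

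Then comes the combinatorial summation: $|\BFB_g(L)|$ is obtained by summing the per-graph asymptotics over all combinatorial types of the relevant trivalent graphs, and dividing by the automorphisms that identify realizations giving the same geodesic. The constants $12^g$, $g!$, and $(3g-2)!$ should emerge from this bookkeeping: the $g!$ and a $2^g$ from permuting and flipping the $g$ handles, further factors of $2$ and $3$ from edge- and vertex-orientations of trivalent graphs (the number of trivalent ribbon graphs with the relevant data is a classical count, cf.\ the appearance of $12^g$ in Euler-characteristic-of-moduli-space formulas), and $(3g-2)!$ as a normalization from the leading coefficient of the polynomial-growth term for a single graph. I would match the leading term of the sum against the claimed formula, checking the $g=1$ case by hand as a sanity check (there $\BFB_1(L)$ should grow like $\tfrac{2}{12\cdot1\cdot1\cdot\vol(T^1\Sigma)}L^2 e^{L/2}$), and separately verify via the appendix's treatment of Huber's theorem that the analytic inputs are being normalized consistently.

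The main obstacle I anticipate is the ``critical realization'' reduction and the attendant multiplicity/automorphism bookkeeping: one must show that \emph{every} geodesic of commutator length exactly $g$ (and not $<g$, which requires excluding the lower-genus strata — a boundary/degeneration argument showing those contribute lower-order terms) arises from a critical realization, that the realization is essentially unique up to a controlled finite ambiguity, and that the geodesic edges of a critical realization are automatically embedded/in minimal position so that the length of the boundary cycle really is the length of the resulting closed geodesic $\gamma$ (a priori it could be shorter after tightening). Controlling this, and making sure the error terms from non-generic graph types and from the passage between ``total edge length $\le L/2$'' and ``boundary length $\le L$'' are genuinely of lower order, is where the real work lies; the rest is assembling the explicit constant from standard trivalent-graph enumeration and the equidistribution estimate on $T^1\Sigma$.
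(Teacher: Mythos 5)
Your proposal follows essentially the same route as the paper: reduce $\BFB_g(L)$ to counting critical realizations of trivalent genus-$g$ fat graphs (with the boundary cycle traversing each edge twice, hence the rate $e^{L/2}$), invoke the equidistribution-based count per graph, sum over graph types weighted by automorphisms via the Bacher--Vdovina enumeration, and control the error from non-unique fillings and from the edge-length-to-boundary-length shift. You correctly flag the genuine difficulties — excluding lower-genus fillings, near-bijectivity of the realization-to-geodesic map, and the additive correction in passing from $2\ell(\phi)$ to $\ell(\gamma)$ — which are exactly where the paper does its real work (via pleated-surface fillings, the multifilling bound, and the explicit $\log\frac43$ saving per vertex).
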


Here, as we will throughout the paper, we have endowed the unit tangent bundle $T^1\Sigma$ with the Liouville measure, normalized in such a way that $\vol(T^1\Sigma)=2\pi\cdot\vol(\Sigma)=-4\pi^2\chi(\Sigma)$. In particular we have that, as in \eqref{eq homology huber}, the quantities in Theorem \ref{thm counting curves} depend on the topology of the underlying surface $\Sigma$ but there is no dependence on its geometry.
\medskip

Note that by definition the curves in $\BFB_g(L)$ bound a surface of genus $g$, but that this surface is just the image of a continuous, or if you want smooth, map. We prove that only a small but definite proportion of the elements in $\BFB_g(L)$ arise as the boundary of an immersed surface of genus $g$ (and connected boundary):

\begin{sat}\label{thm counting curves immersion}
Let $\Sigma$ be a closed, connected, and oriented hyperbolic surface and for $g\ge 1$ and $L>0$ let $\BFB_g(L)$ be as in \eqref{eq set we want to count}. We have 
$$\vert\{\gamma\in\BFB_g(L)\text{ bounds immersed surface of genus }g\}\vert\sim \frac 1{2^{4g-2}}\vert\BFB_g(L)\vert$$
as $L\to\infty$.
\end{sat}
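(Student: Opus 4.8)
The plan is to revisit the proof of Theorem~\ref{thm counting curves} and to single out, inside the count it produces, the configurations that come from immersed surfaces. Recall the mechanism: a curve $\gamma$ with $cl(\gamma)=g$ bounds an oriented surface $S$ of genus $g$ with one boundary component; $S$ deformation retracts onto a trivalent spine, which then has $4g-2$ vertices and $6g-3$ edges; and the embedding of the spine into $S$ endows it with a ribbon structure — a cyclic order of the three half-edges at each vertex — recording which type-$(g,1)$ ribbon graph $G$ we are looking at. After a homotopy one takes the induced map $f\colon G\to\Sigma$ to send every edge to a geodesic arc and to be critical for the total length of the boundary cycle; criticality forces the three geodesic germs at each vertex to leave it in mutual equilibrium. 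The underlying realization $f$ depends only on the underlying graph $\Theta$ of $G$ (not on the ribbon structure), but it carries its own \emph{geometric} ribbon structure $G_f$: at each vertex read off the cyclic order of the three germs with respect to the orientation of $\Sigma$. Thus, up to the automorphism bookkeeping and the negligible error already present in the proof of Theorem~\ref{thm counting curves}, that proof counts pairs $(G,f)$ where $f$ is a critical realization of a trivalent graph $\Theta$ with $b_1(\Theta)=2g$ whose boundary geodesic has length $\le L$, and $G$ is a choice of type-$(g,1)$ ribbon structure on $\Theta$; the curve $\gamma=\gamma_{(G,f)}$ is the geodesic representative of the boundary word of $G$ read along $f$. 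At each vertex $v$ there is a chirality $\chi_v\in\{+,-\}$ recording whether $G$ and $G_f$ agree at $v$.

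The first step is a local criterion: \emph{$\gamma_{(G,f)}$ bounds an immersed oriented surface of genus $g$ with connected boundary if and only if $G=G_f$}, i.e. iff $\chi_v=+$ for all $v$. If $G=G_f$, thicken $G$ through its ribbon structure to the surface $S\supset G$ and extend $f$ over $S$. Away from the vertices this extension is automatically a local homeomorphism: a thin thickening of an embedded geodesic arc immerses, and the transverse self-intersections of the $1$-skeleton — generic for a critical realization, and unavoidable once $L$ is large — only superimpose separate sheets of $S$ and so do not obstruct the local homeomorphism property. At a vertex the thickened disc covers a neighbourhood of $f(v)$ as a local homeomorphism exactly when the three corners of $S$ at $v$ tile that neighbourhood without overlap, which is precisely $\chi_v=+$. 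Conversely, an immersed oriented surface of genus $g$ with connected boundary $\gamma$ retracts onto a trivalent spine whose induced realization, after straightening, is a critical realization with $G=G_f$ and with boundary geodesic still of length $\le L$; so $\gamma$ has the form $\gamma_{(G_f,f)}$. One must only check that this translation is consistent with the identifications already made in the count behind Theorem~\ref{thm counting curves}, so that one neither over- nor under-counts.

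The second step is the density estimate. For a fixed trivalent $\Theta$ with $b_1(\Theta)=2g$, let $T(\Theta)$ be the number of its ribbon structures of type $(g,1)$ and $N_\Theta(L)$ the weighted number of critical realizations $f$ of $\Theta$ with boundary length $\le L$; then $\vert\BFB_g(L)\vert\sim\sum_\Theta T(\Theta)\,N_\Theta(L)$, while by the first step the immersion-bounding curves are asymptotically in bijection with $\{f: G_f\text{ is of type }(g,1)\}$, each such $f$ contributing the single curve $\gamma_{(G_f,f)}$. It therefore suffices to prove that, as $f$ ranges over the critical realizations of $\Theta$ with boundary length $\le L$, the geometric ribbon structure $G_f$ becomes equidistributed among the $2^{4g-2}$ ribbon structures of $\Theta$ — equivalently, the chiralities $(\chi_v)_v$ become asymptotically independent and uniform on $\{+,-\}^{4g-2}$. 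Granting this, $\{f:G_f\text{ of type }(g,1)\}$ has relative density $T(\Theta)/2^{4g-2}$, the factors $T(\Theta)$ cancel, and summing over $\Theta$ with Theorem~\ref{thm counting curves} yields
$$\vert\{\gamma\in\BFB_g(L)\text{ bounds immersed surface of genus }g\}\vert\sim\frac1{2^{4g-2}}\,\vert\BFB_g(L)\vert.$$
The equidistribution of $G_f$ should come from the equidistribution/mixing input behind Theorem~\ref{thm counting curves}, upgraded from lengths to the unit tangent bundle: the leading term of $N_\Theta(L)$ is built from a product over the vertices of local contributions, each an integral against a Liouville-type measure over the locus of admissible vertex germs (a point together with three unit tangent vectors in mutual equilibrium), and that locus, with its measure, is invariant under the orientation-reversing reflection that flips $\chi_v$; so each chirality value carries exactly half the local mass.

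\textbf{The main obstacle} is making the last paragraph rigorous: one must check that the asymptotics of $N_\Theta(L)$ really does factor as a product over vertices after recording the chiralities, that flipping a single $\chi_v$ preserves the corresponding local factor, and that this is compatible with the automorphism bookkeeping of Theorem~\ref{thm counting curves}. Concretely this means redoing the equidistribution argument of that theorem at the level of unit tangent vectors rather than lengths and verifying that no asymmetry is hidden in the way critical realizations near a vertex are enumerated. A secondary and essentially formal point is the global form of the first step — that a curve whose minimal spine is chirality-mismatched cannot bound a genuinely different immersed genus-$g$ surface with connected boundary — which follows from the retraction argument together with the fact that, on the relevant length scale, any such surface has a spine among the finitely many graphs already enumerated. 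Everything else reuses, essentially verbatim, the simplex integrals and ribbon-graph combinatorics that prove Theorem~\ref{thm counting curves}.
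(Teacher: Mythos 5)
Your plan is essentially the paper's proof, and what you flag as ``the main obstacle'' is resolved with almost no extra work: the argument behind Theorem~\ref{thm counting minimising graphs} already lives in the unit tangent bundle via the sets $U^X_{\vec x,\epsilon-\crit}$ of Corollary~\ref{kor fat graph}, so one only has to note that the subset $U^X_{\vec x,\epsilon-\crit-\imm}\subset U^X_{\vec x,\epsilon-\crit}$ cut out by the fat-structure condition picks, at each vertex independently, exactly one of the two cyclic orders---each of equal measure---and hence has volume $2^{2\chi(X)}\cdot\vol(U^X_{\vec x,\epsilon-\crit})$; feeding this through Section~\ref{sec counting critical} verbatim gives $\vert\BFG^X_{\imm}(L)\vert\sim 2^{2\chi(X)}\vert\BFG^X(L)\vert$, which is your equidistribution statement. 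The only organizational difference is that the paper keeps the bookkeeping indexed by fat graphs $X$ weighted by $1/\vert\Aut(X)\vert$ rather than regrouping over underlying graphs $\Theta$ with the count $T(\Theta)$ of genus-$g$ ribbon structures; the two are equivalent by orbit counting, but the paper's choice lets Bacher--Vdovina be invoked directly without ever computing $T(\Theta)$.
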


Theorem \ref{thm counting curves immersion} should perhaps be compared with the Immersion Theorem in \cite[Theorem 4.79]{scl}. The Immersion Theorem asserts that every homologically trivial geodesic $\gamma$ in $\Sigma$ virtually bounds an immersed surface, meaning that there is an immersion into $\Sigma$ of a compact surface $S$ in such a way that the boundary of $S$ is mapped onto $\gamma$ with positive degree. In some sense Theorem \ref{thm counting curves immersion} seems to suggest that, most of the time, the genus of $S$ does not agree with the genus of $\gamma$.

In the course of the proof of Theorem \ref{thm counting curves} we will need a different counting result that we believe has its own interest. Suppose namely that $X$ is a compact trivalent graph. Under a {\rm critical realization} of $X$ in $\Sigma$ we understand a (continuous) map
$$\phi:X\to\Sigma$$
sending each edge to a non-degenerate geodesic segment in such a way that any two (germs of) edges incident to the same vertex are sent to geodesic segments meeting at angle $\frac{2\pi}3$ (see Lemma \ref{lem component critical realization} for an explanation of the etymology of this terminology). Although it may not be completely evident to the reader at this point, the set 
\begin{equation}\label{eq set of critical realizations}
\BFG^X(L)=\left\{\begin{array}{c}\phi:X\to\Sigma\text{ critical realization}\\ \text{with length }\ell_\Sigma(\phi)\le L\end{array}\right\}
\end{equation}
is finite, where the {\em length} of a critical realization is defined to be the sum
$$\ell_\Sigma(\phi)=\sum_{e\in\edg(X)}\ell_\Sigma(\phi(e))$$
of the lengths of the geodesic segments $\phi(e)$ when $e$ ranges over the set of edges of $X$. The following is the key to Theorem \ref{thm counting curves}:

\begin{sat}\label{thm counting minimising graphs}
Let $\Sigma$ be a closed, connected, and oriented hyperbolic surface. For every connected trivalent graph $X$ we have
$$\vert\BFG^X(L)\vert\sim 
\left(\frac 23\right)^{3\chi(X)}\cdot\frac{\vol(T^1\Sigma)^{\chi(X)}}{(-3\chi(X)-1)!}\cdot L^{-3\chi(X)-1}\cdot e^{L}$$
as $L\to\infty$. Here $\chi(X)$ is the Euler-characteristic of the graph $X$.
\end{sat}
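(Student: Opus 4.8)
\emph{Sketch of the intended argument.}

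\smallskip

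\noindent\textbf{The engine.} Everything rests on exponential mixing of the geodesic flow on $T^1\Sigma$, which I would use in the form of an equidistribution statement for long geodesic arcs: for $p,q\in\Sigma$ the number of geodesic arcs from $p$ to $q$ of length in $[\ell,\ell+d\ell]$ whose initial velocity lies in a set $A\subseteq T^1_p\Sigma$ and whose terminal velocity lies in $B\subseteq T^1_q\Sigma$ equals $\frac{\vert A\vert\,\vert B\vert}{4\pi\vol(\Sigma)}\,e^{\ell}\,d\ell$ up to an exponentially small relative error, uniformly in $p$ and $q$; here $\vert\cdot\vert$ denotes angular length on the unit tangent circles, and the case $A=B=S^1$ recovers the classical count $\#\{\gamma\in\Gamma:d(\tilde p,\gamma\tilde q)\le\ell\}\sim\pi e^{\ell}/\vol(\Sigma)$. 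I would also isolate the variational meaning of ``critical realization'' (Lemma \ref{lem component critical realization}): on the space $\CR$ of all geodesic realizations of $X$ in $\Sigma$ the total length $\mathcal L(\phi)=\sum_{e\in\edg(X)}\ell_\Sigma(\phi(e))$ is smooth, the first variation of $\mathcal L$ coming from moving the image of a vertex $v$ is $-\sum_{e\ni v}\vec e(v)$ (the sum of the outgoing unit tangents of the edges at $v$), and hence the critical points of $\mathcal L$ are exactly the critical realizations; moreover, upon lifting to $\BH^2$, $\mathcal L$ is convex, so there is at most one critical realization in each free homotopy class, and it minimizes length there.

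\smallskip

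\noindent\textbf{Reduction to a zero count.} Sending a realization to its vertex positions exhibits $\CR$ as a (disconnected, infinite–fibered) covering of $\Sigma^{\ver(X)}$, and a critical realization of length $\le L$ is precisely a zero of $\nabla\mathcal L$ with $\mathcal L\le L$. Pushing the tautology $\vert\BFG^X(L)\vert=\#\{\nabla\mathcal L=0,\ \mathcal L\le L\}$ down to $\Sigma^{\ver(X)}$ gives
\[
\vert\BFG^X(L)\vert=\int_{\Sigma^{\ver(X)}}\Big(\sum_{\substack{\phi\in\CR\ \text{over}\ (p_v),\\ \ell_\Sigma(\phi)\le L}}\delta_0\big(\nabla\mathcal L(\phi)\big)\,\big\vert\det\mathrm{Hess}\,\mathcal L(\phi)\big\vert\Big)\,\prod_{v}dp_v ,
\]
where $\nabla\mathcal L(\phi)\in\prod_vT_{p_v}\Sigma$, $\mathrm{Hess}\,\mathcal L(\phi)$ is the $2\vert\ver(X)\vert\times2\vert\ver(X)\vert$ Hessian, and the inner sum runs over the geodesic realizations with the prescribed vertex positions. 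One then has to check that degenerate contributions -- collapsing edges, colliding vertices, backtracking edges, and the (lower–order) homotopy types for which the lifted $\mathcal L$ fails to be proper -- are negligible.

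\smallskip

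\noindent\textbf{Asymptotic evaluation.} For fixed vertex positions the geodesic arcs joining $p_v$ to $p_w$ equidistribute, independently across the edges, in the triple (length, initial velocity, terminal velocity); so to leading order the inner sum is the integral, over $\{\ell_e>0,\ \sum_e\ell_e\le L\}$ and over the $2\vert\edg(X)\vert$ endpoint directions, of $\prod_e\big(\tfrac{1}{4\pi\vol(\Sigma)}e^{\ell_e}d\ell_e\,d\vec e(v)\,d\vec e(w)\big)$ against $\delta_0(\nabla\mathcal L)\,\vert\det\mathrm{Hess}\,\mathcal L\vert$. Three elementary computations then finish the job. (i) $\int_{\sum_e\ell_e\le L}e^{\sum_e\ell_e}\prod_e d\ell_e\sim\frac{L^{\vert\edg(X)\vert-1}}{(\vert\edg(X)\vert-1)!}e^L$, with $\vert\edg(X)\vert=-3\chi(X)$ for a trivalent graph. (ii) At a critical point $\mathrm{Hess}\,\mathcal L$ is block diagonal to leading order, the off–diagonal blocks decaying like $e^{-\ell_e}$, and its $v$-th diagonal block is $\sum_{e\ni v}\coth(\ell_e)\big(I-\vec e(v)\vec e(v)^{\top}\big)\to 3I-\tfrac32 I=\tfrac32 I$, using that three unit vectors at mutual angle $\tfrac{2\pi}{3}$ satisfy $\sum_{e\ni v}\vec e(v)\vec e(v)^{\top}=\tfrac32 I$; hence $\vert\det\mathrm{Hess}\,\mathcal L\vert\to(\tfrac94)^{\vert\ver(X)\vert}$. (iii) Since each half–edge belongs to exactly one vertex, the integral over the endpoint directions against $\delta_0(\nabla\mathcal L)=\prod_v\delta_0\big(\sum_{e\ni v}\vec e(v)\big)$ factors over the vertices, each factor being $\int_{(S^1)^3}\delta_0(\vec u_1+\vec u_2+\vec u_3)\,d\vec u_1d\vec u_2d\vec u_3=2\pi\cdot\tfrac{4}{\sqrt3}=\tfrac{8\pi}{\sqrt3}$ (the inner integral has the two solutions $\{\vec u_2,\vec u_3\}$ at mutual angle $\tfrac{2\pi}{3}$, with Jacobian $\sin\tfrac{2\pi}{3}=\tfrac{\sqrt3}{2}$). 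Multiplying the vertex factor $\big(\tfrac94\cdot\tfrac{8\pi}{\sqrt3}\cdot\vol(\Sigma)\big)^{\vert\ver(X)\vert}=(6\sqrt3\,\pi\vol(\Sigma))^{\vert\ver(X)\vert}$, the edge factor $(4\pi\vol(\Sigma))^{-\vert\edg(X)\vert}$, and $\frac{L^{\vert\edg(X)\vert-1}}{(\vert\edg(X)\vert-1)!}e^L$, and using $\vert\edg(X)\vert=\tfrac32\vert\ver(X)\vert$ together with $\vol(T^1\Sigma)=2\pi\vol(\Sigma)$, everything collapses to $(\tfrac23)^{3\chi(X)}\cdot\frac{\vol(T^1\Sigma)^{\chi(X)}}{(-3\chi(X)-1)!}\cdot L^{-3\chi(X)-1}\cdot e^{L}$.

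\smallskip

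\noindent\textbf{Where the difficulty is.} The scheme above is a heuristic, and the real work is the error analysis behind the passage ``$\sum_{\text{arcs}}\rightsquigarrow\int$'': one must feed in the equidistribution of arcs \emph{with} its exponentially small error, sum the errors over all homotopy types and integrate over $\Sigma^{\ver(X)}$, and retain a net gain over $e^L$. This is genuinely delicate near the faces and corners of the simplex $\{\sum_e\ell_e\le L\}$, where some edges stay of bounded length: there the arc equidistribution has not set in (nor have the estimates $\coth\ell_e\to1$, $e^{-\ell_e}\to0$ used for the Hessian), so those subregions need crude separate bounds that must be shown to be of lower order -- and this interplay between the rate of mixing and the shape of the simplex is, I expect, the technical heart of the proof. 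The second point requiring care is the negligibility of the degenerate configurations and, relatedly, the properness of the lifted length function for all but a lower–order family of homotopy types. Once this framework is in place, by contrast, the bookkeeping of the numerical constant $(\tfrac23)^{3\chi(X)}$ is routine, as indicated above.
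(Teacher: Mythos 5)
Your heuristic is correct — the Kac–Rice count $\vert\BFG^X(L)\vert=\int\delta_0(\nabla\CL)\,\vert\det\mathrm{Hess}\,\CL\vert$, the per-vertex factor $\tfrac94\cdot\tfrac{8\pi}{\sqrt3}=6\sqrt3\,\pi$ (which, not coincidentally, equals the paper's ratio $\frac{12\pi\epsilon^2}{(2/\sqrt3)\epsilon^2}$ between the volume of the admissible sector set and the volume of the hexagon $T(\epsilon)$ around a critical tripod), and the final constant all check out against the stated asymptotics — but it is a genuinely different route from the paper's. The paper never forms the Hessian or a delta density: instead it thickens the critical locus to the set $\CG_{\epsilon-\crit}$ of $\epsilon$-critical realizations, proves (Proposition \ref{prop sum up section 5}, via Corollary \ref{lem critical near kind of critical} and the hexagon picture) that each connected component of that set containing a long critical realization has volume $\sim(\tfrac2{\sqrt3}\epsilon^2)^{V}$ and a unique critical point, computes the total volume of $\CG_{\epsilon-\crit}$ over a box of edge lengths by pushing down to $\Sigma^{\ver(X)}$ and invoking Corollary \ref{kor fat graph}, and divides (Proposition \ref{prop critical in box}). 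It then sums over boxes, disposing of the boxes near the faces of the simplex by the crude homotopy-class bound of Lemma \ref{lem bounding the rest} and of collapsed-edge and other degenerate configurations by the convexity statements of Lemma \ref{lem component critical realization}. The two mechanisms are dual -- the hexagon volume is the softened version of your $\delta_0\cdot\vert\det\mathrm{Hess}\vert$ density -- but the paper's has a concrete advantage that is worth noticing: it runs on mere mixing of the geodesic flow, with no rate. You stake the error analysis on \emph{exponential} mixing; the paper deliberately avoids this (it says so explicitly in the introduction), achieving the needed control instead by the sandwich $G_\epsilon(\vec L,h)\le\vert\BFG(\vec L,h)\vert\le\hat G_\epsilon(\vec L,h)$, letting $L\to\infty$ for fixed $\epsilon,h,\delta$, and only then letting $\epsilon,h,\delta\to0$ -- a classical "soft" scheme that needs no quantitative decay. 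Your diagnosis of where the difficulty lies (faces of the simplex, short edges, degenerate realizations, summation of errors over homotopy types) is accurate, and those are precisely the points the paper addresses with Lemmas \ref{lem bounding the rest} and \ref{lem component critical realization} and with Proposition \ref{prop sum up section 5}; a fully rigorous version of your approach would need analogues of each, so in terms of work required the two proofs are comparable, with yours spending a stronger dynamical input (exponential mixing) to simplify the error bookkeeping and the paper spending more on soft sandwich arguments to get away with qualitative mixing alone.
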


Let us sketch the proof of Theorem \ref{thm counting minimising graphs}. Recall first that there are basically two approaches (that we know of) to establish Huber's theorem \eqref{eq huber}: either one approaches it \`a la Huber \cite{Huber}, that is from the point of spectral analysis or, as Margulis did later \cite{Margulis-thesis}, exploiting the ergodic properties of the geodesic flow. Indeed, already the predecessor to Huber's theorem, namely Delsarte's lattice point counting theorem \cite{Delsarte} can be approached from these two different points of view. In Section \ref{sec delsarte} below we will sketch the argument to derive Delsarte's theorem from the fact that the geodesic flow is mixing, discussing also the count of those geodesic arcs in $\Sigma$ going from $x$ to $y$ and whose initial and terminal speed are in predetermined sectors of the respective unit tangent spaces---see Theorem \ref{thm Delsarte's theorem for sectors} for the precise statement. This is indeed all the dynamics we will need in the proof of the theorems above. To be clear: we do not make use of any of the slightly rarified refinements of the mixing property of the geodesic flow. Specifically, we do not need exponential mixing or such.

The basic idea of the proof of Theorem \ref{thm counting minimising graphs} is to note that, for a fixed graph $X$, the set of all its realizations in $\Sigma$, that is maps $X\to\Sigma$ mapping each edge geodesically, is naturally a manifold. Generically, each connected component contains a unique critical realization. To count how many such critical realizations there are with total length less than $L$ we consider for small $\epsilon$ the set $\CG_{\epsilon-\crit}(L)$ of geodesic realizations of length at most $L$ and where the angles, instead of being $\frac{2\pi}3$ on the nose, are in the interval of size $\epsilon$ around that number. Delsarte's theorem allows us to compute the volume $\vol(\CG_{\epsilon-\crit}(L))$ of that set of geodesic realizations. A little bit of hyperbolic geometry then shows that most of the connected components of $\CG_{\epsilon-\crit}(L)$ have basically the same volume. We thus know, with a small error, how many connected components we have, and thus how many critical realizations. This concludes the sketch of the proof of Theorem \ref{thm counting minimising graphs}.

\begin{bem}
The strategy used to prove Theorem \ref{thm counting minimising graphs} can be used to give a pretty easy proof of Huber's theorem \eqref{eq huber}. We work this out in the appendix, and although there is no logical need of doing so, we encourage the reader to have a look at it before working out the details of the proof of Theorem \ref{thm counting minimising graphs}.
\end{bem}

Let us also sketch the proof of Theorem \ref{thm counting curves}. A {\em fat graph} is basically a graph which comes equipped with a regular neighborhood $\neigh(X)$ homeomorphic to an oriented surface. Such a fat graph {\em has genus $g$} if this regular neighborhood is homeomorphic to a compact surface of genus $g$ with connected boundary. The point of considering fat graphs is that whenever we have a realization $\phi:X\to\Sigma$ of the graph underlying a genus $g$ fat graph then we get a curve, namely $\phi(\D\neigh(X))$ which has at most genus $g$. The basic idea of the proof of Theorem \ref{thm counting curves} is to consider the set 
$$\BFX_{g}=\left\{(X,\phi)\middle\vert\begin{array}{c}X\text{ is a fat graph of genus }g\text{ and}\\
\phi:X\to\Sigma\text{ is a critical realization}\\ \text{of the underlying graph}\end{array}\middle\}\right/_{\text{equiv}}$$
of (equivalence classes) of realizations (see Section \ref{sec main} for details) and prove that the map
$$\Lambda:\BFX_{g}\to\BFC,\ \ (X,\phi)\mapsto\text{ geodesic homotopic to }\phi(\D X)$$
is basically bijective onto $\BFB_g$ and that generically, the geodesic $\Lambda(X,\phi)$ has length almost exactly equal to $2\cdot\ell(\phi)-C$ for some explicit constant $C$. Once we are here we get the statement of Theorem \ref{thm counting curves} from Theorem \ref{thm counting minimising graphs} together with a result by Bacher and Vdovina \cite{Bacher-Vdovina}. Other than proving Theorem \ref{thm counting minimising graphs}, the bulk of the work is to establish the properties of the map $\Lambda$. The key step is to bound the number of curves with at most length $L$ and which arise in two essentially different ways as the boundary of genus $g$ surfaces:

\begin{sat}\label{bounding multi-fillings}
For any $g$ there are at most $\bconst\cdot L^{6g-5}\cdot e^{\frac L2}$ genus $g$ closed geodesics $\gamma$ in $\Sigma$ with length $\ell(\gamma)\le L$ and with the property that there are two non-homotopic fillings $\beta_1:S_1\to\Sigma$ and $\beta_2:S_2\to\Sigma$ of genus $\le g$.
\end{sat}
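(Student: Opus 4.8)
The plan is to reduce the count of curves admitting two distinct genus-$\le g$ fillings to a count of critical realizations of graphs of Euler characteristic $1-2g$ that satisfy an additional \emph{degeneracy} constraint, and then to show that imposing this constraint costs a full power of $L$, so that Theorem \ref{thm counting minimising graphs} gives the claimed bound. Concretely: suppose $\gamma$ is a genus-$g$ geodesic of length $\le L$ with two non-homotopic fillings $\beta_i:S_i\to\Sigma$, $i=1,2$, each of genus $\le g$ (hence exactly $g$, since $cl(\gamma)=g$). As in the sketch of Theorem \ref{thm counting curves}, each filling deformation-retracts onto a spine which we may take to be a trivalent fat graph $X_i$ of genus $g$, and after homotoping we may assume the spine is critically realized, so $(X_i,\phi_i)\in\BFX_g$ with $\Lambda(X_i,\phi_i)=\gamma$. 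Thus a curve with two distinct fillings corresponds to a pair of critical realizations $(X_1,\phi_1)\ne(X_2,\phi_2)$ with $\Lambda(X_1,\phi_1)=\Lambda(X_2,\phi_2)$. The length of $\gamma$ is, generically, $2\ell(\phi_i)-C$ for the explicit constant $C$ of the sketch; so both $\phi_1,\phi_2$ have length $\approx\frac{L+C}2$.

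First I would set up the correspondence cleanly: fix the finite list $\mathcal F_g$ of (isomorphism types of) trivalent fat graphs of genus $g$ — there are finitely many by Bacher--Vdovina — and for each ordered pair $(X_1,X_2)\in\mathcal F_g\times\mathcal F_g$ consider the map
$$\Xi:\BFG^{X_1}(M)\times\BFG^{X_2}(M)\to\mathrm{Maps},\qquad (\phi_1,\phi_2)\mapsto\bigl(\Lambda(X_1,\phi_1),\Lambda(X_2,\phi_2)\bigr),$$
with $M\approx\frac{L+C}2$, and look at the \emph{diagonal locus} where the two geodesics coincide. Because there are only finitely many fat-graph types, it suffices to bound, for each pair $(X_1,X_2)$, the number of critical realizations $\phi_1$ of $X_1$ of length $\le M$ such that $\Lambda(X_1,\phi_1)$ is \emph{also} the $\Lambda$-image of some critical realization of $X_2$. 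The point is that the condition ``$\phi(\D\neigh(X_1))$ and $\phi'(\D\neigh(X_2))$ are freely homotopic'' is a closed, positive-codimension condition on the configuration space $\BFG^{X_1}\times\BFG^{X_2}$: the two boundary curves run along the edges of $X_1$, resp. $X_2$, so asking them to be homotopic forces an algebraic relation among the edge-length parameters. Equivalently — and this is the geometrically transparent route — if $\gamma$ has two non-homotopic fillings then $\gamma$, together with the images $\phi_1(X_1)\cup\phi_2(X_2)$, forms a geodesic ``ribbon'' configuration whose underlying graph has strictly larger first Betti number than a single genus-$g$ spine; i.e. the union supports a critical realization of a graph $Y$ with $\chi(Y)\le 1-2g-1=-2g$ — one extra independent cycle coming from the fact that $\phi_1$ and $\phi_2$ are genuinely different.

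So the core of the argument is: (i) show that the union $\phi_1(X_1)\cup\phi_2(X_2)$ (after taking the geodesic representatives of all arcs and recording all the $\frac{2\pi}3$-type angle conditions where edges meet) is the critical realization of a connected trivalent graph $Y$ with $-\chi(Y)\ge 2g$, with total length $\le 2M+O(1)\le L+O(1)$; (ii) invoke Theorem \ref{thm counting minimising graphs} for each such $Y$: the number of critical realizations of $Y$ of length $\le L+O(1)$ is $O(L^{-3\chi(Y)-1}e^{L})=O(L^{6g-1}e^{L})$ — but wait, this overshoots. The fix is that the relevant length is not $L$ but $M\approx L/2$: the union has length $\le 2M+O(1)$, however each of the two \emph{halves} has length $\le M$, and they share the curve $\gamma$ (which has length $\approx 2M$), so the honest bound on the total length of the graph $Y$ is $\approx 2M+2M=2L$? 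No — $Y$'s edges are covered once, not with multiplicity, and $\ell(Y)\le \ell(\phi_1)+\ell(\phi_2)\le 2M\approx L$. Then Theorem \ref{thm counting minimising graphs} with $\chi(Y)=-2g$ (the generic, smallest-complexity case) gives $O(L^{-3(-2g)-1}e^{L})=O(L^{6g-1}e^L)$ realizations of $Y$ of length $\le L$ — which is \emph{worse} than the claimed $L^{6g-5}e^{L/2}$. The resolution: one must exploit that $\gamma$ itself has length $\approx L$, not $\le L/2$, so in fact $\ell(Y)$ is \emph{not} close to $L$ but close to $L/2$ — the edges of $Y$ that lie on $\gamma$ are traversed \emph{twice} by the combined spine data, so counting $Y$ with length $\le L/2+O(1)$ and $\chi(Y)=1-2g$ (relative Euler characteristic of a genus-$g$ fat graph spine) yields $O((L/2)^{6g-4}e^{L/2})$; the extra independent cycle forced by non-homotopy drops a further $L/2$, giving $O(L^{6g-5}e^{L/2})$, as required.

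The step I expect to be the main obstacle is (i): making precise that two non-homotopic genus-$g$ fillings of the \emph{same} geodesic $\gamma$ force the combined geodesic-segment data to be the critical realization of a graph with at least one extra independent loop, with length controlled by $L/2+O(1)$ rather than $L+O(1)$. This is where one needs the length bookkeeping from the map $\Lambda$ in Section \ref{sec main} (the relation $\ell_\Sigma(\Lambda(X,\phi))=2\ell_\Sigma(\phi)-C+o(1)$ generically) together with a combinatorial-topological argument: the double cover of $\gamma$ by the two boundary curves $\phi_i(\D\neigh(X_i))$, glued to the two spines, builds a $2$-complex whose $1$-skeleton, after straightening, is $Y$; non-homotopy of the fillings is exactly the statement that this $1$-skeleton is not homotopy-equivalent to a single genus-$g$ spine, forcing $b_1(Y)\ge 4g$ rather than $4g-1$. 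Handling the non-generic realizations (where angle or transversality degenerations occur, or where $\gamma$ meets the spines non-transversally) is a finite, lower-dimensional correction absorbed into the constant, exactly as in the proof of Theorem \ref{thm counting minimising graphs}; and the finiteness of $\mathcal F_g$ keeps the number of graph-types $Y$ to consider finite, so the $O(\cdot)$ is uniform. I would carry the argument out in that order: (a) finiteness of fat-graph and combined-graph types; (b) the degeneration/length lemma producing $Y$ with the extra cycle and length $\le L/2+O(1)$; (c) apply Theorem \ref{thm counting minimising graphs} and sum over the finitely many $Y$.
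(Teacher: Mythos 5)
Your proposal has a genuine gap, and the approach is quite different from the paper's, which uses pleated surfaces rather than graph spines. Let me explain why your route does not go through.

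The central move you propose---taking the union $\phi_1(X_1)\cup\phi_2(X_2)$ and declaring it to be a critical realization of some graph $Y$ with extra Betti number---is not well-defined. The two spines are critical realizations of $X_1$ and $X_2$ separately, with $\frac{2\pi}{3}$-angles at their own trivalent vertices, but there is no reason the images intersect at vertices, satisfy any angle conditions at intersection points, or indeed intersect transversally at all; generically they cross at a large number of essentially random points. ``Recording all the $\frac{2\pi}{3}$-type angle conditions where edges meet'' is not something that can be done: the crossings do not satisfy them, and if you try to introduce new vertices at the crossings the resulting graph is neither trivalent nor critical, and worse, the number of crossings (hence of vertices of $Y$) is unbounded as $L\to\infty$, so there is no fixed finite list of graph types $Y$ to sum over.

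Even granting yourself a magical $Y$, the bookkeeping is inconsistent in a way you half-notice but never resolve. The total edge-length of the union is $\le \ell(\phi_1)+\ell(\phi_2)\approx L$, so Theorem \ref{thm counting minimising graphs} applied with $\chi(Y)=-2g$ gives $O(L^{6g-1}e^L)$ --- hopeless. You then claim $\ell(Y)\approx L/2$ because ``the edges of $Y$ that lie on $\gamma$ are traversed twice,'' but the spines $X_i$ do \emph{not} contain $\gamma$: the curve $\gamma$ is the image of $\partial\neigh(X_i)$, which runs near but not on the spine. So no edges of $Y$ lie on $\gamma$ and there is no doubling to exploit. Finally, ``the extra independent cycle drops a further $L/2$'' has the sign backwards: in Theorem \ref{thm counting minimising graphs} a graph with one \emph{more} independent cycle has \emph{more} critical realizations of a given length (more factors of $L$), not fewer, so the extra cycle makes the bound worse, not better.

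The actual argument in the paper is of a different kind entirely. Given two non-homotopic fillings, one first replaces them by \emph{hyperbolic} (pleated-surface) fillings and glues them along the boundary geodesic to get a closed pleated genus-$2g$ surface, the ``pseudo-double'' $\Theta:S\to\Sigma$ with crease $\gamma$. The key geometric input is Lemma \ref{lem criterion fillings to be homotopic}: if the $\epsilon_0$-thin part of $S$ has the maximal $6g-3$ components each crossed exactly twice by $\gamma$, the two fillings \emph{must} be homotopic. So non-homotopy forces at most $6g-4$ such components. One then bounds the number of homotopy classes of pseudo-doubles with this constraint (Proposition \ref{prop this is a pain in the butt}) via the ``wired surface'' machinery: collapsing the thin parts to wires, the number of tight maps is controlled by how many wires are crossed with minimal multiplicity $2$, and that count is $\bconst\cdot L^{d-1}\cdot e^{L/2}$ where $d\le 6g-4$. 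The exponent $6g-5$ comes precisely from $d-1$ with $d=6g-4$, not from any Euler-characteristic count on a union of spines. The power $e^{L/2}$ comes from the fact that the wires (thin parts) are each crossed at least twice, not from halving the length of a graph. None of this can be seen at the level of the critical graph realizations of Theorem \ref{thm counting minimising graphs} alone; the thin-thick decomposition of the doubled pleated surface is the essential new ingredient.
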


Here, a genus $g$ filling of $\gamma$ is a continuous map $\beta: S\to\Sigma$ from a genus $g$ surface $S$ with connected boundary such that $\beta(\partial S) = \gamma$. 

Theorem \ref{bounding multi-fillings} may look kind of weak because we are only bounding by $\frac\bconst L$ the proportion of those elements in $\BFB_g(L)$ that we are double counting when we count surfaces of genus $g$ instead of counting curves. It should however be noted that this is the order of the error term in \eqref{eq homology huber} (see \cite{Phillips-Sarnak}), and this is indeed the order of error term that we expect in the results we prove here. For what it is worth, it is not hard to show that the set of those $\gamma$ in Theorem \ref{bounding multi-fillings} is at least of the order of $\bconst\cdot L^{6g-10}\cdot e^{\frac L2}$. Indeed, if $\omega\in\pi_1(\Sigma)$ arises in two ways as a commutator, for example if we choose $\omega=[aabab,ba^{-1}a^{-1}]=[aaba,bba^{-1}]$, and if $\eta$ is a randomly chosen product of $g-1$ generators then $\omega\eta$ arises in two different ways as a product of $g$ commutators and hence admits two non-homotopic fillings, and there are $\bconst\cdot L^{6g-10}\cdot e^{\frac L2}$ many choices for $\eta$.

\subsection*{Section-by-section summary}
In Section \ref{sec realizations} we discuss realizations of graphs and the topology and geometry of spaces of realizations.

In Section \ref{sec delsarte} we discuss Delsarte's classical lattice point counting result and a couple of minimal generalizations thereof. 

At this point we will have all tools needed to prove Theorem \ref{thm counting minimising graphs}. This is done in Section \ref{sec counting critical}.

In Section \ref{sec fillings} and Section \ref{sec fillings2} we work out the geometric aspects of the proof of Theorem \ref{thm counting curves}, the main result being Theorem \ref{bounding multi-fillings}. Although we do not use any results about pleated surfaces, some of the arguments in these two sections will come pretty natural to those readers used to working with such objects.

In Section \ref{sec main} we prove Theorem \ref{thm counting curves} combining the results of the previous two sections with Theorem \ref{thm counting minimising graphs}. Theorem \ref{thm counting curves immersion} is proved in Section \ref{sec immersed}.

Finally, Section \ref{sec comments} is dedicated to discussing what parts of what we do hold if we soften the assumption that $\Sigma$ is a closed hyperbolic surface.

To conclude, we present in Appendix \ref{sec huber} a proof of Huber's theorem using the same idea as in the proof of Theorem \ref{thm counting minimising graphs}.
\medskip

\begin{bem}
After conclusion of this paper we learned that the problem of calculating how many elements arise as commutators in some group has also been treated in other cases. More concretely, Park \cite{Park} uses Wicks forms to get asymptotics, as $L\to\infty$, for the number of elements in the free group $\BF_r$ of rank $r$ which arise as a commutator and have word-length $L$. In the same paper he also treats the case that the group is a free product of two non-trivial finite groups. It would be interesting to figure out if it is possible to apply the methods here to recover Park's beautiful theorems. 
\end{bem}

\subsection*{Acknowledgements}
We thank Vincent Delecroix, Lars Louder, Michael Magee, and Peter Sarnak for their very helpful comments. The second author is also very grateful for the patience shown by S\'ebastien Gou\"ezel, Vincent Guirardel and Fran\c cois Maucourant with all his questions---it is nice to have one's office next to theirs.
\bigskip

\newpage

\centerline{\bf Notation.}

\smallskip

Under a graph $X$ we understand a 1-dimensional CW-complex with finitely many cells. We denote by $\ver=\ver(X)$ the set of vertices, by $\edg=\edg(X)$ the set of edges of $X$, and by $\hal=\hal(X)$ the set of half-edges of a graph $X$---a half-edge is nothing other than the germ of an edge. Given a vertex $v\in\ver(X)$ we let $\hal_v=\hal_v(X)$ be the set of half-edges emanating out of $v$. Note that two elements of $\hal_v$ might well correspond to the same edge---that is, $X$ might have edges which are incident to the same vertex on both ends. The cardinality of $\hal_v$ is the {\em degree} of $X$ at $v$ and we say that $X$ is {\em trivalent} if its degree is 3 at every vertex. The reader can safely assume that the graphs they encounter are trivalent.

When it comes to surfaces, we will be working all the time with the same underlying surface, our fixed closed connected and oriented hyperbolic surface $\Sigma=\Gamma\bs\BH^2$. We identify $T^1\Sigma$ with $\Gamma\bs T^1\BH^2=\Gamma\bs\PSL_2\BR$, and endow $T^1\Sigma$ we the distance induced by a $\PSL_2\BR$ left-invariant Riemannian metric on $\PSL_2\BR$, say one that gives length $2\pi$ to each unit tangent space $T_{x_0}^1\Sigma$ and such that the projection $T^1\BH^2\to\BH^2$ is a Riemannian submersion. This means that the unit tangent bundle has volume $\vol(T^1\Sigma)=2\pi\cdot\vol(\Sigma)=4\pi^2\cdot\vert\chi(S)\vert$. Angles between tangent vectors based at the same point of $\Sigma$ will always be unoriented, meaning that they take values in $[0,\pi]$---note that this is consistent with the unit tangent spaces having length $2\pi$. 

Often we will denote the discrete sets we are counting by boldface capitals, such as $\BFC$ or $\BFB$. They will often come with a wealth of decorations such as for example $\BFG^X(L)$ or $\BFB_g(L)$. Often these sets arise as discrete subsets of larger spaces which will be denoted by calligraphic letters. Often the boldfaced and the calligraphic letters go together: $\BFG$ will be a subset of the space $\CG$.

A comment about constants. In this paper there are two kinds of constants: the ones whose value we are trying to actually compute, and those about which we just need to know that they exist. Evidently the first kind we have to track carefully. It would however be too painful, and for no clear gain, to do the same with all possible constants. And in general constants tend to breed more and more constants. This is why we we just write $\bconst$ for a constant whose actual value is irrelevant, allowing the precise value of $\bconst$ to change from line to line. We hope that this does not cause any confusion.

And now a comment about Euclidean vectors. All vector arising here, indicated with an arrow as in $\vec v=(v_1,\dots,v_k)$, are {\em positive} in the sense that the entries $v_i$ are positive. In other words, they live in $\BR_+^k$ or maybe in $\BN^k$. We will write 
$$\Vert\vec v\Vert=\vert v_1\vert+\dots+\vert v_k\vert=v_1+\dots+v_k$$
for the $L^1$-norm on vectors. This is the only norm we will encounter here.

\section{Realizations of graphs in surfaces}\label{sec realizations}
In this section we discuss certain spaces of maps of graphs into a surface. These spaces play a key r\^ole in this paper. Although long, the material here should be nice and pleasant to read---only the proof of Proposition \ref{lemma new} takes some amount of work.

\subsection*{Realizations}
We will be interested in connected graphs living inside our hyperbolic surface $\Sigma$, or more precisely in continuous maps
\begin{equation}\label{eq realization}
\phi:X\to\Sigma
\end{equation}
of graphs $X$ into $\Sigma$ which when restricted to each edge are geodesic. We will say that such a map \eqref{eq realization} is a {\em realization of $X$ in $\Sigma$.} We stress that realizations do not need to be injective, and that in fact the map $\phi$ could be constant on certain edges, or even on larger pieces of the graph. A {\em regular} realization is one whose restriction to every edge is non-constant. If $\phi:X\to\Sigma$ is a regular realization and if $\vec e\in\hal(X)$ is a half-edge incident to a vertex $v\in\ver(X)$ then we denote by $\phi(\vec e)\in T^1_{\phi(v)}\Sigma$ the unit tangent vector at $\phi(v)$ pointing in the direction of the image of $\vec e$.

We endow the set $\CG^X$ of all realizations of the (always connected) graph $X$ with the compact-open topology and note that unless $X$ itself is contractible, the space $\CG^X$ is not connected: the connected components of $\CG^X$ correspond to the different possible free homotopy classes of maps of $X$ into $\Sigma$. Indeed, pulling segments tight relative to their endpoints we get that any homotopy 
$$[0,1]\times X\to\Sigma,\ \ (t,x)\mapsto \varphi_t(x)$$
between two realizations is homotopic, relative to $\{0,1\}\times X$, to a homotopy, which we are still denoting by the same symbol, such that 
\begin{enumerate}
\item $t\mapsto \varphi_t(v)$ is geodesic for every vertex $v\in\ver(X)$, and
\item $\varphi_t:X\to\Sigma$ is a realization for all $t$.
\end{enumerate}
A homotopy $[0,1]\times X\to\Sigma$ satisfying (1) and (2) is said to be a {\em geodesic homotopy}.

Geodesic homotopies admit a different intrinsic description. Indeed, uniqueness of geodesic representatives in each homotopy class of arcs implies that each realization $\phi\in\CG^X$ has a neighborhood which is parametrized by the image of the vertices. This implies that the map
\begin{equation}\label{eq cover}
\Pi:\CG^X\to\Sigma^{\ver(X)},\ \ \phi\mapsto(\phi(v))_{v\in\ver(X)}
\end{equation}
is a cover. Pulling back the product of hyperbolic metrics we think of it as a manifold locally modeled on the product $(\BH^2)^{\ver(X)}=\BH^2\times\dots\times\BH^2$ of $\ver(X)$ worth of copies of the hyperbolic plane. Geodesic homotopies are, from this point of view, nothing other than geodesics in $\CG^X$.

Since all of this will be quite important, we record it here as a proposition:

\begin{prop}\label{prop cover}
Let $X$ be a graph. The map \eqref{eq cover} is a cover and geodesic homotopies are geodesics with respect to the pull-back metric.\qed
\end{prop}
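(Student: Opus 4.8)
The plan is to verify \emph{directly} that $\Pi$ is a covering map by exhibiting evenly covered neighbourhoods, the only geometric input being uniqueness (and continuous dependence on the endpoints) of geodesic representatives of arcs rel endpoints in $\Sigma$; the statement about geodesic homotopies then comes for free, since a covering equipped with the pulled-back metric is a local isometry and the base $\Sigma^{\ver(X)}$ is a Riemannian product.

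First I would produce good local sections of $\Pi$. Fix $\phi_0\in\CG^X$ and set $b_0=(\phi_0(v))_{v\in\ver(X)}$. Choose $r>0$ smaller than the convexity radius of $\Sigma$, so each ball $B(\phi_0(v),r)$ is convex, and put $V=\prod_{v}B(\phi_0(v),r)$, an open neighbourhood of $b_0$. For an edge $e$ joining vertices $v_e$ and $w_e$ (possibly $v_e=w_e$, and allowing constant or loop segments) and a geodesic segment $\sigma$ from $\phi_0(v_e)$ to $\phi_0(w_e)$, let $T^{\sigma}_e(x,y)$, for $(x,y)\in B(\phi_0(v_e),r)\times B(\phi_0(w_e),r)$, be the unique geodesic segment from $x$ to $y$ lying in the homotopy class rel endpoints obtained from $\sigma$ by concatenating the canonical short geodesics $[x\to\phi_0(v_e)]$ and $[\phi_0(w_e)\to y]$. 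Uniqueness of geodesics in a homotopy class makes $T^{\sigma}_e$ well defined, and it is continuous because the short geodesics depend continuously (indeed real-analytically) on their endpoints, with $T^{\sigma}_e(\phi_0(v_e),\phi_0(w_e))=\sigma$. Note also that near $\phi_0$ any realization has each edge in the homotopy class rel endpoints of $\phi_0|_e$, hence equals $T^{\phi_0|_e}_e$ evaluated at its own endpoints; so $\Pi$ is injective near $\phi_0$ with continuous inverse given by $(x_v)_v\mapsto(e\mapsto T^{\phi_0|_e}_e(x_{v_e},x_{w_e}))$, i.e. $\Pi$ is a local homeomorphism. Then for \emph{any} $\phi$ in the fibre $\Pi^{-1}(b_0)$ --- equivalently, for any choice of a geodesic segment $\sigma_e:=\phi|_e$ per edge with endpoints dictated by $b_0$ --- the map $s_\phi\colon (x_v)_v\mapsto(e\mapsto T^{\sigma_e}_e(x_{v_e},x_{w_e}))$ is a continuous section of $\Pi$ over $V$ with $s_\phi(b_0)=\phi$; since a section of a local homeomorphism is an open embedding, $U_\phi:=s_\phi(V)$ is an open neighbourhood of $\phi$ carried homeomorphically onto $V$ by $\Pi$.

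Next I would check that $V$ is evenly covered, namely $\Pi^{-1}(V)=\bigsqcup_{\phi\in\Pi^{-1}(b_0)}U_\phi$. For the inclusion ``$\subseteq$'': given $\psi\in\Pi^{-1}(V)$, slide the endpoints of each $\psi|_e$ back to $b_0$ along the canonical short geodesics to obtain a geodesic segment $\sigma_e$ from $\phi_0(v_e)$ to $\phi_0(w_e)$; the resulting $\phi\in\Pi^{-1}(b_0)$ satisfies $\psi=s_\phi(\Pi(\psi))\in U_\phi$. Disjointness: if $\psi\in U_\phi\cap U_{\phi'}$ then for every edge $e$ the segments $T^{\phi|_e}_e(\psi(v_e),\psi(w_e))$ and $T^{\phi'|_e}_e(\psi(v_e),\psi(w_e))$ agree, hence their homotopy classes rel endpoints agree, hence (sliding endpoints back) $\phi|_e=\phi'|_e$; thus $\phi=\phi'$. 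This exhibits $\Pi$ as a covering map, and in particular shows the fibres are discrete.

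Finally, endow $\Sigma^{\ver(X)}$ with the product of the hyperbolic metrics and $\CG^X$ with the pulled-back metric, so $\Pi$ is a local isometry. A curve in $\CG^X$ is a geodesic exactly when its $\Pi$-image is, and since the Levi-Civita connection of a Riemannian product splits as a product, a curve in $\Sigma^{\ver(X)}$ is a geodesic exactly when each coordinate $t\mapsto\varphi_t(v)$ is a constant-speed geodesic of $\Sigma$ --- i.e. exactly when the lifted family of realizations is a geodesic homotopy in the sense of conditions (1)--(2). Combined with unique path lifting (which holds because $\Pi$ is a covering), a homotopy of realizations is determined by its starting point together with the path it induces in $\Sigma^{\ver(X)}$, and every such path lifts; hence the geodesic homotopies are precisely the geodesics of the pulled-back metric. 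The main obstacle --- and the only genuinely non-formal point --- is the construction and continuity of the transport maps $T^{\sigma}_e$ together with the verification that distinct fibre points produce disjoint sheets; this is exactly where one uses uniqueness of geodesic representatives of arcs rel endpoints, and it is also what dictates the choice $r<$ convexity radius.
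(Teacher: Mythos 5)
Your argument is correct and follows the same route the paper sketches: the paper's proof is precisely that uniqueness of geodesic representatives of arcs rel endpoints gives a neighbourhood of any $\phi_0\in\CG^X$ parametrized by the vertex images, and your transport maps $T^{\sigma}_e$ together with the even-covering and disjointness checks are a careful implementation of that one-line claim. The closing identification of geodesic homotopies with geodesics of the pulled-back metric, via the local isometry $\Pi$ and the splitting of the product metric on $\Sigma^{\ver(X)}$, is likewise exactly what the paper has in mind.
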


\subsection*{Length function}
On the space $\CG^X$ of realizations of the graph $X$ in $\Sigma$ we have the {\em length function} 
$$\ell_\Sigma:\CG^X\to\BR_{\ge 0},\ \ \ell_\Sigma(\phi)=\sum_{e\in\edg(X)}\ell_\Sigma(\phi(e)).$$
First note that Arzela-Ascoli implies that $\ell_\Sigma$ is a proper function. It follows thus that the restriction of $\ell_\Sigma$ to any and every connected component of $\CG^X$ has a minimum. Now, convexity of the distance function $d_{\BH^2}(\cdot,\cdot)$ implies that $\ell_\Sigma$ is convex. More precisely, if $(\varphi_t)$ is a geodesic homotopy between two realizations in $\Sigma$ then the function $t\mapsto \ell_\Sigma(\varphi_t)$ is convex. Indeed, it is strictly convex unless the image of $[0,1]\times X$ is contained in a geodesic in $\Sigma$, or rather if the image of some (and hence any) lift to the universal cover is contained in a geodesic in $\BH^2$. 

Note now also that the length function is smooth when restricted to the set of regular realizations---its derivative is given by the first variation formula
$$\frac d{dt}\ell_\Sigma(\phi_t)=\sum_{v\in\ver(X)}\sum_{\vec e\in\hal_v(X)} \left\langle-\phi(\vec e),\frac d{dt}\phi_t(v)\right\rangle$$
where $\phi(\vec e)\in T_v^1\Sigma$ is the unit tangent vector based at $v$ and pointing as the image of the half-edge $\vec e$. It follows that a regular realization $\phi$ is a critical point for the length function $\ell_\Sigma:\CG^X\to\BR_{\ge 0}$ if and only if for every $v\in\ver(X)$ we have $\sum_{\vec e\in\hal_v}\phi(\vec e)=0$. Note that this implies, in the for us relevant case that $X$ is trivalent, that the (unsigned) angle $\angle(\phi'(\vec e_1),\phi'(\vec e_2))$ between the images of any two half edges incident to the same vertex is equal to $\frac{2\pi}3$. 

\begin{defi*}
A regular realization $\phi:X\to\Sigma$ of a trivalent graph $X$ into $\Sigma$ is {\em critical} if we have
$$\angle(\phi(\vec e_1),\phi(\vec e_2))=\frac{2\pi}3$$ 
for every vertex $v\in\ver(X)$ and for any two distinct $\vec e_1,\vec e_2\in\hal_v(X)$.
\end{defi*}

We collect in the next lemma a few of the properties of critical realizations:

\begin{lem}\label{lem component critical realization}
Let $X$ be a trivalent graph. A regular realization $\phi\in\CG^X$ is a critical point for the length function if and only if $\phi$ is a critical realization. Moreover, if $\phi\in\CG^X$ is a critical realization and $\CG^\phi$ is the connected component of $\CG^X$ containing $\phi$, then the following holds:
\begin{enumerate}
\item $\phi$ is the unique critical realization in $\CG^\phi$.
\item $\phi$ is the global minimum of the length function on $\CG^\phi$. 
\item Besides $\phi$, there are no other local minima in $\CG^\phi$ of the length function.
\item The connected component $\CG^\phi$ is isometric to the product $\BH^2\times\dots\times\BH^2$ of $\ver(X)$ many copies of the hyperbolic plane.
\end{enumerate}
\end{lem}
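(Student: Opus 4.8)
The plan is to leverage Proposition \ref{prop cover}, which identifies each connected component $\CG^\phi$ with a cover of $\Sigma^{\ver(X)}$ carrying the pull-back of the product hyperbolic metric, together with the convexity of the length function $\ell_\Sigma$ established just above. First I would settle the equivalence ``critical point $\iff$ critical realization'': this is immediate from the first variation formula, since for a trivalent vertex the vanishing of $\sum_{\vec e\in\hal_v}\phi(\vec e)$ among three unit vectors in a two-dimensional tangent space forces the three vectors to be at pairwise angle $\tfrac{2\pi}{3}$ (and conversely). One should note in passing that a critical realization is automatically regular by definition, so there is no issue with smoothness of $\ell_\Sigma$ at $\phi$.

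The heart of the argument is part (4), from which (1), (2), (3) follow essentially for free. The key claim is that the holonomy of the cover $\Pi\colon\CG^\phi\to\Sigma^{\ver(X)}$ is trivial, so that $\CG^\phi$ is isometric to the universal cover $(\BH^2)^{\ver(X)}$. To see this I would argue that the existence of a critical point for a \emph{strictly} convex proper function already forces the total space to be simply connected: more precisely, $\ell_\Sigma$ is convex along geodesics, and strictly convex unless a lift of the image lies in a single geodesic of $\BH^2$; but a critical realization $\phi$ cannot have its image in a geodesic (a trivalent vertex with all angles $\tfrac{2\pi}{3}$ cannot be realized inside a geodesic segment), so $\ell_\Sigma$ is strictly convex in a neighbourhood of $\phi$. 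A proper function on a complete Riemannian manifold that is convex everywhere and strictly convex near one of its minima has a unique minimum, and — here is the point — if the manifold had nontrivial $\pi_1$, one could find a nonconstant closed geodesic loop, along which a convex function must be constant, contradicting properness together with strict convexity somewhere on a translate of that loop. Phrased on the cover: any deck transformation is an isometry of $(\BH^2)^{\ver(X)}$ preserving $\ell_\Sigma$ (since length only depends on the image in $\Sigma^{\ver(X)}$); a convex proper function invariant under an infinite-order isometry of a $\mathrm{CAT}(0)$ space cannot be strictly convex anywhere along an axis, forcing all deck transformations to be trivial. Hence $\CG^\phi\cong(\BH^2)^{\ver(X)}$, giving (4).

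Granting (4), the function $\ell_\Sigma$ becomes a proper convex function on $(\BH^2)^{\ver(X)}$ which is strictly convex near $\phi$; I would then observe that strict convexity in fact holds on a dense open set (the locus where the realization is not contained in a geodesic, which here is everything except possibly a measure-zero set), and in any case propriety plus convexity plus a single point of local strict convexity suffice to conclude that $\phi$ is the unique critical point, the unique local minimum, and the global minimum — this yields (1), (2), (3) simultaneously. The main obstacle I anticipate is making the holonomy-triviality argument in (4) airtight: one must rule out the possibility that $\CG^\phi$ is, say, a product of $\BH^2$'s quotiented in some vertex-coordinates — the cleanest route is probably to show directly that the length function is \emph{proper and uniformly convex transverse to the (trivial) flat directions}, i.e. that there are no flat directions at all at $\phi$, which pins down the component as the full universal cover by a standard $\mathrm{CAT}(0)$ argument (a proper convex function on a $\mathrm{CAT}(0)$ space with a point of strict convexity attains a unique minimum and the space retracts to it, but one still needs invariance under deck transformations to kill $\pi_1$). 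I expect the bulk of the technical care to go into this point; the rest is bookkeeping with the first variation formula and elementary hyperbolic trigonometry.
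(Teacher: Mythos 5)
Your approach is essentially the paper's: first-variation characterisation of critical points, convexity (and strict convexity away from realizations mapping $X$ into a single geodesic) of $\ell_\Sigma$ along geodesic homotopies, and the covering description of $\CG^\phi$ from Proposition~\ref{prop cover}. There are, however, two weak points in your handling of part~(4).

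The parenthetical claim that ``length only depends on the image in $\Sigma^{\ver(X)}$'' is false: two realizations in the same fiber of $\Pi$ typically have different lengths, since their edges may lie in different homotopy classes of arcs. The invariance of the lifted length function $\tilde\ell$ on $(\BH^2)^{\ver(X)}$ under the deck group $\pi_1(\CG^\phi)$ of the universal covering of $\CG^\phi$ is true, but for the trivial reason that $\tilde\ell$ is a pullback from $\CG^\phi$; it has nothing to do with invariance under $\pi_1(\Sigma)^{\ver(X)}$.

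The axis argument also has a gap. The axis (or min-set) of a nontrivial $g\in\pi_1(\CG^\phi)$ need not pass through or near $\tilde\phi$, and $\tilde\ell$ being constant along it only forces the realizations there to have image in a single geodesic of $\BH^2$, which is not by itself a contradiction. Moreover, if $\pi_1(\CG^\phi)\neq 1$ then $\tilde\ell$ is not proper on $(\BH^2)^{\ver(X)}$ (it is $\pi_1(\CG^\phi)$-periodic), so properness cannot be invoked there either. The paper closes part (4) more directly: given a nontrivial $g\in\pi_1(\CG^\phi)$, take the geodesic segment from $\tilde\phi$ to $g\tilde\phi$. Along it $\tilde\ell$ is strictly convex, because at $t=0$ the image of the trivalent critical $\phi$ is not contained in a geodesic; it equals $\ell_\Sigma(\phi)$ at both endpoints; and its derivative at $t=0$ vanishes since $\phi$ is a critical point. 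These three facts are incompatible, so $\pi_1(\CG^\phi)=1$ and, being a connected and simply connected cover of $\Sigma^{\ver(X)}$, the component $\CG^\phi$ is isometric to $(\BH^2)^{\ver(X)}$. No axes, no properness on the universal cover, no CAT(0) machinery beyond convexity along a single geodesic through $\tilde\phi$.
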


Note that lack of global smoothness of the length function means that we cannot directly derive (2) and (3) from (1). This is why they appear as independent statements.

\begin{proof}
Statement (1) was actually discussed in the paragraph preceding the definition of critical realization. Let us focus in the subsequent ones. Recall that if 
$$[0,1]\to\CG^\phi,\ t\mapsto\phi_t$$ 
is a (non-constant) geodesic homotopy with $\phi_0=\phi$, then the length function
\begin{equation}\label{eq I want pie}
t\mapsto\ell_\Sigma(\phi_t(X))
\end{equation}
is convex. Well, in our situation it is strictly convex: since $\phi$ is critical we get that its image, or rather the image of its lifts to the universal cover, are not contained in a geodesic because if they were, then the angles between any two half-edges starting at the same vertex could only take the values $0$ or $\pi$. Now, strict convexity and the fact that $\phi=\phi_0$ is a critical point of the length function implies that $t=0$ is the minimum and only critical point of the function \eqref{eq I want pie}. From here we get directly (2) and (3). To prove (4) note that if $\CG^\phi$ were not simply connected, then there would be a non-trivial geodesic homotopy starting and ending at $\phi$, contradicting the strict convexity of the length function. Note now that the restricting the cover \eqref{eq cover} to $\CG^\phi$ we get a locally isometric cover $\CG^\phi\to\Sigma^{\ver(X)}$. Since the domain of this cover is connected and simply connected, we get that it is nothing other than the universal cover of $\Sigma^{\ver(X)}$. This proves (4) and concludes the proof of the lemma.
\end{proof}

\begin{bem}
Although we will not need it here, let us comment briefly on the topology of the connected components of $\CG^X$. The fundamental group of the connected component $\CG^\phi$ containing a realization $\phi\in\CG^X$ is isomorphic to the centralizer $\CZ_{\pi_1(\Sigma)}(\phi_*(\pi_1(X)))$ in $\pi_1(\Sigma)$ of the image of $\pi_1(X)$ under $\phi_*:\pi_1(X)\to\pi_1(\Sigma)$. It follows that $\CG^\phi$ is isometric to the quotient under the diagonal action of $\CZ_{\pi_1(\Sigma)}(\phi_*(\pi_1(X)))$ of $\BH^2\times\dots\times\BH^2$ of $\ver(X)$-copies of the hyperbolic plane. In particular we have:
\begin{itemize}
\item If $\phi$ is homotopically trivial then $\CG^\phi\simeq\pi_1(\Sigma)\bs(\BH^2\times\dots\times\BH^2)$.
\item If $\phi_*(\pi_1(X))\neq\Id_{\pi_1(\Sigma)}$ is abelian then $\CG^\phi\simeq\BZ\bs(\BH^2\times\dots\times\BH^2)$.
\item $\phi_*(\pi_1(X))$ is non-abelian then $\CG^\phi\simeq\BH^2\times\dots\times\BH^2.$
\end{itemize}
In the appendix we will give a concrete description of the components of $\CG^X$ for the case that $X$ is a loop, that is the graph with a single vertex and a single edge.
\end{bem}

Lemma \ref{lem component critical realization}, with all its beauty, does not say anything about the existence of critical realizations. Our next goal is to prove that any realization that from far away kind of looks like a critical realization is actually homotopic to a critical realization.

\subsection*{Quasi-critical realizations}\label{sec almost critical}

Recall that a regular realization $\phi:X\to\Sigma$ of a trivalent graph $X$ is {\em critical} if the angles between the images of any two half-edges incident to the same vertex are equal to $\frac{2\pi}3$. We will say that a regular realization is {\em quasi-critical} if those angles are bounded from below by $\frac{1}{2}\pi$ and that the realization is {\em $\ell_0$-long} if $\ell(\phi(e))>\ell_0$ for all edges $e$ of $X$. Recall that we measure all angles in $[0,\pi]$.

Our next goal here is to prove that every sufficiently long quasi-critical realization is homotopic to a critical realization. This will follow easily from the following technical result:

\begin{prop}\label{lemma new}
For any trivalent graph $X$ and constant $C\geq0$ there exist constants $\ell_0, D>0$ such that given an $\ell_0$-long quasi-critical realization $\phi: X\to \Sigma$, a trivalent graph $Y$, and realization $\psi: Y\to\Sigma$ satisfying
\begin{enumerate}
\item $\ell(\psi)\leq\ell(\phi)+C$, and
\item there is a homotopy equivalence $\sigma: X\to Y$ with $\phi$ and $\sigma\circ\psi$ homotopic, 
\end{enumerate}
then there exists a homeomorphism $F: Y\to X$ mapping each edge with constant speed, such that $\sigma\circ F$ is homotopic to the identity and such that the geodesic homotopy $X\times [0,1]\to \sigma$,  $(x, t)\mapsto \phi_t(x)$ joining $\phi_0(\cdot) = \phi\circ F(\cdot)$ to $\phi_1(\cdot) = \psi(\cdot)$ has tracks bounded by $D$. 
\end{prop}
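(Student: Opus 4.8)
The plan is to show that a long quasi-critical realization is "rigid enough" that any realization of comparable length which is homotopic to it (after transport along $\sigma$) must in fact be a combinatorially and geometrically close copy of it. The starting point is the convexity machinery from Lemma \ref{lem component critical realization} and the fact, from Proposition \ref{prop cover}, that $\CG^X$ is a manifold locally modeled on a product of hyperbolic planes in which geodesic homotopies are geodesics and $\ell_\Sigma$ is (strictly) convex along them. The first step is to control $\phi$ itself: I would argue that an $\ell_0$-long quasi-critical realization, for $\ell_0$ large, lies within a bounded distance in $\CG^X$ of the unique critical realization $\phi^{\mathrm{crit}}$ in its component. The mechanism is that at each vertex the first-variation formula gives a descent direction whose size is controlled from below by how far the angles are from $\frac{2\pi}{3}$ — and being quasi-critical bounds those angles away from $\pi$ and (via the edge lengths being $>\ell_0$) forces the configuration to be genuinely two-dimensional at each vertex; strict convexity plus a compactness/continuity argument on the (finitely many) combinatorial types then yields a uniform bound $D_0$ on $d_{\CG^X}(\phi,\phi^{\mathrm{crit}})$, and in particular a uniform lower bound $\ell_{\min}(X)$ on $\ell_\Sigma(\phi)$.

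Next I would reconstruct the homeomorphism $F$. Hypothesis (2) says $\phi$ and $\sigma\circ\psi$ are freely homotopic as maps $X\to\Sigma$, so $\psi$ represents — up to the homotopy equivalence $\sigma$ — the same $\pi_1$-data as $\phi$; combined with (1), $\ell_\Sigma(\psi)\le\ell_\Sigma(\phi)+C$, which by the previous paragraph is at most $\ell_{\min}(X)+D_0+C$, a bound depending only on $X$ and $C$. This caps the total length of $\psi$, hence the length of each edge of $Y$, hence (since $Y$ is trivalent and a homotopy equivalence to the fixed graph $X$) the number of edges of $Y$ and therefore the number of combinatorial types of $Y$ that can occur is finite. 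For each such type one checks that the homotopy equivalence $\sigma$, being a map from $X$ to a graph with bounded combinatorics realizing an isomorphism on $\pi_1$, is homotopic to a homeomorphism; I take $F:Y\to X$ to be (the inverse of, parametrized to have constant speed on each edge) such a homeomorphism, so that $\sigma\circ F\simeq\mathrm{id}_X$ and $\phi\circ F\simeq\psi$ — the last homotopy existing because $\phi\circ F\simeq\phi\circ\sigma^{-1}\simeq(\sigma\circ\psi)\circ\sigma^{-1}\simeq\psi$ up to the identifications just made.

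Finally I would bound the tracks of the geodesic homotopy from $\phi_0=\phi\circ F$ to $\phi_1=\psi$. Both endpoints now live in the same connected component $\CG^\phi\cong(\BH^2)^{\ver(X)}$ (Lemma \ref{lem component critical realization}(4)), and the track through a point $x\in X$ is exactly the projection to $\Sigma$ of the ambient geodesic in that product space, so its length is at most $d_{\CG^X}(\phi_0,\phi_1)$. Both $\phi_0$ and $\phi_1$ have length at most $\ell_{\min}(X)+D_0+C$, and $\ell_\Sigma$ is a proper, strictly convex exhaustion function on $\CG^\phi$ with minimum $\phi^{\mathrm{crit}}$; properness gives that the sublevel set $\{\ell_\Sigma\le\ell_{\min}(X)+D_0+C\}$ has bounded diameter $D_1=D_1(X,C)$, so $d_{\CG^X}(\phi_0,\phi_1)\le D_1$ and we may take $D=D_1$. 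Vertices follow the ambient geodesic directly; for an interior point of an edge one notes the track is an arc joining the two vertex-tracks along a family of geodesic segments whose endpoints move by at most $D_1$, so it too has length $\le D_1$ up to a universal additive constant absorbed into $D$.

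The main obstacle I expect is the first step — making quantitative the claim that a long quasi-critical realization is uniformly close to the critical one. The subtlety is that "quasi-critical" only bounds angles below by $\frac{\pi}{2}$, not above, so a priori a vertex configuration could be nearly degenerate (all three half-edges nearly colinear is excluded, but two half-edges could be nearly antipodal with the third orthogonal); one must check that the first-variation gradient is still uniformly bounded below in norm on the (compact, after fixing combinatorial type) space of such configurations once edges are long enough that the relevant hyperbolic triangles are fat, and then run a Łojasiewicz-type or direct convexity argument to convert a lower bound on the gradient into an upper bound on the distance to the minimizer. Keeping this bound uniform over the finitely many combinatorial types of $X$ (already fixed here) and, more importantly, over all the $Y$'s that arise is where the length bound from hypothesis (1) does the essential work.
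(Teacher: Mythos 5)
There are two genuine gaps in the argument, and either one alone is fatal.

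\textbf{The length bound is wrong.} You assert that $\ell_\Sigma(\phi)$ is bounded above by a constant $\ell_{\min}(X)+D_0$ depending only on $X$, and then use this to cap $\ell_\Sigma(\psi)$ and to claim that the sublevel set $\{\ell_\Sigma\le\ell_{\min}(X)+D_0+C\}$ has diameter $D_1(X,C)$. But ``$\ell_0$-long'' bounds edge lengths from \emph{below}, not above; the total length $\ell_\Sigma(\phi)$ is unbounded over $\ell_0$-long quasi-critical realizations. (Indeed your own first paragraph only produces a \emph{lower} bound $\ell_{\min}(X)$, which does not give the upper bound you use a sentence later.) Even granting the first step's conclusion $d_{\CG^X}(\phi,\phi^{\mathrm{crit}})\le D_0$, that only yields $\ell_\Sigma(\phi)\le\ell_\Sigma(\phi^{\mathrm{crit}})+\bconst D_0$, and $\ell_\Sigma(\phi^{\mathrm{crit}})$ grows without bound as one varies the component $\CG^\phi$. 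The proposition is applied in the paper precisely in the regime $\ell_\Sigma(\phi)\to\infty$, so any argument that begins by capping $\ell_\Sigma(\phi)$ cannot succeed. The downstream claim that the sublevel set has diameter independent of the component likewise does not follow from properness and convexity alone; one would at least need a Hessian lower bound uniform over all components, and you never establish one.

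\textbf{The homeomorphism $F$ is taken for granted.} In the middle paragraph you assert that, once the combinatorial type of $Y$ is fixed, the homotopy equivalence $\sigma:X\to Y$ ``is homotopic to a homeomorphism,'' and you then take $F$ to be its inverse. This is false for graph homotopy equivalences in general: the theta graph and the dumbbell graph are both trivalent, have the same Euler characteristic and the same number of edges and vertices, and are homotopy equivalent, yet they are not homeomorphic. The existence of $F$ is part of what the proposition asserts; it cannot be assumed, and it has to come out of the geometry of $\phi$ and $\psi$. This is exactly where the paper does real work: it lifts to $\BH^2$, observes that $T_X=\widetilde\phi(\widetilde X)$ is a quasiconvex tree, projects $\widetilde\psi(\widetilde Y)$ onto $T_X$, and uses the length budget in hypothesis (1) to show that this projection has vanishing fold, which forces local injectivity of the induced map $\widetilde Y\to T_X$ and hence (together with the homotopy equivalence) that it descends to a homeomorphism. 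Your proposal has no counterpart of this step, and without it neither the existence of $F$ nor the track bound can be deduced.
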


Since the proof of Proposition \ref{lemma new} is pretty long, let us first demonstrate that it might be useful: 

\begin{kor}\label{lem critical near kind of critical}
Let $X$ be a trivalent graph. There are positive constants $\ell_0$ and $D$ such that every component of $\CG^X$ which contains an $\ell_0$-long quasi-critical realization $\phi: X\to \Sigma$, also contains a critical realization $\psi$, which moreover is unique and homotopic to $\phi$ by a homotopy whose tracks have length bounded by $D$.
\end{kor}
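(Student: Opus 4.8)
The corollary is meant to be an easy consequence of Proposition \ref{lemma new}, so the main job is to feed the right data into that proposition. First I would fix the graph $X$ and apply Proposition \ref{lemma new} with the constant $C=0$, obtaining constants $\ell_0,D>0$. These are the constants claimed in the corollary (possibly after enlarging $\ell_0$ slightly, see below). Now suppose $\CG^\phi$ is a component of $\CG^X$ containing an $\ell_0$-long quasi-critical realization $\phi:X\to\Sigma$. By Lemma \ref{lem component critical realization}, \emph{if} the component contains a critical realization then that realization is unique and is the global minimum of the length function on the component; so the entire content to be proved is existence, plus the bound on the tracks of the connecting homotopy.

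For existence, I would argue as follows. The length function $\ell_\Sigma$ restricted to $\CG^\phi$ is proper (Arzel\`a--Ascoli) and convex, hence attains its minimum at some realization $\psi\in\CG^\phi$. The minimizer $\psi$ is automatically \emph{regular}: if $\psi$ were constant on some edge we could not have a minimum because the graph is connected and trivalent and one can always shorten — more carefully, one sees directly from the first variation formula that at a minimum the vertex-sum condition $\sum_{\vec e\in\hal_v}\psi(\vec e)=0$ must hold at every vertex, which already forces regularity (a zero sum of three unit vectors cannot have a missing term). Hence $\psi$ is a regular realization satisfying the critical-point condition, so by Lemma \ref{lem component critical realization}(1) it is a critical realization. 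This is the $\psi$ we want; uniqueness is Lemma \ref{lem component critical realization}(1).

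It remains to bound the tracks of a homotopy from $\phi$ to $\psi$, and this is where Proposition \ref{lemma new} enters. Take $Y=X$, take $\sigma=\mathrm{id}_X$, and take the minimizer $\psi$ as the realization of $Y$. Hypothesis (1) of the proposition holds since $\ell(\psi)\le\ell(\phi)$ (as $\psi$ is the minimum in the component), so in particular $\ell(\psi)\le\ell(\phi)+C$ with $C=0$; hypothesis (2) holds because $\phi$ and $\psi$ lie in the same component $\CG^\phi$, i.e. are connected by a geodesic homotopy, and $\sigma=\mathrm{id}$. The proposition then produces a homeomorphism $F:X\to X$ mapping each edge with constant speed, homotopic to the identity, such that the geodesic homotopy joining $\phi\circ F$ to $\psi$ has tracks bounded by $D$. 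Composing this with the (short, controlled) geodesic homotopy from $\phi$ to $\phi\circ F$ — whose tracks are bounded in terms of $X$ alone since $F$ is a constant-speed self-homeomorphism homotopic to the identity, and one may absorb this bound into $D$ by enlarging $D$ at the outset — yields a homotopy from $\phi$ to $\psi$ with tracks bounded by (the enlarged) $D$. I would remark that, since $\psi$ is the global minimum of a strictly convex function on the component (Lemma \ref{lem component critical realization}(2)--(3)) and $F$ is homotopic to the identity, $\phi\circ F$ lies in the same component $\CG^\phi$, so everything is consistent.

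The only genuine subtlety — and the step I would be most careful about — is the passage from "$\phi$ is homotopic to $\psi$ through $\CG^\phi$" to producing the \emph{reparametrizing homeomorphism} $F$ and controlling the extra homotopy $\phi\rightsquigarrow\phi\circ F$: a priori the geodesic homotopy in $\CG^\phi$ joining $\phi$ to $\psi$ need not be the one whose tracks Proposition \ref{lemma new} controls (the proposition reparametrizes the target graph), so one must check that the composite homotopy really does have tracks bounded purely in terms of $X$, independently of $\phi$ and $\psi$. This is exactly the kind of bookkeeping the proof of Proposition \ref{lemma new} is designed to handle, so I expect it to go through cleanly, but it is the place where one cannot be sloppy about which homotopy is being bounded.
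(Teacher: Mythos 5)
There is a genuine gap, and it sits exactly where you flagged the argument as being easy. You assert that the minimizer $\psi$ of the length function on $\CG^\phi$ is ``automatically regular,'' reading off the vertex-sum condition $\sum_{\vec e\in\hal_v}\psi(\vec e)=0$ from the first variation formula. But the first variation formula, as the paper explicitly states, is only available on the open set of \emph{regular} realizations; at a realization that collapses an edge, $\ell_\Sigma$ fails to be differentiable (the distance function is not smooth at zero), and the unit vectors $\psi(\vec e)$ in your vertex-sum are not even defined on degenerate edges. So the argument presupposes precisely what it is trying to prove. In fact the minimizer of $\ell_\Sigma$ on a component of $\CG^X$ can perfectly well be non-regular (for instance if $\phi_*$ kills some subgroup, parts of the graph can collapse), and the only reason this does not happen in the present setting is the hypothesis that $\phi$ is $\ell_0$-long and quasi-critical. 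The paper's proof uses Proposition~\ref{lemma new} \emph{first}, to get a homotopy from $\phi$ (or $\phi\circ F$) to $\psi$ with tracks bounded by $D$, and only then deduces regularity: after increasing $\ell_0$ so that $\ell_0>2D$, each edge of $\phi$ has length $>2D$, so an edge of $\psi$, whose endpoints moved by at most $D$ each, cannot have collapsed. Regularity is the \emph{output} of the bounded-tracks estimate, not a free ingredient.

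Your handling of the reparametrizing homeomorphism $F$ has a related problem. You propose absorbing the homotopy $\phi\rightsquigarrow\phi\circ F$ into $D$ ``since $F$ is a constant-speed self-homeomorphism homotopic to the identity,'' but the tracks of that homotopy are not controlled by $X$ alone --- if $F$ permuted edges non-trivially, the distance in $\CG^X$ between $\phi$ and $\phi\circ F$ would scale with the edge lengths of $\phi$, hence with $\ell(\phi)$. The paper instead observes that for a trivalent graph, a homeomorphism mapping edges at constant speed and homotopic to the identity is actually \emph{equal} to the identity, so $\phi\circ F=\phi$ and there is nothing to absorb. That observation is what makes the bound uniform, and it is the piece your plan is missing at this second point as well.
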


\begin{proof} 
Let $\ell_0$ and $D$ be given by Lemma \ref{lemma new} for $C=0$ and if needed increase $\ell_0$ so that it is larger than $2D$. Let $\phi: X\to \Sigma$ be an $\ell_0$-long quasi-critical realization. Let $\psi: X\to \Sigma$ be the minimizer for the length function in the component $\CG^{\phi}$---it exists because the length function is proper. We claim that $\psi$ is critical. In the light of Lemma \ref{lem component critical realization} it suffices to prove that $\psi$ is regular.

Being a minimizer we have that $\ell_\Sigma(\psi)\le\ell_\Sigma(\phi)$. We thus get from Proposition \ref{lemma new} that there exists a homeomorphism $F: X\to X$ homotopic to the identity, mapping edges with constant speed, and such that the geodesic homotopy from $\phi\circ F$ to $\psi$ has tracks bounded by $D$. Now, since $X$ is trivalent we get that the homeomorphism $F$, being homotopic to the identity and mapping edges with constant speed, is actually equal to the identity. What we thus have is a homotopy from $\phi$ and $\psi$ with tracks bounded by $D$. Now, since each edge of $\phi(X)$ has at least length $\ell_0>2D$ and since the tracks of the homotopy are bounded by $D$, we get that $\psi$ is regular and hence critical by Lemma \ref{lem component critical realization}, as we needed to prove. Lemma \ref{lem component critical realization} also yields that $\psi$ is unique.
\end{proof}

Let us next prove the proposition:

\begin{proof}[Proof of Proposition \ref{lemma new}]
Let $\phi: X\to\Sigma$ be a quasi-critical realization and assume it is $\ell_0$-long for an $\ell_0$ large enough to satisfy some conditions we will give in the course of the proof. Let $H: X\times[0,1]\to \Sigma$ be the homotopy between $\phi$ and $\psi\circ\sigma$. 

To be able to consistently choose lifts of $\phi$, $\psi$, and $\sigma$ to the universal covers $\widetilde X, \widetilde Y$ and $\BH^2$ of $X,Y$ and $\Sigma$, let us start by picking base points. Fixing $x_0\in X$, consider the base points $\sigma(x_0)\in Y$ and $\phi(x_0)\in\Sigma$, and pick lifts $\widetilde{x}_0\in\widetilde X$, $\widetilde{\sigma(x_0)}\in\widetilde Y$ and $\widetilde{\phi(x_0)}\in\mathbb{H}^2$ of each one of those endpoints. Having chosen those base points we have uniquely determined lifts $\widetilde{\phi}: \widetilde{X}\to\mathbb{H}^2$ and $\widetilde{\sigma}: \widetilde{X}\to\widetilde{Y}$ of $\phi$ and $\sigma$ satisfying $\widetilde{\phi}(\widetilde x_0) = \widetilde{\phi(x_0)}$ and $\widetilde{\sigma}(\widetilde{x}_0) = \widetilde{\sigma(x_0)}$.  We can also lift to $\BH^2$, starting at $\widetilde\phi(\widetilde{x_0})$, the path in $\Sigma$ given by $t\mapsto H(x_0, t)$. The endpoint of this path is a lift of $\psi\circ\sigma(x_0)$ and we take the lift $\widetilde{\psi}: \widetilde{Y}\to\mathbb{H}^2$ which maps $\widetilde{\sigma}(\widetilde x_0)$ to this point. All those lifts are related by the following equivariance property:
\begin{equation}\label{eq equivariance}
\widetilde{\phi}(g(x)) = \phi_*(g)(\widetilde{\phi}(x)) \text{ and } (\widetilde{\psi}\circ\widetilde{\sigma})(g(x)) = \phi_*(g)\left((\widetilde{\psi}\circ\widetilde{\sigma})(x)\right)
\end{equation} 
for all $x\in \widetilde{X}$ and for all $g\in \pi_1(X, x_0)$ where $\phi_*: \pi_1(X, x_0)\to \pi_1(\Sigma, \phi(x_0))$ is the homomorphism induced by $\phi$ and the chosen base points. 

Note that, since $\phi$ is almost critical and $\ell_0$-long we get, as long as $\ell_0$ is large enough, that the lift $\widetilde\phi:\widetilde X\to\BH^2$ is an injective quasi-isometric embedding. For ease of notation, denote by $T_X=\widetilde{\phi}(\widetilde{X})\subset\mathbb{H}^2$ the image of $\widetilde\phi$ and let us rephrase what we just said: if $\ell_0$ is sufficiently large then $T_X$ is a quasiconvex tree, with quasiconvexity constants only depending on a lower bound for $\ell_0$. We denote by $\hat\pi:\BH^2\to T_X$ a nearest point retraction which is equivariant under $\phi_*(\pi_1(X, x_0))$. We would like to define 
$$\pi: \widetilde{Y}\to T_X$$ 
as $\hat\pi\circ\widetilde\psi$, but since $\hat\pi$ is definitively not continuous, we have to be slightly careful. We define $\pi$ as follows: first set $\pi(v) = \hat\pi(\tilde{\psi}(v))$ for all vertices $v\in \ver(\widetilde Y)$ of $\widetilde Y$, and then extend (at constant speed) over the edges of $Y$.  Note that \eqref{eq equivariance}, together with the equivariance of $\pi$, implies that $\pi\circ\widetilde{\sigma}: \tilde{X}\to T_X$ satisfies 
$$(\pi\circ\widetilde{\sigma})(g(x)) = \phi_*(g)(\pi\circ\widetilde{\sigma}(x))$$
for all $x\in \widetilde{X}$ and $g\in \pi_1(X, x_0)$.

\begin{claim}\label{claim no idea what name}
As long as $\ell_0$ is large enough, we have that $\pi$ maps each vertex of $\widetilde Y$ within $\bconst$ of one and only one vertex of $T_X$.
\end{claim}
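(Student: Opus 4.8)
The plan is to show that $\pi$ moves the vertices of $\widetilde Y$ a bounded amount and that moreover it is ``coarsely vertex-to-vertex'' in the sense that the nearest-point projection of each $\widetilde\psi(v)$ to the quasiconvex tree $T_X$ lands, up to a bounded error, on a single vertex of $T_X$ rather than in the interior of an edge. The first part is essentially a consequence of the coarse geometry we have already set up: since $\widetilde\phi$ is a quasi-isometric embedding onto the quasiconvex tree $T_X$ (with constants depending only on the lower bound $\ell_0$), and since $\widetilde\psi\circ\widetilde\sigma$ is equivariantly homotopic to $\widetilde\phi$ via a lift of $H$, the two maps $\widetilde\phi$ and $\widetilde\psi\circ\widetilde\sigma$ are at bounded distance from one another --- the bound being controlled by $C$ and the homotopy $H$. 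Composing with the nearest-point retraction $\hat\pi$, which is coarsely Lipschitz on $T_X$, then shows $\pi\circ\widetilde\sigma$ is at bounded distance from $\widetilde\phi$ (reparametrized), and since $\widetilde\sigma$ is a quasi-isometry this feeds back to a bound on how far $\pi$ moves vertices of $\widetilde Y$ relative to the tree structure.

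The heart of the claim is the ``one and only one vertex'' assertion, and here is where the $\ell_0$-long and quasi-critical hypotheses on $\phi$ get used in an essential way. The point is that the tree $T_X$ has edges of length at least roughly $\ell_0$ (each edge of $X$ has length at least $\ell_0$, and quasi-criticality prevents back-tracking, so these lengths survive, up to a controlled multiplicative error, in $T_X$), while at each vertex of $T_X$ the three incident edges leave at angles $\ge \frac\pi2$ --- in particular they are \emph{uniformly} $\ge \frac\pi2$, so there is a definite ``fellow-travelling defect'': a geodesic in $\BH^2$ passing near a vertex of $T_X$ cannot stay within bounded distance of two distinct edges of $T_X$ for long. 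Quantitatively, one shows: there is a constant $\bconst$ such that any point of $\BH^2$ whose nearest-point projection to $T_X$ is within $\bconst$ of an edge-midpoint region cannot simultaneously be within $\bconst$ of two different edges; equivalently, the ``$\bconst$-neighborhoods of distinct edges of $T_X$'' overlap only in $\bconst$-neighborhoods of the vertices. Combining this with the already-established bound on how far $\pi$ moves a vertex $v$ of $\widetilde Y$, one concludes: either $\pi(v)$ is within $\bconst$ of a vertex of $T_X$, in which case we must check this vertex is unique, or $\pi(v)$ lies in the interior of a single edge at distance $\gg\bconst$ from both its endpoints --- but the latter possibility is what we want to rule out (or rather, absorb: if $\pi(v)$ is in the deep interior of an edge, we can use the constant-speed-over-edges definition of $\pi$ together with the fact that the \emph{edges} of $Y$ adjacent to $v$ also have controlled length to push $v$'s projection to a vertex; alternatively, one simply enlarges $\bconst$ so that ``within $\bconst$ of a vertex'' becomes automatic once one rules out the genuinely ambiguous case). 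Uniqueness is then immediate from the edge-length lower bound: two distinct vertices of $T_X$ are at distance $\ge (\text{const})\cdot\ell_0$, which for $\ell_0$ large exceeds $2\bconst$.

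Concretely the steps, in order, are: (i) bound $d(\widetilde\phi, \widetilde\psi\circ\widetilde\sigma)$ using the homotopy $H$ and hypothesis (1) on lengths; (ii) deduce, via coarse Lipschitzness of $\hat\pi$ on the quasiconvex set $T_X$ and the quasi-isometry $\widetilde\sigma$, a bound $\bconst$ on how far $\pi$ displaces each vertex of $\widetilde Y$ within the metric of $T_X$; (iii) establish the geometric separation lemma for $T_X$ --- that $\bconst$-neighborhoods of distinct edges of $T_X$ meet only near vertices --- using quasi-criticality (angles $\ge\frac\pi2$) and hyperbolicity; (iv) combine (ii) and (iii), enlarging $\bconst$ and $\ell_0$ as needed, to place each $\pi(v)$ within $\bconst$ of a vertex of $T_X$; (v) conclude uniqueness of that vertex from the edge-length lower bound $\ell_0 > 2\bconst$. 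The main obstacle is step (iii): making precise and quantitative the statement that uniformly-obtuse trivalent geodesic trees in $\BH^2$ with long edges do not have their edge-neighborhoods overlapping away from vertices. This is elementary hyperbolic trigonometry --- it amounts to the standard thin-triangles / contracting-projection estimate applied at a single vertex of $T_X$ where the two relevant edges leave at angle $\ge\frac\pi2$ --- but it is the one place where the specific hypotheses on $\phi$ (as opposed to just ``$\widetilde\phi$ is a QI embedding'') are genuinely needed, so it requires care to extract the right constant depending only on a lower bound for $\ell_0$.
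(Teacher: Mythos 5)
Your step (i) contains a genuine gap, and it is the crux of the problem. You claim that $\widetilde\phi$ and $\widetilde\psi\circ\widetilde\sigma$ are ``at bounded distance from one another --- the bound being controlled by $C$ and the homotopy $H$.'' But the homotopy $H$ is just \emph{some} homotopy provided by hypothesis (2); nothing is assumed about its tracks. Indeed, obtaining a bound on the tracks of a geodesic homotopy between $\phi\circ F$ and $\psi$, \emph{uniformly} in $\ell_0$ and $C$, is precisely what Proposition \ref{lemma new} is trying to prove. The fact that two $\phi_*$-equivariant maps on a space with cocompact quotient are at finite distance is true, but this distance depends on the particular maps (or on $H$); you cannot promote it to a constant depending only on $\ell_0$ and $C$ without a separate argument. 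Once step (i) is in doubt, step (ii) and the rest of the chain lose their foundation. A further soft spot appears in step (iv), where you dismiss the ``deep interior of an edge'' case by invoking that ``the edges of $Y$ adjacent to $v$ also have controlled length''; but only the \emph{total} length of $\psi$ is controlled (hypothesis (1)), and individual edges of $Y$ may be arbitrarily short, so this needs justification.

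The paper takes a different and more robust route that never tries to bound $d(\widetilde\phi,\widetilde\psi\circ\widetilde\sigma)$ directly. It introduces the relative length $d_{T_X\text{rel}B}$ (distances in $T_X$ measured outside radius-$1$ balls around vertices) and the quantity $\fold(\pi)$ measuring the overlap in $T_X$ of projections of adjacent edges of $\widetilde Y$. Two elementary estimates --- the coarse contraction property \eqref{eq quasi-convex horrible 1} of the nearest-point retraction to the quasiconvex tree $T_X$, and the comparison $d_{T\text{rel}B}\le d_{\BH^2}$ from \eqref{eq relative} --- combine with the covering inequality \eqref{eq another one} to yield $\ell(\psi)\ge -\bconst+\ell(\phi)+\fold(\pi)$. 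Feeding in the length-budget hypothesis $\ell(\psi)\le\ell(\phi)+C$ then forces $\fold(\pi)\le\bconst$, and it is \emph{this} bound, together with the trivalence of $\widetilde Y$ and the long-edge/obtuse-angle structure of $T_X$, that places each $\pi(v)$ near a vertex of $T_X$: if $\pi(v)$ were in the deep interior of an edge, two of the three edges of $\widetilde Y$ at $v$ would have to project in the same direction, and a bounded $\fold$ rules out this configuration (uniqueness then follows from $\ell_0\gg\bconst$, as in your step (v)). The point is that the length budget is converted into information about $\fold$ rather than into a pointwise bound on the maps; this is both what makes the argument uniform and what makes the ``near a vertex'' conclusion come out for free. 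Your step (iii) --- the obtuse-angle separation estimate for $T_X$ --- is genuinely part of the needed hyperbolic geometry (it underlies \eqref{eq quasi-convex horrible 1} and \eqref{eq quasi-convex horrible 2}), but without the $\fold$ mechanism the overall argument does not close.
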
 

Starting with the proof of the claim, note that there is a constant $A\ge 0$ depending only on the quasiconvexity constants for $T_X$ with 
\begin{equation}\label{eq quasi-convex horrible 1}
d_{\BH^2}(x,y)\ge -A+d_{\mathbb{H}^2}(\hat\pi(x),\hat\pi(y))
\end{equation}
for all $x,y\in\BH^2$. In fact, there exists constants $K, A'>0$ which once again depend only on the quasiconvexity constant of $T_X$ such that whenever $d_{\mathbb{H}^2}(\hat\pi(x),\hat\pi(y))>K$ we have the better bound
\begin{equation}\label{eq quasi-convex horrible 2}
d_{\BH^2}(x,y)\ge -A'+d_{\BH^2}(x,\hat\pi(x))+d_{\BH^2}(\hat\pi(x),\hat\pi(y))+d_{\BH^2}(\hat\pi(y),y).
\end{equation}

For $a, b\in T_X$ let $d_{T_X}(a,b)$ denote the interior distance in $T_X$ between them. We will care about a modified version of this distance: let $B$ be the collection of balls in $\BH^2$ of radius 1 centered at each vertex of $T_X$ and define $d_{T_X\text{rel}B}(a, b)$ to be the length of the part of the path in $T_X$ between them that lies outside of $B$. For all $\ell_0$ large enough (depending only on hyperbolicity of $\BH^2$) we have from the choice of the radius\footnote{In fact, any radius greater than $\log\sqrt{2}\approx 0.3466$ would work for large $\ell_0$.} that 
\begin{equation}\label{eq relative}
d_{T\text{rel}B}(a, b) \leq d_{\BH^2}(a, b)
\end{equation}
for all $a, b \in T_X$.

Given two edges $e, e'$ of $\widetilde Y$ adjacent to the same vertex, we define 
$$\fold(e, e') = \ell_{T\text{rel}B}(\pi(e)\cap\pi(e'))$$ 
and let then 
$$\fold(\pi)=\max\fold(e,e')$$ 
where the maximum is taken over all such pairs of edges. The reason to introduce this quantity is that we have
\begin{equation}\label{eq another one}
\sum_{[v, v']\in\edg(Y)} d_{T_X\text{rel}B}(\pi(v), \pi(v'))
 \geq  \ell(\phi) + \fold(\pi) - A''
 \end{equation}
for some positive constant $A''$ depending only on that number of vertices (and the fact that we chose the balls $B$ to have radius $1$)---here we have identified each edge of $Y$ with one of its representatives in $\widetilde Y$.

Now, using \eqref{eq quasi-convex horrible 1}, \eqref{eq relative} and \eqref{eq another one} we get that 
\begin{align*}
\ell(\psi) 
&= \sum_{[v, v']\in\edg(Y)}d_{\BH^2}(\tilde\psi(v), \tilde\psi(v')) \\
& \geq -\bconst + \sum_{[v, v']\in\edg(Y)} d_{\BH^2}(\pi(v), \pi(v'))\\
& \geq -\bconst + \sum_{[v, v']\in\edg(Y)} d_{T_X\text{rel}B}(\pi(v), \pi(v'))\\
& \geq -\bconst + \ell(\phi) + \fold(\pi)
\end{align*} 
where each $\bconst$ is a positive constant (but not necessarily the same) depending on the quasiconvexity constant and combinatorics of $Y$.

Since $\ell(\psi)\leq\ell(\phi)+C$ it follows that $\fold(\pi) \leq\bconst$, where the constant depends on $C$ and on the lower bound for $\ell_0$ and on combinatorics of $Y$. In particular, for each $v\in \ver$ there is a vertex $v'\in T_X$ such that $d_{T_X\text{rel}B}(\pi(v), v') \leq \bconst$ and we can choose $\ell_0$ large such that $v'$ is unique. We have proved Claim \ref{claim no idea what name}.\qed
\medskip

Armed with Claim \ref{claim no idea what name} we can define a map $\widetilde{F}: \widetilde Y\to T_X$ by first mapping $v\in \ver(\widetilde Y)$, to the unique vertex in $T_X$ closest to $\pi(v)$, extending it so that it maps each $e\in\edg(\widetilde Y)$ with constant speed. Note that equivariance of $\pi$ implies that $\widetilde{F}$ satisfies $\widetilde F(\sigma_*(g)(y)) = g(\widetilde F(y))$ for all $y\in\widetilde Y$ and $g\in \pi_1(X, x_0)$, where $\sigma_*: \pi_1(X, x_0)\to \pi_1(Y, \sigma(x_0))$ is the isomorphism induced by $\sigma$ and the chosen base points. It follows that $\widetilde F$ descends to a map $F: Y\to X$. 

\begin{claim}\label{claim waiting for dpd}
$F:Y\to X$ is a homeomorphism with $F\circ\sigma$ homotopic to the identity.
\end{claim}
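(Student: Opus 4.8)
The plan is to verify the three separate assertions---that $F$ is continuous and surjective, that it is injective, and that $F\circ\sigma\simeq\mathrm{id}$---using that $\widetilde F$ is a $\pi_1(X,x_0)$-equivariant map between the two trees $\widetilde Y$ and $T_X\cong\widetilde X$, both of which are simplicial trees on which $\pi_1(X)$ acts freely and cocompactly with quotients $Y$ and $X$ respectively. The homotopy-equivalence $\sigma:X\to Y$ together with the relation $\widetilde F\circ\widetilde\sigma(g y)=g\cdot\widetilde F\widetilde\sigma(y)$ already tells us that $\widetilde F$ realizes (up to equivariant homotopy) the homotopy inverse of $\widetilde\sigma$, so the content of the claim is really that, under the long/quasi-critical hypotheses, $\widetilde F$ is an actual \emph{simplicial isomorphism} of trees and not merely a homotopy equivalence. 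The main obstacle is injectivity on vertices: a priori two vertices $v\neq v'$ of $\widetilde Y$ could be carried by $\widetilde F$ to the same vertex of $T_X$.

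First I would prove that $\widetilde F$ is injective on the vertex set of $\widetilde Y$. Suppose $\widetilde F(v)=\widetilde F(v')$ for adjacent vertices $v,v'$. Then $\pi(v)$ and $\pi(v')$ lie within $\bconst$ of the \emph{same} vertex of $T_X$, so $d_{T_X\mathrm{rel}B}(\pi(v),\pi(v'))\le\bconst$; but running the length estimate from the proof of Claim \ref{claim no idea what name} in the other direction, the total sum $\sum_{[v,v']\in\edg(Y)}d_{T_X\mathrm{rel}B}(\pi(v),\pi(v'))$ is bounded \emph{below} by $\ell(\phi)-\bconst$, hence bounded below by $|\edg(Y)|\cdot(\ell_0-\bconst)$ up to the combinatorics of $Y$; since every individual term is at most $\ell(\psi)+\bconst\le\ell(\phi)+C+\bconst$, collapsing one edge-term to size $\bconst$ forces the remaining terms to absorb an amount of length that exceeds what they can carry once $\ell_0$ is chosen large enough. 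More precisely: the number of edges of $Y$ equals the number of edges of $X$ (both are trivalent with homotopy-equivalent, hence equal, rank), and if some $d_{T_X\mathrm{rel}B}(\pi(v),\pi(v'))$ were $\le\bconst$ then $\fold(\pi)$ plus the telescoping error would have to make up the deficit, contradicting $\fold(\pi)\le\bconst$ for $\ell_0$ large. This rules out collapsing of edges; combined with the fact that $\pi$, hence $\widetilde F$, cannot fold two edges at a vertex together (again because $\fold(\pi)\le\bconst<\ell_0$), we conclude $\widetilde F$ is locally injective at every vertex and sends distinct adjacent vertices to distinct adjacent vertices. A locally injective equivariant simplicial map between trees with no vertex of valence $\le 1$ is injective, so $\widetilde F$ is injective.

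Next, surjectivity: $\widetilde F$ is a locally injective simplicial map from the tree $\widetilde Y$ to the tree $T_X$, both of whose vertices have valence exactly $3$ (trivalence passes to the universal cover). A locally injective map of trees which is $3$-to-$3$ at each vertex is automatically a covering map onto its image; since $T_X$ is connected and the map is equivariant with respect to the same group acting cocompactly on both sides, the image is all of $T_X$. (Equivalently, an injective simplicial map between trivalent trees that is surjective on the link of each vertex is an isomorphism.) Hence $\widetilde F:\widetilde Y\to T_X$ is a simplicial isomorphism, and therefore $F:Y\to X$ is a homeomorphism mapping each edge with constant speed onto an edge. Finally, for $F\circ\sigma\simeq\mathrm{id}_X$: lift to $\widetilde X$ and note $\widetilde F\circ\widetilde\sigma:\widetilde X\to T_X=\widetilde\phi(\widetilde X)$ is $\pi_1(X)$-equivariant and sends each vertex $\widetilde v$ to the vertex of $T_X$ nearest $\hat\pi(\widetilde\psi(\widetilde\sigma(\widetilde v)))=\hat\pi(\widetilde{\phi}(\widetilde v))=\widetilde\phi(\widetilde v)$ (using $\psi\circ\sigma\simeq\phi$ and that $\widetilde\phi(\widetilde v)\in T_X$ is its own nearest point), i.e. $\widetilde F\circ\widetilde\sigma$ agrees with $\widetilde\phi$ on vertices hence, being simplicial with constant speed, equals $\widetilde\phi$ after identifying $T_X$ with $\widetilde X$ via $\widetilde\phi$. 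Since $\widetilde\phi$ corresponds under that identification to $\mathrm{id}_{\widetilde X}$, and the whole picture is equivariant, $F\circ\sigma$ is homotopic to $\mathrm{id}_X$ through the straight-line (geodesic) homotopy in $T_X$, with uniformly bounded tracks. This also sets up the last part of Proposition \ref{lemma new}, since the tracks of the geodesic homotopy from $\phi\circ F$ to $\psi$ are then controlled by $d_{\BH^2}(\widetilde\phi\widetilde F(v),\widetilde\psi(v))\le d_{\BH^2}(\widetilde\psi(v),\hat\pi\widetilde\psi(v))+\bconst\le\bconst$ for each vertex, using \eqref{eq quasi-convex horrible 2} and the bound on $\fold(\pi)$.
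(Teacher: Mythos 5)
Your strategy matches the paper's: bound $\fold(\widetilde F)$ via $\fold(\pi)\le\bconst$ and the fact that $\widetilde F$ is $\bconst$-close to $\pi$, use simpliciality to force $\fold(\widetilde F)=0$ once $\ell_0$ is large, combine local injectivity with the homotopy-equivalence property of $F$ to get a homeomorphism, and obtain $F\circ\sigma\simeq\mathrm{id}$ from equivariance. However, the argument you add to rule out edge collapse does not work. The inequality from the proof of Claim~\ref{claim no idea what name} controls only the \emph{total} sum, sandwiching
$$\ell(\phi)-\bconst\le\sum_{[v,v']\in\edg(Y)}d_{T_X\mathrm{rel}B}(\pi(v),\pi(v'))\le\ell(\psi)+\bconst\le\ell(\phi)+C+\bconst,$$
and there is no per-edge lower bound available. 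If one summand were $O(1)$, the remaining $E-1$ summands could perfectly well absorb the difference (nothing prevents a single edge of $\psi$ from having length comparable to $\ell(\phi)$), and the per-term upper bound $\le\ell(\psi)+\bconst$ you quote is far too weak to produce a contradiction. So the step ``collapsing one edge-term forces the remaining terms to exceed what they can carry'' fails. The paper does not attempt such an estimate; once $\fold(\widetilde F)=0$, it concludes directly from the facts that a locally injective equivariant map of trees is an embedding and that $F$ is a homotopy inverse to $\sigma$ (so cocompactness and trivalence force surjectivity).

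The step establishing $F\circ\sigma\simeq\mathrm{id}_X$ also contains an unjustified equality: $\hat\pi(\widetilde\psi\widetilde\sigma(\widetilde v))=\hat\pi(\widetilde\phi(\widetilde v))$ does not hold, since $\widetilde\psi\circ\widetilde\sigma$ and $\widetilde\phi$ are homotopic but not pointwise equal. The desired conclusion $\widetilde F\widetilde\sigma(\widetilde v)=\widetilde\phi(\widetilde v)$ would follow from knowing that $\widetilde\psi\widetilde\sigma(\widetilde v)$ is $\bconst$-close to $\widetilde\phi(\widetilde v)$ when $\ell_0$ is large, but establishing precisely that closeness is the content of the final part of the proof of Proposition~\ref{lemma new}, so invoking it here is circular. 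The paper's route is shorter and avoids the issue: $\widetilde F\circ\widetilde\sigma$ is $\pi_1(X,x_0)$-equivariant, hence $F\circ\sigma$ induces the identity on $\pi_1(X)$, hence is homotopic to $\mathrm{id}_X$ since $X$ is aspherical.
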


The fact that $F\circ\sigma$ is homotopic to the identity follows directly from the fact $\widetilde F\circ \widetilde\sigma: \widetilde X\to \widetilde X$ satisifies that $(\widetilde F\circ\sigma)(gx)=g((\widetilde F\circ\widetilde\sigma)(x))$ for $g\in\pi_1(X,x_0)$. What we really have to prove is that $F$ is a homeomorphism. To see that this is the case note that since the maps $\widetilde F,\pi:\widetilde Y\to\widetilde X$ send points within $\bconst$ of each other, and since in our way to proving Claim \ref{claim no idea what name} we proved that $\fold(\pi)<\bconst$, we get that $\fold(\widetilde F)<\bconst$. On the other hand, since $\widetilde F$ maps vertices to vertices and has constant speed on the edges we get that if $\fold(\widetilde F)\neq 0$ then $\fold(F)$ has to be at least as large as the shortest edge of $X$. This implies, using the fact that $\fold(\widetilde F)$ is uniformly bounded, that as long as $\ell_0$ is large enough then $\fold(\widetilde F)=0$. This implies in turn that $\widetilde F$ is locally injective. Local injectivity of $\widetilde F$, together with the fact that $F$, being a homotopy inverse to $\sigma$, is a homotopy equivalence, imply that $F$ is a homeomorphism. We have proved Claim \ref{claim waiting for dpd}.\qed
\medskip

What is left is to bound the tracks of the geodesic homotopy from $\phi\circ F$ to $\psi$---the argument is similar to the proof of the existence of $F$. First, note that we know that for every vertex $v\in \widetilde Y$, the nearest point projection $\hat\pi$ maps $\widetilde\psi(v)$ close to the vertex $\tilde\phi(\widetilde F(v))$ of $T_X$. It follows that if we choose $\ell_0$ large enough we can also assume that  $d_{\mathbb{H}^2}(\hat\pi(\tilde\psi(v)),\hat\pi(\tilde\psi((v')))) > K$ for any two distinct vertices $v,v'\in\ver(\widetilde Y)$. This means that \eqref{eq quasi-convex horrible 2} applies, and hence that we have 
\begin{align*}
\ell(\psi(Y))
&= \sum_{[v, v']\in\edg}d_{\BH^2}(\tilde\psi(v), \tilde\psi(v')) \\
& \geq -\bconst + \sum_{[v, v']\in\edg} d_{\mathbb{H}^2}(\hat\pi(\tilde\psi(v)),\hat\pi(\tilde\psi(v')))+ \\
& \hspace{1cm}+ 3\sum_{v\in\ver} d_{\BH^2}(\tilde\psi(v),\hat\pi(\tilde\psi(v)))\\
& \geq -\bconst + \ell(\phi(X)) +  3\sum_{v\in\ver} d_{\mathbb{H}^2}(\tilde\psi(v), \hat\pi(\tilde\psi(v)))
\end{align*}
where as before each $\bconst$ is a positive constant, the first inequality follows by \eqref{eq quasi-convex horrible 2}, the second by \eqref{eq relative} and \eqref{eq another one}. Hence, again by assumption (1) in the lemma, we must have that $d_{\mathbb{H}^2}(\tilde\psi(v), \hat\pi(\tilde\psi(v)))<\bconst$ for all $v\in\ver(\widetilde Y)$. Since $\widetilde{F}(v)$ and $\hat\pi(\tilde\psi(v))$ are near each other, we have that $d_{\mathbb{H}^2}(\tilde\psi(v), \widetilde F(v))<\bconst$ for all $v\in\ver(\widetilde Y)$. Convexity of the length function implies that the geodesic homotopy between $\tilde\psi$ and $\tilde \phi\circ\widetilde F$ has then tracks bounded by the same constant $\bconst$, as we had claimed.
\end{proof}

\subsection*{$\epsilon$-critical realizations}

Corollary \ref{lem critical near kind of critical} asserts that whenever we have a realization with very long edges and which looks vaguely critical, then it is not far from a critical realization. Our next goal is to give a more precise description of the situation in the case that our realization looks even more as if it were critical. To be precise, suppose that $X$ is a trivalent graph and $\epsilon$ positive and small. A regular realization $\phi:X\to\Sigma$ is {\em $\epsilon$-critical} if 
$$\angle(\phi(\vec e_1),\phi(\vec e_2))\in\left[\frac{2\pi}3-\epsilon,\frac{2\pi}3+\epsilon\right]$$
for every two half-edges $\vec e_1,\vec e_2\in\hal_v$ incident to any vertex $v\in\ver(X)$.

The goal of this section is to determine how the set $\CG^X_{\epsilon-\crit}$ of $\epsilon$-critical realizations of $X$ looks. To do that we need a bit of of notation and preparatory work. First, under a {\em tripod} we will understand a tuple $\tau=(p,\{v,v',v''\})$ where $p\in\BH^2$ is a point and where $\{v,v',v''\}\subset T^1_p\BH^2$ is an unordered collection consisting of three distinct unit vectors based at $p$. A tripod $\tau=(p,\{v,v',v''\})$ is {\em critical} if the three vectors $v,v',v,''$ have pairwise angle equal to $\frac{2\pi}3$. Any tripod $\tau$ determines 3 geodesic rays, that is 3 points in $\D_\infty\BH^2$, that is an ideal triangle $T_\tau\subset\BH^2$ (see left hand side of Figure \ref{fig:triangle}). Conversely, every ideal triangle $T\subset\BH^2$ determines a tripod $\tau_p^T=(p,\{v,v',v''\})$ for every $p\in\BH^2$: the vectors $v,v',v,''$ are the unit vectors based at $p$ and pointing to the (ideal) vertices of the ideal triangle $T$. Note that $T$ consists of exactly those points $p$ such that the vectors in the tripod $\tau_p^T$ have pairwise (always unoriented) angles adding up to $2\pi$. For a given $\epsilon$ we let
$$T(\epsilon)=\left\{p\in T\ \middle\vert\begin{array}{l}\text{the angles between the vectors}\\ \text{in }\tau_p^T\text{ lie in }[\frac{2\pi}3-\epsilon,\frac{2\pi}3+\epsilon]\end{array}\right\}$$
be the set of points such that the angles of the tripod $\tau_p^T$ are within $\epsilon$ of $\frac{2\pi}3$. The set $T(\epsilon)$ is, at least for $\epsilon\in(0,\frac\pi 6)$, a hexagon centered at the center of the triangle---the sides of the hexagon are not geodesic but rather sub-segments of curves of constant curvature, see Figure \ref{fig:triangle}. However, if we scale everything by $\epsilon^{-1}$ then $\epsilon^{-1}\cdot T(\epsilon)$ converges to the regular euclidean hexagon of side-length $\frac 23$. This means in particular that 
\begin{equation}\label{eq hexagon}
\diam(T(\epsilon))\sim\frac 43\epsilon\text{ and }\vol(T(\epsilon))\sim \frac 2{\sqrt 3}\epsilon^2\text{ as }\epsilon\to 0.
\end{equation}

\begin{figure}[h]
\centering
\includegraphics[width=0.7\textwidth]{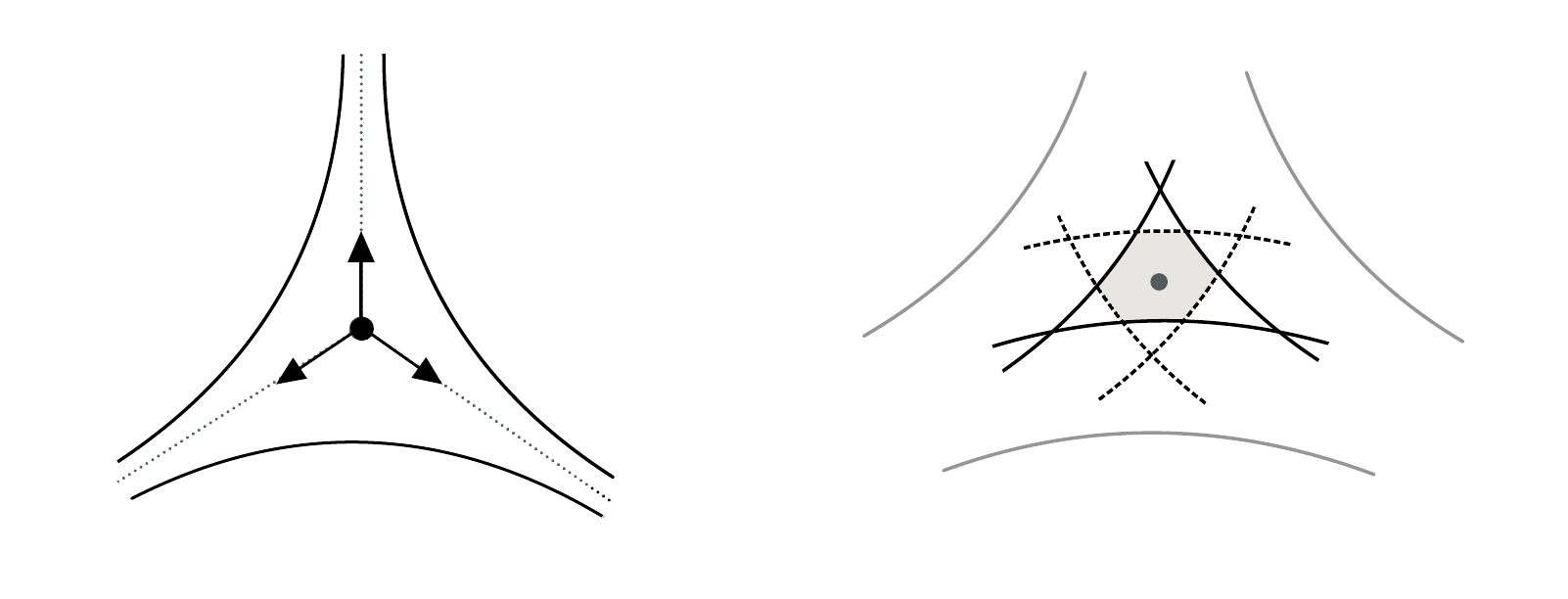}
\caption{Left: A critical tripod and (part of) the ideal triangle it determines. Right: The hexagon $T(\epsilon)$. Each of the lines making up the boundary of the hexagon corresponds to points $p$ having an angle between vectors in $\tau^T_P$ of measure $\frac{2\pi}{3}-\epsilon$ (dotted lines) and $\frac{2\pi}{3}+\epsilon$ (solid lines).} 
\label{fig:triangle}
\end{figure}

\begin{bem}
To get the shape of $T(\epsilon)$ one can use elementary synthetic hyperbolic geometry. But one can also resort to hyperbolic trigonometry. For example, evoking formula 2.2.2 (iv) in the back of Buser's book \cite{Buser} one gets 
$$T(\epsilon)=\left\{p\in T\,\middle\vert\, \cot\left(\frac{\pi}{3}-\frac{\epsilon}{2}\right)\leq \sinh(d(p, \partial T))\leq\cot\left(\frac{\pi}{3}+\frac{\epsilon}{2}\right)\right\}.$$
\end{bem}

\smallskip

Suppose now that $\phi:X\to\Sigma$ is a critical realization of a trivalent graph $X$ in our closed hyperbolic surface and recall that by Lemma \ref{lem component critical realization} we have that the connected component $\CG^\phi$ of geodesic realizations homotopic to $\phi$ is isometric to a product of hyperbolic planes $\BH^2\times\dots\times\BH^2$, one factor for each vertex of $X$. To each vertex $x$ of $X$ we can associate first the tripod $\tau_x^\phi=(\phi(x),\{\phi(\vec e)\}_{\vec e\in\hal_x(X)})$ consisting of the image under $\phi$ of the vertex $x$ and of the unit vectors $\phi(\vec e)$ tangent to the images of the half-edges incident to $x$, and then $T_{\tau^\phi_x}$ the ideal triangle associated to the critical tripod $\tau^\phi_x$. The assumption that $\phi$ is critical implies that the point $\phi(x)$ is the center of $T_{\tau^\phi_x}$ for every vertex $x\in\ver$. We let 
$$\CT^\phi=\prod_{x\in\ver(X)}T_{\tau_x^\phi}\subset\prod_{x\in\ver(X)}\BH^2=\CG^\phi$$
be the subset of $\CG^\phi$ consisting of geodesic realizations homotopic to $\phi$ via a homotopy that maps each vertex $x$ within $T_{\tau_x^\phi}$. Accordingly we set
$$\CT^\phi(\epsilon)=\prod_{x\in\ver(X)}T_{\tau_x^\phi}(\epsilon)\subset\CT^\phi$$
and we note that
$$\vol(\CT^\phi(\epsilon))\sim \left(\frac 2{\sqrt 3}\epsilon^2\right)^{\vert\ver(X)\vert}.$$

The reason why we care about all these sets is that, asymptotically, $\CT^\phi(\epsilon)$ agrees with the set $\CG^\phi_{\epsilon-\crit}$ of $\epsilon$-critical realizations $\psi$ homotopic to $\phi$. Indeed we get from Corollary \ref{lem critical near kind of critical} that the geodesic homotopy from $\psi$ to $\phi$ has lengths bounded by a uniform constant. This means that for all $e\in\edg(X)$ the geodesic segments $\phi(e)$ and $\psi(e)$ are very long but have endpoints relatively close to each other. This means that, when looked from (each) one of its vertices $x$, the segment $\psi(e)$ is very close to being asymptotic to $\phi(e)$. In particular, the angle between $\psi(e)$ and the geodesic ray starting at $\psi(x)$ and asymptotic to the ray starting in $\phi(x)$ in direction $\phi(e)$ is smaller than $\delta=\delta(\min_{e\in\edg(X)}\ell(\psi(e)))$ for some positive function with $\delta(t)\to 0$ as $t\to\infty$. With the rather clumsy notation we find ourselves working with we have that $\delta$ bounds from above the angles between corresponding edges of the tripods $\tau^\psi_x$ and $\tau^{T^\phi_x}_{\psi(x)}$. Since by assumption the angles of $\tau^\psi_x$ belong to $[\frac{2\pi}3-\epsilon,\frac{2\pi}3+\epsilon]$ we get that the angles of $\tau^{T^\phi_x}_{\psi(x)}$ belong to $[\frac{2\pi}3-\epsilon-2\delta,\frac{2\pi}3+\epsilon+2\delta]$, meaning that $\psi(x)\in T_{\tau_x^\phi}(\epsilon+2\delta)$.

To summarize, what we have proved is that for all $\epsilon_1>\epsilon$ there is $\ell_1$ such that if $\phi:X\to\Sigma$ is critical with $\ell(\phi(e))\ge\ell_1$ for all $e\in\edg$, then 
$$\CG^\phi_{\epsilon-\crit}\subset\CT^\phi(\epsilon_1).$$
The same argument proves that for all $\epsilon_2<\epsilon$ there is $\ell_2$ such that if $\phi:X\to\Sigma$ is critical with $\ell(\phi(e))\ge\ell_2$ for all $e\in\edg$, then 
$$\CT^\phi(\epsilon_2)\subset\CG^\phi_{\epsilon-\crit}$$
We record these facts:

\begin{lem}\label{lem pain in the ass}
There are functions $h:(0,\frac\pi 6)\to\BR_{>0}$ and $\delta:(0,\epsilon_0)\to\BR_{>0}$
with $\lim_{t\to 0}h(t)=0$ and $\lim_{t\to\infty}\delta(t)=0$ such that for every critical realizations $\phi:X\to\Sigma$ we have
$$\CT^\phi(\epsilon-r(\epsilon,\phi))\subset\CG^\phi_{\epsilon-\crit}\subset\CT^\phi(\epsilon+r(\epsilon,\phi))$$
where $r(\epsilon,\phi)=h(\epsilon)+\delta(\min_{e\in\edg(X)}\ell(\phi(e)))$.\qed
\end{lem}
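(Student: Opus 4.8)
The plan is to establish Lemma \ref{lem pain in the ass} by formalizing the heuristic estimate sketched in the paragraphs immediately preceding it, namely that an $\epsilon$-critical realization homotopic to a critical realization $\phi$ must move each vertex into a slightly dilated copy of the hexagon $T_{\tau_x^\phi}(\epsilon)$, and conversely. The two defining functions will arise as follows: $h$ packages the error coming from Corollary \ref{lem critical near kind of critical} (the tracks of the geodesic homotopy from an $\epsilon$-critical realization to the critical one are bounded by a constant depending only on $X$, hence on $\epsilon$ only through the quasi-criticality hypothesis being automatically satisfied once $\epsilon<\frac\pi2$), while $\delta$ packages the rate at which a long geodesic segment, viewed from one endpoint, becomes asymptotic to a fixed ray emanating from a nearby point.

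First I would fix $\epsilon_0=\frac\pi2$ (or any value $<\frac\pi2$ ensuring that $\epsilon$-critical implies quasi-critical) and apply Corollary \ref{lem critical near kind of critical}: there are constants $\ell_0,D$ so that any $\ell_0$-long $\epsilon$-critical realization $\psi$ is homotopic to the unique critical realization $\phi$ in its component by a homotopy whose tracks are bounded by $D$. Now work in the universal cover on a single vertex $x$. The geodesic segment $\widetilde\psi(e)$ has both endpoints within $D$ of the corresponding endpoints of $\widetilde\phi(e)$, and both segments have length $\ge\ell_1-2D$ for any prescribed $\ell_1$ once $\min_e\ell(\phi(e))$ is large. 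A standard $\delta$-hyperbolicity (or explicit hyperbolic-trigonometry) estimate then gives: the unit vector at $\widetilde\psi(x)$ pointing along $\widetilde\psi(e)$ and the unit vector at $\widetilde\psi(x)$ pointing to the ideal endpoint of the ray issuing from $\widetilde\phi(x)$ in direction $\phi(e)$ differ in angle by at most $\delta(\min_e\ell(\phi(e)))$ with $\delta(t)\to0$. Summing the two half-edge errors at $x$, the angles of the tripod $\tau^{T_x^\phi}_{\psi(x)}$ lie within $2\delta$ of the angles of $\tau^\psi_x$, which by hypothesis lie in $[\frac{2\pi}3-\epsilon,\frac{2\pi}3+\epsilon]$; hence $\psi(x)\in T_{\tau_x^\phi}(\epsilon+2\delta)$. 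This is the inclusion $\CG^\phi_{\epsilon-\crit}\subset\CT^\phi(\epsilon+r)$, with the $h(\epsilon)$ contribution absorbing any dependence of $D$ on $\epsilon$. The reverse inclusion $\CT^\phi(\epsilon-r)\subset\CG^\phi_{\epsilon-\crit}$ runs symmetrically: a realization with every vertex in $T_{\tau_x^\phi}(\epsilon-2\delta)$ has, at each vertex, tripod angles within $2\delta$ of those of $\tau^{T_x^\phi}_{\psi(x)}\in[\frac{2\pi}3-(\epsilon-2\delta),\frac{2\pi}3+(\epsilon-2\delta)]$, hence in $[\frac{2\pi}3-\epsilon,\frac{2\pi}3+\epsilon]$, so $\psi$ is $\epsilon$-critical — one must only check that such a $\psi$ is genuinely long-edged, which again follows since its vertices are within a bounded distance (the hexagon diameter, plus $D$) of those of $\phi$.

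The step I expect to be the main obstacle is making the asymptotic comparison between $\tau^\psi_x$ and $\tau^{T_x^\phi}_{\psi(x)}$ fully rigorous and uniform: one needs that the angle defect $\delta$ depends only on a lower bound for the edge lengths and not on the particular geometry of the configuration, and one must be careful that the nearest-point/asymptotic-ray comparison behaves well simultaneously at both endpoints of each edge (so that the track bound $D$ from Corollary \ref{lem critical near kind of critical} can be converted into an angular bound). This is routine hyperbolic geometry — e.g. using the fact that for a geodesic ray $[p,\xi)$ and a point $q$ with $d(p,q)\le D$, the angle at $q$ between $[q,\xi)$ and the direction of a point $z\in[p,\xi)$ at distance $t$ from $p$ is $O(e^{-t})$ with the implied constant depending only on $D$ — but it requires some care to state cleanly. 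Once this angular estimate is in hand, the definitions of $h$ and $\delta$ and the two claimed inclusions follow by the bookkeeping above, and recording them as in the lemma statement completes the proof. \qed
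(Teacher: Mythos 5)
Your proposal is correct and follows essentially the same route as the paper, which proves Lemma \ref{lem pain in the ass} via the informal discussion immediately preceding its statement: invoke Corollary \ref{lem critical near kind of critical} to bound the tracks of the geodesic homotopy from an $\epsilon$-critical $\psi$ to the critical $\phi$ by a uniform constant, then use the fact that long geodesic segments with nearby endpoints are nearly asymptotic to compare the tripod $\tau^\psi_x$ with $\tau^{T^\phi_x}_{\psi(x)}$ up to an angle error $2\delta(\min_e\ell(\phi(e)))$, yielding the two inclusions. One small slip: the quasi-criticality threshold for $\epsilon$-critical realizations is $\epsilon<\frac\pi6$ (so that $\frac{2\pi}{3}-\epsilon\ge\frac\pi2$), not $\frac\pi2$; this matches the domain $(0,\frac\pi6)$ of $h$ in the lemma statement.
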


To conclude this section we collect what we will actually need about the set of $\epsilon$-critical realizations in a single statement.

\begin{prop}\label{prop sum up section 5}
Let $X$ be a trivalent graph. There are $\ell>0$ and functions $\rho_0,\rho_1:\BR_{>0}\to\BR_{>0}$ with $\lim_{t\to 0}\rho_0(t)=0$ and $\lim_{t\to\infty}\rho_1(t)=0$ and such that the following holds for all $\epsilon\in[0,\frac\pi 6]$:

If $\phi\in\CG_{\epsilon-\crit}(X)$ is an $\ell$-long $\epsilon$-critical realization then
\begin{equation}\label{eq prop sum up section 5 1}
\left\vert\vol(\CG_{\epsilon-\crit}^\phi)-\left(\frac 2{\sqrt 3}\epsilon^2\right)^V\right\vert\le \rho_0(\epsilon)\cdot\rho_1\left(\min_{e\in\edg}\ell(\phi(e))\right)\cdot\epsilon^{2V},
\end{equation}
Moreover $\CG^\phi_{\epsilon-\crit}$ contains a unique critical realization $\psi$ and we have 
\begin{equation}\label{eq prop sum up section 5 2}
\max_{e\in\edg}\vert\ell_\Sigma(\phi(e))-\ell_\Sigma(\psi(e))\vert\le\rho_0(\epsilon)+\rho_1\left(\min_{e\in\edg}\ell(\phi(e))\right)\cdot\epsilon
\end{equation}
Here again $V$ is the number of vertices of $X$ and $\edg=\edg(X)$ is its set of edges.
\end{prop}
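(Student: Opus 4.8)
The plan is to assemble Proposition~\ref{prop sum up section 5} directly from the two facts already established in the section: Lemma~\ref{lem pain in the ass}, which sandwiches the set $\CG^\phi_{\epsilon-\crit}$ between two product sets $\CT^\phi(\epsilon\mp r(\epsilon,\phi))$, and Corollary~\ref{lem critical near kind of critical}, which provides the unique critical realization $\psi$ in the component $\CG^\phi$ and bounds the tracks of the homotopy from $\phi$ to $\psi$ by a uniform constant $D$. First I would choose $\ell>0$ large enough that Corollary~\ref{lem critical near kind of critical} applies (any $\ell$-long $\epsilon$-critical realization is in particular $\ell_0$-long and quasi-critical once $\ell\ge\ell_0$, using $\epsilon\le\frac\pi6$ so that the angles exceed $\frac\pi2$), and large enough that Lemma~\ref{lem pain in the ass} applies; here $r(\epsilon,\phi)=h(\epsilon)+\delta(\min_e\ell(\phi(e)))$ with $h\to0$ at $0$ and $\delta\to0$ at $\infty$.

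For the volume estimate \eqref{eq prop sum up section 5 1}: taking volumes in the sandwich of Lemma~\ref{lem pain in the ass} gives
$$\vol(\CT^\phi(\epsilon-r))\le\vol(\CG^\phi_{\epsilon-\crit})\le\vol(\CT^\phi(\epsilon+r)).$$
Since $\CT^\phi(s)=\prod_{x\in\ver}T_{\tau_x^\phi}(s)$ is a product of $V$ copies of hexagon-like regions, $\vol(\CT^\phi(s))=\prod_x\vol(T_{\tau_x^\phi}(s))$, and by the scaling asymptotics \eqref{eq hexagon} each factor equals $\frac2{\sqrt3}s^2(1+o(1))$ as $s\to0$, uniformly (the convergence of $\epsilon^{-1}T(\epsilon)$ to the regular hexagon is a statement about a single triangle up to isometry, hence uniform over all critical tripods). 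Therefore $\vol(\CT^\phi(\epsilon\pm r))=\bigl(\frac2{\sqrt3}\bigr)^V(\epsilon\pm r)^{2V}(1+o(1))$. Expanding $(\epsilon\pm r)^{2V}-\epsilon^{2V}$ and using $r\le h(\epsilon)+\delta(\min_e\ell(\phi(e)))\le(\text{something}\to0)\cdot\epsilon$ when $h(\epsilon)=o(\epsilon)$—wait, $h(\epsilon)$ need not be $o(\epsilon)$, so I must be more careful: I would instead bound $\bigl|(\epsilon\pm r)^{2V}-\epsilon^{2V}\bigr|\le 2V\,r\,(\epsilon+r)^{2V-1}$ and then, since $r\le h(\epsilon)+\delta_{\min}$ with both $h$ and $\delta_{\min}$ going to zero, factor the whole error as $\rho_0(\epsilon)\cdot\rho_1(\min_e\ell(\phi(e)))\cdot\epsilon^{2V}$ for suitable $\rho_0\to0$, $\rho_1\to0$. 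Actually the cleanest route is to observe $r=h(\epsilon)+\delta(\min_e\ell(\phi(e)))$, split the perturbation into the $h$-part and the $\delta$-part, absorb the $h$-part into $\rho_0(\epsilon)$ and the $\delta$-part into $\rho_1(\min_e\ell)$, and note that the product form $\rho_0\cdot\rho_1$ (rather than a sum) can be arranged by bounding each small factor by $\max$ of the two and taking square roots, or simply by redefining $\rho_0,\rho_1$ so their product dominates the sum $h+\delta$; this repackaging is routine. The main subtlety—and the step I expect to require the most care—is precisely this repackaging of the two-variable error $r(\epsilon,\phi)$ into the demanded product shape $\rho_0(\epsilon)\rho_1(\cdot)\epsilon^{2V}$ while keeping the functions independent of $\phi$ except through $\min_e\ell(\phi(e))$; one must check that all implied constants in the hexagon asymptotics and in Lemma~\ref{lem pain in the ass} depend only on $X$.

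For the length estimate \eqref{eq prop sum up section 5 2}: by Lemma~\ref{lem pain in the ass} the critical realization $\psi$ (which lies in $\CG^\phi_{\epsilon-\crit}$ trivially, as its angles are exactly $\frac{2\pi}3$) satisfies $\psi(x)\in T_{\tau_x^\phi}(\epsilon+r)$ for every vertex $x$, hence $d_{\BH^2}(\phi(x),\psi(x))\le\diam(T_{\tau_x^\phi}(\epsilon+r))\sim\frac43(\epsilon+r)$ by \eqref{eq hexagon}. Each edge $e=[x,x']$ has $\phi(e)$ and $\psi(e)$ the geodesic segments between $\phi(x),\phi(x')$ and $\psi(x),\psi(x')$ respectively, so by the triangle inequality $|\ell_\Sigma(\phi(e))-\ell_\Sigma(\psi(e))|\le d(\phi(x),\psi(x))+d(\phi(x'),\psi(x'))\le 2\cdot\frac43(\epsilon+r)(1+o(1))$, which is at most $\rho_0(\epsilon)+\rho_1(\min_e\ell(\phi(e)))\cdot\epsilon$ after the same repackaging (splitting $\epsilon+r=\epsilon+h(\epsilon)+\delta(\min_e\ell)$, sending $O(\epsilon+h(\epsilon))$ into $\rho_0(\epsilon)$ and $O(\delta(\min_e\ell))$ into $\rho_1(\min_e\ell)\cdot\epsilon$—here one uses that we may assume $\epsilon$ bounded, say $\epsilon\le\frac\pi6$, to convert an additive $O(\delta)$ into $\rho_1(\min_e\ell)\cdot\epsilon$ is not automatic, so more honestly one puts the whole $O(\delta)$ term into a redefined $\rho_1$ with the $\epsilon$ factor and absorbs the remainder; again routine bookkeeping). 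Uniqueness of $\psi$ is exactly part~(1) of Lemma~\ref{lem component critical realization} together with Corollary~\ref{lem critical near kind of critical}. I would present the argument by first fixing $\ell$, then handling \eqref{eq prop sum up section 5 1} and \eqref{eq prop sum up section 5 2} in parallel since both reduce to controlling $\diam$ and $\vol$ of $T_{\tau_x^\phi}(\epsilon+r)$ via \eqref{eq hexagon}, and finally defining $\rho_0,\rho_1$ at the end to simultaneously satisfy both bounds.
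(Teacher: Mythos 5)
Your proposal is exactly the paper's proof: fix $\ell\ge\ell_0$ from Corollary \ref{lem critical near kind of critical} (which supplies the unique critical $\psi\in\CG^\phi$), apply the sandwich of Lemma \ref{lem pain in the ass}, and read off \eqref{eq prop sum up section 5 1}--\eqref{eq prop sum up section 5 2} from the hexagon asymptotics \eqref{eq hexagon}. Two remarks. One is a slip you inherit from the paper's own phrasing: Lemma \ref{lem pain in the ass} is stated for a \emph{critical} base realization, so the sandwich has to be placed around $\psi$, i.e.\ $\CT^\psi(\epsilon\mp r)\subset\CG^\phi_{\epsilon-\crit}\subset\CT^\psi(\epsilon+r)$ with $r=h(\epsilon)+\delta(\min_e\ell(\psi(e)))$, and the edge lengths of $\psi$ then have to be compared to those of $\phi$ via the track bound $D$ of Corollary \ref{lem critical near kind of critical}. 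Writing $T_{\tau_x^\phi}$ with $\phi$ merely $\epsilon$-critical does not parse, since $T_{\tau^\phi_x}$ is the ideal triangle determined by a critical tripod.

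The more substantive point concerns the step you yourself flagged as ``the one requiring the most care'' and then declared routine: repackaging the additive error $r$ into the product $\rho_0(\epsilon)\cdot\rho_1(m)\cdot\epsilon^{2V}$, where $m=\min_e\ell(\phi(e))$. This is not routine and I do not believe it follows from the sandwich alone. Your own estimate $|(\epsilon+r)^{2V}-\epsilon^{2V}|\lesssim r(\epsilon+r)^{2V-1}$, and more drastically the fact that $\vol(\CT^\psi(\epsilon+r))$ is of order $r^{2V}$ once $r\gg\epsilon$, show that the relative error coming out of the sandwich is at least of order $\min\bigl(r/\epsilon,(r/\epsilon)^{2V}\bigr)$. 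Fix any $m\ge\ell$ with $\delta(m)>0$ and let $\epsilon\to 0$: this quantity blows up, while $\rho_0(\epsilon)\rho_1(m)\to 0$ for any admissible pair $(\rho_0,\rho_1)$. So a bound of the stated product form cannot be extracted from Lemma \ref{lem pain in the ass} over the whole range $\epsilon\in(0,\frac\pi6]$; what the sandwich genuinely gives is a weaker statement --- for every $\eta>0$ and every sufficiently small $\epsilon$ there is a threshold $\ell(\epsilon,\eta)$ such that the relative error is below $\eta$ once $m\ge\ell(\epsilon,\eta)$ --- which is all that the proof of Proposition \ref{prop critical in box} actually uses. The paper's one-line proof is silent on this, so your plan is faithful to the source, but your instinct that the repackaging is the crux was correct and should not be waved away.
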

\begin{proof}
Suppose to begin with that $\ell$ is at least as large as the $\ell_0$ in Corollary \ref{lem critical near kind of critical}. We thus get that $\CG^\phi$ contains a unique critical realization $\psi$. Now we get from Lemma \ref{lem pain in the ass} that
\begin{equation}\label{eq fucking putin}
\CT^\phi(\epsilon-r(\epsilon,\phi))\subset\CG^\phi_{\epsilon-\crit}\subset\CT^\phi(\epsilon+r(\epsilon,\phi))
\end{equation}
where $r(\epsilon,\phi)=h(\epsilon)+\delta(\min_{e\in\edg}\ell(\phi(e)))$ for some functions $h:(0,\frac\pi 6)\to\BR_{>0}\text{ with }$ and $\delta:(0,\epsilon_0)\to\BR_{>0}$ with $\lim_{t\to\infty}h(t)=0$ and $\lim_{t\to\infty}\delta(t)=0$. The bounds \eqref{eq prop sum up section 5 1} and \eqref{eq prop sum up section 5 2} follow now from \eqref{eq fucking putin} and \eqref{eq hexagon}.
\end{proof}

\section{Variations of Delsarte's theorem}\label{sec delsarte}
In this section we establish the asymptotics of the number of realizations $\phi:X\to\Sigma$ satisfying some length condition, mapping each vertex to some pre-determined point in $\Sigma$, and so that the tuple of directions of images of half-edges belongs to some given open set of such tuples---see Theorem \ref{sat delsarte graph} for details. Other than some book-keeping, what is needed is a slight extension of Delsarte's lattice counting theorem \cite{Delsarte}, namely Theorem \ref{thm Delsarte's theorem for sectors} below. This result is known to experts, and much more sophisticated versions than what we need here can be found in the literature---see for example \cite[Th\'eor\`eme 4.1.1]{Roblin}. However, for the sake of completeness, we will explain how to derive Theorem \ref{thm Delsarte's theorem for sectors} from the fact that the geodesic flow is mixing. We refer to the very nice books \cite{Bekka-Meyer,Roblin} for the needed background.
\medskip

We start by recalling a few well-known facts about dynamics of geodesic flows on hyperbolic surfaces. First recall that we can identify $T^1\BH^2$ with $\PSL_2\BR$, and $T^1\Sigma=T^1(\Gamma\bs\BH^2)$ with $\Gamma\bs\PSL_2\BR$. More specifically, when using the identification
$$\PSL_2\BR\to T^1\BH^2\ \ g\mapsto \frac d{dt}g(e^ti)\vert_{t=0}$$
where the computation happens in the upper half-plane model and $i\in\BH^2$ is the imaginary unit, then the geodesic and horocyclic flows amount to right multiplication by the matrices
$$\rho_t=\left(\begin{array}{cc}e^{\frac t2} & 0 \\ 0 & e^{-\frac t2}\end{array}\right)\text{ and }h_s=\left(\begin{array}{cc}1 & s \\ 0 & 1\end{array}\right)$$
respectively.
Note that $K=\SO_2\subset\PSL_2\BR$, the stabilizer of $i$ under the action $\PSL_2\BR\actson\BH^2$, is a maximal compact subgroup---$K$ corresponds under the identification $\PSL_2\BR\simeq T^1\BH^2$ to the unit tangent space $T^1_i\BH^2$ of the base point $i\in\BH^2$. The KAN decomposition (basically the output of the Gramm-Schmidt process from linear algebra) asserts that every element in $g\in\PSL_2\BR$ can be written in a unique way as
\begin{equation}\label{eq kan coordinates}
g=k\rho_th_s
\end{equation}
for $k\in K$ and $t,s\in\BR$. In those coordinates, the Haar measure of $\PSL_2\BR$ is given by
$$\int f(g)\,\vol_{T^1\BH^2}(g)=\iiint f(k\rho_th_s)\cdot e^{-t}\, dkdtds$$
where $dk$ stands for integrating over the arc length in $K\simeq T^1_i\BH^2$, normalized to have total measure $2\pi$.

The basic fact we will need is that the geodesic flow $\rho_t$ on $T^1\Sigma$ is mixing \cite[III.2.3]{Bekka-Meyer}, or rather one of its direct consequences, namely the fact that for each $x_0\in\BH^2$ the projection to $\Sigma$ of the sphere $S(x_0,L)$ centered at $x_0$ and with radius $L$ gets equidistributed in $\Sigma$ when $L\to\infty$ \cite[III.3.3]{Bekka-Meyer}. To be more precise, note that we might assume without loss of generality that $x_0=i$, meaning that we are identifying $T_{x_0}^1\BH^2=K$. The fact that the geodesic flow is mixing implies then that for every non-degenerate interval $I\subset K$ the spherical arcs $I\rho_t$ equidistribute in $\Gamma\bs\PSL_2\BR$ in the sense that for any continuous function $f\in C^0(T^1\Sigma)=C^0(\Gamma\bs\PSL_2\BR)$ we have
\begin{equation}\label{eq equidistribution}
\lim_{L\to\infty}\frac 1{\ell(I)}\int_{I}f(\Gamma k\rho_L)\,dk=\frac 1{\vol_{T^1\Sigma}(T^1\Sigma)}\int_{T^1\Sigma} f(\Gamma g)\cdot dg
\end{equation}
where $\ell(I)$ is the arc length of $I$ and where the second integral is with respect to $\vol_{T^1\Sigma}$. Anyways, we care about equidistribution of the spheres for the following reason. If $f\in C^0(\Sigma)$ is a continuous function and if we let $\tilde f$ be the composition of $f$ with the cover $\BH^2\to\Sigma$ then \eqref{eq equidistribution} implies, with $I=K$, that
$$\lim_{L\to\infty}\frac 1{\vol_{\BH^2}(B(x_0,L))}\cdot\int_{B(x_0,L)}\tilde f(x)\, dx=\frac 1{\vol(\Sigma)}\int_\Sigma f(x)dx$$
Applying this to a non-negative function $f_{y_0,\epsilon}\in C^0(\Sigma)$ with total integral $1$ and supported by $B(y_0,\epsilon)$ we get that
$$\int_{B(x_0,L)}\tilde f_{y_0,\epsilon}(x)\, dx\sim\frac{\vol_{\BH^2}(B(x_0,L))}{\vol(\Sigma)}$$
Now, from the properties of $f_{y_0,\epsilon}$ we get that 
\begin{equation}\label{eq delsart bounds}
\int_{B(x,L-\epsilon)}\tilde f_{y_0,\epsilon}(\cdot)d\vol_{\BH^2}\le\vert\Gamma\cdot y_0\cap B(x_0,L)\vert\le\int_{B(x,L+\epsilon)}\tilde f_{y_0,\epsilon}(\cdot)d\vol_{\BH^2}
\end{equation}
When taken together, the last two displayed equations imply that for all $\epsilon$ one has
$$\frac{\pi\cdot e^{L-\epsilon}}{\vol(\Sigma)}\le\vert\Gamma\cdot y_0\cap B(x_0,L)\vert\le\frac{\pi\cdot e^{L+\epsilon}}{\vol(\Sigma)}$$
and hence that 
\begin{equation}\label{eq delsarte classic}
\vert\Gamma\cdot y_0\cap B(x_0,L)\vert\sim\frac{\pi\cdot e^L}{\vol(\Sigma)}\text{ as }L\to\infty.
\end{equation}
This is Delsarte's lattice point counting theorem \cite{Delsarte}---we refer to \cite[III.3.5]{Bekka-Meyer} for more details.

An observation: note that counting elements of $\Gamma\cdot y_0$ contained in the ball $B(x_0,L)$ is exactly the same thing as counting geodesic arcs in $\Sigma$ of length at most $L$ going from $x_0$ to $y_0$, or to be precise, from the projection of $x_0$ to the projection of $y_0$. In this way, if we are given a segment $I\subset K=T_{x_0}^1\Sigma$ and we use the equidistribution of the spherical segments $I\rho_t$ then when we run the argument above we get the cardinality of the set $\BFA_{I,y_0}(L)$ of geodesic arcs of length at most $L$, starting in $x_0$ with initial speed in $I$, and ending in $y_0$. As expected, the result is that
$$\vert\BFA_{I,y_0}(L)\vert\sim \frac{\ell(I)}{2\pi}\cdot\frac{\pi\cdot e^L}{\vol(\Sigma)}\ \ \ \text{ when }L\to\infty.$$

Suppose now that we dial it up a bit giving ourselves a second sector $J\subset T^1_{y_0}\Sigma$ and care, for some fixed $h$, about the cardinality of the set $\BFA_{I,J}(L,h)$ of geodesic arcs with length in $[L,L+h]$, joining $x_0$ to $y_0$, and with initial and terminal velocities in $I$ and $J$ respectively. We can obtain the asymptotics of $\vert\BFA_{I,J}(L,h)\vert$ following the same basic idea as in the proof of Delsarte's theorem above. We need however to replace the bump function $f_{y_0,\epsilon}$ by something else. 

Using the coordinates \eqref{eq kan coordinates} consider for $h$ and $\delta$ positive and small the set
$$\CJ(J,h,\delta)=\{J\rho_t h_s\text{ with }s\in[-\delta,\delta]\text{ and }t\in[0,h]\}\subset T^1\Sigma$$
and note that it has volume $\ell(J)\cdot 2\delta\cdot(1-e^{-h})$. Note also that the intersection of $\CJ(J,h,\delta)$ with the outer normal vector field of any horosphere consists of segments of the form $H_{v\rho_t}=\{v\rho_t h_s\text{ with }s\in[-\delta,\delta]\}$ where $v\in J$ and $t\in[0,h]$ and that each such segment has length $2\delta$. 
Finally observe each one of the sets $H_{v\rho_t}$ contains exactly one vector, namely $v\rho_t$, which lands in $J$ when we geodesic flow it for some time in $[-h,0]$. It follows that for all intervals $\BI\subset\BR$ and $w\in T^1\Sigma$ we have
$$\left\vert\left\vert\left\{r\in\BI\,\middle\vert\begin{array}{c}\exists s\in[-\delta,\delta],t\in[-h,0]\\ \text{ with }wh_s\rho_t\in J\end{array}\right\}\right\vert-\frac{\ell(\{s\in\BI\text{ with }wh_s\in\CJ\})}{2\delta}\right\vert\le 2$$
where $\CJ=\CJ(J,h,\delta)$ and where the number $2$ is there to take into account possible over counting near the ends of the horospherical segment. 

Using then the fact that when $L\to\infty$ the sphere $S(x_0,L+h)\subset\BH^2$ looks more and more like a horosphere, one gets the following analogue of \eqref{eq delsart bounds}: for any $\delta'<\delta<\delta''$ and $h'<h<h''$ and $J'\Subset J\Subset J''$ we have for all sufficiently large $L$
$$\int_{I\rho_{L+h}}\chi_{\CJ(J',h',\delta')}\le 2\delta\cdot\vert\BFA_{I,J}(L,h)\vert\le\int_{I\rho_{L+h}}\chi_{\CJ(J'',h'',\delta'')}$$
where $\chi_\CJ$ is the characteristic function of $\CJ$. From here we get that
\begin{align*}
\vert\BFA_{I,J}(L,h)\vert
&\sim \frac 1{2\delta}\int_{I\rho_{L+h}}\chi_{\CJ(J,h,\delta)}\sim\frac 1{2\delta}\cdot\frac{e^{L+h}}2\cdot\int_I\chi_{\CJ(J,h,\delta)}(k\rho_{L+h})\ dk\\
&\stackrel{\text{\eqref{eq equidistribution}}}\sim\frac 1{2\delta}\cdot\frac{e^{L+h}}2\cdot\ell(I)\cdot\frac{\vol_{T^1\Sigma}(\CJ(J,h,\delta))}{\vol_{T^1\Sigma}(T^1\Sigma)}\\
&\sim \frac{\ell(I)\cdot\ell(J)}{4\pi}\cdot\frac{e^{L+h}-e^L}{\vol(\Sigma)}
\end{align*}
We record this slightly generalized version of Delsarte's theorem for later reference:

\begin{sat}[Delarte's theorem for in-out sectors]\label{thm Delsarte's theorem for sectors}
Let $\Sigma$ be a closed hyperbolic surface, let $h$ be positive, and let $I\subset T_{x_0}\Sigma$ and $J\subset T_{y_0}\Sigma$ be non-degenerate segments. Let then $\BFA_{I,J}(L,h)$ be the set of geodesic arcs $\alpha:[0,r]\to\Sigma$ with length $r\in[L,L+h]$, with endpoints $\alpha(0)=x_0$ and $\alpha(r)=y_0$, and with initial and terminal speeds satisfying $\alpha'(0)\in I$ and $\alpha'(r)\in J$. Then we have
$$\vert\BFA_{I,J}(L,h)\vert\sim \frac{\ell(I)\cdot\ell(J)}{4\pi}\cdot\frac{e^{L+h}-e^L}{\vol(\Sigma)}$$
when $L\to\infty$.\qed
\end{sat}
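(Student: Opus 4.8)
The plan is to imitate the derivation of the classical Delsarte estimate \eqref{eq delsarte classic} sketched just above the statement, but with the single point $y_0$ replaced by a thin horospherical ``slab'' so that the length constraint $r\in[L,L+h]$ and the terminal-direction constraint $\alpha'(r)\in J$ get encoded geometrically. Concretely, normalize the lift so that $x_0=i$, hence $T^1_{x_0}\BH^2=K=\SO_2$, and recall that an element $\alpha\in\BFA_{I,J}(L,h)$ is the same thing as a point of the spherical arc $I\rho_{r}\subset T^1\BH^2=\PSL_2\BR$ (pushed down to $T^1\Sigma$) for some $r\in[L,L+h]$ whose endpoint lands at $y_0$ with velocity in $J$. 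The natural auxiliary object is then the set $\CJ(J,h,\delta)=\{J\rho_th_s:s\in[-\delta,\delta],\,t\in[0,h]\}$, of volume $\ell(J)\cdot 2\delta\cdot(1-e^{-h})$, which is designed so that each of its horospherical slices $H_{v\rho_t}=\{v\rho_th_s:s\in[-\delta,\delta]\}$ contains exactly one vector, namely $v\rho_t$, that flows into $J$ under $\rho_{[-h,0]}$.

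First I would carry out the combinatorial reduction: for any single horocyclic orbit through a vector $w\in T^1\Sigma$ and any parameter interval $\BI$, the number of $r\in\BI$ with $wh_s\rho_t\in J$ for some admissible $(s,t)$ differs from $\tfrac1{2\delta}\,\ell(\{s\in\BI:wh_s\in\CJ\})$ by at most $2$, the error coming only from over/undercounting at the two ends of each horospherical segment. Slicing the spherical arc $I\rho_{L+h}$ by the horocyclic foliation, summing this bound over the slices, and using that as $L\to\infty$ the sphere $S(x_0,L+h)$ converges locally to a horosphere, I would obtain, for all large $L$ and for any $J'\Subset J\Subset J''$, $h'<h<h''$, $\delta'<\delta<\delta''$, the sandwich
$$\int_{I\rho_{L+h}}\chi_{\CJ(J',h',\delta')}\ \le\ 2\delta\cdot\vert\BFA_{I,J}(L,h)\vert\ \le\ \int_{I\rho_{L+h}}\chi_{\CJ(J'',h'',\delta'')}.$$

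Next I would evaluate the integral by equidistribution. Since $\chi_\CJ$ is not continuous, I would squeeze it between continuous functions supported on slightly larger/smaller slabs, apply the mixing consequence \eqref{eq equidistribution} with the interval $I$, and use $\int_{I\rho_{L+h}}(\cdot)=\tfrac{e^{L+h}}2\int_I(\cdot)(k\rho_{L+h})\,dk$ together with the normalization $\vol(T^1\Sigma)=2\pi\vol(\Sigma)$. This gives
$$\vert\BFA_{I,J}(L,h)\vert\ \sim\ \frac1{2\delta}\cdot\frac{e^{L+h}}2\cdot\ell(I)\cdot\frac{\vol_{T^1\Sigma}(\CJ(J,h,\delta))}{\vol_{T^1\Sigma}(T^1\Sigma)}\ =\ \frac{\ell(I)\cdot\ell(J)}{4\pi}\cdot\frac{e^{L+h}-e^{L}}{\vol(\Sigma)},$$
the $\delta$'s cancelling and the factor $1-e^{-h}$ turning $e^{L+h}$ into $e^{L+h}-e^{L}$; finally I would let $J',J'',h',h''$ converge to $J,h$ (the $\delta$'s being harmless) to close the sandwich and conclude.

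The hard part will be the passage from ``sphere'' to ``horosphere'': making rigorous that the clean $\pm2$ discrepancy estimate, which is exact on a genuine horocycle, continues to hold with an error that is bounded uniformly in $L$ (and is then absorbed by the $\Subset$-thickening) when the horocycle is replaced by the enormous sphere $S(x_0,L+h)\subset\BH^2$. This is the only genuinely geometric point; the remaining ingredients are bookkeeping in the KAN coordinates \eqref{eq kan coordinates} and a single invocation of mixing through \eqref{eq equidistribution}, with the non-continuity of $\chi_\CJ$ dispatched by the usual inner/outer continuous approximations.
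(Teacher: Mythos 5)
Your proposal reproduces the paper's own argument essentially verbatim: the auxiliary slab $\CJ(J,h,\delta)$ with its horocyclic slices, the $\pm 2$ bookkeeping estimate, the sandwich via inner/outer approximations $J'\Subset J\Subset J''$, $h'<h<h''$, $\delta'<\delta<\delta''$, and the final evaluation by equidistribution of large spheres in KAN coordinates all coincide with what the paper does. You have also correctly identified the spot where the argument leaves the most to the reader, namely the passage from genuine horocycles to the large sphere $S(x_0,L+h)$, which the paper likewise only indicates; so this is a correct proof by the same route.
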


As we mentioned earlier, Theorem \ref{thm Delsarte's theorem for sectors} is a very special case of the much more general \cite[Th\'eor\`eme 4.1.1]{Roblin}. We would not be surprised if there were also other references we are not aware of.

\begin{bem}
Note also that the asymptotic behavior in Theorem \ref{thm Delsarte's theorem for sectors} is uniform as long as $I$ and $J$ belong to a compact set of choices. For example, since the surface $\Sigma$ is compact, we get that for all $\delta>0$ the asymptotic behavior in Theorem \ref{thm Delsarte's theorem for sectors} is uniform as long as $I$ and $J$ are intervals of length at least $\delta$ contained respectively in $T_{x_I}^1\Sigma$ and $T_{x_J}^1\Sigma$ for some $x_I$ and $x_J$ in $\Sigma$.
\end{bem}

Now, let $X$ be a trivalent graph with vertex set $\ver(X)$ and let $\vec x=(x_v)_{v\in\ver(X)}\in\Sigma^{\ver(X)}$ be a $\ver(X)$-tuple of points in the surface. Let also  
$$U\subset\prod_{v\in\ver(X)}\left(\bigoplus_{\bar e\in\hal_v(X)} T^1_{x_v}\Sigma\right)\stackrel{\text{def}}=\BT_{\vec x}$$
be an open set where, as always, $\hal_v(X)$ is the set of all half-edges of $X$ starting at the vertex $v$. 

Given for each edge $e\in\edg(X)$ a positive number $L_e$ and writing $\vec L=(L_e)_{e\in\edg(X)}$ we are going to be interested in the set $\BFG^X_{U}(\vec L_e,h)$ of realizations $\phi:X\to\Sigma$ 
\begin{itemize}
\item[(a)] mapping each vertex $v\in\ver(X)$ to the point $x_v$, 
\item[(b)] with $(\phi(\vec e))_{\bar e\in\hal(X)}\in U$, and 
\item[(c)] with $\ell(e)\in[L_e,L_e+h]$ for all $e\in\edg(X)$. 
\end{itemize}
In this setting we have the following version of Delsarte's theorem relating the cardinality of $\BFG^X_{U}(\vec L_e,h)$ with the volume $\vol_{\vec x}(U)$ of $U$, where the volume is measured in the flat torus $\BT_{\vec x}$.

\begin{sat}[Delarte's theorem for graph realizations]\label{sat delsarte graph}
Let $\Sigma$ be a closed hyperbolic surface, let $X$ be a finite graph, fix $\vec x\in\Sigma^{\ver(X)}$ and an open set $U_{\vec x}\subset\BT_{\vec x}$. If $\vol_{\vec x}(\bar U_{\vec x}\setminus U_{\vec x})=0$, then for every $h>0$ we have
$$\vert\BFG^X_{U_{\vec x}}(\vec L,h)\vert\sim \frac{\vol_{\vec x}(U_{\vec x})}{(4\pi)^{E}}\cdot\frac{(e^h-1)^{E}\cdot e^{\Vert\vec L\Vert}}{\vol(\Sigma)^{E}}$$
when $\min_{e\in\edg(X)}L_e\to\infty$. Here $\bar U_{\vec x}$ is the closure of $U_{\vec x}$ in $\BT_{\vec x}$ and $\vol_{\vec x}(\cdot)$ stands for the volume therein. Also, $E=\vert\edg(X)\vert$ is the number of edges in $X$, and $\Vert\vec L\Vert=\sum_{e\in\edg(X)}L_e$.
\end{sat}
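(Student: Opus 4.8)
The plan is to bootstrap from Theorem~\ref{thm Delsarte's theorem for sectors}, which handles the single-edge case, by treating the edges of $X$ one at a time. The key observation is that a realization $\phi$ satisfying (a) is completely determined by the tuple of geodesic arcs $(\phi(e))_{e\in\edg(X)}$, and each such arc $\phi(e)$ runs between the two (pre-determined) endpoints $x_v,x_{v'}$ of $e$; moreover the arc $\phi(e)$ is determined by its length together with its initial direction at one endpoint, which by Delsarte is essentially free to range over a sector. So I would first fix, for each edge $e=[v,v']$, a product decomposition of the relevant factor of $\BT_{\vec x}$ into the directions at $v$ and the directions at $v'$, and note that the condition $(\phi(\vec e))_{\vec e}\in U_{\vec x}$ is, by Fubini, a condition that can be integrated over one edge at a time.

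The main step is then a product-of-intervals approximation. Since $\vol_{\vec x}(\bar U_{\vec x}\setminus U_{\vec x})=0$, for any $\eta>0$ I can sandwich $U_{\vec x}$ between a finite disjoint union $P^-$ of open "boxes" (products, over half-edges, of arcs in the circles $T^1_{x_v}\Sigma$) and a slightly larger finite union $P^+$ of boxes, with $\vol_{\vec x}(P^+\setminus P^-)<\eta$; this is just regularity of Lebesgue measure on the torus $\BT_{\vec x}$, together with the fact that the boundary contributes nothing. On each single box, the count factorizes: the set of realizations mapping $v\mapsto x_v$, with $\phi(\vec e_1),\phi(\vec e_2)$ in prescribed arcs $I_{\vec e_1}\subset T^1_{x_v}\Sigma$, $I_{\vec e_2}\subset T^1_{x_{v'}}\Sigma$ for the two half-edges of each edge $e=[v,v']$, and with $\ell(\phi(e))\in[L_e,L_e+h]$, is exactly $\prod_{e}\BFA_{I_{\vec e_1},I_{\vec e_2}}(L_e,h)$ once one checks that the choices on distinct edges are independent (they are, because the only coupling between edges is through the common endpoint $x_v$, which is already fixed). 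Applying Theorem~\ref{thm Delsarte's theorem for sectors} to each factor gives that the count over a box of volume $\prod_e \ell(I_{\vec e_1})\ell(I_{\vec e_2})$ is asymptotic to $\prod_e \frac{\ell(I_{\vec e_1})\ell(I_{\vec e_2})}{4\pi}\cdot\frac{e^{L_e+h}-e^{L_e}}{\vol(\Sigma)}=\frac{(\text{box volume})}{(4\pi)^E}\cdot\frac{(e^h-1)^E e^{\Vert\vec L\Vert}}{\vol(\Sigma)^E}$, as $\min_e L_e\to\infty$. Summing over the finitely many boxes in $P^-$ and in $P^+$, and using that $\vol_{\vec x}(P^\pm)\to\vol_{\vec x}(U_{\vec x})$ as $\eta\to0$, yields the claimed asymptotic for $U_{\vec x}$ by a standard squeeze.

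The one genuine subtlety — and the place I expect to have to be careful — is uniformity: to conclude $\vert\BFG^X_{U_{\vec x}}(\vec L,h)\vert\sim C\cdot e^{\Vert\vec L\Vert}$ as $\min_e L_e\to\infty$, rather than as all $L_e\to\infty$ at comparable rates, I need the asymptotics in Theorem~\ref{thm Delsarte's theorem for sectors} to hold uniformly over the finitely many sectors appearing in a fixed box decomposition. This is exactly what the Remark following Theorem~\ref{thm Delsarte's theorem for sectors} provides: the asymptotic there is uniform over intervals of length bounded below by a fixed $\delta>0$. So once the box decomposition is fixed (for a given $\eta$), all the sectors $I_{\vec e_i}$ have length bounded below, and the product over the (finitely many) edges of finitely many uniform one-variable asymptotics is again uniform. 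The only remaining bookkeeping is the edge-case of edges both of whose half-edges are incident to the same vertex $v$: then both endpoints of $\phi(e)$ equal $x_v$, and one applies Theorem~\ref{thm Delsarte's theorem for sectors} with $x_0=y_0=x_v$, which is permitted. With uniformity in hand the squeeze argument closes, and the proof is complete.
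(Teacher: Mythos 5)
Your proof is correct and follows essentially the same approach as the paper: approximate $U_{\vec x}$ from inside and outside by finite unions of product-boxes, observe that the count over each box factorizes as a product over the edges of $X$, apply Theorem~\ref{thm Delsarte's theorem for sectors} to each factor, and squeeze. One small remark: the uniformity you flag as ``the one genuine subtlety'' is not actually needed here --- for a fixed finite box decomposition the squeeze already closes as $\min_e L_e\to\infty$, since each of the finitely many one-edge asymptotics converges on its own and the box decomposition is chosen before the limit is taken; the uniformity from the Remark after Theorem~\ref{thm Delsarte's theorem for sectors} becomes essential only later, in the Addendum to Corollary~\ref{kor fat graph}, where one integrates over $\vec x\in\Sigma^{\ver(X)}$.
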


We stress that $\vol_{\vec x}$ is normalized in such a way that $\vol_{\vec x}(\BT_{\vec x})=(2\pi)^{2 E}$. We also stress that this is consistent with the fact that we use the interval $[0,\pi]$ to measure unoriented angles.

\begin{proof}
Let us say that a closed set of the form
$$\prod_{v\in\ver(X)}\left(\prod_{\bar e\in\hal_v(X)} I_{\bar e}\right)\subset\BT_{\vec x}=\prod_{v\in\ver(X)}\left(\bigoplus_{\bar e\in\hal_v(X)} T^1_{x_v}\Sigma\right)$$
where each $I_{\bar e}$ is a segment in $T^1_{x_v}\Sigma$ is {\em a cube}. We say that a closed subset of $\prod_{v\in\ver(X)}\left(\oplus_{\bar e\in\hal_v(X)} T^1_{x_v}\Sigma\right)$ it {\em cubical} if it can be given as the union of finitely many cubes with disjoint interiors. The assumption that the open set $U=U_{\vec x}$ in the statement is such that $\bar U\setminus U$ is a null-set implies that $U$ can be approximated from inside and outside by cubical sets $U'\subset U\subset U''$ with $\vol(U'')-\vol(U')$ as small as we want. It follows that it suffices to prove the theorem if $U$ is (the interior of) a cubical set. 

Note now that if $U=U_1\cup U_2$ is the disjoint union of two sets $U_1$ and $U_2$ and if the statement of the theorem holds true for $U_1$ and $U_2$ then it also holds true for $U$. Since every cubical set is made out of finitely many cubes with disjoint interior we deduce that it really suffices to prove the theorem for individual cubes
$$U=\prod_{v\in\ver(X)}\left(\prod_{\bar e\in\hal_v(X)} I_{\bar e}\right)$$ 
Note that, up to shuffling the factors we can see $U$ as
$$U=\prod_{e\in\edg(X)}\left(I_{e^+}\times I_{e^-}\right).$$ 
Here we are denoting by $e^+$ and $e^-$ the two half-edges of the edge $e\in\edg(X)$. Now, unpacking the notation one sees that a realization $\phi$ belongs to $\BFG_{U}(\vec L_e,h)$ if and only if for all $e\in\edg(X)$ the arc $\phi(e)$ belongs to $\BFA_{I_{e^+},I_{e^-}}(L_e,h)$. It follows thus from Delsarte's theorem for in-out sectors that 
\begin{align*}
\vert\BFG_{U}^X(\vec L,h)\vert
&=\prod_{e\in\edg(X)}\left\vert\BFA_{I_{e^+},I_{e^-}}(L_e,h)\right\vert\\
&\sim\prod_{e\in\edg(X)}\left(\frac{\ell(I_{e^+})\cdot\ell(I_{e^-})}{4\pi}\cdot \frac{(e^h-1)\cdot e^{L_e}}{\vol(\Sigma)}\right)\\
&=\frac{\prod_{e\in\edg(X)}(\ell(I_{e^+})\cdot\ell(I_{e^-}))}{(4\pi)^{E}}\cdot \frac{(e^h-1)^{E}\cdot e^{\sum_{e\in\edg(X)}L_e}}{\vol(\Sigma)^{E}}\\
&=\frac{\vol(U)}{(4\pi)^{E}}\cdot \frac{(e^h-1)^{E}\cdot e^{\Vert\vec L\Vert}}{\vol(\Sigma)^{E}}
\end{align*}
where, as in the statement we have set $E=\vert\edg(X)\vert$. We are done.
\end{proof}

Let us consider now the concrete case we will care mostly about. Suppose namely that $X$ is a trivalent graph and that we want all the angles to be between $\frac 23\pi-\epsilon$ and $\frac 23\pi+\epsilon$. In other words, we are interested in the set of $\epsilon$-critical realizations $\phi:X\to\Sigma$ which map the vertices to our prescribed tuple $\vec x\in\Sigma^{\ver(X)}$. Then we have to consider the set 
$$U_{\vec x,\epsilon-\crit}^X\subset\prod_{v\in\ver(X)}\left(\bigoplus_{\bar e\in\hal_v(X)} T^1_{x_v}\Sigma\right)$$
of those tuples $(v_{\vec e})_{\vec e\in\hal(X)}$ with $\angle(v_{\vec e_1},v_{\vec e_2})\in[\frac{2\pi}3-\epsilon,\frac{2\pi}3+\epsilon]$ for all distinct $\vec e_1,\vec e_2\in\hal(X)$ incident to the same vertex.

Well, let us compute the volume of $U_{\vec x,\epsilon-\crit}^X$. Noting that the conditions on $U_{\vec x,\epsilon-\crit}^X$ associated to different vertices are independent, we get that it suffices to think vertex-by-vertex and then multiply all the numbers obtained for all vertices. For each vertex $v$ label arbitrarily the half-edges incident to $v$ by $\vec e_1,\vec e_2$ and $\vec e_3$. We have no restriction for the position of $v_{\vec e_1}\in T_{x_v}\Sigma$. Once we have fixed $v_{\vec e_1}\in T_{x_v}^1(\Sigma)$ we get that $v_{\vec e_2}$ can belong to two segments, each one of length $2\epsilon$, in $T^1_{x_v}(\Sigma)$---recall that all our angles are unoriented. Then, once we have fixed $v_{\vec e_1}$ and $v_{\vec e_2}$ we have to choose $v_{\vec e_3}$ in an interval of length $2\epsilon-\vert\angle(v_{\vec e_1},v_{\vec e_2})-\frac{2\pi}3\vert$. This means that when we have chosen $v_{\vec e_1}$ and which segment $v_{\vec e_2}$ is in, then we have $3\epsilon^2$ worth of choices of $(v_{\vec e_2},v_{\vec e_3})$. This means that the set of possible choices in $T_{x_v}^1\Sigma\times T_{x_v}^1\Sigma\times T_{x_v}^1\Sigma$ has volume equal to $12\pi\epsilon^2$. Since, as we already mentioned earlier, the conditions at all vertices of $X$ are independent, we get that
$$\vol_{\vec x}(U_{\vec x,\epsilon-\crit}^X)=(12\pi\epsilon^2)^{\vert\ver(X)\vert}$$
From Theorem \ref{sat delsarte graph} we get thus that for all $h>0$ and all $\vec x\in\Sigma^{\ver(X)}$ we have 
$$\vert\BFG^X_{\vec x,\epsilon-\crit}(\vec L,h)\vert\sim \frac{(12\pi\epsilon^2)^{V}}{(4\pi)^{E}}\cdot\frac{(e^h-1)^{E}\cdot e^{\Vert\vec L\Vert}}{\vol(\Sigma)^{E}}\text{ as }\min_{e\in\edg(X)}L_e\to\infty,$$
where we are writing $V$ and $E$ for the number of vertices and edges of $X$, and $\BFG^X_{\vec x,\epsilon-\crit}$ instead of $\BFG^X_{U_{\vec x,\epsilon-\crit}}$. Taking into account that $X$ is trivalent and hence satisfies $V=-2\chi(X)$ and $E=-3\chi(X)$ we can clean this up to 
$$\vert\BFG^X_{\vec x,\epsilon-\crit}(\vec L,h)\vert\sim \epsilon^{4\vert\chi(X)\vert}\cdot\left(\frac{2}{3}\right)^{2\chi(X)}\cdot\pi^{\chi(X)}\cdot\frac{(e^h-1)^{-3\chi(X)}\cdot e^{\Vert\vec L\Vert}}{\vol(\Sigma)^{-3\chi(X)}}$$
as $\min_{e\in\edg(X)}L_e\to\infty$. Moreover, since the geometry of the set $U_{\vec x}^X(\epsilon-\crit)$ is independent of the point $\vec x$, we get that the speed of convergence to this asymptotic is independent of $\vec x$. Altogether have the following result:

\begin{kor}\label{kor fat graph}
Let $\Sigma$ be a closed hyperbolic surface and $X$ be a trivalent graph, fix $\epsilon>0$ and $h>0$, and for $\vec x\in\Sigma^{\ver(X)}$, let $\BFG^X_{\vec x,\epsilon-\crit}(\vec L,h)$ be the set of $\epsilon$-critical realizations $\phi:X\to \Sigma$ mapping the vertex $v$ to the point $x_v=\phi(v)$. Then we have
\begin{equation}\label{eq key delsarte}
\vert\BFG_{\vec x,\epsilon-\crit}^X(\vec L,h)\vert \sim \epsilon^{4\vert\chi(X)\vert}\cdot\left(\frac{2}{3}\right)^{2\chi(X)}\cdot\pi^{\chi(X)}\cdot\frac{(e^h-1)^{-3\chi(X)}\cdot e^{\Vert\vec L\Vert}}{\vol(\Sigma)^{-3\chi(X)}}
\end{equation}
as $\min_{e\in\edg(X)}L_e\to\infty$.\qed
\end{kor}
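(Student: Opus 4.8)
The plan is to read this off from Theorem~\ref{sat delsarte graph} after choosing the right open set and computing its volume. First I would take $U_{\vec x}=U^X_{\vec x,\epsilon-\crit}$, the set of tuples $(v_{\vec e})_{\vec e\in\hal(X)}$ with $\angle(v_{\vec e_1},v_{\vec e_2})\in[\frac{2\pi}3-\epsilon,\frac{2\pi}3+\epsilon]$ for every pair of distinct half-edges $\vec e_1,\vec e_2$ incident to a common vertex. This set is open in $\BT_{\vec x}$, and its boundary is contained in the finite union of the hypersurfaces cut out by turning one of the angle inequalities into an equality; each such hypersurface is a null set, so $\vol_{\vec x}(\bar U_{\vec x}\setminus U_{\vec x})=0$ and Theorem~\ref{sat delsarte graph} applies. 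Unwinding the definition of $\epsilon$-critical realization, one has $\BFG^X_{\vec x,\epsilon-\crit}(\vec L,h)=\BFG^X_{U_{\vec x}}(\vec L,h)$, so the corollary will follow once $\vol_{\vec x}(U^X_{\vec x,\epsilon-\crit})$ is computed and the resulting constant cleaned up.

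For the volume I would use that the conditions attached to distinct vertices involve disjoint blocks of coordinates and are therefore independent, so $\vol_{\vec x}(U^X_{\vec x,\epsilon-\crit})$ factors as a product over $\ver(X)$ of the volume of the corresponding subset of $T^1_{x_v}\Sigma\times T^1_{x_v}\Sigma\times T^1_{x_v}\Sigma$. Fixing a vertex $v$ with incident half-edges $\vec e_1,\vec e_2,\vec e_3$: the direction $v_{\vec e_1}$ is unconstrained and contributes the full length $2\pi$; given $v_{\vec e_1}$, the direction $v_{\vec e_2}$ must make unoriented angle within $\epsilon$ of $\frac{2\pi}3$, which amounts to two arcs of length $2\epsilon$ (one near each of the two directions at unoriented angle $\frac{2\pi}3$); and given $v_{\vec e_1}$ and $v_{\vec e_2}$, the direction $v_{\vec e_3}$ is forced into a single arc of length $2\epsilon-\bigl|\angle(v_{\vec e_1},v_{\vec e_2})-\frac{2\pi}3\bigr|$. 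Writing $\angle(v_{\vec e_1},v_{\vec e_2})=\frac{2\pi}3+t$ and integrating $2\epsilon-|t|$ over $t\in[-\epsilon,\epsilon]$ gives $3\epsilon^2$, so each choice of $v_{\vec e_1}$ and each of the two arcs for $v_{\vec e_2}$ carries $3\epsilon^2$ worth of $(v_{\vec e_2},v_{\vec e_3})$; multiplying, every vertex contributes $2\pi\cdot2\cdot3\epsilon^2=12\pi\epsilon^2$, whence $\vol_{\vec x}(U^X_{\vec x,\epsilon-\crit})=(12\pi\epsilon^2)^{V}$ with $V=|\ver(X)|$.

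Feeding this into Theorem~\ref{sat delsarte graph} gives $|\BFG^X_{\vec x,\epsilon-\crit}(\vec L,h)|\sim\frac{(12\pi\epsilon^2)^V}{(4\pi)^E}\cdot\frac{(e^h-1)^E e^{\Vert\vec L\Vert}}{\vol(\Sigma)^E}$ with $E=|\edg(X)|$, and since a trivalent graph satisfies $3V=2E$, hence $V=-2\chi(X)$ and $E=-3\chi(X)$, a one-line rearrangement ($\epsilon^{2V}=\epsilon^{4|\chi(X)|}$, $12^V/4^E=(4/9)^{\chi(X)}=(2/3)^{2\chi(X)}$, $\pi^{V-E}=\pi^{\chi(X)}$) turns the right-hand side into the expression in \eqref{eq key delsarte}. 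For the uniformity claim I would invoke the remark after Theorem~\ref{thm Delsarte's theorem for sectors}: after identifying each $T^1_{x_v}\Sigma$ with $\BR/2\pi\BZ$, the set $U^X_{\vec x,\epsilon-\crit}$ literally does not depend on $\vec x$, so the rate of convergence inherited from Theorem~\ref{sat delsarte graph} is $\vec x$-independent. There is no real obstacle here; the only points demanding care are remembering the factor $2$ from the two arcs at unoriented angle $\frac{2\pi}3$ (and getting the elementary integral $\int_{-\epsilon}^{\epsilon}(2\epsilon-|t|)\,dt=3\epsilon^2$ right), and keeping the signs straight when passing between $V$, $E$, and $\chi(X)$.
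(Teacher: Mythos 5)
Your proposal is correct and matches the paper's proof essentially line for line: the same choice of $U^X_{\vec x,\epsilon-\crit}$, the same vertex-by-vertex volume computation giving $12\pi\epsilon^2$ per vertex and hence $(12\pi\epsilon^2)^V$ total, the same substitution into Theorem~\ref{sat delsarte graph}, and the same simplification using $V=-2\chi(X)$, $E=-3\chi(X)$. The only additions on your end — spelling out the integral $\int_{-\epsilon}^{\epsilon}(2\epsilon-|t|)\,dt=3\epsilon^2$ and explicitly checking that $\vol_{\vec x}(\bar U\setminus U)=0$ — are slight elaborations of what the paper leaves implicit, not a different argument.
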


Note that the set $U_{\vec x,\epsilon-\crit}^X$ needed to establish Corollary \ref{kor fat graph} is basically the same for all choices of $\vec x$. For example, if we take another $\vec y\in\Sigma^{\ver(X)}$ and we identity $\BT_{\vec x}$ with $\BT_{\vec y}$ isometrically by parallel transport (along any collection of curves whatsoever) then $U_{\vec x,\epsilon-\crit}^X$ is sent to $U_{\vec y,\epsilon-\crit}^X$. It follows that we can approximate $U_{\vec x,\epsilon-\crit}^X$ and $U_{\vec y,\epsilon-\crit}^X$ at the same time by cubical sets consisting of the same number of cubes with the same side lengths. It thus follows from the comment following Theorem \ref{thm Delsarte's theorem for sectors} that the asymptotics in Corollary \ref{kor fat graph} is uniform in $\vec x$. We record this fact:

\begin{named}{Addendum to Corollary \ref{kor fat graph}}
 The asymptotics in \eqref{eq key delsarte} is uniform in $\vec x\in\Sigma^{\ver(X)}$.\qed
\end{named}

\section{Counting critical realizations}\label{sec counting critical}
In this section we prove Theorem \ref{thm counting minimising graphs} from the introduction. Before restating the theorem, recall that if $X$ is a trivalent graph then we denote by 
\begin{equation}\label{eq this is an actual name}
\BFG^X(L)=\left\{\begin{array}{c}\phi:X\to\Sigma\text{ critical realization}\\ \text{with length }\ell_\Sigma(\phi)\le L\end{array}\right\}
\end{equation}
the set of all critical realizations of length $\ell_\Sigma(\phi)$ at most $L$. 

\begin{named}{Theorem \ref{thm counting minimising graphs}}
Let $\Sigma$ be a closed, connected, and oriented hyperbolic surface. For every connected trivalent graph $X$ we have
$$\vert\BFG^X(L)\vert\sim 
\left(\frac 23\right)^{3\chi(X)}\cdot\frac{\vol(T^1\Sigma)^{\chi(X)}}{(-3\chi(X)-1)!}\cdot L^{-3\chi(X)-1}\cdot e^{L}$$
as $L\to\infty$.
\end{named}

Fixing for the remaining of this section the trivalent graph $X$ we will write $\ver=\ver(X)$ and $\edg=\edg(X)$ for the sets of vertices and edges and $V=\vert\ver\vert$ and $E=\vert\edg\vert$ for their cardinalities. Similarly we will denote by $\CG=\CG^X$ the manifold of realizations of $X$ in $\Sigma$, and by $\BFG(L)=\BFG^X(L)$ the set of critical realizations with length at most $L$.

The main step in the proof of Theorem \ref{thm counting minimising graphs} is to count critical realizations of $X$ such that the corresponding vector of lengths $(\ell(\phi(e)))_{e\in\edg}$ belongs to a box of size $h>0$. More concretely, we want to count how many elements there are in the set 
\begin{equation}\label{eq critical in a box}
\BFG(\vec L,h)=\left\{\begin{array}{c}\phi:X\to\Sigma\text{ critical realization with}\\ \ell(\phi(e))\in(L_e,L_e+h]\text{ for all }e\in\edg\end{array}\right\}
\end{equation}
where $\vec L=(L_e)_{e\in\edg}$ is a positive vector. We start by establishing some form of an upper bound for the number of homotopy classes of realizations when we bound the length of each individual edge---recall that by Lemma \ref{lem component critical realization} any two homotopic critical realizations are identical.

\begin{lem}\label{lem bounding the rest}
For all $\vec L\in\BR_+^{\edg(X)}$ there are at most $\bconst\cdot e^{\Vert\vec L\Vert}$ homotopy classes of realizations $\phi:X\to\Sigma$ with $\ell(\phi(e))\le L_e$ for all $e\in\edg(X)$.
\end{lem}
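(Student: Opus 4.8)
The plan is to produce a single realization $\phi_0:X\to\Sigma$ whose homotopy class is uniquely determined by the word $\phi_{0*}(\pi_1(X))\subset\pi_1(\Sigma)$ together with ``decoration data'' about where the vertices go, and then to bound the number of such words by a counting argument for the fundamental group. Fix once and for all a base vertex $v_0\in\ver(X)$ and a spanning tree $T\subset X$; then $\pi_1(X,v_0)$ is free on the $E-V+1$ edges not in $T$. A homotopy class of realization $\phi:X\to\Sigma$ is determined by the homotopy classes rel endpoints of the images of each edge, and each such class is in turn determined by the free homotopy data of the induced map; concretely, choosing a maximal tree identifies a realization (up to homotopy moving $\phi(v_0)$) with the tuple of conjugacy-or-path classes of the edge images, which is governed by $\pi_1(\Sigma,\phi(v_0))$.

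The key estimate is then the classical one: for a point $p\in\BH^2$ and a radius $R$, the number of $\Gamma$-translates $\gamma p$ with $d_{\BH^2}(p,\gamma p)\le R$ is $O(e^R)$ (this is exactly Delsarte's theorem \eqref{eq delsarte classic}, or more simply its trivial upper bound coming from the fact that the $\Gamma$-translates of a fixed fundamental domain are disjoint and a ball of radius $R$ has area $O(e^R)$). First I would reduce to the case where $\phi$ maps $v_0$ to a fixed point $x_0$ (up to moving $x_0$ inside a fixed fundamental domain of $\Sigma$, which only costs a uniform multiplicative constant). Lift to a realization $\widetilde\phi:\widetilde X\to\BH^2$ with $\widetilde\phi(\widetilde v_0)=\widetilde x_0$. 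The images of the vertices of $\widetilde X$ lying over a fixed lift of the tree $T$ are all within $\sum_{e\in T}L_e\le\Vert\vec L\Vert$ of $\widetilde x_0$, so each of the $V$ vertex-images lands in a ball of radius $\Vert\vec L\Vert$ about $\widetilde x_0$: by the lattice-point bound there are at most $\bconst\cdot e^{\Vert\vec L\Vert}$ choices of $\Gamma$-orbit for each, hence at most $\bconst\cdot e^{V\cdot\Vert\vec L\Vert}$ in total — but this is too weak, so instead I would organize the count edge-by-edge along a traversal of $X$: having fixed the image of the initial vertex of an edge $e$, the image of its terminal vertex is constrained to a ball of radius $L_e$ about the (already-fixed) initial image, giving at most $\bconst\cdot e^{L_e}$ new choices; multiplying over the $V-1$ tree-edges in a spanning-tree traversal and over the $E-V+1$ non-tree edges (each of which, once both endpoints are fixed, contributes at most $\bconst\cdot e^{L_e}$ choices of homotopy class of arc, again by the lattice-point bound applied in $\pi_1(\Sigma)$) yields the total bound $\bconst\cdot\prod_{e\in\edg}e^{L_e}=\bconst\cdot e^{\Vert\vec L\Vert}$, with the constant depending only on $X$ and the fixed fundamental domain.

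I expect the main obstacle to be bookkeeping rather than any deep point: one must make sure that the homotopy class of the realization really is recovered from (i) the $\Gamma$-orbits of the vertex images and (ii) the homotopy-rel-endpoints classes of the edge images, with no double counting and with the count for each edge genuinely bounded by $e^{L_e}$ rather than $e^{2L_e}$ or worse — this is what forces the traversal-based argument above, where each edge is ``paid for'' exactly once. A secondary technical point is handling the finitely many possible positions of $\phi(v_0)$ inside a fundamental domain: since $\Sigma$ is compact this contributes only a bounded factor, and since the lattice-point counting constant in \eqref{eq delsarte classic} is uniform over $x_0,y_0$ ranging in a compact set (as remarked after Theorem \ref{thm Delsarte's theorem for sectors}), the whole estimate is uniform. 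No properness or convexity of the length function is needed here; this lemma is purely about counting homotopy classes, and the $e^{\Vert\vec L\Vert}$ growth is exactly the exponential volume growth of $\BH^2$ seen $E$ times, once per edge, with the tree structure preventing the exponents from compounding.
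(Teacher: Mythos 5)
Your approach is essentially correct and gives the right bound, but it takes a more circuitous route than the paper's, and one step in it needs to be made precise. The paper's proof is shorter: homotope $\phi$ so that \emph{every} vertex is sent to a single fixed point $x_0\in\Sigma$, which costs at most $2\cdot\diam(\Sigma)$ of extra length per edge. After this move every edge of $X$ is a loop at $x_0$, hence determines an element $g_e\in\pi_1(\Sigma,x_0)=\Gamma$ of length at most $L_e+2\diam(\Sigma)$, and the tuple $(g_e)_{e\in\edg(X)}$ determines the homotopy class rel vertices (a fortiori the free homotopy class). Each $g_e$ has at most $\bconst\cdot e^{L_e}$ possibilities by the coarse upper bound in Delsarte's theorem, and multiplying over edges gives $\bconst\cdot e^{\Vert\vec L\Vert}$. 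No spanning tree, no traversal, and no distinction between tree and non-tree edges is needed; all edges are treated identically.

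In your traversal-based version, the place that needs care is the claim that a tree edge $e$ ``gives at most $\bconst\cdot e^{L_e}$ new choices'' for the image of its terminal vertex: that image ranges over a continuum in $\BH^2$, not a discrete set, so this is not yet a count. What you are really counting is the $\Gamma$-translate of a fixed fundamental domain (equivalently, the nearest lattice point of $\Gamma\widetilde x_0$) that contains the new vertex image; once you say this explicitly (snapping each vertex image to such a lattice point at a bounded additive cost), the tree-edge count becomes a genuine discrete count of $\bconst\cdot e^{L_e}$. Note also that the tree-vertex positions are not themselves homotopy-invariant data: the genuine invariants are the group elements $\gamma_f$ attached to the non-tree edges. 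The reason one must nevertheless keep track of the (discretized) tree-vertex positions is that the bound $\bconst\cdot e^{L_f}$ on the number of admissible $\gamma_f$ holds only relative to a bounded-accuracy knowledge of where the tree vertices lift to; without the tree discretization the count of $\gamma_f$ alone grows like $e^{L_f+\text{(tree path lengths)}}$. With that made explicit your traversal argument closes and yields the same bound as the paper, so this is a valid alternative, just a heavier one: the paper's one-step homotopy to a common basepoint discretizes everything at once and makes the edge-by-edge product immediate.
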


It is worth pointing out that Lemma \ref{lem bounding the rest} fails if $\Sigma$ is allowed to have cusps---see Section \ref{sec comments}.

\begin{proof}
Let us fix a point $x_0\in\Sigma$ and note that every point in $\Sigma$---think of the images under a realization of the vertices of $X$---can be moved to $x_0$ along a path of length at most $\diam(\Sigma)$. It follows that every realization $\phi:X\to\Sigma$ is homotopic to a new realization $\psi:X\to\Sigma$ mapping all vertices of $X$ to $x_0$ and with 
\begin{equation}\label{eq ya no hay locos}
\ell(\psi(e))\le\ell(\phi(e))+2\cdot\diam(\Sigma)\le L_e+2\cdot\diam(\Sigma).
\end{equation}
for every edge $e\in\edg(X)$. Note that the homotopy class of $\psi$ is determined by the homotopy classes of the loops $\psi(e)$ when $e$ ranges over the edges of $X$. Now \eqref{eq ya no hay locos} implies that, up to homotopy, we have at most $\bconst\cdot e^{L_e}$ choices for the geodesic segment $\psi(e)$. This implies that there are at most $\bconst\cdot e^{\Vert\vec L\Vert}$ choices for the homotopy class of $\psi$, and hence for the homotopy class of $\phi$. We are done.
\end{proof}

Although it is evidently pretty coarse, Lemma \ref{lem bounding the rest} will play a key role in the proof of Theorem \ref{thm counting minimising graphs}. However, the main tool in the proof of the theorem is the following:

\begin{prop}\label{prop critical in box}
For all $h>0$ we have
$$\vert\BFG(\vec L,h)\vert\sim\frac {2^{4\chi(X)}}{3^{3\chi(X)}}\cdot\pi^{\chi(X)}\cdot
\frac{(e^{h}-1)^{-3\chi(X)}\cdot e^{\Vert\vec L\Vert}}{\vol(\Sigma)^{-\chi(X)}}$$
as $\min_{e\in\edg}L_e\to\infty$. Here $\BFG(\vec L,h)$ is as in \eqref{eq critical in a box}.
\end{prop}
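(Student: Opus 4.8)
The plan is to count $\epsilon$-critical realizations with length vector in a box of size $h$, and then let $\epsilon\to 0$ using the dictionary between $\epsilon$-critical realizations and genuine critical realizations established in Section~\ref{sec realizations}. The starting point is Corollary~\ref{kor fat graph} together with its Addendum: integrating the pointwise count $\vert\BFG^X_{\vec x,\epsilon-\crit}(\vec L,h)\vert$ over $\vec x\in\Sigma^{\ver(X)}$ (using that the asymptotics is uniform in $\vec x$, so the integral and the limit commute) computes the volume, with respect to the Liouville-type measure on the space of realizations, of the set $\CG_{\epsilon-\crit}(\vec L,h)$ of all $\epsilon$-critical realizations with $\ell(\phi(e))\in(L_e,L_e+h]$. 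Concretely, integrating over the $\Sigma^{\ver(X)}$ factor contributes $\vol(\Sigma)^V$, so
\[
\vol\big(\CG_{\epsilon-\crit}(\vec L,h)\big)\sim \epsilon^{4|\chi(X)|}\cdot\Big(\tfrac23\Big)^{2\chi(X)}\cdot\pi^{\chi(X)}\cdot\vol(\Sigma)^V\cdot\frac{(e^h-1)^{-3\chi(X)}e^{\Vert\vec L\Vert}}{\vol(\Sigma)^{-3\chi(X)}}
\]
as $\min_e L_e\to\infty$. Here one has to be a little careful about what "volume" means: the fibers of the cover $\Pi:\CG^X\to\Sigma^{\ver(X)}$ over $\vec x$ are discrete, and the natural measure on $\CG^X$ is the one making $\Pi$ a local isometry for the product-of-hyperbolic-metrics structure tensored with the counting measure on fibers, combined with the angular constraint defining $\epsilon$-criticality. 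Unwinding Corollary~\ref{kor fat graph}, the count in the fiber over $\vec x$ is exactly the fiber measure, and the $U^X_{\vec x,\epsilon-\crit}$ bookkeeping there already incorporates the angular volume $(12\pi\epsilon^2)^V$; so the displayed formula is just Corollary~\ref{kor fat graph} with the $\vol(\Sigma)^V$ from the base integration put back in.

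Next I would decompose $\CG_{\epsilon-\crit}(\vec L,h)$ into connected components. By Proposition~\ref{prop sum up section 5}, provided $\min_e L_e$ is large enough (which is guaranteed in the asymptotic regime), every component of $\CG_{\epsilon-\crit}^X$ meeting the box contains exactly one critical realization $\psi$, and on that component the set of $\epsilon$-critical realizations has volume
\[
\vol(\CG^\phi_{\epsilon-\crit})=\Big(\tfrac{2}{\sqrt3}\epsilon^2\Big)^V\big(1+o(1)\big)
\]
uniformly, with an error controlled by $\rho_0(\epsilon)\cdot\rho_1(\min_e\ell(\psi(e)))$. Moreover, by \eqref{eq prop sum up section 5 2}, the edge lengths $\ell(\psi(e))$ of the critical realization differ from the corresponding $\ell(\phi(e))$ by at most $\rho_0(\epsilon)+o(1)$, so for $\epsilon$ small, a component meets the box $\prod_e(L_e,L_e+h]$ essentially iff its critical realization lies in a slightly fattened or shrunk box $\prod_e(L_e-\eta,L_e+h+\eta]$ with $\eta=\eta(\epsilon)\to 0$. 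Dividing the total volume by the per-component volume $(\tfrac{2}{\sqrt3}\epsilon^2)^V(1+o(1))$ therefore yields, for each fixed small $\epsilon$,
\[
\big|\BFG(\vec L',h')\big|\;\lesssim\;\frac{\vol(\CG_{\epsilon-\crit}(\vec L,h))}{(\tfrac{2}{\sqrt3}\epsilon^2)^V}\;\lesssim\;\big|\BFG(\vec L'',h'')\big|
\]
with $\vec L',h',\vec L'',h''$ differing from $\vec L,h$ by $O(\eta(\epsilon))$; combined with monotonicity of $|\BFG(\cdot,\cdot)|$ in the box and with Lemma~\ref{lem bounding the rest} (to absorb the negligible contribution of the boundary sliver, whose width $\to 0$ while the count in a full box of fixed size $h$ is $\asymp e^{\Vert\vec L\Vert}$), one gets that $|\BFG(\vec L,h)|$ is squeezed between the two sides, and letting $\epsilon\to 0$ pins down the asymptotic constant. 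Plugging in $(\tfrac{2}{\sqrt3}\epsilon^2)^V$ and simplifying using $V=-2\chi(X)$, $E=-3\chi(X)$, $\vol(T^1\Sigma)=2\pi\vol(\Sigma)$:
\[
\vert\BFG(\vec L,h)\vert\sim \frac{\epsilon^{4V}(\tfrac23)^{2\chi(X)}\pi^{\chi(X)}\vol(\Sigma)^{V+3\chi(X)}(e^h-1)^{-3\chi(X)}e^{\Vert\vec L\Vert}}{(\tfrac{2}{\sqrt3})^V\epsilon^{2V}}
\]
and since $4V=2V+2V$... wait, the $\epsilon$ powers must cancel: $4|\chi(X)|=4V/2\cdot\!$ — indeed $4|\chi(X)|=2V$ and the denominator contributes $\epsilon^{2V}$, so they cancel exactly, leaving a clean $\epsilon$-independent constant, which after arithmetic is the claimed $\frac{2^{4\chi(X)}}{3^{3\chi(X)}}\pi^{\chi(X)}(e^h-1)^{-3\chi(X)}e^{\Vert\vec L\Vert}/\vol(\Sigma)^{-\chi(X)}$ (using $\vol(\Sigma)^{V+3\chi(X)}=\vol(\Sigma)^{\chi(X)}$ and $(\tfrac23)^{2\chi(X)}/(\tfrac{2}{\sqrt3})^{-2\chi(X)}=\ldots=(\tfrac23)^{3\chi(X)}\cdot$ something — the arithmetic is routine and I will verify it carefully rather than here).

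The main obstacle I anticipate is not any single hard estimate but rather the careful handling of the two limits $\min_e L_e\to\infty$ and $\epsilon\to 0$, which must be taken in the right order and with uniform control. Specifically: Proposition~\ref{prop sum up section 5} gives per-component volume $\sim(\tfrac{2}{\sqrt3}\epsilon^2)^V$ only up to a multiplicative error $\rho_0(\epsilon)\rho_1(\min_e\ell(\psi(e)))$, and Corollary~\ref{kor fat graph} gives the total volume only up to an additive $o$ as $\min L_e\to\infty$ \emph{for fixed $\epsilon$}; one must check these errors do not interact badly, i.e.\ that for each fixed $\epsilon$ one may first send $\min L_e\to\infty$ to trap $|\BFG(\vec L,h)|$ between $(1\pm c(\epsilon))$ times the target, and only then send $\epsilon\to 0$ with $c(\epsilon)\to 0$. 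The use of Lemma~\ref{lem bounding the rest} to show the fuzzy boundary region of the box contributes negligibly is the other point requiring care, since the number of components straddling the boundary is a codimension-one effect and Lemma~\ref{lem bounding the rest} gives exactly the $e^{\Vert\vec L\Vert}$ bound needed to see it is lower order once the sliver width $\to 0$.

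\begin{proof}[Proof of Proposition \ref{prop critical in box}]
\end{proof}
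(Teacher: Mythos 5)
Your proposal follows essentially the same strategy as the paper's proof: compute $\vol(\CG_{\epsilon-\crit}(\vec L,h))$ by integrating Corollary \ref{kor fat graph} over $\Sigma^{\ver(X)}$ via the cover $\Pi$, use Proposition \ref{prop sum up section 5} both for the per-component volume $(\tfrac{2}{\sqrt 3}\epsilon^2)^V$ and for the $O(\epsilon)$ control of edge lengths within a component, sandwich the count between volumes of slightly fattened/shrunk boxes, divide, and let $\epsilon\to 0$; the arithmetic you deferred does close up as you anticipate.

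One small remark on logic: you have the sandwich running $|\BFG(\vec L',h')|\lesssim\vol(\cdots)/(\tfrac{2}{\sqrt 3}\epsilon^2)^V\lesssim|\BFG(\vec L'',h'')|$ and then propose to invoke Lemma \ref{lem bounding the rest} to compare the $\vec L',h'$ and $\vec L'',h''$ counts with the $\vec L,h$ count; but Lemma \ref{lem bounding the rest} only yields an $O(e^{\Vert\vec L\Vert})$ bound for the sliver, the same order as the main term, so it does not show the sliver is negligible. The paper sidesteps this by inverting the roles: it fixes the set of components $G_\epsilon(\vec L,h)\subset\pi_0(\CG)$ whose $\epsilon$-critical part sits inside (resp.\ meets) the target box, so that $|G_\epsilon|\le|\BFG(\vec L,h)|\le|\hat G_\epsilon|$ with the exact quantity $|\BFG(\vec L,h)|$ in the middle, and then bounds $\vol(\cup G_\epsilon)$ and $\vol(\cup\hat G_\epsilon)$ above and below by $\vol(\CG_{\epsilon-\crit})$ in boxes shifted by $\pm 2\epsilon$ in $\vec L$ and by $\mp 4\epsilon$ in $h$. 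The resulting two-sided bound on $|\BFG(\vec L,h)|$ is then given by the \emph{formula} evaluated at the perturbed boxes, and continuity of that formula in $(\vec L,h)$ makes both sides converge as $\epsilon\to 0$; no auxiliary estimate on sliver counts is needed. With that rearrangement your argument matches the paper's.
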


\begin{proof}
Denote by $\CG(\vec L,h)\subset\CG$ the set of all realizations $\phi:X\to\Sigma$ with $\ell_\Sigma(\phi)\in[L_e,L_e+h]$ for all $e\in\edg$ and then let
\begin{align*}
G_{\epsilon}(\vec L,h)&=\left\{ \CG^\phi\in\pi_0(\CG)\text{ with }\CG^{\phi}_{\epsilon-\crit}\subset \CG(\vec L,h)\right\}\\
\hat G_{\epsilon}(\vec L,h)&=\left\{ \CG^\phi\in\pi_0(\CG)\text{ with }\CG^{\phi}_{\epsilon-\crit}\cap\CG(\vec L,h)\neq\emptyset\right\}
\end{align*}
be the sets of connected components of $\CG$ whose set of $\epsilon$-critical realizations is fully contained in (resp. which meet) $\CG(\vec L,h)$. It follows from Corollary \ref{lem critical near kind of critical} that there is some $\ell_0$ such that as long as $\vec L$ satisfies that $\min L_e\ge\ell_0$, then each component listed in $\hat G_\epsilon(\vec L,h)$ contains exactly one critical realization of the graph $X$. Assuming from now on that we are in this situation we get that 
$$\vert G_{\epsilon}(\vec L,h)\vert\le\vert\BFG(\vec L, h)\vert\le\vert\hat G_\epsilon(\vec L,h)\vert.$$

Now, from \eqref{eq prop sum up section 5 1} in Proposition \ref{prop sum up section 5} we get that for all $\delta>0$ there is $\ell_1>\ell_0$ with
$$(1-\delta)\cdot\vol\left(\bigcup_{\CG^\phi\in G_\epsilon(\vec L,h)}\CG^\phi_{\epsilon-\crit}\right)<\left(\frac 2{\sqrt 3}\epsilon^2\right)^V\cdot\vert G_{\epsilon}(\vec L,h)\vert$$
$$(1+\delta)\cdot\vol\left(\bigcup_{\CG^\phi\in\hat G_\epsilon(\vec L,h)}\CG^\phi_{\epsilon-\crit}\right)>\left(\frac 2{\sqrt 3}\epsilon^2\right)^V\cdot \vert \hat G_{\epsilon}(\vec L,h)\vert$$
whenever $\epsilon$ is small enough and $\min L_e\ge\ell_1$. Altogether we get that for all $\epsilon$ positive and small we have
\begin{align*}
(1-\delta)\cdot\left(\frac 2{\sqrt 3}\epsilon^2\right)^{-V}\cdot\vol\left(\bigcup_{\CG^\phi\in G_\epsilon(\vec L,h)}\CG^\phi_{\epsilon-\crit}\right)&<\vert\BFG(\vec L, h)\vert\\
(1+\delta)\cdot\left(\frac 2{\sqrt 3}\epsilon^2\right)^{-V}\cdot\vol\left(\bigcup_{\CG^\phi\in\hat G_\epsilon(\vec L,h)}\CG^\phi_{\epsilon-\crit}\right)&>\vert\BFG(\vec L, h)\vert
\end{align*}
for all $\vec L$ with $\min L_e\ge\ell_1$.

We get now from \eqref{eq prop sum up section 5 2} in Proposition \ref{prop sum up section 5} that there is $\ell_2>\ell_1$ such that, as long as $\epsilon$ is under some threshold, we have that whenever $\phi,\psi\in\CG$ are homotopic $\epsilon$-critical realizations with $\ell(\phi(e)),\ell(\psi(e))\ge\ell_2$ for all $e\in\edg$, then we have that the lengths $\ell(\phi(e))$ and $\ell(\psi(e))$ differ by at most $2\epsilon$ for each edge $e\in\edg$. This implies that for any such $\vec L$ and $\epsilon$ we have
\begin{align*}
\CG_{\epsilon-\crit}(\vec L+[2\epsilon],h-4\epsilon)&\subset \bigcup_{\CG^\phi\in G_\epsilon(\vec L,h)}\CG^\phi_{\epsilon-\crit}\\
\CG_{\epsilon-\crit}(\vec L-[2\epsilon],h+4\epsilon)&\supset \bigcup_{\CG^\phi\in\hat G_\epsilon(\vec L,h)}\CG^\phi_{\epsilon-\crit}
\end{align*}
where $\vec L+[t]\in\BR^{\edg}$ is the vector with entries $(\vec L+[t])_e=\vec L_e+t$.

Let us summarize what we have obtained so far:
\begin{align*}
(1-\delta)\cdot\left(\frac 2{\sqrt 3}\epsilon^2\right)^{-V}\cdot\vol\left(\CG_{\epsilon-\crit}(\vec L+[2\epsilon],h-4\epsilon)\right)&<\vert\BFG(\vec L, h)\vert\\
(1+\delta)\cdot\left(\frac 2{\sqrt 3}\epsilon^2\right)^{-V}\cdot\vol\left(\CG_{\epsilon-\crit}(\vec L-[2\epsilon],h+4\epsilon)\right)&>\vert\BFG(\vec L, h)\vert.
\end{align*}
Our next goal is to compute the volumes on the left. Using the cover \eqref{eq cover} we can compute volumes $\vol(\CG_{\epsilon-\crit}(\vec L,h))$ by integrating over $\Sigma^{\ver}$ the cardinality of the intersection 
$$\BFG_{\vec x,\epsilon-\crit}^X(\vec L,h)=\Pi^{-1}(\vec x)\cap\CG_{\epsilon-\crit}(\vec L,h)$$
of the fiber $\Pi^{-1}(\vec x)$ with the set we care about. In light of Corollary \ref{kor fat graph} we get in this way that
\begin{align*}
\vol(\CG_{\epsilon-\crit}(\vec L,h))
&=\int_{\Sigma^{\ver}} \left\vert\BFG_{\vec x,\epsilon-\crit}^X(\vec L,h)\right\vert\, d\vec x\\
&\stackrel{\text{Cor. \ref{kor fat graph}}}\sim\int_{\Sigma^{\ver}} \epsilon^{4\vert\chi(X)\vert}\cdot\left(\frac{2}{3}\right)^{2\chi(X)}\cdot\pi^{\chi(X)}\cdot\frac{(e^h-1)^{-3\chi(X)}\cdot e^{\Vert\vec L\Vert}}{\vol(\Sigma)^{-3\chi(X)}}\, d\vec x\\
&=\epsilon^{4\vert\chi(X)\vert}\cdot\left(\frac{2}{3}\right)^{2\chi(X)}\cdot\pi^{\chi(X)}\cdot\frac{(e^h-1)^{-3\chi(X)}\cdot e^{\Vert\vec L\Vert}}{\vol(\Sigma)^{-3\chi(X)}}\vol(\Sigma)^{V}\\
&=\epsilon^{4\vert\chi(X)\vert}\cdot\left(\frac{2}{3}\right)^{2\chi(X)}\cdot\pi^{\chi(X)}\cdot\frac{(e^h-1)^{-3\chi(X)}\cdot e^{\Vert\vec L\Vert}}{\vol(\Sigma)^{-\chi(X)}}
\end{align*}
where we have used that $V=-2\chi(X)$ and where the asymptotics hold true when $\min_eL_e\to\infty$. This means that whenever $\min_eL_e$ is large enough we have
\begin{align*}
(1-\delta)\cdot\frac {2^{4\chi(X)}}{3^{3\chi(X)}}\cdot\pi^{\chi(X)}\cdot
\frac{(e^{h-4\epsilon}-1)^{-3\chi(X)}\cdot e^{\sum_{e\in\edg}(L_e+2\epsilon)}}{\vol(\Sigma)^{-\chi(X)}}&<\vert\BFG(\vec L, h)\vert\\
(1+\delta)\cdot\frac {2^{4\chi(X)}}{3^{3\chi(X)}}\cdot\pi^{\chi(X)}\cdot
\frac{(e^{h+4\epsilon}-1)^{-3\chi(X)}\cdot e^{\sum_{e\in\edg}(L_e-2\epsilon)}}{\vol(\Sigma)^{-\chi(X)}}&>\vert\BFG(\vec L, h)\vert
\end{align*}
Since this is true for all $\epsilon>0$ we get that
$$(1-\delta)\cdot\frac {2^{4\chi(X)}}{3^{3\chi(X)}}\cdot\pi^{\chi(X)}\cdot
\frac{(e^{h}-1)^{-3\chi(X)}\cdot e^{\Vert\vec L\Vert}}{\vol(\Sigma)^{-\chi(X)}}\le\vert\BFG(\vec L, h)\vert$$
$$(1+\delta)\cdot\frac {2^{4\chi(X)}}{3^{3\chi(X)}}\cdot\pi^{\chi(X)}\cdot
\frac{(e^{h}-1)^{-3\chi(X)}\cdot e^{\Vert\vec L\Vert}}{\vol(\Sigma)^{-\chi(X)}}\ge\vert\BFG(\vec L, h)\vert$$
and hence, since for all $\delta>0$ we can choose $L$ large enough so that the above bounds hold, we have
$$\frac {2^{4\chi(X)}}{3^{3\chi(X)}}\cdot\pi^{\chi(X)}\cdot
\frac{(e^{h}-1)^{-3\chi(X)}\cdot e^{\Vert\vec L\Vert}}{\vol(\Sigma)^{-\chi(X)}}\sim\vert\BFG(\vec L, h)\vert$$
as we wanted to prove.
\end{proof}

Armed with Lemma \ref{lem bounding the rest} and Proposition \ref{prop critical in box} we can now prove the theorem:

\begin{proof}[Proof of Theorem \ref{thm counting minimising graphs}]
Let $h>0$ be small, and for $\vec n\in \mathbb{N}^{\edg}$ consider, with the same notation as in \eqref{eq critical in a box}, the set $\BFG(h\cdot\vec n,h)$. Setting
$$\Delta(N)=\{\vec n\in\BN^{\edg}\text{ with }\Vert\vec n\Vert\le N\}$$
where $\Vert\vec n\Vert=\sum_e n_e$, note that
\begin{equation}\label{eq proof theorem bla}
\sum_{\vec n\in\Delta(N-E)}\vert\BFG(h\cdot\vec n,h)\vert\le\vert\BFG(h\cdot N)\vert\le\sum_{\vec n\in\Delta(N)}\vert\BFG(h\cdot\vec n,h)\vert
\end{equation}
where $\BFG(h\cdot N)=\BFG^X(h\cdot N)$ is as in \eqref{eq this is an actual name} and where, once again, $E=\vert\edg\vert$ is the number of edges of the graph $X$. Finally, write 
$$\kappa=\frac {2^{4\chi(X)}}{3^{3\chi(X)}}\cdot(\pi\cdot\vol(\Sigma))^{\chi(X)}$$
Proposition \ref{prop critical in box} now reads as
$$\vert\BFG(h\cdot\vec n,h)\vert\sim\kappa\cdot(e^{h}-1)^{-3\chi(X)}\cdot e^{h\cdot\Vert\vec n\Vert}$$
where the asymptotic holds for fixed $h$ when $\min\vec n_e\to\infty$. This means that for all $h$ and $\delta$ there is $n(h,\delta)$ with 
$$\vert\BFG(h\cdot\vec n,h)\vert> (\kappa-\delta)\cdot(e^{h}-1)^{-3\chi(X)}\cdot e^{h\cdot\Vert\vec n\Vert}$$
and
$$\vert\BFG(h\cdot\vec n,h)\vert< (\kappa+\delta)\cdot(e^{h}-1)^{-3\chi(X)}\cdot e^{h\cdot\Vert\vec n\Vert}$$
for all $\vec n$ with $\min\vec n_e\ge n(h,\delta)$. It follows thus from the left side of \eqref{eq proof theorem bla} that
\begin{align*}
\vert\BFG(h\cdot N)\vert
&\ge\sum_{\tiny\begin{array}{c}\vec n\in\Delta(N-E),\\ \min\vec n_e\ge n(h,\delta)\end{array}}\vert\BFG(h\cdot\vec n,h)\vert\\
&> (\kappa-\delta)(e^h-1)^{-3\chi(X)}\sum_{\tiny\begin{array}{c}\vec n\in\Delta(N-E),\\ \min\vec n_e\ge n(h,\delta)\end{array}}e^{h\cdot\Vert\vec n\Vert}\\
&= (\kappa-\delta)(e^h-1)^{-3\chi(X)}\sum_{K=0}^{N-E}P(K)\cdot e^{h\cdot K}
\end{align*}
where $P(K)$ is the number of those $\vec n\in\BN^{\edg}$ with $\Vert n\Vert=K$ and $\min\vec n_e\ge n(h,\delta)$. As  $K$ tends to $\infty$ we have $P(K)\sim\frac 1{(E-1)!}K^{E-1}$. Taking into account that $E=-3\chi(X)$ we get that for all $N$ large enough we have
\begin{align*}
\vert\BFG(h\cdot N)\vert
&\succeq\frac{\kappa-\delta}{(-3\chi(X)-1)!}(e^h-1)^{-3\chi(X)}\sum_{K=0}^{N-E}K^{-3\chi(X)-1}\cdot e^{h\cdot K}\\
&=\frac{\kappa-\delta}{(-3\chi(X)-1)!}\left(\frac{e^h-1}h\right)^{-3\chi(X)}\sum_{K=0}^{N-E}(hK)^{-3\chi(X)-1}\cdot e^{h\cdot K}\cdot h\\
&\succeq \frac{\kappa-\delta}{(-3\chi(X)-1)!}\left(\frac{e^h-1}h\right)^{-3\chi(X)}\int_0^{(N-E)h} x^{-3\chi(X)-1}e^xdx
\end{align*}
where the symbol $\succeq$ means that asymptotically the ratio between the left side and the right side is at least $1$. 
When $N\to\infty$ then the value of the integral is asymptotic to $((N-E)\cdot h)^{-3\chi(X)-1}\cdot e^{(N-E)h}$, and this means that for all $N$ large enough we have
$$\vert\BFG(h\cdot N)\vert\succeq \frac{\kappa-\delta}{(-3\chi(X)-1)!}\left(\frac{e^h-1}h\right)^{-3\chi(X)}(Nh-Eh)^{-3\chi(X)-1}\cdot e^{Nh-Eh}$$
This being true for all $\delta$ and all $h$, and replacing $Nh$ by $L$, we have
$$\vert\BFG(L)\vert\succeq \frac{\kappa}{(-3\chi(X)-1)!}L^{-3\chi(X)-1}\cdot e^{L}$$
as $L\to\infty$.
In other words, we have established the desired asymptotic lower bound.
\medskip

Starting with the upper bound we get, again for $h$ positive and small, from the right side in \eqref{eq proof theorem bla} that
$$\vert\BFG(h\cdot N)\vert\le\sum_{\tiny\begin{array}{c}\vec n\in\Delta(N),\\ \min\vec n_e\ge n(h,\delta)\end{array}}\vert\BFG(h\cdot\vec n,h)\vert+\sum_{\tiny\begin{array}{c}\vec n\in\Delta(N),\\ \min\vec n_e\le n(h,\delta)\end{array}}\vert\BFG(h\cdot\vec n,h)\vert.$$
The same calculation as above yields that 
\begin{equation}\label{eq waiting for DHL1}
\begin{split}
\sum_{\tiny\begin{array}{c}\vec n\in\Delta(N),\\ \min\vec n_e\ge n(h,\delta)\end{array}}&\vert\BFG(h\cdot\vec n,h)\vert\preceq \\ &\preceq\frac{\kappa+\delta}{(-3\chi(X)-1)!}\left(\frac{e^h-1}h\right)^{-3\chi(X)}(h(N+E))^{-3\chi(X)-1}\cdot e^{h(N + E)}
\end{split}
\end{equation}
as $N\to\infty$. On the other hand we get from Lemma \ref{lem bounding the rest} that there is $C>0$ with $\vert\BFG(h\cdot\vec n,h)\vert\le C\cdot e^{h\cdot\vert\vec n\vert}$ for all $\vec n$. This means thus that
$$\sum_{\tiny\begin{array}{c}\vec n\in\Delta(N),\\ \min\vec n_e\le n(h,\delta)\end{array}}\vert\BFG(h\cdot\vec n,h)\vert\le C\cdot\sum_{K=1}^NQ(K)\cdot e^{h\cdot K}$$
where $Q(K)$ is the number of those $\vec n\in\BN^{\edg}$ with $\min\vec n_e< n(h,\delta)$ and $\vert\vec n\vert=K$. When $K\to\infty$ the function $Q(K)$ is asymptotic to $C'\cdot K^{E-2}$ for some positive constant $C'$, meaning that we have 
$$\sum_{\tiny\begin{array}{c}\vec n\in\Delta(N),\\ \min\vec n_e\le n(h,\delta)\end{array}}\vert\BFG(h\cdot\vec n,h)\vert\le \frac{(1+\delta)\cdot C\cdot C'}{h^{E-1}}\cdot\sum_{K=1}^N h\cdot(hK)^{E-2}\cdot e^{h\cdot K}$$
for all $L$ large enough. A similar estimation as the one above yields thus that there is another positive constant $C''$ with 
\begin{equation}\label{eq waiting for DHL2}
\sum_{\tiny\begin{array}{c}\vec n\in\Delta(N),\\ \min\vec n_e\le n(h,\delta)\end{array}}\vert\BFG(h\cdot\vec n,h)\vert\le C''\cdot (N\cdot h)^{-3\chi(X)-2}\cdot e^{N\cdot H}
\end{equation}
The quantity in the right hand side of \eqref{eq waiting for DHL2} is negligible when compared to the right hand side of \eqref{eq waiting for DHL1}, and this means that we have
$$\vert\BFG(h\cdot N)\vert\preceq \frac{\kappa+2\delta}{(-3\chi(X)-1)!}\left(\frac{e^h-1}h\right)^{-3\chi(X)}(h(N+E))^{-3\chi(X)-1}\cdot e^{h(N+E)}$$
for all large $N$. Since this holds true for all $\delta$ and all $h$, replacing $hN$ by $L$ we deduce that 
$$\vert\BFG(L)\vert\preceq \frac{\kappa}{(-3\chi(X)-1)!}L^{-3\chi(X)-1}\cdot e^{L}.$$
Having now also established the upper asymptotic bound, we are done with the proof of the theorem.
\end{proof}

Before moving on to other matters, we include an observation that we will use later on. The basic strategy of the proof of Theorem \ref{thm counting minimising graphs} was to decompose the problem of counting all critical realizations of at most some given length into the problem of counting those whose edge lengths are in a given box and then adding over all boxes. For most boxes, Proposition \ref{prop critical in box} gives a pretty precise estimation for the number of critical realizations in the box, and from Lemma \ref{lem bounding the rest} we get an upper bound for all boxes. We used these two to get the desired upper bound in the theorem, deducing that we could ignore the boxes where Proposition \ref{prop critical in box} does not apply. A very similar argument implies that the set of critical realizations where some edge is shorter than $\ell_0$ is also negligible. We state this observation as a lemma:

\begin{lem}\label{lem most critical realizations are long}
For all $\ell$, all but a negligible set of critical realizations are $\ell$-long.\qed
\end{lem}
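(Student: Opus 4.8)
The plan is to mirror the two‑part estimate used in the proof of Theorem~\ref{thm counting minimising graphs}, but this time isolating the contribution of ``short'' edges rather than the boxes to which Proposition~\ref{prop critical in box} fails to apply. Fix $\ell>0$ and an edge $e_0\in\edg(X)$, and let us bound the number of critical realizations $\phi:X\to\Sigma$ with $\ell_\Sigma(\phi)\le L$ and $\ell_\Sigma(\phi(e_0))\le\ell$. As in the theorem, fix $h>0$ small and, for $\vec n\in\BN^{\edg}$, work with the boxes $\BFG(h\cdot\vec n,h)$; the realizations we want to count lie in the union of those boxes with $\Vert\vec n\Vert\le N=\lceil L/h\rceil$ and $n_{e_0}\le \lceil\ell/h\rceil$. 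Since by Lemma~\ref{lem bounding the rest} every box satisfies $\vert\BFG(h\cdot\vec n,h)\vert\le\bconst\cdot e^{h\Vert\vec n\Vert}$, we get
$$\#\{\phi\ \ell\text{-short at }e_0,\ \ell_\Sigma(\phi)\le L\}\le\bconst\sum_{\substack{\vec n\in\Delta(N)\\ n_{e_0}\le\lceil\ell/h\rceil}} e^{h\Vert\vec n\Vert}\le\bconst\sum_{K=0}^{N} R(K)\cdot e^{hK},$$
where $R(K)$ is the number of $\vec n\in\BN^{\edg}$ with $\Vert\vec n\Vert=K$ and $n_{e_0}\le\lceil\ell/h\rceil$.

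The key point is the polynomial gain: bounding one coordinate by a constant drops the dimension of the simplex by one, so $R(K)\sim \bconst\cdot K^{E-2}$ as $K\to\infty$ instead of $\sim\bconst\cdot K^{E-1}$. Plugging this into the sum and comparing with an integral exactly as in the derivation of \eqref{eq waiting for DHL2}, one obtains
$$\#\{\phi\ \ell\text{-short at }e_0,\ \ell_\Sigma(\phi)\le L\}\le\bconst\cdot L^{E-2}\cdot e^{L}=\bconst\cdot L^{-3\chi(X)-2}\cdot e^L.$$
Summing this bound over the finitely many edges $e_0\in\edg(X)$ shows that the set of critical realizations of length $\le L$ that fail to be $\ell$-long is $O(L^{-3\chi(X)-2}e^L)$, which is negligible compared with the asymptotic $\vert\BFG^X(L)\vert\sim\bconst\cdot L^{-3\chi(X)-1}e^L$ furnished by Theorem~\ref{thm counting minimising graphs}. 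This gives the lemma.

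I do not expect any genuine obstacle here: the argument is a routine repetition of the upper‑bound half of the proof of Theorem~\ref{thm counting minimising graphs}, the only new input being the elementary observation that constraining one of the $E$ coordinates to a bounded range costs a factor of $1/K$ in the counting function $R(K)$, hence a factor of $1/L$ overall. The only mild care needed is to make sure the implied constants are uniform in $\vec n$ (which they are, since Lemma~\ref{lem bounding the rest} is uniform) and that $h$ and $\delta$ play no role once we only want a one‑sided bound of the correct polynomial order; we may in fact fix $h=1$ throughout this argument.
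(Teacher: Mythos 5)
Your proof is correct and is essentially the argument the paper has in mind: the authors state the lemma without a written proof but explicitly point to the upper‑bound half of the proof of Theorem~\ref{thm counting minimising graphs} as the template, and your decomposition into boxes, appeal to Lemma~\ref{lem bounding the rest} for the per‑box bound, and observation that constraining $n_{e_0}$ to a bounded range reduces $R(K)$ from order $K^{E-1}$ to order $K^{E-2}$ (hence a factor of $1/L$ against the asymptotics of Theorem~\ref{thm counting minimising graphs}) is exactly that argument.
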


It is probably clear from the context what negligible means here, but to be precise, we mean that the set is negligible inside the set of all critical realizations in the sense of \eqref{eq defi negligible} below.

\section{Fillings}\label{sec fillings}

Let $S_g$ be a compact, connected and oriented surface of genus $g$ and with one boundary component. Below we will be interested in continuous maps
\begin{equation}\label{eq defi boundary}
\beta:S_g\to\Sigma
\end{equation}
which send $\D S_g$ to a closed geodesic $\gamma=\beta(S_g)$. We will refer to such a map as a {\em filling of genus $g$ of $\gamma$}, a {\em genus $g$ filling of $\gamma$}, or just simply as a {\em filling of $\gamma$} when the genus $g$ is either undetermined or understood from the context. The genus of a curve $\gamma\subset\Sigma$ is the infimum of all $g$'s for which there is a genus $g$ filling \eqref{eq defi boundary} with $\gamma=\beta(\D S_g)$. Note that the genus of a curve is infinite unless $\gamma$ is homologically trivial, that is unless $\gamma$ is represented by elements in the commutator subgroup of $\pi_1(\Sigma)$. Indeed, as an element of $\pi_1(S_g)$ the boundary $\D S_g$ is a product of $g$ commutators. It follows that if a curve $\gamma$ in $\Sigma$ has genus $g$ then it is, when considered as an element in $\pi_1(\Sigma)$, a product of $g$ commutators. Conversely, if $\gamma$ is a product of $g$ commutators then there is a map as in \eqref{eq defi boundary} with $S_g$ of genus $g$. In a nutshell, what we have is that the genus and the commutator length of $\gamma$ agree. We record this fact for ease of reference:

\begin{lem}
The genus of a curve agrees with its commutator length.\qed
\end{lem}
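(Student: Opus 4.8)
The plan is to prove the two inequalities $cl(\gamma)\le\text{genus}(\gamma)$ and $\text{genus}(\gamma)\le cl(\gamma)$ separately, working throughout with free homotopy classes of loops, equivalently conjugacy classes in $\pi_1(\Sigma)$. Note first that both quantities are $+\infty$ precisely when $\gamma$ fails to be homologically trivial: a product of commutators lies in $[\pi_1(\Sigma),\pi_1(\Sigma)]$, and conversely the boundary of $S_g$ represents a product of $g$ commutators in $\pi_1(S_g)$. So it suffices to treat the homologically trivial case.

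For $cl(\gamma)\le\text{genus}(\gamma)$, suppose $\gamma$ admits a genus $g$ filling $\beta:S_g\to\Sigma$. Choose a basepoint $p\in\partial S_g$ and the standard free generators $\alpha_1,\beta_1,\dots,\alpha_g,\beta_g$ of $\pi_1(S_g,p)$, for which the based boundary loop represents $\prod_{i=1}^g[\alpha_i,\beta_i]$. Pushing forward by $\beta_*:\pi_1(S_g,p)\to\pi_1(\Sigma,\beta(p))$ expresses $\beta|_{\partial S_g}$, and hence its conjugacy class $\gamma$, as the product $\prod_{i=1}^g[\beta_*\alpha_i,\beta_*\beta_i]$ of $g$ commutators. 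Thus $cl(\gamma)\le g$, and taking the infimum over all fillings gives $cl(\gamma)\le\text{genus}(\gamma)$.

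For the reverse inequality I would argue as follows. Set $g=cl(\gamma)$ and pick $a_1,b_1,\dots,a_g,b_g\in\pi_1(\Sigma)$ with $\prod_{i=1}^g[a_i,b_i]$ conjugate to $\gamma$. Since $S_g$ has nonempty boundary it deformation retracts onto $\bigvee_{2g}S^1$, so $\pi_1(S_g,p)$ is \emph{free} on $\alpha_1,\dots,\beta_g$; in particular there is no cell to be attached compatibly (in contrast to the closed surface case), so one may freely choose a continuous map $\beta_0:S_g\to\Sigma$ inducing $\alpha_i\mapsto a_i$ and $\beta_i\mapsto b_i$ on $\pi_1$. Its restriction to $\partial S_g$ then represents $\prod_i[a_i,b_i]$, hence is freely homotopic to $\gamma$. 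Using that $(S_g,\partial S_g)$ has the homotopy extension property, this free homotopy of the boundary extends to a homotopy of $\beta_0$; finally pulling the boundary loop tight to the unique closed geodesic in its class yields a genus $g$ filling $\beta$ with $\beta(\partial S_g)=\gamma$. Therefore $\text{genus}(\gamma)\le g=cl(\gamma)$, and combining the two inequalities completes the proof (and shows the infimum defining the genus is attained).

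I do not expect a genuine obstacle here: the content is the standard dictionary between maps of surfaces with boundary and words in $\pi_1$, and the only points needing care are the mild bookkeeping ones above — that $S_g$ with one boundary component is homotopy equivalent to a wedge of $2g$ circles, so a map out of it is determined up to homotopy by its effect on $\pi_1$, together with the routine passage between based and free homotopy classes.
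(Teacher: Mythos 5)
Your proposal is correct and takes essentially the same approach as the paper: both directions are exactly the two inequalities the paper sketches in the paragraph preceding the lemma (push forward the commutator expression for $\partial S_g$, and conversely realize a given commutator expression by a map of $S_g$ using that $\pi_1(S_g)$ is free). You simply flesh out the routine bookkeeping the paper leaves implicit.
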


Continuing with the same notation and terminology, suppose that a curve $\gamma$ in $\Sigma$ has genus $g$. We then refer to any $\beta:S\to\Sigma$ as in \eqref{eq defi boundary} with $S$ of genus $g$ as a {\em minimal genus filling}. Minimal genus fillings have very nice topological properties. Indeed, suppose that $\beta:S_g\to\Sigma$ is a filling as in \eqref{eq defi boundary} and suppose that there is an essential simple curve $m\subset S_g$ with $\beta(m)$ homotopically trivial. Then, performing surgery on the surface $S_g$ and the map $\beta$ we get a smaller genus filling $\beta':S_{g'}\to\Sigma$ with $\beta'(\D S_{g'})=\beta(\D S_g)$ and $g'<g$. It follows that if $\beta:S_g\to\Sigma$ is a minimal genus filling for $\gamma=\beta(\D S_g)$ then $\beta$ is {\em geometrically incompressible} in the sense that there are no elements in $\ker(\beta_*:\pi_1(S_g)\to\pi_1(\Sigma))$ which are represented by simple curves. We record this fact:

\begin{lem}\label{lem minimal filling is incompressible}
Minimal genus fillings are geometrically incompressible.\qed
\end{lem}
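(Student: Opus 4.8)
The plan is to establish the contrapositive. Suppose $\beta\colon S_g\to\Sigma$ is a genus $g$ filling of $\gamma$ which is \emph{not} geometrically incompressible; I will produce a filling of $\gamma$ of genus strictly smaller than $g$, contradicting the assumption that $\beta$ realizes the genus of $\gamma$. By hypothesis there is a simple closed curve $m\subset S_g$ which is essential in $S_g$ and with $\beta_*[m]=1$ in $\pi_1(\Sigma)$; here essential means not null-homotopic in $S_g$ and not isotopic into $\partial S_g$. The second condition is automatic for a curve in the kernel: if $m$ were boundary-parallel then $\beta(m)$ would be freely homotopic to $\gamma$, which is a closed geodesic and hence not null-homotopic. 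After an isotopy we may assume $m$ lies in the interior of $S_g$, and since $\beta_*[m]=1$ the loop $\beta\vert_m\colon m\to\Sigma$ is freely null-homotopic.

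First I would treat the case that $m$ is non-separating. Cutting $S_g$ along $m$ produces a connected surface $\hat S$ of genus $g-1$ whose boundary consists of the original circle $\partial S_g$ together with two new circles $m_+,m_-$, each a copy of $m$. Glue a disk to each of $m_+$ and $m_-$ to obtain a surface $S'$ of genus $g-1$ with connected boundary $\partial S'=\partial S_g$. Because $\beta\vert_m$ is null-homotopic, the restriction of $\beta$ to each of $m_\pm$ extends continuously over the corresponding disk; these extensions agree with $\beta$ along $m_\pm$ and therefore assemble, together with $\beta$ on $\hat S$, into a continuous map $\beta'\colon S'\to\Sigma$ with $\beta'(\partial S')=\beta(\partial S_g)=\gamma$. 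Thus $\gamma$ has a filling of genus $g-1<g$, a contradiction.

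Next I would handle the separating case. Cutting along $m$ decomposes $S_g$ as $S'\cup_m S''$, where I label so that $S'$ is the piece containing $\partial S_g$; write $g'$ and $g''$ for the genera of $S'$ and $S''$, so that $g'+g''=g$, with $S'$ carrying two boundary circles ($\partial S_g$ and a copy of $m$) and $S''$ carrying one (a copy of $m$). Essentiality of $m$ rules out the two degenerate configurations: if $g''=0$ then $S''$ is a disk and $m$ is null-homotopic, while if $g'=0$ then $S'$ is an annulus and $m$ is boundary-parallel. Hence $g',g''\ge 1$, and in particular $g'<g$. Capping off the copy of $m$ in $\partial S'$ by a disk over which $\beta\vert_{S'}$ extends (again using that $\beta\vert_m$ is null-homotopic) yields a genus $g'$ filling of $\gamma$, contradicting minimality. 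This proves the lemma.

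The argument is elementary surface topology throughout; the hard part, such as it is, is simply the bookkeeping in the separating case---one must check that excluding the two inessential pictures forces the genus of the piece carrying $\partial S_g$ to drop. The single non-formal ingredient is the extension of $\beta$ over the capping disks, and the hypothesis $\beta_*[m]=1$ is precisely what makes this possible.
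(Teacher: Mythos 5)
Your proof is correct and follows the same surgery/compression argument the paper gives, just spelled out in full: the paper's proof is the single sentence ``performing surgery on the surface $S_g$ and the map $\beta$ we get a smaller genus filling,'' and you have supplied the case analysis (non-separating vs.\ separating $m$), the Euler characteristic bookkeeping showing the genus drops, and the observation that boundary-parallelness is automatically excluded because $\gamma$ is an essential geodesic. Nothing is missing; this is exactly what the paper leaves implicit.
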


From now on we will be working exclusively with minimal genus fillings and the reader can safely add the words ``minimal genus" every time they see the word ``filling". 

We will not be that much interested in individual fillings, but rather in homotopy classes of fillings. In particular, we will allow ourselves to select particularly nice fillings. More precisely, we will be working with {\em hyperbolic fillings}, by what we will understand a particular kind of pleated surface. We remind the reader that according to Thurston \cite{Thurston} (see also \cite{CEG}) a pleated surface is a map from a hyperbolic surface to a hyperbolic manifold with the property that every point in the domain is contained in a geodesic segment which gets mapped isometrically.

\begin{defi*}
A filling $\beta:S\to\Sigma$ is {\em hyperbolic} if $S$ is endowed with a hyperbolic metric with geodesic boundary and if the map $\beta$ is such that every $x\in S$ is contained in the interior of a geodesic arc $I$ such that $\beta$ maps $I$ isometrically to a geodesic arc in $\Sigma$.
\end{defi*}

An important observation is that if $\beta:S\to\Sigma$ is a hyperbolic filling, if $x\in\D S$, and if $I\subset S$ is a geodesic segment with $x$ in its interior, then $I\subset\D S$. It follows that hyperbolic fillings map the boundary isometrically.

\begin{lem}
The restriction of any hyperbolic filling $\beta:S\to\Sigma$ to $\partial S$ is geodesic. \qed
\end{lem}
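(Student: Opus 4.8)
The plan is to unwind the definition of hyperbolic filling and use the observation recorded just before the statement. First I would fix a hyperbolic filling $\beta:S\to\Sigma$ and take a point $x\in\D S$. By the definition of a pleated surface, there is a geodesic arc $I\subset S$ with $x$ in its interior such that $\beta$ maps $I$ isometrically onto a geodesic arc in $\Sigma$. The observation immediately preceding the statement says that such an $I$ must in fact lie in $\D S$; the reason is that $\D S$ is itself a geodesic (the metric on $S$ has geodesic boundary), so any geodesic segment of $S$ passing through an interior point of $\D S$ and not contained in $\D S$ would have to cross $\D S$ transversally, which is impossible since a point of $\D S$ has a half-disc neighbourhood in $S$. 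Hence $I\subset\D S$, and $\beta|_I$ is an isometric embedding of a subarc of $\D S$ onto a geodesic arc in $\Sigma$.

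Next I would promote this local statement to the global one. Every point of $\D S$ has a neighbourhood in $\D S$ on which $\beta$ is an isometric embedding onto a geodesic arc of $\Sigma$; in particular $\beta$ restricted to $\D S$ is locally a local isometry onto its image and locally geodesic. A curve in $\Sigma$ that is locally geodesic is geodesic, so $\beta(\D S)$ is a closed geodesic and $\beta|_{\D S}$ parametrizes it proportionally to arc length. (One can also simply note that $\beta$ sends $\D S$ onto the closed geodesic $\gamma$ by the definition of a filling, and the above shows this parametrization has unit speed, i.e. is ``geodesic'' in the sense meant here.)

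I expect this to be essentially immediate given the material already in place; the only point requiring a word of care is the one already flagged in the excerpt, namely that a geodesic segment of $S$ through an interior point of a boundary geodesic, being length-minimizing locally, cannot leave $\D S$ — this is where the assumption that $S$ has \emph{geodesic} boundary is used. There is no real obstacle here; the lemma is a formal consequence of the definitions of hyperbolic filling and pleated surface together with the preceding observation.
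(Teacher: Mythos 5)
Your proof is correct and follows the paper's own route: the paper states the same key observation (that any geodesic segment through an interior point of $\partial S$ must lie in $\partial S$) immediately before the lemma and derives the conclusion from it. You have simply spelled out the half-disc/geodesic-boundary reasoning and the passage from local to global that the paper leaves implicit.
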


We should not delay making sure that hyperbolic fillings exist:

\begin{prop}\label{prop existence hyperbolic filling}
Every minimal genus filling is homotopic to a hyperbolic filling. 
\end{prop}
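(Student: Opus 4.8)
The plan is to start with a minimal genus filling $\beta_0: S_g \to \Sigma$ and deform it to a pleated surface in the standard way, being careful about the boundary. First I would isotope the closed geodesic $\gamma = \beta_0(\partial S_g)$ so that $\beta_0$ restricted to $\partial S_g$ is an (orientation-preserving, possibly multiple-cover — but since $\gamma$ is primitive as a geodesic here, degree one) parametrization by arclength of $\gamma$; this is a homotopy of $\beta_0$. Next, choose an ideal triangulation of the interior of $S_g$, or rather a triangulation of $S_g$ with all vertices on $\partial S_g$, such that the boundary edges are exactly the edges of $\partial S_g$ in this triangulation. Equivalently, pick a maximal collection of disjoint essential arcs with endpoints on $\partial S_g$ cutting $S_g$ into triangles. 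Homotope $\beta_0$, relative to $\partial S_g$, so that each such arc is sent to a geodesic arc in $\Sigma$ (pull tight rel endpoints) and each triangle is sent to a totally geodesic (possibly degenerate) triangle — i.e. the "straightening" or "spinning" construction. This produces a map $\beta: S_g \to \Sigma$ which is geodesic on each edge and maps each triangle via the unique ruled/geodesic extension.

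The second step is to put a hyperbolic metric on $S_g$ with geodesic boundary making $\beta$ a pleated surface. For each triangle of the combinatorial triangulation, $\beta$ determines three geodesic arcs in $\Sigma$ with prescribed lengths (the lengths of the images of the edges), and one builds the corresponding hyperbolic triangle with those side lengths — here one must handle the degenerate case where the three arc-images are "colinear" or where a triangle collapses, which is why the statement allows pleated surfaces rather than genuine immersions. Gluing these hyperbolic triangles along shared edges (the lengths match by construction) yields a hyperbolic metric on $S_g$; the boundary $\partial S_g$, being a concatenation of geodesic segments that $\beta$ maps isometrically onto the smooth geodesic $\gamma$, is automatically geodesic in this metric (no corners, because locally $\beta$ is an isometry onto a neighborhood of a point of the smooth geodesic $\gamma$, forcing the cone angle to be $\pi$ along $\partial S_g$ and, at the would-be vertices on $\partial S_g$, the two boundary edges to meet straight). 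Every point of $S_g$ lies in the interior of a geodesic segment mapped isometrically to $\Sigma$: in the interior of a triangle this is the ruling, along an interior edge this holds because the edge is geodesic and its neighborhood is covered by geodesic segments crossing it transversally into the two adjacent (geodesic) triangles, and along $\partial S_g$ the boundary arc itself works since $\beta$ maps it isometrically into $\gamma$. Thus $\beta$ is a hyperbolic filling, and it is homotopic to $\beta_0$ by construction since every step was a homotopy.

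The main obstacle — and the reason some care is needed — is the degeneracy issue: when we straighten, some edges may become homotopically trivial rel their endpoints, or triangles may degenerate, so the "hyperbolic triangle with these side lengths" may be degenerate or a single point, and then the glued object is not a priori a smooth hyperbolic surface but only a path-metric space. The fix is either (i) to observe that by Lemma~\ref{lem minimal filling is incompressible} the minimal genus filling is geometrically incompressible, so one can choose the triangulating arcs to have essential, nontrivial image — but edges can still straighten to something degenerate, and triangles can still collapse — or, more robustly, (ii) to follow Thurston's and the \cite{CEG} treatment of pleated surfaces: degenerate triangles are allowed in the definition of a pleated surface, and the resulting singular metric is genuinely hyperbolic away from a measure-zero set and its completion is a hyperbolic surface with geodesic boundary of the right topological type (the bending/pleating locus being the image of the $1$-skeleton). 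One should cite \cite{Thurston,CEG} for this and simply note that the construction goes through verbatim in the bordered case, with the only new point being the verification — already sketched above — that the boundary comes out geodesic because $\beta$ restricted to it is an arclength parametrization of the smooth closed geodesic $\gamma$. I would therefore present the proof as: (1) normalize on the boundary; (2) pick a triangulation with boundary-edges as edges; (3) straighten; (4) build the hyperbolic (possibly degenerate) metric triangle-by-triangle and glue, invoking the pleated-surface machinery of \cite{Thurston,CEG} to conclude the glued metric is a hyperbolic metric with geodesic boundary and that $\beta$ is pleated; (5) check the pleated-surface condition at interior edges and along $\partial S$; (6) note the homotopy.
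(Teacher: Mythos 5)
There is a genuine gap, and it is in exactly the place where your proposal substitutes hope for the actual pleated-surface construction. You describe the straightening step as ``pull tight rel endpoints \dots i.e.\ the `straightening' or `spinning' construction,'' but these are two different operations and only one of them works here. Pulling each triangulating arc tight rel its endpoints and then gluing together the hyperbolic triangles with the resulting side lengths produces a hyperbolic \emph{cone} metric on $S_g$: the cone angle at a boundary vertex $v$ is the sum of the angles of the triangles meeting at $v$, and there is no reason whatsoever for this sum to equal $\pi$. Your justification --- that ``locally $\beta$ is an isometry onto a neighborhood of a point of the smooth geodesic $\gamma$, forcing the cone angle to be $\pi$'' --- is false: $\beta$ is only $1$-Lipschitz, and near a boundary vertex the straightened triangles can fold back across $\gamma$, overlap, or collapse, so the map is not a local isometry and the total angle in the domain can be anything. (One can also see there is a constraint being violated via Gauss--Bonnet: the sum of the triangle areas is dictated by the edge lengths, and only equals $-2\pi\chi(S_g)$ when the cone angles are exactly right.) So step (4) of your outline simply does not produce a hyperbolic metric with geodesic boundary.

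The device that makes this work --- and the one the paper actually uses --- is \emph{spinning}: the vertices of the triangulation are placed on disjoint simple closed curves that map to closed geodesics, the edges are wrapped infinitely around those geodesics, and the triangles become ideal, so there are no vertices and hence no cone angles to control. This is Theorem~I.5.3.6 of \cite{CEG}. But spinning is not automatic either: one needs the spiralled edges to converge to geodesics with distinct ideal endpoints, i.e.\ a non-degeneracy condition, which in the paper is the condition that the relevant rank-two subgroups of $\pi_1(\Sigma)$ are non-abelian. Your appeal to ``the pleated-surface machinery allows degenerate triangles'' does not fix this: the construction in \cite{CEG} requires exactly this non-degeneracy, and the paper's notion of a \emph{useful triangulation} (exactly $g+1$ vertices, one on $\partial S$ and $g$ on disjoint interior non-separating curves, together with the non-abelianness condition on the graphs $G_I$) is engineered precisely so that geometric incompressibility (Lemma~\ref{lem minimal filling is incompressible}) can be leveraged to verify it. Your proposal invokes incompressibility only loosely, to say the arc images are ``essential,'' which is not the condition one actually needs. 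In short: you need to spin, not straighten, and you need to verify the spinning hypotheses; both of these are absent from your argument as written.
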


To prove this proposition we will first show that if $\beta:S\to\Sigma$ is any filling, and if the surface $S$ admits a triangulation with certain properties, then $\beta$ is homotopic to a hyperbolic filling. For lack of a better name we will say that a triangulation $\CT$ of $S$ is {\em useful} if it satisfies the following three conditions:
\begin{itemize}
\item[(i)] $\CT$ has exactly $g+1$ vertices $v_0,\dots,v_g$, with $v_0\in\D S$ and the others in the interior of $S$.
\item[(ii)] There is a collection of edges $I_0,\dots,I_g$ of $\CT$ such that both endpoints of $I_i$ are attached to $v_i$ for all $i=0,\dots,g$. Moreover $I_0=\D S$.
\end{itemize}
These two conditions are evidently pretty soft. It is the condition we will state next what makes useful triangulations actually useful. We first need a bit of notation: if $I$ is any edge of $\CT$ other than $I_0,\dots,I_g$ then let $G_I$ be the connected component of $I\cup I_0\cup\dots\cup I_g$ containing $I$. 
\begin{itemize}
\item[(iii)] For any edge $I$ other than $I_0,\dots,I_g$ we have that the image under $\beta_*:\pi_1(G_I)\to\pi_1(\Sigma)$ is not abelian.
\end{itemize}
Note that $\pi_1(G_I)$ is a a free group of rank $2$. This means that its image $\beta_*(\pi_1(G_I)))$ has at most rank $2$. Since we are assuming that it is not abelian, we actually get that it is a rank $2$ free group. Free groups being Hopfian we deduce that $\beta_*:\pi_1(G_I)\to\pi_1(\Sigma)$ is an isomorphism onto its image. In other words, $\beta_*$ is injective on $\pi_1(G_I)$.

The following result makes clear why we care about such triangulations:

\begin{lem}\label{lem spinning is possible}
If the domain $S$ of a filling $\beta:S\to\Sigma$ admits a useful triangulation $\CT$, then $\beta$ is homotopic to a hyperbolic filling.
\end{lem}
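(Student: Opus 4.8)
The plan is to run Thurston's spinning construction \cite{Thurston,CEG}, using condition (iii) precisely to guarantee non-degeneracy of the spun triangles. First I would normalize $\beta$ within its homotopy class: after moving the vertices of $\CT$ to generic positions and pulling each edge tight rel its endpoints, I may assume that $\beta$ maps every edge of $\CT$ to a geodesic segment. Next I want each distinguished loop $I_i$ ($i=0,\dots,g$) to be mapped to (a power of) the closed geodesic $\gamma_i^*\subset\Sigma$ freely homotopic to $\beta(I_i)$; this requires $\beta_*(I_i)\neq 1$ in $\pi_1(\Sigma)$. For $I_0=\D S$ this is immediate since $\beta(\D S)=\gamma$ is already a closed geodesic, and for $i\geq 1$ it follows from condition (iii) together with the injectivity statement recorded just before the lemma (free groups are Hopfian): any edge $I$ incident to $v_i$ witnesses that $\beta_*$ is injective on a rank-$2$ free group containing $I_i$, so $\beta_*(I_i)\neq 1$. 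Since $\pi_1(\Sigma)$ is torsion-free, $\gamma_i:=\beta_*(I_i)$ is hyperbolic; I fix a lift of its axis in $\BH^2$ and the corresponding attracting endpoint $p_i\in\D_\infty\BH^2$.

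Now comes the spinning. Informally I would homotope $\beta$ so that at each vertex $v_i$ every incident edge spirals onto $\gamma_i^*$, infinitely many times and in the direction of $\gamma_i$; formally it is cleanest to build the developing data directly. Let $\widehat S$ be $S$ with the interior vertices $v_1,\dots,v_g$ removed, viewed together with $\CT$ as a spun ideal triangulation relative to these ends and to $\D S$. On the universal cover of $\widehat S$ I send each lift of a vertex $v_i$ to the appropriate $\Gamma$-translate of $p_i$, each spun edge to the bi-infinite geodesic joining the images of its two ideal endpoints, and each $2$-cell to the ideal triangle spanned by its three ideal vertices. This assignment is $\beta_*$-equivariant, hence descends to a map $\beta':\widehat S\to\Sigma$; because all ideal triangles in $\BH^2$ are totally geodesic and mutually isometric, $\beta'$ is automatically a pleated surface, the edge $I_0=\D S$ is realized as the geodesic $\gamma$, and $\D S$ becomes geodesic boundary. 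Thus $\beta'$ is a hyperbolic filling.

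It remains to check two points. The first is non-degeneracy, and this is exactly what condition (iii) buys us: for an edge $I\notin\{I_0,\dots,I_g\}$ running between $v_i$ and $v_j$ (possibly $i=j$), the group $\beta_*(\pi_1(G_I))$ is non-abelian free of rank $2$ by (iii), and since $\Gamma$ is discrete and torsion-free this forces the relevant conjugates of the hyperbolic elements $\gamma_i,\gamma_j$ to have disjoint fixed-point sets on $\D_\infty\BH^2$; hence the two ideal endpoints of the spun edge $I$ are distinct, so $I$ is realized by a genuine geodesic. Since this holds for every edge, the three ideal vertices of every triangle of $\CT$ are pairwise distinct — the triangles having some $I_k$ among their edges being handled the same way, keeping track of conjugacies — so each spun triangle is a non-degenerate ideal triangle and the glued structure is a genuine hyperbolic structure with geodesic boundary. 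The second point is that $\beta'$ is homotopic to $\beta$: the spinning is an honest homotopy of $\beta$ on the $1$-skeleton (it merely lets the edges spiral), and it extends over each $2$-cell because the obstruction to extending it there lies in $\pi_2(\Sigma)=0$.

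The main obstacle I expect is the technical setup of the spinning itself: making precise, simultaneously at all vertices and all their lifts, the rule ``spin every edge at $v_i$ in the direction of $\gamma_i$'' so that the resulting developing data is consistently defined and $\beta_*$-equivariant, and then confirming that the ideal triangles really do assemble into a hyperbolic structure whose holonomy is $\beta_*$. By contrast the conceptual core — that condition (iii) prevents any spun edge or triangle from degenerating — is a short discrete-group argument once the framework is in place, which is why the definition of a useful triangulation is pitched exactly at (iii).
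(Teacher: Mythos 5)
Your proposal is correct and follows essentially the same route as the paper: straighten the distinguished loops, invoke Thurston's spinning construction (CEG I.5.3.6), and use condition (iii) of a useful triangulation to guarantee that the spun edges have distinct ideal endpoints, hence non-degenerate ideal triangles. The one point where you phrase things a bit differently is that you build the developing data directly by sending lifts of $v_i$ to the fixed point $p_i$ of $\beta_*(I_i)$, whereas the paper first homotopes $\beta$ so that each $\gamma_i$ is realized by a closed geodesic $\hat\gamma_i$ and then spins around those geodesics; since spinning to $p_i$ is exactly spiralling onto the axis of $\beta_*(I_i)$, these amount to the same construction, and your non-commutativity argument for disjoint fixed-point sets is precisely the paper's observation that $\beta_*(\gamma_i)$ and $\beta_*(I\gamma_jI^{-1})$ do not commute. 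Both proofs delegate the careful equivariant assembly of the spun data (and the treatment of the boundary vertex $v_0$) to the reference, so you have, if anything, been more explicit than the original about where that labour lives.
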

\begin{proof}
As we just discussed, our conditions imply that $\beta_*$ is injective on $\pi_1(G_I)$ for all $I$. This implies in particular that each one of the exceptional edges $I_0,\dots,I_g$ of $\CT$ closes up to a simple closed curve $\gamma_0,\dots,\gamma_g$ in $S$ which is mapped to a homotopically essential curve. This in turn means that the images of $\gamma_0,\dots,\gamma_g$ are homotopic to non-trivial closed geodesics. Since all these $g+1$ curves are mutually disjoint we can then homotope $\beta$ so that $\beta(\gamma_i)$ is a closed geodesic $\hat\gamma_i$ for all $i$. 

Let now $I$ be one of the remaining edges of $\CT$, let $v_i$ and $v_j$ be its (possibly equal) endpoints and note that $G_I=\gamma_i\cup I\cup\gamma_j$. Since $\beta_*$ is injective on $\pi_1(G_I,v_i)$ we know that the elements $\beta_*(\gamma_i)$ and $\beta_*(I*\gamma_j*I^{-1})$ do not commute and hence have distinct fixed points in $\D_\infty\BH^2$. This seemingly weak property is all we need to run the standard construction of pleated surfaces by spinning the edges of the triangulation over the geodesics $\hat\gamma_i$---compare with the proof of Theorem I.5.3.6 in \cite{CEG}.
\end{proof}

We can now prove Proposition \ref{prop existence hyperbolic filling}.

\begin{proof}
Let $\beta:S\to\Sigma$ be a minimal genus filling of a geodesic $\gamma=\beta(\partial S)$, say of genus $g$. In light of Lemma \ref{lem spinning is possible}, to prove that $\beta$ is homotopic to a hyperbolic filling it suffices to show that $S$ admits a useful triangulation. Well, let us start by taking $g$ disjoint compact one-holed tori $T_1,\ldots,T_g\subset S$. We claim that the restriction of $\beta$ to each $T_i$ is $\pi_1$-injective. Indeed, since we know that $\beta$ is geometrically incompressible we deduce that $\D T_i$ is not in the kernel of the induced homomorphism at the fundamental group level. It follows that the image of $\beta_*(\pi_1(T_i))$ cannot be abelian. Now, this implies that $\beta_*(\pi_1(T_i))$ is free, and evidently of rank 2. Hence, again since free groups are Hopfian, we get that the restriction of $\beta_*$ to $\pi_1(T_i)$ is injective for all $i$.

Now, why do we care about that? Well, knowing that the restriction of $\beta_*$ to $\pi_1(T_i)$ is injective for any $i$ implies that when the images under $\beta$ of the non-boundary parallel simple closed curves in $T_i$ determine infinitely many conjugacy classes of maximal abelian subgroups of $\pi_1(\Sigma)$. We can thus choose for each $i=1,\dots,g$ a non-boundary parallel simple closed curve $\gamma_i\subset T_i$ such that if we also set $\gamma_0=\D S$ then we have that 
\begin{itemize}
\item[(*)] no two of the the maximal abelian subgroups of $\pi_1(\Sigma)$ containing $\beta_*(\gamma_0),\dots,\beta_*(\gamma_g)$ are conjugate to each other. 
\end{itemize}
Choosing now a vertex $v_i$ in each one of the curves $\gamma_i$ we get from (*) that if $I$ is any simple path joining $v_i$ to $v_j$ for $i\neq j$, then the image of $\beta_*(\pi_1(\gamma_i\cup I\cup \gamma_j))$ is not abelian. 

The upshot of all of this is that any triangulation $\CT$ with 
\begin{enumerate}
\item vertex set $v_0,\dots,v_g$, 
\item such that there are edges $I_0,\dots,I_g$ incident on both ends to $v_i$ and with image $\gamma_i$, and
\item such that all other edges connect distinct endpoints,
\end{enumerate}
is useful. To see that such a triangulation exists cut $S$ along $\gamma_1,\dots,\gamma_g$. When doing this we get a $2g+1$ holed sphere $\Delta$ and each vertex $v_1,\dots,v_g$ arises twice---denote the two copies of $v_i$ by $v_i$ and $v_i'$ as in Figure \ref{fig:triangulation}. Now any triangulation of $\Delta$ with vertex set $v_0,v_1,v_1',v_2,v_2',\dots,v_g,v_g'$ and which 
\begin{itemize}
\item contains a sequence of $2g-1$ edges yielding a path 
$$[v_1,v_2],[v_2,v_3],\dots,[v_g,v_1'],[v_1',v_2'],\dots,[v_{g-1}',v_g']$$ 
and
\item all other edges incident to $v_0$
\end{itemize}
yields a triangulation of $S$ satisfying (1)--(3) above. Having proved that there is a useful triangulation, we get from Lemma \ref{lem spinning is possible} that $\beta$ is homotopic to a hyperbolic filling, as we needed to prove.
\end{proof}

\begin{figure}[h]
\leavevmode \SetLabels
\L(.235*.48) $v_1$\\%
\L(.295*.48) $v_2$\\%
\L(.41*.48) $v_g$\\%
\L(.47*.48) $v'_1$\\%
\L(.53*.48) $v'_2$\\%
\L(.66*.48) $v'_g$\\%
\L(.455*1.05) $v_0$\\%
\endSetLabels
\begin{center}
\AffixLabels{\centerline{\includegraphics[width=0.6\textwidth]{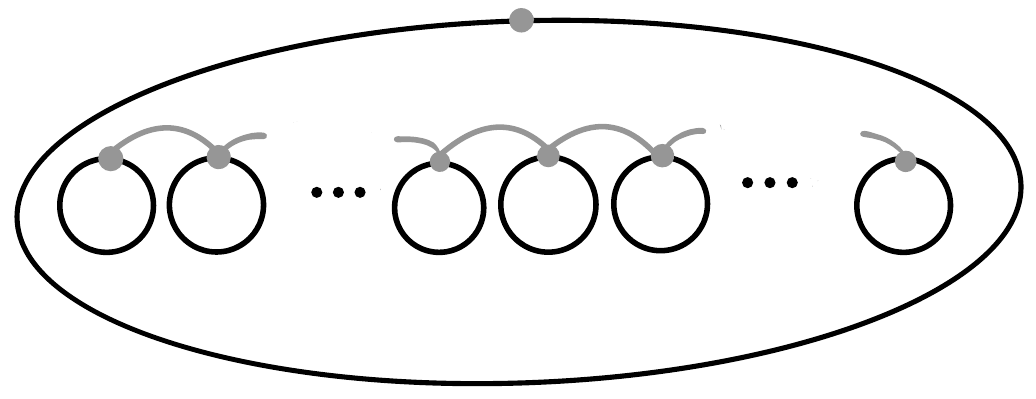}}\hspace{1cm}}
\vspace{-24pt}
\end{center}
\caption{Construction of the useful triangulation in Proposition \ref{prop existence hyperbolic filling}: The $2g+1$ holed sphere obtained from $S$ by cutting along $\gamma_1, \ldots, \gamma_g$. The vertices $v_1,\dots,v_g$ and their copies $v_1',\dots,v_g'$ are joined by the sequence of edges $[v_1,v_2],[v_2,v_3],\dots,[v_g,v_1'],[v_1',v_2'],\dots,[v_{g-1}',v_g']$. To complete the triangulation add as many edges as needed joining $v_0$ to the other vertices.} 
\label{fig:triangulation}
\end{figure}

Being able to work with hyperbolic fillings is going to be key in the next section, where we bound the number of closed geodesics $\gamma$ in $\Sigma$ with length $\ell_\Sigma(\gamma)\le L$ and which admit at least two homotopically distinct fillings. 

\begin{defi*}
Two fillings $\beta_1:S_1\to\Sigma$ and $\beta_2:S_2\to\Sigma$ are {\em homotopic} if there is a homeomorphism $F:S_1\to S_2$ such that $\beta_1$ is homotopic to $\beta_2\circ F$.
\end{defi*}

Evidently, to bound the number of pairs of non-homotopic fillings with the same boundary we will need some criterion to decide when two fillings are homotopic. To be able to state it note that if $\beta_1:S_1\to\Sigma$ and $\beta_2:S_2\to\Sigma$ are homotopic hyperbolic fillings, then there is
\begin{quote}
a closed hyperbolic surface $S=S_1\cup_{\D S_1=\D S_2} S_2$ obtained by isometrically gluing $S_1$ and $S_2$ along the boundary, in such a way that there is a pleated surface $\Theta:S\to\Sigma$ with $\Theta\vert_{S_i}=\beta_i$.
\end{quote}
For lack of a better name we refer to $\Theta:S\to\Sigma$ as the {\em pseudo-double} associated to $\beta_1$ and $\beta_2$, and to the curve $\D S_1=\D S_2\subset S$ as the {\em crease} of the pseudo-double.

Our criterion to decide if the two hyperbolic fillings $\beta_1$ and $\beta_2$ of a geodesic $\gamma$ are homotopic will be in terms of the structure of the $\epsilon_0$ thin part of the domain of the associated pseudo-double, where we choose once and forever the constant 
\begin{equation}\label{eq margulis lemma}
\epsilon_0=\frac 1{10}\cdot\min\left\{\text{Margulis constant of }\BH^2,\ \text{systole of }\Sigma\right\}.
\end{equation}
The following is our criterion.

\begin{lem}\label{lem criterion fillings to be homotopic}
Suppose that $\beta_1:S_1\to\Sigma$ and $\beta_2:S_2\to\Sigma$ are hyperbolic minimal genus fillings of genus $g$ of a geodesic $\gamma$, let $\Theta:S\to\Sigma$ be the associated pseudo-double, and denote its crease $\partial S_1 = \partial S_2\subset S$ also by $\gamma$. If 
\begin{enumerate}
\item the $\epsilon_0$-thin part of $S$ has $6g-3$ connected components $U_1,\dots,U_{6g-3}$, and
\item $\gamma$ traverses each $U_i$ exactly twice, 
\end{enumerate}
then $\beta_1$ and $\beta_2$ are homotopic.
\end{lem}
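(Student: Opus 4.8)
Plan. Since $S_1,S_2$ are one-holed surfaces of genus $g$, the domain $S=S_1\cup_\gamma S_2$ of the pseudo-double is closed of genus $2g$, and a pants decomposition of $S$ has exactly $3(2g)-3=6g-3$ curves. So hypothesis~(1) says that the $\epsilon_0$-thin part of $S$ is maximal: its components $U_1,\dots,U_{6g-3}$ are Margulis tubes and their core geodesics $\delta_1,\dots,\delta_{6g-3}$ form a pants decomposition of $S$. I would first pin down how the $\delta_i$ sit relative to the crease $\gamma$. Because $\Theta$ is a pleated surface it is $1$-Lipschitz, so $\ell_\Sigma(\Theta(\delta_i))\le\ell_S(\delta_i)<2\epsilon_0<\syst(\Sigma)$; hence $\Theta(\delta_i)$ lies in an embedded metric ball of $\Sigma$ and is null-homotopic. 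If some $\delta_i$ missed $\gamma$ it would be an essential simple closed curve lying in $S_1$ or $S_2$ and mapped by the corresponding $\beta_j=\Theta|_{S_j}$ to a null-homotopic loop, contradicting geometric incompressibility of minimal genus fillings (Lemma~\ref{lem minimal filling is incompressible}). So each $\delta_i$ meets $\gamma$, and hypothesis~(2) — $\gamma$ runs all the way through each tube $U_i$ exactly twice — forces $i(\gamma,\delta_i)=2$; thus $\gamma$ splits each $\delta_i$ into an arc $a_i\subset S_1$ and an arc $b_i\subset S_2$ with the same endpoint pair on $\gamma$.

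The next step is to observe that $\{a_i\}$ fills $S_1$ and $\{b_i\}$ fills $S_2$: an essential simple closed curve $c\subset S_1$ disjoint from all $a_i$ is disjoint from all $b_i$ too, hence from $\bigcup_i\delta_i$; maximality of the pants decomposition forces $c$ to be isotopic in $S$ to some $\delta_i$, which is absurd since $i(\gamma,\delta_i)=2$. Counting Euler characteristics, cutting $S_j$ along its $6g-3$ arcs gives exactly $4g-2$ disks, each a $2m$-gon with $m$ sides on $\gamma$ alternating with $m$ arc-sides. Equivalently, each $S_j$ is the ribbon surface of a genus $g$ one-boundary fat graph $G_j$, with vertices the disks, edges the $\delta_i$ (realized in $S_j$ by $a_i$, resp.\ $b_i$), and single boundary circle $\gamma$.

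Then I would identify $G_1$ with $G_2$. A fat graph with one boundary component is recovered from its edge-pairing together with its boundary cyclic word in the half-edges; here the half-edges correspond to the $12g-6$ points $\bigcup_i(\delta_i\cap\gamma)\subset\gamma$, the edge-pairing to ``bounding a common $\delta_i$'', and the boundary word to the cyclic order of these points along $\gamma$. This data is intrinsic to $(S,\gamma,\{\delta_i\})$ — the orders read off from $\partial S_1$ and $\partial S_2$ differ only by the reversal coming from the opposite induced orientations — so $G_1\cong G_2$, and one can build a homeomorphism $F\colon S_1\to S_2$ with $F(a_i)=b_i$ for all $i$ and with $F|_\gamma$ isotopic to the identity of the circle $\gamma$ fixing each marked point. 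Finally, since $\gamma=\partial S_1=\partial S_2$ and $\Theta|_\gamma=\beta_1|_\gamma=\beta_2|_\gamma$, the maps $\beta_1$ and $\beta_2\circ F$ agree up to homotopy on $\partial D$ for every disk $D$ of the above decomposition: on $\gamma$-sides both are the same subarc of the geodesic $\gamma\subset\Sigma$, and on arc-sides both map into the embedded ball around $\Theta(\delta_i)$, where arcs with fixed endpoints are homotopic rel endpoints. As $\Sigma$ is aspherical, $\pi_2(\Sigma)=0$, so $\beta_1|_D\simeq(\beta_2\circ F)|_D$ rel $\partial D$; gluing these homotopies along the $a_i$ gives $\beta_1\simeq\beta_2\circ F$.

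The main obstacle, I expect, is twofold and concentrated in the last steps: extracting the clean equality $i(\gamma,\delta_i)=2$ from the informal hypothesis that ``$\gamma$ traverses $U_i$ exactly twice'' (one must exclude $\gamma$ dipping into and out of a tube on the same side, and note that an essential geodesic arc across a Margulis tube crosses the core once), and then carrying out the orientation and basepoint bookkeeping needed to turn the combinatorial isomorphism $G_1\cong G_2$ into a genuine homotopy of fillings — in particular choosing $F$ so that $\beta_2\circ F$ and $\beta_1$ really do match on $\gamma$.
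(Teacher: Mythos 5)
Your proof is correct and follows essentially the same route as the paper's: both pass from the maximal thin part to a pants decomposition whose curves $\delta_i$ map to null-homotopic loops, use $\iota(\gamma,\delta_i)=2$ to cut $S$ into hexagons/disks paired across the crease, and then homotope across these pieces using asphericity of $\Sigma$. The only stylistic difference is that you construct the identifying homeomorphism $F:S_1\to S_2$ explicitly via the dual fat-graph combinatorics of the arc systems $\{a_i\}$, $\{b_i\}$, whereas the paper leaves the blue-hexagon-to-yellow-hexagon correspondence implicit and just produces the homotopy relative to the crease directly.
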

\begin{proof}
Note that the surface $S$ has genus $2g$. In particular, the assumption on the $\epsilon_0$-thin part implies that there is a pants decomposition $P$ in $S$ consisting of closed geodesics of length at most $2\epsilon_0$. Now, since $\Theta$ is evidently 1-Lipschitz and since $2\epsilon_0$ is less than the systole of $\Sigma$ we get that each one of the components of $P$ is mapped to a homotopically trivial curve. The assumption that the crease $\gamma$, a simple curve, intersects each component of the pants decomposition exactly twice implies that $\gamma$ cuts each pair of pants into two hexagons. Paint blue those in $S_1$ and yellow those in $S_2$. 

We can now construct a homotopy relative to the crease as follows. Each component of $P$ consists of a blue arc and a yellow arc whose juxtaposition is homotopically trivial. This implies that we can homotope all yellow arcs, relative to their endpoints to the corresponding blue arcs. Extend this homotopy to a homotopy fixing $\D S_2$ and defined on the whole of $S_2$. Now the boundary of each yellow hexagon is mapped to the boundary of each blue hexagon. Since $\Sigma$ has trivial $\pi_2$ we deduce that those two hexagons are homotopic. Proceeding like this with each hexagon we get a homotopy relative to the crease of the yellow parts of $S$ to the blue part. This is what we wanted to get.
\end{proof}

\section{Bounding the number of multifillings}\label{sec fillings2}

The goal of this section is to prove Theorem \ref{bounding multi-fillings}:

\begin{named}{Theorem \ref{bounding multi-fillings}}
For any $g$ there are at most $\bconst\cdot L^{6g-5}\cdot e^{\frac L2}$ genus $g$ closed geodesics $\gamma$ in $\Sigma$ with length $\ell(\gamma)\le L$ and with two non-homotopic fillings $\beta_1:S_1\to\Sigma$ and $\beta_2:S_2\to\Sigma$ of genus $\le g$.
\end{named}

The proof of Theorem \ref{bounding multi-fillings} turns out to be kind of involved. We hope that the reader will not despair with all the weird objects and rather opaque statements they will find below.

\subsection*{Wired surfaces}
Under a {\em wired surface} we will understand a compact connected simplicial complex $\Delta$ obtained as follows: start with a compact, possibly disconnected, triangulated surface $\Surf(\Delta)$ and an, evidently finite, subset $P_\Delta$ of the set of vertices of the triangulation of $\Surf(\Delta)$ such that every connected component of $\Surf(\Delta)\setminus P_\Delta$ has negative Euler-characteristic. We think of the elements in $P_\Delta$ as {\em plugs}. Now attach 1-simplices, to which we will refer as {\em wires}, by attaching both end-points to plugs, and do so in such a way that each plug arises exactly once as the end-point of a wire---we denote the set of wires by $\wire(\Delta)$. A wired surface $\Delta$ without wires, that is one with $\Delta=\Surf(\Delta)$, is said to be {\em degenerate}. Otherwise it is {\em non-degenerate}.
\medskip

How will wired surfaces arise? We will say that a pair $(\BF,\BT)$ is a {\em decoration} of a surface $S$ if 
\begin{itemize}
\item $\BF$ is a partial foliation of $S$ supported by the union of a finite collection of disjoint, essential and non-parallel, cylinders, and
\item $\BT$ is a triangulation of the complement of the interior of those cylinders.
\end{itemize}
Now, if $(\BF,\BT)$ is a decoration of $S$, then we get an associated wired surface $\Delta=S/\sim_\BF$ by collapsing each leaf of $\BF$ to a point and dividing each one of the arising bigons into two triangles by adding a vertex in the interior of the bigon. We will say that the quotient map $\pi:S\to\Delta$ is a {\em resolution} of $\Delta$ with {\em associated foliation} $\BF$. Every wired surface admits an essentially unique resolution, unique in the sense that any two differ by a PL-homeomorphism mapping one of the foliations to the other one. 
\medskip

Suppose now that $\Delta$ is a wired surface. A {\em simple curve} on $\Delta$ is a map $\eta:\BS^1\to\Delta$ such that there are a resolution $\pi:S\to\Delta$ with associated foliation $\BF$ and an essential simple curve $\eta':\BS^1\to S$ which is transversal to $\BF$ and with $\eta=\pi\circ\eta'$. 

Note that transversality to the associated foliation implies that if $\eta$ is a simple curve of a wired surface $\Delta$ then $\eta\cap\pi^{-1}(\Delta\setminus\Surf(\Delta))$ consists of a collection of segments, each one of them mapped homeomorphically to a wire. If $I$ is such a wire then we denote by $n_I(\eta)$ the {\em weight} of $\eta$ in $I$, that is the number connected components of $\eta\cap\pi^{-1}(\Delta\setminus\Surf(\Delta))$ which are mapped homeomorphically to $I$, or in other words, the number of times that $\eta$ crosses $I$. We refer to the vector
\begin{equation}\label{eq vector of weights wired}
\vec n_\Delta(\eta)=(n_I(\eta))_{I\in\wire(\Delta)}
\end{equation}
as the {\em weight vector for $\eta$ in $\Delta$}. The intersection of the image of $\eta$ with the surface part $\Surf(\Delta)$ is a simple arc system with endpoints in the set $P_\Delta$. Note that up to homotopying $\eta$ to another simple curve in $\Delta$ we might assume that all the components of the arc system $\eta\cap\Surf(\Delta)$ are essential. This is equivalent to asking that for each wire $I$ we have $n_I(\eta)\le n_I(\eta')$ for any other simple curve $\eta'$ in $\Delta$ homotopic to $\eta$. We will suppose from now on, without further mention, that all simple curves in $\Delta$ satisfy these minimality requirements.

So far, wired surfaces are just topological objects. Let us change this. Under a {\em hyperbolic wired surface} we understand a wired surface $\Delta$ whose surface part $\Surf(\Delta)$ is endowed with a piece-wise hyperbolic metric, that is, one with respect to which the simplexes in the triangulation of $\Surf(\Delta)$ are isometric to hyperbolic triangles.

Let $\Delta$ be a hyperbolic wired surface, and as always let $\Sigma$ be our fixed hyperbolic surface. We will say that a map $\Xi:\Delta\to\Sigma$ is {\em tight} if the following holds:
\begin{itemize}
\item $\Xi$ maps every wire to a geodesic segment, and
\item $\Xi$ is an isometry when restricted to each one of the simplexes in the triangulation of $\Surf(\Delta)$.
\end{itemize}
We will be interested in counting pairs $(\Xi:\Delta\to\Sigma,\gamma)$ consisting of tight maps and simple curves. Evidently, without further restrictions, there could be infinitely many such pairs. What we are going to count is pairs were the curve has bounded length. We are however going to use a pretty strange notion of length. Consider namely for some given small but positive $\epsilon$ the following quantity
\begin{equation}\label{eq horrible length bound}
\ell_\Xi^\epsilon(\gamma)=\epsilon\cdot\ell_{\Surf(\Delta)}(\gamma\cap\Surf(\Delta))+\sum_{I\in\wire}n_I(\gamma)\cdot\max\left\{\ell_\Sigma(\Xi(I))-\frac 1\epsilon,0\right\}
\end{equation}
It is evident that this notion of length is exactly taylored to what we will need later on, but let us try to parse what \eqref{eq horrible length bound} actually means. What is the role of $\epsilon$? Well, if we think of the length as a measure of the cost of a journey, then the first $\epsilon$ just makes traveling along the surface part pretty cheap, meaning that for the same price we can cruise longer over there. Along the same lines, when traveling through the wires, we only pay when the wires are very long. 

\begin{lem}\label{lem weird lemma}
Let $\Delta$ be a non-degenerate hyperbolic wired surface with set of wires $\wire=\wire(\Delta)$. Fix a tight map $f:\Delta\to\Sigma$, a positive integer vector $\vec n=(n_I)_{I\in\wire}\in\BN_+^{\wire}$, and denote by 
$$\min=\min_{I\in\wire}n_I\ge 1\text{ and }d=\vert\{I\in \wire\text{ with }n_I=\min\}\vert$$ 
the smallest entry of $\vec n$ and the number of times that this value is taken. 

For any $\epsilon>0$ there are at most $\bconst\cdot L^{d-1}\cdot e^{\frac{L}{\min}}$ homotopy classes of pairs $(\Xi:\Delta\to\Sigma,\gamma)$ where $\Xi$ is a tight map with $\Xi\vert_{\Surf(\Delta)}=f\vert_{\Surf(\Delta)}$ and where $\gamma$ is a simple multicurve in $\Delta$ with $n_I(\gamma)\ge n_I$ for every wire $I$ and with $\ell_\Xi^\epsilon(\gamma)\le L$.
\end{lem}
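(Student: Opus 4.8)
The idea is to bound the number of such pairs $(\Xi,\gamma)$ by counting, separately, (a) the combinatorial data of $\gamma$ as a simple curve on the fixed wired surface $\Delta$, and (b) the geometric data of the tight map $\Xi$, which since $\Xi|_{\Surf(\Delta)}=f|_{\Surf(\Delta)}$ is entirely determined by the images $\Xi(I)$ of the wires. Because $\Surf(\Delta)$ is endowed with a fixed piecewise-hyperbolic metric and $f$ is fixed, once we know the weight vector $\vec{n}_\Delta(\gamma)$ the homotopy class of $\gamma$ inside $\Surf(\Delta)$ is constrained to lie in a bounded-complexity arc system; I would first observe that the number of homotopy classes of simple (multi)curves $\gamma$ on $\Delta$ with a prescribed weight vector $\vec{n}_\Delta(\gamma)=\vec{m}$ is bounded by $\bconst$ (a constant depending only on $\Delta$), since after imposing the minimality requirement stated just before the lemma, the arcs of $\gamma\cap\Surf(\Delta)$ realize a fixed finite list of isotopy classes and the multiplicities are pinned down by $\vec m$. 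So the count reduces to: how many pairs (weight vector $\vec m\ge\vec n$, tight map $\Xi$ agreeing with $f$ on $\Surf(\Delta)$) are there with $\ell_\Xi^\epsilon(\gamma)\le L$?

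**Key steps.** First, fix the weight vector $\vec m=(m_I)_{I\in\wire}$ with $m_I\ge n_I$. Given $\vec m$, a tight map $\Xi$ is determined by choosing, for each wire $I$, a geodesic segment $\Xi(I)$ in $\Sigma$ with prescribed endpoints (the images under $f$ of the two plugs of $I$, which are fixed points of $\Sigma$); by Delsarte's theorem (equation \eqref{eq delsarte classic}, or really just the elementary lattice-point bound $\vert\Gamma y_0\cap B(x_0,R)\vert\le\bconst\cdot e^R$) the number of homotopy classes of such a segment of length $\le R_I$ is at most $\bconst\cdot e^{R_I}$. The length bound $\ell_\Xi^\epsilon(\gamma)\le L$ reads
$$\epsilon\cdot\ell_{\Surf(\Delta)}(\gamma\cap\Surf(\Delta))+\sum_{I\in\wire} m_I\cdot\max\{\ell_\Sigma(\Xi(I))-\tfrac1\epsilon,0\}\le L.$$
The surface-part term is automatically $\le\bconst$ (the arc system lives in a fixed metrized complex and has bounded complexity once $\vec m$ is fixed — actually it grows linearly in $\|\vec m\|$, so I would instead bound it by $\bconst\cdot\|\vec m\|$), so absorbing that and writing $R_I=\ell_\Sigma(\Xi(I))$ we get $\sum_I m_I\cdot\max\{R_I-\tfrac1\epsilon,0\}\le L+\bconst\cdot\|\vec m\|$. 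The number of choices of $\Xi$ with a given $\vec m$ and given rounded lengths is then at most $\prod_I \bconst\cdot e^{R_I}$, and summing over the lattice of admissible length tuples $(R_I)$ subject to the weighted constraint gives, since each $R_I$ is weighted by $m_I\ge\min$ in the constraint but by $1$ in the exponent, a total of at most $\bconst\cdot L^{d-1}\cdot e^{L/\min}$: the dominant contribution comes from pushing length into the $d$ wires with $m_I=\min$, each contributing an $e^{R/\min}$-type gain per unit of budget, and the $L^{d-1}$ is the volume of the simplex $\{x_1+\dots+x_d\le L, x_i\ge0\}$ cross-section. Finally one sums over $\vec m\ge\vec n$; the geometric decay in the constraint as $\|\vec m\|$ grows beyond $\vec n$ (wires of weight $>\min$ cost more than they give back) makes this sum converge up to another $\bconst$, leaving the leading term $\bconst\cdot L^{d-1}e^{L/\min}$ coming from $\vec m=\vec n$ (or nearby).

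**Main obstacle.** The delicate point is the bookkeeping in the last step: showing that summing the single-$\vec m$ bounds over all $\vec m$ with $m_I\ge n_I$, and over the induced lattice of length tuples, does not inflate the exponent $e^{L/\min}$ or the polynomial degree $d-1$. This requires checking that (i) wires with $n_I>\min$ genuinely contribute only a geometrically summable factor because their length is weighted strictly more than $\frac1{\min}$ in the constraint, so allocating budget to them is exponentially wasteful compared to allocating it to the $d$ minimal wires; and (ii) the $\max\{\cdot-\frac1\epsilon,0\}$ truncation and the $\bconst\cdot\|\vec m\|$ slack from the surface term really are harmless — they only change the constant, since the truncation at worst adds $\frac{\|\vec m\|}\epsilon$ to the right-hand side, and the resulting $e^{(L+\bconst\|\vec m\|)/\min}$ still sums against the geometric decay in $\|\vec m\|$ provided $\epsilon$ is not too small relative to that decay rate (this is exactly where the hypothesis "for any $\epsilon>0$" is used with the constant $\bconst$ allowed to depend on $\epsilon$). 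I would organize this as a lemma on counting integer points in a weighted simplex: $\#\{(R_I)\in(\frac1\epsilon\BN)^{\wire}: \sum m_I R_I\le L+c\}\cdot\prod_I e^{R_I}$ summed appropriately is $\le\bconst_\epsilon\cdot L^{d-1}e^{L/\min}$, and then everything else is routine substitution.
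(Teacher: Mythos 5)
Your overall strategy is the same as the paper's — fix the tight map $\Xi$ via the wire lengths, count the possible wire images with the elementary lattice point bound $\bconst\cdot e^R$, extract the constraint $\sum_I n_I\cdot\max\{\ell_\Sigma(\Xi(I))-\tfrac1\epsilon,0\}\le L$ from the definition of $\ell^\epsilon_\Xi$, and observe that the resulting weighted-simplex sum is dominated by concentrating the budget on the $d$ wires of minimal weight, giving $\bconst\cdot L^{d-1}e^{L/\min}$. The paper organizes this by summing over the length vector $\vec\lambda$ of the wire images rather than over the weight vector $\vec m$ of $\gamma$, but that is a cosmetic difference.

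There is, however, a genuine gap in your first observation. You claim that for a fixed weight vector $\vec m$ the number of homotopy classes of simple curves $\gamma$ in $\Delta$ with $\vec n_\Delta(\gamma)=\vec m$ is bounded by a constant depending only on $\Delta$, "since the arcs of $\gamma\cap\Surf(\Delta)$ realize a fixed finite list of isotopy classes and the multiplicities are pinned down by $\vec m$." This is false. The weight vector only records the number of arcs meeting each plug; the arcs themselves live in a surface whose components have negative Euler characteristic, and there are infinitely many isotopy classes of essential simple arcs with endpoints in a fixed finite set of marked points — one can twist an arc system along any essential simple closed curve disjoint from the plugs without changing $\vec m$. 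The minimality requirement imposed before the lemma only forces each arc to be essential (not peripheral to a plug); it does not pin down the isotopy class. Consequently both the claim that the number of $\gamma$'s with fixed $\vec m$ is $\bconst$ and the claim that $\ell_{\Surf(\Delta)}(\gamma\cap\Surf(\Delta))\le\bconst\cdot\Vert\vec m\Vert$ fail. The missing input is the one the paper actually uses: once $\Xi$ is fixed (with wire lengths roughly $\vec\lambda$), the constraint $\ell_\Xi^\epsilon(\gamma)\le L$ forces $\ell_{\Surf(\Delta)}(\gamma\cap\Surf(\Delta))\le\frac1\epsilon\bigl(L-\langle\vec n,\vec\lambda\rangle+\frac{\Vert\vec n\Vert}\epsilon\bigr)$, and the number of simple arc systems of bounded length in a fixed hyperbolic surface grows only polynomially in the length bound, yielding $\bconst\cdot(L-\langle\vec n,\vec\lambda\rangle)^{\bconst}$ choices of $\gamma$. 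It is crucial that this polynomial is in the \emph{slack} $L-\langle\vec n,\vec\lambda\rangle$, not in $L$: the final $L^{d-1}$ comes from the integral $\int_{\langle\vec n,\vec x\rangle\le L}(L-\langle\vec n,\vec x\rangle)^{\bconst}e^{\Vert\vec x\Vert}\,d\vec x$, not from the arc-system polynomial, and the slack vanishes precisely where $e^{\Vert\vec\lambda\Vert}$ is largest, so the two factors are in the right tension. Your bookkeeping, as written, cannot recover this because you have discarded the dependence of the $\gamma$-count on the available surface-part length budget.
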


Note that the fact that the obtained bound, or rather its rate of growth, does not depend on $\epsilon$ implies that actually the only way to get many homotopy classes is to play with the wires. In fact, since the given bound only depends on $d$ and $\min$, the only wires that matter are those which the curve crosses as little as possible. 

Another comment before launching the proof. Namely, what happens if the wired surface $\Delta$ in Lemma \ref{lem weird lemma} is degenerate? Well, if there are no wires, then $\Delta$ is nothing other than a surface with a (piece-wise hyperbolic) metric. In such a surface there are at most $\bconst\cdot L^{3\cdot\vert\chi(\Delta)\vert}$ simple multi-curves of length at most $L$---see for example \cite{book}. This means that for a degenerate wired surface one actually gets a polynomial bound instead of an exponential one.

We are now ready to launch the proof of Lemma \ref{lem weird lemma}:

\begin{proof}
Note that, since the map $\Xi$ is fixed on $\Surf(\Delta)$, we get that the homotopy type of the map, or even the map itself, is determined by what happens to the wires. In particular, as in the proof of Lemma \ref{lem bounding the rest} we get that if we give ourselves a positive vector $\vec\lambda=(\lambda_I)_{I\in\wire}\in\BR_+^{\wire}$, then there are at most $\bconst\cdot e^{\Vert\vec\lambda\Vert}$ homotopy classes of tight maps $\Xi:\Delta\to\Sigma$ with $\Xi\vert_{\Surf(\Delta)}=f$ and such that 
\begin{equation}\label{eq man I am sick of this}
\lambda_I\le \ell_\Sigma(\Xi(I))\le\lambda_I+1\text{ for all }I\in\wire.
\end{equation}
As always, we have set $\Vert\vec\lambda\Vert=\lambda_1+\dots+\lambda_r$. 

Now, we are not counting homotopy classes of maps, but rather of pairs $(\Xi:\Delta\to\Sigma,\gamma)$ where the multicurve $\gamma$ satisfies $\ell_\Xi^\epsilon(\gamma)\le L$. Note that, if $\Xi$ satisfies \eqref{eq man I am sick of this} then our given length bound $\ell_\Xi^\epsilon(\gamma)\le L$ implies that
\begin{align*}
\ell_{\Surf(\Delta)}(\gamma\cap\Surf(\Delta))
&= \frac 1\epsilon\left(\ell_\Xi^\epsilon(\gamma)-\sum_{I\in\wire}n_I(\gamma)\cdot\max\left\{\ell_\Sigma(\Xi(I))-\frac 1\epsilon,0\right\}\right)\\
&\le \frac 1\epsilon\left(L-\sum_{I\in\wire}n_I\cdot\max\left\{\lambda_I-\frac 1\epsilon,0\right\}\right)\\
&\le \frac 1\epsilon\left(L-\langle\vec n,\vec\lambda\rangle+\frac 1\epsilon\cdot\Vert\vec n\Vert\right)
\end{align*}
Now, since for any given length there are only polynomially many simple arc systems of bounded length we deduce that for each $\Xi$ satisfying \eqref{eq man I am sick of this} there are at most $\bconst\cdot(L-\langle\vec n,\vec\lambda\rangle)^{\bconst}$ homotopy classes of simple multicurves $\gamma$ in $\Delta$ with $\ell_\Xi^\epsilon(\gamma)\le L$ and satisfying $n_{I}(\gamma)\ge n_{I}$ for each wire $I$. Putting all of this together we get the following:

\begin{fact}
There are at most $\bconst\cdot (L-\langle\vec n,\vec\lambda\rangle)^{\bconst}\cdot e^{\Vert\lambda\Vert}$ homotopy classes of pairs $(\Xi:\Delta\to\Sigma,\gamma)$ where $\Xi$ is a tight map with $\Xi\vert_{\Surf(\Delta)}=f$, satisfying \eqref{eq man I am sick of this}, and where $\gamma$ is a simple curve in $\Delta$ with $n_I(\gamma)\ge n_I$ for every wire $I$ and with $\ell_\Xi^\epsilon(\gamma)\le L$.\qed
\end{fact}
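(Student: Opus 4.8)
The plan is to bound separately the number of homotopy classes of admissible tight maps $\Xi$ and, for each fixed $\Xi$, the number of homotopy classes of admissible curves $\gamma$, and then to multiply the two bounds; this is legitimate since any reasonable notion of homotopy class of a pair refines the product of the individual homotopy classes, so the product bound is an upper bound for what we want.

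First I would count the tight maps. Since $\Xi$ agrees with $f$ on $\Surf(\Delta)$, its homotopy class rel $\Surf(\Delta)$ — indeed the map itself up to such homotopy — is determined by the homotopy classes rel endpoints of the geodesic segments $\Xi(I)$ as $I$ ranges over $\wire=\wire(\Delta)$. The two endpoints of a wire $I$ are plugs, hence vertices of the triangulation of $\Surf(\Delta)$, and are therefore sent by $f$ to two fixed points of $\Sigma$. The number of homotopy classes rel endpoints of geodesic arcs between two fixed points of $\Sigma$ of length in $[\lambda_I,\lambda_I+1]$ is at most $\bconst\cdot e^{\lambda_I}$; here only the elementary upper bound for the lattice point count is needed, although one may also simply quote Theorem~\ref{thm Delsarte's theorem for sectors}. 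Taking the product over all wires shows that there are at most $\bconst\cdot e^{\Vert\vec\lambda\Vert}$ homotopy classes of tight maps $\Xi$ with $\Xi|_{\Surf(\Delta)}=f$ and satisfying \eqref{eq man I am sick of this}.

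Next I would fix such a $\Xi$ and bound the number of admissible $\gamma$. The key point is that the homotopy class of a simple curve $\gamma$ in $\Delta$ is determined by the homotopy class in $\Surf(\Delta)$ of the simple arc system $\alpha=\gamma\cap\Surf(\Delta)$ with endpoints in $P_\Delta$: choosing a resolution $\pi:S\to\Delta$ with associated foliation, the part of the lift of $\gamma$ outside $\pi^{-1}(\Surf(\Delta))$ consists of arcs running straight through the wires, and simplicity of $\gamma$ forces the matching of the ends of $\alpha$ across each wire to be the unique matching producing no self-crossings; in particular the weights $n_I(\gamma)$ and the way $\gamma$ threads the wires are all read off from $\alpha$. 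So it suffices to bound the number of homotopy classes of such arc systems. From $\ell_\Xi^\epsilon(\gamma)\le L$, the hypothesis $n_I(\gamma)\ge n_I$ for every wire, and \eqref{eq man I am sick of this}, the computation already displayed gives
$$\ell_{\Surf(\Delta)}(\alpha)\ \le\ \tfrac1\epsilon\bigl(L-\langle\vec n,\vec\lambda\rangle+\tfrac1\epsilon\Vert\vec n\Vert\bigr).$$
We may assume the right-hand side is nonnegative, i.e. $L-\langle\vec n,\vec\lambda\rangle\ge-\tfrac1\epsilon\Vert\vec n\Vert$, since otherwise there are no admissible pairs at all and the asserted bound holds trivially. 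As $\epsilon$ and $\vec n$ are fixed, this right-hand side is $\le\bconst\cdot(L-\langle\vec n,\vec\lambda\rangle)+\bconst$, and the number of homotopy classes of simple arc systems in the compact piecewise-hyperbolic surface $\Surf(\Delta)$ with endpoints in the fixed finite set $P_\Delta$ and of length at most $R$ is bounded by $\bconst\cdot R^{\bconst}$ — the standard polynomial bound for simple objects, see \cite{book}. Hence there are at most $\bconst\cdot(L-\langle\vec n,\vec\lambda\rangle)^{\bconst}$ choices for $\gamma$ given $\Xi$, and multiplying by the count from the first step yields the Fact.

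The step I expect to demand the most care is the reduction of the count of curves $\gamma$ to the count of arc systems on $\Surf(\Delta)$: one has to make precise, using the essentially unique resolution of $\Delta$ together with the minimality conventions adopted for simple curves in wired surfaces, that $\gamma$ is reconstructed from $\alpha$ and that its homotopy class is pinned down. The two enumeration inputs it leans on — the $e^{\lambda_I}$ upper bound for geodesic arcs of bounded length between two fixed points, and the polynomial bound for simple arc systems of bounded length on a surface — are by now routine and can simply be quoted.
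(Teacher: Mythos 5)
Your proposal is correct and follows essentially the same route as the paper: the $e^{\Vert\vec\lambda\Vert}$ factor comes from counting the homotopy classes of the wire images (the paper quotes the argument of Lemma \ref{lem bounding the rest}), and the polynomial factor comes from the displayed length bound on $\gamma\cap\Surf(\Delta)$ together with the polynomial count of simple arc systems. Your extra care about reconstructing $\gamma$ from the arc system, and about the degenerate case $L-\langle\vec n,\vec\lambda\rangle<0$, only makes explicit what the paper leaves implicit.
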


Now we get that the quantity we want to bound, that is the number of homotopy classes of pairs $(\Xi:\Delta\to\Sigma,\gamma)$ where $\Xi$ is a tight map with $\Xi\vert_{\Surf(\Delta)}=f$ and where $\gamma$ is a simple curve in $\Delta$ with $n_I(\gamma)\ge n_I$ for every wire $I$ and with $\ell_\Xi^\epsilon(\gamma)\le L$ is bounded from above by
$$\sum_{\lambda\in\BN^{\wire},\ \Vert\lambda\Vert\le L}\bconst\cdot (L-\langle\vec n,\vec\lambda\rangle)^{\bconst}\cdot e^{\Vert\lambda\Vert}$$
This quantity is then bounded from above by the value of the integral
$$\bconst\int_{\{\vec x\in\BR_+^{\wire},\ \langle\vec n,\vec x\rangle\}\le L}(L-\langle \vec n,\vec x\rangle)^{\bconst}\cdot e^{\Vert\vec x\Vert}dx$$
and now it is a calculus problem that we leave to the reader to check that this integral is bounded by $\bconst\cdot L^{d-1}\cdot e^{\frac{L}{n_{\min}}}$ where $n_{\min}=\min_In_I\ge 1$ and where  $d=\vert\{I\text{ wire with }n_I=n_{\min}\}\vert$. We are done.
\end{proof}

At this point we know how to bound the number of homotopy classes of tight maps of wired surfaces. It is time to explain why we care about being able to do so.

\subsection*{Pseudo-doubles}
Earlier, just before the statement of Lemma \ref{lem criterion fillings to be homotopic} we introduced the pseudo-double associated to two fillings. Let us extend that terminology a bit: Under a {\em pseudo-double} we understand a pair $(\Theta:S\to\Sigma,\gamma)$ where 
\begin{itemize}
\item $\Theta:S\to\Sigma$ is a pleated surface with $S$ closed,
\item $\gamma\subset S$, the {\em crease}, is a simple curve cutting $S$ into two connected components, and
\item $\Theta$ maps $\gamma$ to a geodesic in $\Sigma$ and its restriction $\Theta\vert_{S\setminus\gamma}$ to the complement of $\gamma$ is geometrically incompressible.
\end{itemize}
Two pseudo-doubles $(\Theta:S\to\Sigma,\gamma)$ and $(\Theta':S'\to\Sigma,\gamma')$ are {\em homotopic} if there is a homeomorphism $f:S\to S'$ with $\gamma'$ homotopic to $f(\gamma)$ and with $\Theta$ homotopic to $\Theta'\circ f$. 
\medskip

Note that this terminology is consistent with the use of the word pseudo-double in the previous section.
\medskip

Recall now that we fixed earlier some $\epsilon_0$ satisfying \eqref{eq margulis lemma} and note that if $(\Theta:S\to\Sigma,\gamma)$ is a pseudo-double then the crease $\gamma$, being separating, crosses every component $U$ of the thin part $S^{\le\epsilon_0}$ an even number of times $\iota(\gamma,U)\in 2\BN$. In fact, since by the choice of $\epsilon_0$ we get that $\Theta$ maps the core of every component of the thin part to a homotopically trivial curve, and since the restriction of $\Theta$ to $S\setminus\gamma$ is geometrically incompressible we get that actually
\begin{equation}\label{eq cross atleast twice}
\iota(\gamma,U)\ge 2\text{ for all components }U\text{ of the thin part of }S,
\end{equation}
by what we mean that $\gamma$ traverses each such $U$ at least twice. 

Our next goal is to bound, for growing $L$, the number of homotopy classes of pseudo-doubles $(\Theta:S\to\Sigma,\gamma)$ where $S$ has given topological type, where there are precisely $d$ components $U$ of the thin part with $\iota(\gamma,U)=2$, and where $\ell_S(\gamma)\le L$:

\begin{prop}\label{prop this is a pain in the butt}
Let $\epsilon_0$ be as in \eqref{eq margulis lemma} and suppose that $S_0$ is a closed orientable surface. 
\begin{enumerate}
\item For every $d\ge 1$ there are at most $\bconst\cdot L^{d-1}\cdot e^{\frac L2}$ homotopy classes of pseudo-doubles $(\Theta:S\to\Sigma,\gamma)$ where $S$ is homeomorphic to $S_0$, where the thin part $S^{\le\epsilon_0}$ of $S$ has exactly $d$ components $U$ with $\iota(\gamma,U)=2$, and where $\gamma$ has length $\ell_S(\gamma)\le L$. 
\item There are at most $\bconst\cdot e^{\frac L3}$ homotopy classes of pseudo-doubles $(\Theta:S\to\Sigma,\gamma)$ where $S$ is homeomorphic to $S_0$, where there is no component $U$ of the thin part $S^{\le\epsilon_0}$ with $\iota(\gamma,U)=2$, and where $\gamma$ has length $\ell_S(\gamma)\le L$. 
\end{enumerate}
\end{prop}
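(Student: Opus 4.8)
\emph{Proof plan.}
The plan is to reduce Proposition~\ref{prop this is a pain in the butt} to the wired-surface count of Lemma~\ref{lem weird lemma}. Given a pseudo-double $(\Theta:S\to\Sigma,\gamma)$, I would split $S$ into its $\epsilon_0$-thick part $S^{\ge\epsilon_0}$ and the finitely many cylinders $U_1,\dots,U_k$ making up the $\epsilon_0$-thin part $S^{\le\epsilon_0}$ (there are no cusps, since $S$ is closed, and $k$ is bounded in terms of the topological type of $S_0$). These data yield a decoration of $S$: let $\BF$ foliate each $U_i$ by the curves equidistant from its core geodesic, and let $\BT$ be a triangulation of $S^{\ge\epsilon_0}$; the associated wired surface $\Delta=S/\!\sim_\BF$ then has the $U_i$ as its wires $I_1,\dots,I_k$. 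The crease $\gamma$ is transverse to $\BF$, so it descends to a simple curve $\bar\gamma$ on $\Delta$ with weight $n_{I_i}(\bar\gamma)=\iota(\gamma,U_i)$, and by \eqref{eq cross atleast twice} all these weights are $\ge 2$. Since each core geodesic is shorter than the systole of $\Sigma$, the map $\Theta$ is homotopic to one that factors through the collapse $\pi:S\to\Delta$; using that $\Theta$ is pleated — so that, after a bounded homotopy, it is an isometry on each triangle of a triangulation of $S^{\ge\epsilon_0}$ adapted to the thin/thick splitting — the induced map $\Xi:\Delta\to\Sigma$ can be taken tight, sending each wire $I_i$ to the geodesic arc in $\Sigma$ homotopic rel endpoints to the $\Theta$-image of a shortest arc of $S$ crossing $U_i$ once. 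One checks that the homotopy class of the pseudo-double $(\Theta,\gamma)$ is determined by that of the pair $(\Xi,\bar\gamma)$ up to a bounded ambiguity, so it suffices to count such pairs.

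The two geometric ingredients needed are bounded geometry of the thick part and a length comparison. Since $\chi(S)$ is fixed, $\area(S)\le 2\pi|\chi(S)|$, so $S^{\ge\epsilon_0}$ has area $\le\bconst$ and injectivity radius $\ge\epsilon_0$, hence bounded diameter; consequently there are only finitely many possibilities for the topological type of $S^{\ge\epsilon_0}$, for the combinatorial type of $\Delta$, and — up to a bounded homotopy, which I absorb from here on — for the tight map $f:=\Xi|_{\Surf(\Delta)}$. The point of the peculiar length \eqref{eq horrible length bound} is that, once $\epsilon$ is small enough, $\ell_\Xi^\epsilon(\bar\gamma)\le\ell_S(\gamma)$: the surface term equals $\epsilon\cdot\ell_{S^{\ge\epsilon_0}}(\gamma\cap S^{\ge\epsilon_0})\le\epsilon\cdot\ell_S(\gamma)$, while for each wire one has $\max\{\ell_\Sigma(\Xi(I_i))-\tfrac1\epsilon,0\}\le w_i$, where $w_i$ is the distance across $U_i$ (indeed $\ell_\Sigma(\Xi(I_i))\le w_i+\bconst$ by $1$-Lipschitzness of $\Theta$, so one need only take $\tfrac1\epsilon\ge\bconst$); since each of the $n_{I_i}(\bar\gamma)$ strands of $\gamma$ through $U_i$ has length $\ge w_i$, the wire term is at most $\sum_i n_{I_i}(\bar\gamma)\,w_i\le\ell_S\!\left(\gamma\cap\bigcup_i U_i\right)$, and adding the two estimates gives the claim.

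With the reduction in place I would finish as follows; fix a model $(\Delta,f)$. For part~(1): for each of the finitely many subsets $W\subset\wire(\Delta)$ of size $d$, apply Lemma~\ref{lem weird lemma} with $f$ and with the weight vector $\vec n^{W}$ equal to $2$ on $W$ and $3$ elsewhere — whose least entry $2$ is attained exactly $d$ times — to bound by $\bconst\,L^{d-1}e^{L/2}$ the number of homotopy classes of pairs $(\Xi,\bar\gamma)$ with $\Xi|_{\Surf(\Delta)}=f$, with $\ell_\Xi^\epsilon(\bar\gamma)\le L$, and with $n_I(\bar\gamma)\ge n^{W}_I$ for all wires $I$. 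Every pseudo-double as in~(1) yields, taking $W$ to be the set of its doubly-crossed wires, such a pair; summing over the finitely many $W$, the finitely many models, and the finitely many combinatorial types of $\Delta$ gives part~(1). For part~(2): if the thin part is non-degenerate then $\iota(\gamma,U)$ is even, $\ge 2$, and $\neq 2$, hence $\ge 4$, for every component $U$, so Lemma~\ref{lem weird lemma} applied with $\vec n=(4,\dots,4)$ gives $\bconst\,L^{\#\wire(\Delta)-1}e^{L/4}\le\bconst\,e^{L/3}$; if the thin part is empty, then $\Delta=\Surf(\Delta)$ is degenerate, $S$ is $\epsilon_0$-thick of bounded diameter, so there are only finitely many homotopy classes of such maps $\Theta$ and at most polynomially many simple curves $\gamma$ on $S$ of length $\le L$, again yielding a bound $\bconst\,L^{\bconst}\le\bconst\,e^{L/3}$.

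The delicate part is the first paragraph: promoting the pleated surface $\Theta$ to an honest tight map of a hyperbolic wired surface, with the thin cylinders correctly realized as the wires, and above all establishing the comparison $\ell_\Xi^\epsilon(\bar\gamma)\le\ell_S(\gamma)$ with leading constant exactly $1$, since any larger constant would spoil the exponents $\tfrac L2$ and $\tfrac L3$. Once the bounded geometry of the thick part is used to make the ``finitely many models'' bookkeeping routine, the counting itself is nothing more than Lemma~\ref{lem weird lemma}.
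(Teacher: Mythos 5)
Your proposal follows essentially the same route as the paper: decompose $S$ into thin and thick parts, collapse the thin cylinders to produce a wired surface, reduce to a finite list of models, establish the length comparison $\ell^\epsilon_\Xi(\bar\gamma)\le\ell_S(\gamma)$, and then invoke Lemma~\ref{lem weird lemma}. The bookkeeping via weight vectors $\vec n^W$ is slightly different in presentation from the paper's (the paper applies Lemma~\ref{lem weird lemma} directly with $\vec n$ the actual weight vector of the crease and then reads off that $\min=2$ is achieved $d$ times), but since the lemma's bound depends only on $\min$ and on how often $\min$ is attained, both choices yield the same exponent. You also handle the degenerate case of part~(2) --- empty thin part --- explicitly, which the paper glosses over.

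There are two places where your write-up is loose and could mislead you into thinking the constant $1$ in $\ell_\Xi^\epsilon(\bar\gamma)\le\ell_S(\gamma)$ comes for free. First, ``using that $\Theta$ is pleated --- so that, after a bounded homotopy, it is an isometry on each triangle of a triangulation'' is not what pleated means: a pleated map is $1$-Lipschitz and totally geodesic on the complement of a lamination, not isometric on the $2$-simplices of an arbitrary triangulation. What you actually need (and what the paper uses) is that $1$-Lipschitzness, combined with bounded geometry of the thick part, forces the edges of $\BT$ to have bounded image; pulling tight rel vertices then gives a uniformly Lipschitz $\Xi|_{\Surf(\Delta)}$ and hence only finitely many homotopy classes, from which you choose tight representatives $f_1,\dots,f_r$ with respect to \emph{fixed} model metrics on $\Surf(\Delta)$. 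Second, and relatedly, once the metric on $\Surf(\Delta)$ is a fixed model rather than the restriction of $S$'s metric, the claimed equality ``surface term $=\epsilon\cdot\ell_{S^{\ge\epsilon_0}}(\gamma\cap S^{\ge\epsilon_0})$'' becomes an inequality up to a bi-Lipschitz factor $R$ depending only on the topology of $S_0$ and on $\epsilon_0$; this forces you to take $\epsilon\le 1/R$ rather than merely $\epsilon\le 1$. This is exactly the constant $R$ appearing in \eqref{eq my god this has no end}, and you should make it explicit that $\epsilon$ is chosen small \emph{after} $R$ is fixed. Neither of these is a fatal flaw --- you flag ``once $\epsilon$ is small enough'' --- but the equality sign in the surface term hides the one step where the argument could break, so it deserves to be written as an inequality with $R$.
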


Recall that we declared a {\em decoration} $(\BF,\BT)$ of a surface $S$ to be a pair consisting of partial foliation $\BF$ supported by the union of disjoint essential and non-parallel cylinders, and of a triangulation $\BT$ of the complement of the interior of those cylinders. To see where these decorations come from, assume that $S$ is a hyperbolic surface.
\begin{itemize}
\item The $\epsilon_0$-thick part of $S$ is metrically bounded in the sense that it has a triangulation $\BT$ whose vertices are $\frac 1{10}\epsilon_0$-separated, and whose edges have length at most $\frac 13\epsilon_0$ and are geodesic unless contained in the boundary $\D(S^{\ge \epsilon_0})$ of the $\epsilon_0$-thick part---in that case they have just constant curvature. 
\item The components of the $\epsilon_0$-thin part  $S^{\le\epsilon_0}$ are not metrically bounded but still have a very simple structure: they are cylinders foliated by constant curvature circles, namely the curves at constant distance from the geodesic at the core of the cylinder. 
\end{itemize}
Putting these things together, that is the triangulation $\BT$ of the thick part and the foliation $\BF$ of the thin part, we get what we will refer as a {\em thin-thick decoration $(\BF,\BT)$ of $S$}---note that the triangulation $\BT$ is not unique, and this is why we use an undetermined article. More importantly, note also that the number of components of the thin part is bounded just in terms of the topology of $S$, and that the number of vertices in the triangulation $\BT$ is bounded by some number depending on the chosen $\epsilon_0$ and again the topological type of $S$. Since we have fixed $\epsilon_0$, it follows that every compact surface $S_0$ admits finitely many decorations $(\BF_1,\BT_1),\dots,(\BF_r,\BT_r)$ such that if $S$ is any hyperbolic surface homeomorphic to $S_0$ then there is a homeomorphism $\sigma:S_0\to S$ and some $i$ such that $(\sigma(\BF_i),\sigma(\BT_i))$ is a $\epsilon_0$-thin-thick decoration of $S$. We state this fact for later reference:

\begin{lem}\label{lem finite decorations}
For every closed surface $S_0$ there are finitely many decorations $(\BF_1,\BT_1),\dots,(\BF_r,\BT_r)$ such that for any hyperbolic surface $S$ homeomorphic to $S_0$ there are $i\in\{1,\dots,r\}$ and a homeomorphism $\sigma:S_0\to S$ such that $\sigma(\BF_i,\BT_i)$ is a $\epsilon_0$-thin-thick decoration of $S$.\qed
\end{lem}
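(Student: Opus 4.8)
The plan is to reduce the statement to a finiteness assertion about the combinatorial data underlying a thin-thick decoration, and then to invoke the Margulis/thick-part geometry established in the discussion preceding the lemma. First I would recall that, since $\epsilon_0$ was fixed once and for all in \eqref{eq margulis lemma}, the Margulis Lemma in $\BH^2$ gives a universal lower bound on the injectivity radius of the $\epsilon_0$-thick part $S^{\ge\epsilon_0}$ of any hyperbolic surface $S$ homeomorphic to the fixed $S_0$. Combined with an upper bound on the diameter of each component of the thick part (which follows from the fact that $S^{\ge\epsilon_0}$ has area bounded by $2\pi|\chi(S_0)|$ by Gauss--Bonnet, so a maximal $\tfrac1{10}\epsilon_0$-separated net has bounded cardinality), this shows that the thick part carries a triangulation $\BT$ with at most $N_0 = N_0(S_0,\epsilon_0)$ vertices, with all edges geodesic (or of constant curvature along $\D(S^{\ge\epsilon_0})$) of length at most $\tfrac13\epsilon_0$. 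This is exactly the structure described in the bulleted remarks above the lemma; I would just be extracting the uniform bound $N_0$ from it.

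Next I would argue that, up to homeomorphism of $S_0$, there are only finitely many combinatorial types of such decorated surfaces. The key point is that the \emph{topological} data of a thin-thick decoration consists of: (a) the isotopy class of the (disjoint, essential, pairwise non-parallel) system of cylinders carrying $\BF$ — there are finitely many such systems up to the mapping class group of $S_0$, since the number of curves is at most $3g_0-3$ for $S_0$ of genus $g_0$; and (b) for each resulting thick component, a triangulation with at most $N_0$ vertices — and a surface of bounded topological type admits only finitely many triangulations with a bounded number of vertices up to isotopy. Since the mapping class group acts on $S_0$ by homeomorphisms, each combinatorial type gives rise to a single model decoration $(\BF_i,\BT_i)$ on $S_0$, and the list $(\BF_1,\BT_1),\dots,(\BF_r,\BT_r)$ is finite.

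Finally I would close the loop: given any hyperbolic $S$ homeomorphic to $S_0$, its canonical $\epsilon_0$-thin-thick splitting together with a choice of triangulation $\BT$ of the thick part as above realizes one of the finitely many combinatorial types from the previous paragraph; pulling back along a homeomorphism $S_0\to S$ (which exists since $S\cong S_0$, and which can be chosen to match the combinatorial type up to the mapping class element that fixes the model) produces the required $i$ and $\sigma:S_0\to S$ with $\sigma(\BF_i,\BT_i)$ a thin-thick decoration of $S$. The main obstacle is the bookkeeping in the second step — making precise that ``finitely many combinatorial types of triangulations with bounded vertex count'' descends to ``finitely many model decorations on the single surface $S_0$'', i.e. correctly quotienting by the mapping class group so that different hyperbolic metrics producing isotopic combinatorial data are grouped under one model. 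Everything else is a direct consequence of Gauss--Bonnet, the Margulis Lemma, and the thin-thick description already spelled out in the text.
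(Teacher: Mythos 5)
Your proposal is correct and matches the paper's reasoning: the paper deliberately states the lemma with a \qed because it regards the finiteness as an immediate consequence of the two bounds spelled out just above it (boundedly many thin components, boundedly many triangulation vertices, with $\epsilon_0$ fixed), and your argument simply makes explicit where those bounds come from (Gauss--Bonnet plus the $\tfrac1{10}\epsilon_0$-separation for the vertex count, and the $3g_0-3$ bound for the cylinder system) and how to reduce modulo the mapping class group to a finite list of model decorations on $S_0$.
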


After these comments, we can finally launch the proof of Proposition \ref{prop this is a pain in the butt}:

\begin{proof}[Proof of Proposition \ref{prop this is a pain in the butt}]
Starting with the proof of (1), note that from Lemma \ref{lem finite decorations} we get that it suffices to prove for each fixed decoration $(\BF,\BT)$ of $S_0$ that
\begin{itemize}
\item[{\bf (*)}]  there are at most $\bconst\cdot L^{d-1}\cdot e^{\frac L2}$ homotopy classes of pseudo-doubles $(\Theta:S\to\Sigma,\gamma)$ where there is a homeomorphism $\sigma:S_0\to S$ such that $(\sigma(\BF),\sigma(\BT))$ is a decoration of the $\epsilon_0$-thin-thick decomposition of $S$, where the thin part $S^{\le\epsilon_0}$ of $S$ has exactly $d$ components $U$ with $\iota(\gamma,U)=2$, and where $\gamma$ has length $\ell_S(\gamma)\le L$.
\end{itemize}
Assuming from now on that we have an $\epsilon_0$-thin-thick decoration $(\BF, \BT)$, let $\Delta=S_0/\sim_\BF$ be the wired surface obtained from $S$ by collapsing each leaf of $\BF$ to a point and let $\pi:S_0\to\Delta$ be the corresponding quotient map. 

The reason why we consider $\Delta$ is that, as we already mentioned earlier, we have that by the choice of $\epsilon_0$ all the leaves of the foliation $\sigma(\BF)$ are mapped by $\Theta$ to homotopically trivial curves. This implies that there is a map
$$\Xi:\Delta\to\Sigma$$ 
mapping the wires of $\Delta$ to geodesic segments and with $\Theta\circ\sigma$ homotopic to $\Xi\circ\pi$ by a homotopy whose tracks are bounded by $\bconst$. In particular, the edges in $\BT$ are mapped to paths homotopic to geodesic paths of at most length $\bconst$. Pulling tight relative to the vertices, we can assume that $\Xi$ maps 2-dimensional simplices in the triangulation of $\Delta$ to hyperbolic triangles. Note that the bound on the lengths of the images of the edges of $\BT$ imply that the restriction of $\Xi$ to $\Surf(\Delta)$ is $\bconst$-Lipschitz.


This uniform Lipschitz bound implies that $\Xi\vert_{\Surf(\Delta_0)}$ belongs to finitely many homotopy classes and that the tracks of the homotopy to any chosen representative of the correct homotopy class are bounded by $\bconst$. Choosing the representatives to be tight maps with respect to some hyperbolic structure on $\Delta_0$ we get:

\begin{fact}
There are finitely many hyperbolic structures $\Delta_1,\dots,\Delta_r$ on $\Delta$ and finitely many tight maps $f_1,\dots,f_r:\Delta_i\to\Sigma$, such that for any $\Theta:S\to\Sigma$ and $\sigma:S_0\to S$ as in (*) there are $i\in\{1,\dots,r\}$ and a tight map $\Xi:\Delta_i\to\Sigma$ with $\Xi\vert_{\Surf(\Delta_i)}=f_i\vert_{\Surf(\Delta_i)}$ and such that $\Theta\circ\sigma:S_0\to\Sigma$ is homotopic to $\Xi\circ\pi:S_0\to\Sigma$ by a homotopy whose tracks have at most length $\bconst$.\qed
\end{fact}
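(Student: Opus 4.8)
The plan is to realize the required finite list $(\Delta_1,f_1),\dots,(\Delta_r,f_r)$ as a choice of one ``generic'' representative per connected component of an explicit \emph{compact} space of candidate maps on the surface part of $\Delta$, and then, given an arbitrary pseudo-double, to transport the associated $\Xi$ to the appropriate representative along a short path in that space, finally extending the resulting homotopy across the wires without losing control of the tracks.

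\emph{Step 1: a compact space of maps, and finiteness.} Recall that $(\BF,\BT)$, hence the combinatorial triangulation underlying $\Surf(\Delta)$, is fixed, and that the map $\Xi$ produced above restricts on $\Surf(\Delta)$ to a map which is geodesic on every $1$-simplex, which sends each $2$-simplex to the geodesic triangle spanned by the images of its vertices, and which satisfies $\ell_\Sigma(\Xi(I))\le\bconst$ for every $1$-simplex $I$ of $\BT$ --- this last bound is where the uniform Lipschitz estimate enters. Call such a map of $\Surf(\Delta)$ into $\Sigma$ \emph{simplicial--geodesic}. A simplicial--geodesic map is determined by the images of the vertices of $\BT$, and the bound $\ell_\Sigma(\,\cdot\,(I))\le\bconst$ says precisely that adjacent vertices have images within $\bconst$ of one another. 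Hence the set $\mathcal M$ of all simplicial--geodesic maps $\Surf(\Delta)\to\Sigma$ with every edge-image of length $\le\bconst$ embeds, via $g\mapsto(g(v))_v$, as a \emph{closed} subset of a finite power of $\Sigma$; since $\Sigma$ is compact, $\mathcal M$ is compact (for disconnected $\Surf(\Delta)$ one argues componentwise, which changes nothing). The homotopy class of a map $\Surf(\Delta)\to\Sigma$ is a locally constant function on $\mathcal M$, so it takes only finitely many values and $\mathcal M$ has finitely many connected components $K_1,\dots,K_r$.

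\emph{Step 2: representatives and a uniform track bound.} The space $\mathcal M$ is cut out inside a power of $\Sigma$ by real--analytic equalities and inequalities, so it is compact and subanalytic; consequently each $K_j$ is path--connected by paths of uniformly bounded length in the sup-metric on maps, and a path in $K_j$ of length $\le\bconst$ is exactly a homotopy with tracks $\le\bconst$ joining its endpoints. In each $K_j$ fix a map $f_j$ which is \emph{generic}, i.e.\ sends every $2$-simplex of $\BT$ to a non--degenerate geodesic triangle; this is possible because the non--generic locus is nowhere dense. Let $\Delta_j$ be the hyperbolic structure on $\Delta$ whose metric on each $2$-simplex of $\Surf(\Delta)$ is pulled back by $f_j$ from $\Sigma$, and extend $f_j$ over the wires of $\Delta_j$ by sending each wire to the geodesic segment joining the images of its endpoints (any fixed homotopy class will do). Then $f_j\colon\Delta_j\to\Sigma$ is tight, and this produces the finite list $(\Delta_1,f_1),\dots,(\Delta_r,f_r)$.

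\emph{Step 3: transporting a given $\Xi$.} Let $\Theta,\sigma$ be as in $(*)$ and let $\Xi\colon\Delta\to\Sigma$ be the map produced above, so $\Theta\circ\sigma$ is homotopic to $\Xi\circ\pi$ with tracks $\le\bconst$, $\Xi$ is tight on the wires, and $\Xi|_{\Surf(\Delta)}\in\mathcal M$, say $\Xi|_{\Surf(\Delta)}\in K_j$. By Step 2 there is a homotopy $H$ from $\Xi|_{\Surf(\Delta)}$ to $f_j|_{\Surf(\Delta_j)}$ with tracks $\le\bconst$; along $H$ the plug endpoints of each wire move by at most $\bconst$. Now define a tight map $\Xi'\colon\Delta_j\to\Sigma$ by $\Xi'|_{\Surf(\Delta_j)}=f_j|_{\Surf(\Delta_j)}$ and, on each wire $I$, by the geodesic segment between the $f_j$-images of its endpoints taken in the homotopy class obtained from $\Xi(I)$ by dragging its endpoints along $H$. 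Dragging a geodesic segment whose endpoints move by $\le\bconst$ gives, by convexity of $d_{\BH^2}$, a homotopy of that segment with tracks $\le\bconst$; assembling $H\circ\pi$ on the preimage of $\Surf(\Delta)$ with these wire homotopies --- they agree on the preimages of the plugs --- gives a homotopy from $\Xi\circ\pi$ to $\Xi'\circ\pi$ with tracks $\le\bconst$. Concatenating with the given bounded homotopy $\Theta\circ\sigma\simeq\Xi\circ\pi$ yields $\Theta\circ\sigma\simeq\Xi'\circ\pi$ with tracks $\le\bconst$; renaming $\Xi'$ as $\Xi$ and $j$ as $i$ is the statement.

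\emph{The main obstacle.} The subtle point is not that the homotopy class of $\Xi|_{\Surf(\Delta)}$ lies in a finite list --- that is a routine Arzel\`a--Ascoli argument --- but that the homotopy to the chosen representative can be taken with a \emph{uniform} track bound, since that is what keeps the concatenated homotopy short; this is exactly what compactness (together with the mild regularity needed to bound the inner diameter of each component, e.g.\ subanalyticity) of $\mathcal M$ provides. The remaining, purely bookkeeping, difficulty is the extension in Step 3, i.e.\ promoting a bounded-track homotopy on $\Surf(\Delta)$ to a bounded-track homotopy of $\Xi$ itself across the wires. I expect no geometric-dynamical input to be needed here at all.
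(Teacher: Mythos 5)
Your argument is correct and is essentially the paper's: the paper derives the Fact in one line from the uniform Lipschitz bound (finitely many homotopy classes of $\Xi\vert_{\Surf(\Delta)}$, plus a uniform track bound to a chosen tight representative), and your Steps 1--3 are exactly the compactness argument that assertion is standing on, with the wire extension made explicit. One small inaccuracy worth noting: a simplicial--geodesic map is \emph{not} determined by its vertex images alone, since two points of $\Sigma$ are joined by many geodesic arcs of length $\le\bconst$; so your space $\mathcal M$ fibers finite-to-one over a closed subset of a finite power of $\Sigma$ rather than embedding into it, but it is still compact, the homotopy class is still locally constant on it, and the rest of the argument goes through unchanged.
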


Continuing with the same notation, note that the bound on the tracks of the homotopy between $\Theta\circ\sigma$ and $\Xi\circ\pi$ means that when we compare the length of the geodesic $\gamma$ in $S$ with that of the curve $\eta=(\pi\circ\sigma^{-1})(\gamma)$ in the hyperbolic wired surface $\Delta_i$ then there is at most an increase by an additive amount every time $\gamma$ crosses a simplex of the wired surface. It means that lengths 
\begin{itemize}
\item increase at most by an additive amount every time we cross a component of the thin part, and
\item increase by a multiplicative amount while we are in the thick part. 
\end{itemize}
Said in other words: there is some $R$ with
\begin{align*}
\ell_\Sigma(\Xi(\eta\cap\Surf(\Delta)))&\le R\cdot \ell_S(\gamma\cap S^{\ge\epsilon_0})\\
\ell_\Sigma(\Xi(\eta\setminus\Surf(\Delta)))&\le \sum_{\kappa\in\pi_0(\gamma\cap S^{\le\epsilon_0})}(\ell_S(\kappa)+R)
\end{align*}
Recalling that the wires $I$ of $\Delta_0$ correspond to the thin parts of $S^{\le\epsilon_0}$ and that the weight $n_I(\eta)$ is nothing other than number of times that $\eta$ crosses the wire $I$, we get from the last two inequalities that
\begin{equation}\label{eq my god this has no end}
\frac 1R\cdot\ell_\Xi(\eta\cap\Surf(\Delta))+\sum_{I\in\wire(\Delta_0)}n_I(\eta)\cdot\max\left\{\ell_\Xi(I)-R,0\right\}\le\ell_S(\gamma)
\end{equation}
where the ``max" arises because a length is always non-negative. Anyways, note that with the notation introduced in \eqref{eq horrible length bound} we can rewrite \eqref{eq my god this has no end} as
$$\ell_\Xi^{\frac 1R}(\gamma)\le\ell_S(\gamma)$$
Note also that from \eqref{eq cross atleast twice} we get that $n_I(\eta)\ge 2$ for all $I\in\wire(\Delta_0)$. This means that using the notation of Lemma \ref{lem weird lemma} we can restate the assumptions in Proposition \ref{prop this is a pain in the butt} (1) as follows: $\min=2$ and this value is achieved $d\ge 1$ times. Lemma \ref{lem weird lemma} implies thus that there are at most $\bconst\cdot L^{d-1}\cdot e^{\frac L2}$ homotopy classes of pairs $(\Xi,\eta)$ arising from pseudo-doubles $(\Theta:S\to\Sigma,\gamma)$ where $\Theta$ is as in the Fact and where $\ell_S(\gamma)\le L$. This implies a fortiori that we have at most that many choices for the homotopy class of $\Xi$. Since the homotopy class of $\Xi$ determines that of $\Theta$, we are done with the proof of (1). The proof of (2) is pretty much identical, the only difference is that now $\min\ge 4$. Plugging this in the argument above we get the bound $\bconst\cdot L^{k-1}\cdot e^{\frac L4}$ for some $k$. This is evidently a stronger bound that $\bconst\cdot e^{\frac L3}$, and we are done. 
\end{proof}

\subsection*{The fruits of our labor}
After all this work we are now ready to prove Theorem \ref{bounding multi-fillings}.

\begin{proof}[Proof of Theorem \ref{bounding multi-fillings}]
Suppose that $\gamma$ is a closed geodesic with length $\ell_\Sigma(\gamma)\le L$ and such that there are two non-homotopic minimal genus fillings $\beta_1:S_1\to\Sigma$ and $\beta_2:S_2\to\Sigma$ with $\beta_i(\D S_i)=\gamma$. From Proposition \ref{prop existence hyperbolic filling} we get that, without loss of generality we might assume that both $\beta_1:S_1\to\Sigma$ and $\beta_2:S_2\to\Sigma$ are hyperbolic fillings. 

Let then $S=S_i\cup_{\D S_1=\D S_2}S_2$ be the hyperbolic surface obtained by gluing both surfaces $S_1$ and $S_2$ along the boundary in such a way that there is a pleated surface $\Theta:S\to\Sigma$ with $\Theta\vert_{S_i}=\beta_i$. Note once again that the map $\Theta$ maps the crease $\hat\gamma=\D S_1=\D S_2$ geodesically to $\gamma$. Moreover, Lemma \ref{lem minimal filling is incompressible} implies that the restriction of $\Theta$ to $S\setminus\hat\gamma$ is geometrically incompressible. Taken together, all of this means that the pair $(\Theta:S\to\Sigma,\hat\gamma)$ is a pseudo-double. 

Now, Lemma \ref{lem criterion fillings to be homotopic} implies, together with the assumption that $\beta_1$ and $\beta_2$ are not homotopic, that the $\epsilon_0$-thin part of $S$ has at most $6g-4$ connected components which are traversed twice by the crease $\hat\gamma$. We thus get from Proposition \ref{prop this is a pain in the butt} that there are at most $\bconst\cdot L^{6g-5}\cdot e^{\frac L2}$ choices for the homotopy class of $(\Theta:S\to\Sigma,\hat\gamma)$. Since the geodesic $\gamma$ is determined by the homotopy class of $\Theta(\hat\gamma)$, we have proved that there are $\bconst\cdot L^{6g-5}\cdot e^{\frac L2}$ choices for $\gamma$, as we had claimed.
\end{proof}

\section{Proof of the main theorem}\label{sec main}

In this section we prove Theorem \ref{thm counting curves} from the introduction. 

\begin{named}{Theorem \ref{thm counting curves}}
Let $\Sigma$ be a closed, connected, and oriented hyperbolic surface and for $g\ge 1$ and $L>0$ let $\BFB_g(L)$ be as in \eqref{eq set we want to count}. We have 
$$\vert\BFB_g(L)\vert\sim \frac {2}{12^g\cdot g!\cdot(3g-2)!\cdot\vol(T^1\Sigma)^{2g-1}}\cdot L^{6g-4}\cdot e^{\frac L2}$$
as $L\to\infty$.
\end{named}

Before we can even explain the idea of the proof of Theorem \ref{thm counting curves} we need to recall what fat graphs are, and a few of their properties:

\subsection*{Fat graphs}
A {\em fat graph} $X$ is a graph endowed with a cyclic ordering of the set $\hal_v$ for each $v$---fat graphs are also sometimes called {\em ribbon graphs}. Every fat graph is endowed with a canonically built thickening $\neigh(X)$, the {\em thickening of $X$}. For the sake of concreteness let us discuss this in the particular case that $X$ is trivalent. Well, we start by taking an oriented filled-in hexagon $G_v$ for every vertex $v$, see Figure \ref{fig:fat}. If we label by $a,b,c$ the three elements in $\hal_v$, given in the correct cyclic order, then we label the boundary components of $G_v$ by $a,ab,b,bc,c,ca$, also given in the correct cyclic order. Now, for every edge $e\in\edg(X)$ let $v_1,v_2\in\ver(X)$ be the two (possibly identical) vertices at which the two half-edges $\vec e_1\in\hal_{v_1}$ and $\vec e_2\in\hal_{v_2}$ corresponding to $e$ are based, and identify in an orientation reversing way the $\vec e_1$-edge of $\D G_{v_1}$ with the $\vec e_2$-edge of $\D G_{v_2}$. Proceeding like this for all edges we end up with the {\em thickening} $\neigh(X)$ of $X$. 

\begin{figure}[h]
\leavevmode \SetLabels
\L(.23*.5) $\circlearrowleft$\\%
\L(.67*.5) $\circlearrowleft$\\%
\endSetLabels
\begin{center}
\AffixLabels{\centerline{\includegraphics[width=0.7\textwidth]{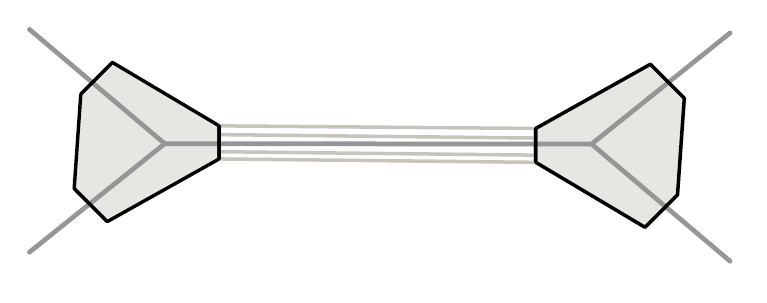}}\hspace{1cm}}
\vspace{-24pt}
\end{center}
\caption{Constructing the thickening of $X$. Pictured are two oriented hexagons corresponding to two vertices connected by an edge. The lighter lines indicate the gluing of those two hexagons.} 
\label{fig:fat}
\end{figure}

\begin{defi*}
A trivalent fat graph $X$ has {\em genus $g$} if $\neigh(X)$ is homeomorphic to a surface of genus $g$ with one boundary component. 
\end{defi*}

One of the quantities that appear in the right side of Theorem \ref{thm counting curves} is the number of genus $g$ fat graphs, or rather, that number weighted by the number of automorphims of each such fat graph, where a map $F:X\to X'$ between two fat graphs is a {\em fat graph homeomorphism} if first it is a homeomorphism between two the underlying graphs, and then if it sends one fat structure to the other one. Anyways, what is really lucky for us is that Bacher and Vdovina \cite{Bacher-Vdovina} have computed that
$$\sum\frac 1{\vert\Aut(X)\vert}=\frac 2{12^g}\cdot\frac{(6g-5)!}{g!\cdot(3g-3)!}$$
where the sum takes place over all homeomorphism classes of genus $g$ fat graphs. 

\begin{bem}
Bacher and Vdovina's result is phrased in terms of 1-vertex triangulations of the closed surface of genus $g$ up to orientation preserving homeomorphism. Let us explain briefly how one goes from such triangulations to genus $g$ fat graphs and back. The dual graph of a triangulation of a surface is a trivalent fat graph---its thickening is the surface minus the vertices of the triangulation. It follows that if the triangulation has a single vertex and the surface is closed of genus $g$ then the fat graph has genus $g$. Conversely, the thickening of a fat graph is equipped with a natural arc system (one arc dual to each edge). When collapsing each boundary component of the thickening to a point one gets a closed surface together with a triangulation with as many vertices as connected components of the boundary. It follows that if $X$ is a genus $g$ fat graph then one gets a 1-vertex triangulation of a genus $g$ surface.
\end{bem}

Let $X$ now be a fat graph and, for the sake of concreteness assume that it has genus $g$. By construction we have a canonical embedding of $X$ into $\neigh(X)$ whose image is a spine. In particular there is a retraction
$$\spine:\neigh(X)\to X$$
such that the pre-image of every vertex is a tripod and the pre-image of every point in the interior of and edge in $X$ is a segment. The image of $\D\neigh(X)$ under $\spine$ runs twice over every edge of $X$. We will refer to this parametrized curve in $X$ as $\D X$. 

\begin{bem}
Note that reversing the orientation of $X$, that is reversing the cyclic order at each vertex, has the effect of reversing the orientation of $\D X$.
\end{bem}

\subsection*{The map}
We will reduce the proof of Theorem \ref{thm counting curves} to the fact that we know how to count critical realizations of graphs, that is to Theorem \ref{thm counting minimising graphs}. Let us explain the basic idea. For given $g$ consider the set
$$\BFX_{g}=\left\{(X,\phi)\middle\vert\begin{array}{c}X\text{ is a fat graph of genus }g\text{ and}\\
\phi:X\to\Sigma\text{ is a criticial realization}\\ \text{of the underlying graph}\end{array}\middle\}\right/_{\text{equiv}}$$
of equivalence classes of realizations, where two realizations $\phi:X\to\Sigma$ and $\phi':X'\to\Sigma$ are {\em equivalent} if there is a fat graph homeomorphism $\sigma:X\to X'$ such that $\phi=\phi'\circ\sigma$. 

\begin{quote}
{\bf We stress something very important:} the critical realization $\phi$ for $(X,\phi)\in\BFX_{g}$ is explicitly not assumed to respect the fat structure. On the other hand, the homeomorphism $\sigma$ in the definition of equivalence definitively has to preserve the fat structure.
\end{quote}

Note that if $(X,\phi)$ is a equivalent to $(X',\phi')$ then the curves $\phi(\D X)$ and $\phi'(\D X')$ are freely homotopic to each other. In particular we have a well-defined map
\begin{equation}\label{eq nielsen map}
\Lambda:\BFX_{g}\to\BFC,\ \ (X,\phi)\mapsto\text{ geodesic homotopic to }\phi(\D X)
\end{equation}
where $\BFC$ is, as it was earlier, the collection of all oriented geodesics in $\Sigma$. 

The basic idea of the proof of Theorem \ref{thm counting curves} is that the map \eqref{eq nielsen map} has the following informally stated properties:
\begin{enumerate}
\item $\Lambda$ is basically injective with image basically contained in $\BFB_g$, 
\item $\Lambda$ is basically surjective onto $\BFB_g$, and 
\item generically, the geodesic $\Lambda(X,\phi)$ has length almost exactly equal to $2\cdot\ell(\phi)-C$ for some explicit constant $C$.
\end{enumerate}
Let us start by clarifying the final point. Suppose that $(X,\phi)\in\BFX_g$ is such that $\phi$ is $\ell_0$-long for some large $\ell_0$. Well, since the curve $\D X$ runs exactly twice over each edge $X$ we get that it its image $\phi(\D X)$ consists of $2\edg(X)$ geodesic segments of length at least $\ell_0$ and with $3\cdot\ver(X)=-6\cdot\chi(X)$ corners where it makes angle equal to $\frac{2\pi}3$. We get now from a standard hyperbolic geometry computation (or, if you so wish, from a limiting argument) that when $\ell_0$ is large then, up to an small error depending on $\ell_0$, when we pull tight $\phi(\D X)$ to get $\Lambda(X,\phi)$ we save $\log\frac 43$ at each one of those corners. Taking into account that $\chi(X)=1-2g$ when $X$ has genus $g$, this is what we have proved:

\begin{lem}\label{lem surface has right length}
For every $\delta>0$ there is $\ell_\delta$ with 
$$\left\vert\ell_\Sigma(\Lambda(X,\phi))-\left(2\ell_\Sigma(\phi)-6\cdot(2g-1)\cdot \log\frac 43\right)\right\vert\le\delta$$
for every $(X,\phi)\in\BFX_{g}$ such that $\phi$ is $\ell_\delta$-long.
\qed
\end{lem}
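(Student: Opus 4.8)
The statement is fundamentally a computation in hyperbolic trigonometry combined with an elementary limiting/triangle-inequality argument, so I would organize the proof around a single model computation at one corner, then sum. First I would set up notation: given $(X,\phi)\in\BFX_g$ with $\phi$ being $\ell_\delta$-long, the curve $\phi(\D X)$ is a closed piecewise-geodesic in $\Sigma$ consisting of $2E$ geodesic segments (with $E=\edg(X)$), each of length at least $\ell_\delta$ because $\D X$ traverses every edge exactly twice, and with exactly $3V = -6\chi(X)$ exterior corners, at each of which the turning is governed by the fact that $\phi$ is critical: the two incoming half-edges at each vertex meet at (interior) angle $\tfrac{2\pi}{3}$, so the piecewise-geodesic $\phi(\partial X)$ makes an exterior angle of $\tfrac{\pi}{3}$ at each such corner. (Here one uses that $\partial X$, running along the boundary of the thickening, turns from one half-edge to the next one in the cyclic order, and the angle between those two is $\tfrac{2\pi}{3}$.)

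**The core estimate.** The heart of the argument is: straightening a piecewise-geodesic path with long legs and a fixed exterior angle $\theta$ at a corner shortens it, in the limit of long legs, by exactly $2\log\!\bigl(\sec\tfrac{\theta}{2}\bigr)$ — i.e. for $\theta=\tfrac{\pi}{3}$ this is $2\log\tfrac{2}{\sqrt3\,/\,?}$… let me pin it down cleanly. Consider an ideal triangle vertex: two geodesic rays from a point $p$ making angle $\tfrac{2\pi}{3}$; as they are ``pulled tight'' past $p$ one replaces the broken path through $p$ by the geodesic connecting far-out points on the two rays. A hyperbolic-trigonometry computation (e.g. via the right-triangle formulas, formula 2.2.2 in Buser \cite{Buser} as already invoked in the paper, or directly in the upper half-plane) shows that if the two legs have lengths $s_1,s_2$, the geodesic connecting their far endpoints has length $s_1+s_2 - c + o(1)$ as $\min(s_1,s_2)\to\infty$, where $c$ is a constant depending only on the angle; carrying out the computation at angle $\tfrac{2\pi}{3}$ gives $c=2\log\tfrac{4}{3}$… I would double check this: the saving claimed in Lemma~\ref{lem surface has right length} is $\log\tfrac43$ \emph{per corner}, so the per-corner constant is $\log\tfrac43$, matching the hexagon asymptotics \eqref{eq hexagon} (side-length $\tfrac23$ of the limiting regular hexagon) — I would make the link to that normalization explicit rather than rederive from scratch. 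So: Step 1, prove the one-corner lemma with error controlled by a function $\epsilon(\ell)\to0$ as $\ell\to\infty$, uniformly. Step 2, apply it at each of the $3V$ corners. Since $\D X$ has $3V=3\cdot(-2\chi(X))$ corners and $\chi(X)=1-2g$ for a genus-$g$ fat graph, $3V = 3(4g-2) = 6(2g-1)$, giving total saving $6(2g-1)\log\tfrac43$; and $\ell_\Sigma(\phi(\D X)) = 2\ell_\Sigma(\phi)$ exactly since each edge is traversed twice.

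**Assembling and the main obstacle.** Step 3: combine, using that $\Lambda(X,\phi)$ is by definition the closed geodesic freely homotopic to $\phi(\D X)$, so $\ell_\Sigma(\Lambda(X,\phi)) \le \ell_\Sigma(\phi(\D X))$ always, and the straightening described above realizes a homotopy to a piecewise-geodesic that is arbitrarily close to the geodesic once the legs are long — so $\ell_\Sigma(\Lambda(X,\phi))$ differs from $2\ell_\Sigma(\phi) - 6(2g-1)\log\tfrac43$ by at most the accumulated error, which is at most $3V\cdot\epsilon(\ell_\delta)$; choosing $\ell_\delta$ large makes this $\le\delta$. The main obstacle, and the only genuinely non-routine point, is making the one-corner straightening estimate \emph{uniform and additive}: when one straightens at one corner, the adjacent legs are shortened slightly, which perturbs the straightening at the \emph{neighboring} corners, and one must be sure these perturbations do not compound badly around the whole cycle of $2E$ segments. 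The clean way to handle this is not to straighten corner-by-corner but to lift $\phi(\partial X)$ to $\BH^2$, note that (by $\ell_\delta$-longness and the $\tfrac{2\pi}{3}$ angles) the lift is a uniform quasigeodesic staying within bounded Hausdorff distance of a genuine geodesic — this is exactly the kind of quasiconvexity/Morse-lemma input already used in the proof of Proposition~\ref{lemma new} — and then to compute the length defect by comparing directly against the concatenation of ideal-triangle ``corner regions'' of the sort analyzed in \eqref{eq hexagon}, where the legs are genuinely asymptotic. That converts the additivity worry into a single global comparison with error $o(1)$ in $\ell_\delta$, which is what the statement asks for.
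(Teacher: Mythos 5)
Your proposal follows the paper's argument exactly: count the $3V = 6(2g-1)$ corners of $\phi(\partial X)$ (using $\chi(X)=1-2g$ and trivalence), observe that $\ell_\Sigma(\phi(\partial X))=2\ell_\Sigma(\phi)$, and show that pulling tight saves $\log\frac43 = 2\log\frac{2}{\sqrt3}$ per corner with error $\to 0$ as the legs grow; the paper dispatches the uniformity/additivity issue with the phrase ``a standard hyperbolic geometry computation (or, if you so wish, from a limiting argument),'' and your quasigeodesic/Morse-lemma route is just a concrete way to carry out that limiting argument.
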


\begin{bem}
Where does $\log\frac 43$ come from? Well, if $\Delta\subset\BH^2$ is an ideal triangle with vertices $\theta_1,\theta_2$ and $\theta_3$ and center $p$, and if $p'$ is the projection of $p$ to the side say $(\theta_1,\theta_2)$ then $\log\frac 2{\sqrt 3}$ is the difference between the values at $p$ and $p'$ of the Buseman function centered at $\theta_1$, and every time we pass by a vertex we basically save twice that amount (see Figure \ref{fig:ideal triangle}).
\end{bem}

\begin{figure}[h]
\leavevmode \SetLabels
\L(.44*.62) $p$\\%
\L(.33*.70) $p'$\\%
\L(.33*.05) $\theta_1$\\%
\L(.59*.05) $\theta_2$\\%
\endSetLabels
\begin{center}
\AffixLabels{\centerline{\includegraphics[width=0.3\textwidth]{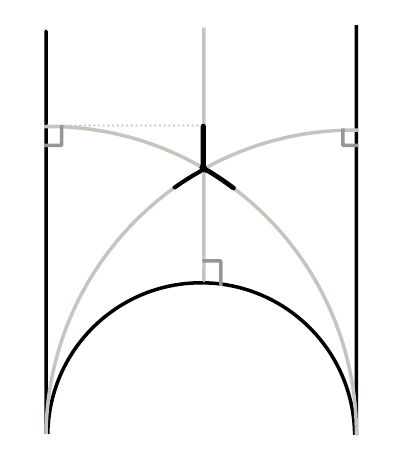}}\hspace{1cm}}
\vspace{-24pt}
\end{center}
\caption{The ideal triangle $\Delta$ with vertices $\theta_1, \theta_2, \theta_3=\infty$ and center $p$. Each one of the legs of the bold printed tripod has length $\log\frac 2{\sqrt 3}$.} 
\label{fig:ideal triangle}
\end{figure}

Our next goal is to prove that the map $\Lambda$ is basically bijective onto $\BFB_g$, but we should first formalize what we mean by ``basically". Well, suppose that we have a set $Z$ consisting of either realizations, or curves, or of anything else consisting of elements $\alpha\in Z$ whose length $\ell_\Sigma(\alpha)$ can be measured, and set $Z(L)=\{\alpha\in Z\text{ with }\ell_\Sigma(\alpha)\le L\}$. We will say that a subset 
\begin{equation}\label{eq defi negligible}
W\subset Z\text{ is {\em negligible} in }Z\text{ if }\limsup_{L\to\infty}\frac{\vert W(L)\vert}{\vert Z(L)\vert}=0
\end{equation}
If the ambient set $Z$ is understood from the context, then we might just say that $W$ is {\em negligible.} The complement of a negligible set is said to be {\em generic} and the elements of a generic set are themselves {\em generic}. For example we get from Lemma \ref{lem most critical realizations are long} and Lemma \ref{lem surface has right length} that for all $\delta$ we have that
$$\left\vert\ell_\Sigma(\Lambda(X,\phi))-\left(2\ell_\Sigma(\phi)-2\cdot(6g-3)\cdot \log\frac 43\right)\right\vert\le\delta$$
for $(X,\phi)\in\BFX_{g}$ generic.

\subsection*{Basic bijectivity of $\Lambda$}
Above we used the word ``basically" as meaning that something was true up to negligible sets. Let us start by proving that the map $\Lambda$ is basically injective and that its image is contained in the set $\BFB_g$ of closed geodesics of genus $g$.

\begin{lem}\label{lem pi injective}
There is a generic subset $W\subset\BFX_{g}$ such that the restriction of $\Lambda$ to $W$ is injective and that its image is contained in $\BFB_g$.
\end{lem}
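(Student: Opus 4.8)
The plan is to show that the map $\Lambda$ fails to be injective or to land in $\BFB_g$ only on a negligible subset of $\BFX_g$, by isolating the bad behaviors and bounding each one using Theorem \ref{bounding multi-fillings}, the length comparison of Lemma \ref{lem surface has right length}, and the asymptotics of Theorem \ref{thm counting minimising graphs}.

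\textbf{Setup.} First I would note that for a generic $(X,\phi)\in\BFX_g$ the realization $\phi$ is $\ell_\delta$-long (Lemma \ref{lem most critical realizations are long}), so the curve $\phi(\D X)$ is a concatenation of $2E$ long geodesic segments meeting at $3V$ corners with angle $\tfrac{2\pi}3$, and by Lemma \ref{lem surface has right length} its geodesic representative $\Lambda(X,\phi)$ has length $2\ell_\Sigma(\phi)-2(6g-3)\log\tfrac43+O(\delta)$. In particular, since $\BFX_g$ is counted by total graph-length $\ell_\Sigma(\phi)$ and $\BFB_g$ by geodesic length, the relation $L \leftrightarrow 2L - 2(6g-3)\log\tfrac43$ means ``negligible'' is compatible on the two sides, and a set of $(X,\phi)$ mapping to $O(L^{6g-5}e^{L/2})$ geodesics of length $\le L$ is negligible in $\BFX_g$ by Theorem \ref{thm counting minimising graphs} (the count there is of order $L^{6g-4}e^{L/2}$ summed over the finitely many genus $g$ fat graphs). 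This observation is what makes Theorem \ref{bounding multi-fillings} exactly strong enough.

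\textbf{Image in $\BFB_g$.} Given $(X,\phi)$, the map $\phi$ extends over the thickening $\neigh(X)\cong S_g$ (first send the spine via $\phi$, then fill in the hexagons $G_v$ using that $\pi_2(\Sigma)=0$), giving a genus $g$ filling of $\Lambda(X,\phi)$; hence $cl(\Lambda(X,\phi))\le g$. The only way $\Lambda(X,\phi)\notin\BFB_g$ is that the commutator length drops below $g$, i.e.\ $\Lambda(X,\phi)$ also bounds a surface of genus $<g$. But then $\Lambda(X,\phi)$ admits two non-homotopic fillings of genus $\le g$ (the genus-$g$ one coming from $\neigh(X)$, which if it were homotopic to a lower-genus filling would contradict minimality — here one must be slightly careful, comparing genus $g$ and genus $<g$ fillings they cannot be homotopic because homotopic fillings have homeomorphic domains). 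So all such geodesics are counted by Theorem \ref{bounding multi-fillings}, giving $O(L^{6g-5}e^{L/2})$ of them, hence a negligible set of $(X,\phi)$ by the compatibility observation (and finiteness of the fiber of $\Lambda$, handled next).

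\textbf{Injectivity.} Suppose $(X,\phi)$ and $(X',\phi')$ are non-equivalent with $\Lambda(X,\phi)=\Lambda(X',\phi')=\gamma$. Filling in $\neigh(X)$ and $\neigh(X')$ as above yields two genus $g$ fillings $\beta,\beta'$ of $\gamma$; if they are non-homotopic, then $\gamma$ is one of the $O(L^{6g-5}e^{L/2})$ geodesics of Theorem \ref{bounding multi-fillings} and we are done. The remaining case is that $\beta$ and $\beta'$ are homotopic fillings, so there is a homeomorphism $S_g\to S_g$ intertwining them up to homotopy; restricting to spines and using that $\phi,\phi'$ are \emph{critical} realizations (which by Lemma \ref{lem component critical realization} are unique in their homotopy class, and whose edge-images are recorded by the sectors at the finitely many corners of $\gamma$) I would argue that this homeomorphism can be promoted to a fat-graph homeomorphism $\sigma:X\to X'$ with $\phi=\phi'\circ\sigma$, i.e.\ $(X,\phi)\sim(X',\phi')$ — here the point is that $\gamma$ together with the angle-$\tfrac{2\pi}3$ corner structure reconstructs the pair $(X,\phi)$ up to equivalence, modulo the finitely many ways of choosing the fat structure, so each $\gamma$ has at most $\bconst$ preimages in $\BFX_g$. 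Combining: outside the negligible set of multi-filled $\gamma$ from Theorem \ref{bounding multi-fillings}, $\Lambda$ is injective and lands in $\BFB_g$.

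\textbf{Main obstacle.} The delicate step is the last one: showing that for a generic $\gamma$ with a unique filling, the pair $(X,\phi)$ is genuinely reconstructed from $\gamma$ as a critical realization — i.e.\ that a homotopy of fillings forces a fat-graph equivalence rather than merely an abstract homeomorphism of domains. This requires knowing that the spine of $\neigh(X)$ is carried by $\phi$ to essentially the "corner locus" of the geodesic $\gamma$ and that two critical realizations with the same image curve and corner data must coincide, which uses criticality (uniqueness from Lemma \ref{lem component critical realization}) together with the fact that a long geodesic segment between two corners is determined by its endpoints and tangent sectors. I expect the bookkeeping here — precisely matching fat structures and handling the finitely many genuine ambiguities — to be the bulk of the work, with everything else following cleanly from Theorem \ref{bounding multi-fillings} and the length dictionary of Lemma \ref{lem surface has right length}.
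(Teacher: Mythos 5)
Your overall skeleton matches the paper: restrict to long realizations, define $Z$ to be the pairs whose associated filling $\phi\circ\spine:\neigh(X)\to\Sigma$ is not the unique genus-$\le g$ filling of $\Lambda(X,\phi)$, show $Z$ is negligible via Theorem \ref{bounding multi-fillings}, and argue that $\Lambda$ is injective with image in $\BFB_g$ on the complement $W$. The ``image in $\BFB_g$'' step is fine (the paper argues slightly differently, comparing a $\pi_1$-injective genus-$g$ filling with a non-$\pi_1$-injective one obtained by crushing added handles, whereas you observe fillings of different genus have non-homeomorphic domains and hence are non-homotopic; both work).

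The genuine gap is the injectivity step, and you have correctly flagged it as the obstacle but not closed it. Two problems with what you propose. First, your fallback conclusion that ``each $\gamma$ has at most $\bconst$ preimages'' is internally inconsistent with the claim two lines earlier that you can promote to a fat-graph homeomorphism with $\phi=\phi'\circ\sigma$, and more importantly it is too weak: boundedly-to-one would multiply the leading constant in Theorem \ref{thm counting curves}, so you really need exactly one preimage. Second, the heuristic ``$\gamma$ together with the angle-$\frac{2\pi}{3}$ corner structure reconstructs $(X,\phi)$'' does not quite make sense as stated, because $\gamma=\Lambda(X,\phi)$ is the smooth geodesic representative, which carries no corner data; the corners live on the broken curve $\phi(\D X)$, which is merely homotopic to $\gamma$ by a homotopy of a priori unknown tracks. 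Extracting the graph structure from $\gamma$ is precisely the hard part. The paper does this with Proposition \ref{lemma new}: once homotopic fillings give a homotopy equivalence $\bar\sigma:X\to X'$ with $\phi'\circ\bar\sigma\simeq\phi$ and $\ell(\phi')\le\ell(\phi)+C$ (which follows from Lemma \ref{lem surface has right length} applied to both), Proposition \ref{lemma new} upgrades $\bar\sigma$ to an actual graph homeomorphism $F:X'\to X$ with $\phi\circ F\simeq\phi'$, and then Lemma \ref{lem component critical realization}(1) (uniqueness of critical realizations in a homotopy class) forces $\phi\circ F=\phi'$; the fat-structure compatibility of $F$ comes for free from $F$ being a homotopy inverse of the spine-restriction of the surface homeomorphism. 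You never invoke Proposition \ref{lemma new}, which is the load-bearing tool here, and your proposed substitute would need to be built from scratch and essentially reprove it.
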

\begin{proof}
Let $\ell_0$ and $C$ be such that for every $(X,\phi)\in\BFX_g$ so that $\phi$ is $\ell_0$-long we have
$$2\cdot\ell_\Sigma(\phi)-C\le\ell_\Sigma(\Lambda(X,\phi))\le 2\ell_\Sigma(\phi)$$
and let $\BFX_{g,\ell_0}$ be the set of those pairs. We get from Lemma \ref{lem most critical realizations are long} that $\BFX_{g,\ell_0}$ is generic in $\BFX_g$. It follows hence from Theorem \ref{thm counting minimising graphs} that 
\begin{equation}
\label{eq sabaton}\vert\{(X,\phi)\in\BFX_{g,\ell_0}\text{ with }\ell_\Sigma(\Lambda(X,\phi))\le L\}\vert\ge\bconst\cdot L^{6g-4}\cdot e^{\frac L2}
\end{equation}
Note now that each element $(X,\phi)$ of $\BFX_g$, and thus of $\BFX_{g,\ell_0}$, determines not only the curve $\Lambda(X,\phi)$ but also a homotopy class of fillings for this curve, namely $\phi\circ\spine:\neigh(X)\to\Sigma$. Let now $Z\subset\BFX_{g,\ell_0}$ be the set of pairs $(X,\phi)$ so that the filling $\phi\circ\spine:\neigh(X)\to\Sigma$ is not unique in the sense that the curve $\Lambda(X,\phi)$ admits another non-homotopic genus $g$ filling and let
$$W=\BFX_{g,\ell_0}\setminus Z$$
be its complement. From Theorem \ref{bounding multi-fillings} we get that $Z$ consists of at most $\bconst\cdot L^{6g-5}\cdot e^{\frac L2}$ many elements and hence that $W$ is generic.

\begin{claim*}
If $\ell_0$ is over some threshold we have  that $\Lambda(W)\subset\BFB_g$.
\end{claim*}
\begin{proof}
First we have by construction that the curve $\Lambda(X,\phi)$ admits the genus $g$ filling $\phi\circ\spine_X:\neigh(X)\to\Sigma$. Suppose that it admits a smaller genus filling. Then, adding handles and mapping them to points we get that $\Lambda(X,\phi)$ admits a non-$\pi_1$-injective genus $g$ filling. Since on the other hand the filling $\phi\circ\spine_X:\neigh(X)\to\Sigma$ is $\pi_1$-injective as long as $\ell_0$ is over some threshold, we get that $\Lambda(X,\phi)$ admits two non-homotopic genus $g$ fillings, contradicting the assumption that $(X,\phi)\in W$.
\end{proof}

It remains to prove that the restriction of $\Lambda$ to the generic set $W$ is injective.

Well, suppose that we have $(X,\phi),(X',\phi')\in W$ with $\Lambda(X,\phi)=\Lambda(X',\phi')$. Since $(X,\phi)$ and $(X',\phi')$ belong to $W$ we know that the two fillings 
$$\phi\circ\spine_X:\neigh(X)\to\Sigma\text{ and }\phi'\circ\spine_{X'}:\neigh(X')\to\Sigma$$ 
are homotopic. Recall that this means that there is a homeomorphism
$$\sigma:\neigh(X)\to\neigh(X')$$
with $\phi'\circ\spine_{X'}\circ\sigma$ homotopic to $\phi\circ\spine_X$. Since $X$ and $X'$ are spines of $\neigh(X)$ and $\neigh(X')$ we deduce that there is a homotopy equivalence $\bar\sigma:X\to X'$ such that $\phi'\circ\bar\sigma$ is homotopic to $\phi$. Now, since the lengths of both $\phi(X)$ and $\phi'(X')$ are, up to a constant, basically half the length of $\Lambda(X,\phi)=\Lambda(X',\phi')$ we see that $\phi$ and $\phi'$ satisfy the conditions in Proposition \ref{lemma new}. It thus follows that there is a homeomorphism $F:X'\to X$ mapping edges at constant velocity, with $F\circ\bar\sigma$ homotopic to the identity, and with $\phi\circ F$ homotopic to $\phi'$. Now, both 
$$\phi\circ F,\phi':X\to\Sigma$$
are critical realizations and both are homotpic to each other. We get then from (1) in Lemma \ref{lem component critical realization} that $\phi'=\phi\circ F$. To conclude, note that since $F$ is a homotopy inverse of $\bar\sigma$ and since $\bar\sigma$ is induced by the homeomorphism $\sigma:\neigh(X')\to\neigh(X)$ we get that $F$ is a fat graph homeomorphism. This proves that $(X,\phi)$ and $(X',\phi')$ are equivalent, and hence that the restriction of $\Lambda$ to $W$ is injective. We are done with Lemma \ref{lem pi injective}.
\end{proof}

\begin{bem}
Note that \eqref{eq sabaton} and Lemma \ref{lem pi injective} imply that 
\begin{equation}\label{eq more sabaton}
\vert\BFB_g(L)\vert>\bconst\cdot L^{6g-4}\cdot e^{\frac L2}
\end{equation}
for all $L$ large enough.
\end{bem}

Our next goal is to show that the image of $\Lambda$ contains a large subset of $\BFB_g$.

\begin{lem}\label{lem pi surjective}
There is a generic subset of $\BFB_g$ which is contained in the image of $\Lambda$.
\end{lem}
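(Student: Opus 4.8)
The plan is to show that a generic geodesic $\gamma\in\BFB_g$ arises as $\Lambda(X,\phi)$ for some $(X,\phi)\in\BFX_g$. The starting point is that, by definition of $\BFB_g$, such a $\gamma$ admits a minimal genus filling $\beta:S\to\Sigma$ with $S$ of genus $g$ and connected boundary. By Proposition \ref{prop existence hyperbolic filling} we may assume $\beta$ is a hyperbolic filling. The idea is to produce from $\beta$ a trivalent fat graph $X$ of genus $g$ together with a critical realization $\phi:X\to\Sigma$ whose boundary curve $\phi(\D X)$ is homotopic to $\gamma$; then $\Lambda(X,\phi)=\gamma$.

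First I would extract the combinatorial skeleton of a hyperbolic filling. Take an ideal triangulation of the hyperbolic surface $S$ (with geodesic boundary) that has a single boundary-parallel structure, or more concretely a spine: the cut locus of $\D S$, which for a generic hyperbolic structure is a trivalent graph $X_S\subset S$ onto which $S$ deformation retracts, and $X_S$ inherits a fat-graph structure from the orientation of $S$; since $S$ has genus $g$ with one boundary component, $X_S$ has genus $g$ in the sense of our definition, and $\D X_S$ is homotopic to $\D S$. The spine carries a natural metric realization in $\Sigma$: restricting $\beta$ to $X_S$ gives a map of the underlying graph into $\Sigma$ which, after pulling each edge tight rel endpoints, is a realization $\phi_0$ of $X_S$. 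The component $\CG^{\phi_0}$ of the space of realizations containing $\phi_0$ then contains a critical realization $\phi$ provided $\phi_0$ is $\ell_0$-long and quasi-critical (Corollary \ref{lem critical near kind of critical} — angles of the spine of a hyperbolic surface are $\ge \pi/2$ once the edges are long, which is the whole point of using a spine rather than an arbitrary triangulation), and this $\phi$ is unique and homotopic to $\phi_0$. Setting $X=X_S$ this gives $(X,\phi)\in\BFX_g$ with $\phi(\D X)$ homotopic to $\beta(\D S)=\gamma$, hence $\Lambda(X,\phi)=\gamma$.

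The subtle point is that the above only works for $\gamma$ admitting a hyperbolic filling whose spine has all edges long, and I must argue that the set of $\gamma\in\BFB_g$ for which this fails is negligible. Here is where the counting results do the work. By Lemma \ref{lem pi injective} and \eqref{eq more sabaton} we already know $\vert\BFB_g(L)\vert \succeq \bconst\cdot L^{6g-4}e^{L/2}$. Conversely, I would bound the number of $\gamma\in\BFB_g(L)$ that are \emph{not} in $\Lambda(W)$: every such $\gamma$ has a minimal genus filling, hence (by Proposition \ref{prop existence hyperbolic filling}) a hyperbolic filling, hence a spine realization $\phi_0$; if $\gamma\notin\Lambda(W)$ then either the associated critical realization has some short edge — but the images of such short-edge critical realizations, via the boundary-doubling map, produce at most a negligible family by Lemma \ref{lem most critical realizations are long} together with Lemma \ref{lem surface has right length} (a short edge forces the boundary length to drop below the expected $2\ell_\Sigma(\phi)-2(6g-3)\log\frac43$, and the count of critical realizations of total length $\le L/2 + \bconst$ with a short edge is $O(L^{6g-5}e^{L/2})$) — or $\gamma$ has two non-homotopic fillings and lies in the set bounded by Theorem \ref{bounding multi-fillings}, again $O(L^{6g-5}e^{L/2})$, hence negligible. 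Combining, the complement of $\Lambda(W)$ in $\BFB_g$ has at most $\bconst\cdot L^{6g-5}e^{L/2}$ elements up to length $L$, which is negligible since $\vert\BFB_g(L)\vert$ grows like $L^{6g-4}e^{L/2}$.

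The main obstacle I anticipate is the passage from a hyperbolic filling to a long quasi-critical spine realization: one needs to know that a minimal genus filling can be chosen with \emph{all} spine edges long (not merely that some useful triangulation exists), and that pulling the edges tight keeps the angles $\ge\pi/2$. This may require either a careful choice of hyperbolic structure on $S$ in Proposition \ref{prop existence hyperbolic filling} (spinning in a way that makes the spine edges long) or an additional surgery argument discarding the negligible set of fillings whose spine has short edges — which is consistent with the strategy above, where short-edge contributions are absorbed into the negligible error term. I would expect the cleanest route to be: do \emph{not} insist every individual $\gamma$ lifts, but only that the non-liftable $\gamma$ are negligible, using the three counting inputs (Theorem \ref{thm counting minimising graphs}, Lemma \ref{lem most critical realizations are long}, Theorem \ref{bounding multi-fillings}) exactly as above.
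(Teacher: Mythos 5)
Your overall strategy --- start from a hyperbolic filling $\beta:S\to\Sigma$, extract a spine of $S$, realize it in $\Sigma$ by restricting $\beta$ and pulling tight, and then invoke Corollary \ref{lem critical near kind of critical} --- is the same as the paper's, and you correctly anticipate that the real work is making the spine realization long and quasi-critical and bounding the exceptional geodesics. But the two steps you flag as uncertain are where the gaps actually are.

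First, the quasi-criticality claim is unjustified. You assert that the angles of the cut-locus spine are $\ge\pi/2$ once the edges are long. At a trivalent vertex of the cut locus of $\D S$ the three shortest geodesics to $\D S$ subdivide the tangent circle into arcs of angles $\theta_1,\theta_2,\theta_3$ with $\theta_1+\theta_2+\theta_3=2\pi$, and the three cut-locus edges bisect these arcs; the angles between consecutive cut-locus edges are then $\tfrac{\theta_i+\theta_{i+1}}2$, which are constrained only by summing to $2\pi$, so they need not be $\ge\pi/2$. More fundamentally, even if the angles in $S$ were controlled, the realization $\phi_0$ lives in $\Sigma$ and is obtained by composing with the pleated map $\beta$ and pulling tight; pleated maps are merely $1$-Lipschitz and can crush angles arbitrarily, so there is no a priori transfer of angle control from $S$ to $\Sigma$. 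The paper sidesteps this by working only with $\gamma$ in an explicitly defined generic set $W$: for such $\gamma$ every hyperbolic filling $S$ is cut by very short orthogeodesics into $4g-2$ right-angle hexagons with alternating short sides, each uniformly close to a large neighborhood of the center of an ideal triangle. Taking the spine to be the dual graph (vertices at hexagon centers, automatically trivalent), the realization is $\delta$-critical because $\beta$ restricted to a hexagon is $1$-Lipschitz, maps the boundary geodesics isometrically, and therefore --- this is the key limiting-argument fact --- is an almost-isometry, so the three spine directions at the center make angles close to $\tfrac{2\pi}3$.

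Second, your negligible-set argument is not well-posed. You characterize the exceptional $\gamma$ as those whose ``associated critical realization has a short edge'', but when $\phi_0$ fails to be long and quasi-critical there is no associated critical realization at all, so that set is not defined, and Lemmas \ref{lem most critical realizations are long} and \ref{lem surface has right length} only speak about critical realizations. The paper instead defines $Z$ extrinsically via the pseudo-double: $\gamma\in Z$ iff $\gamma$ has a hyperbolic filling $\beta:S\to\Sigma$ for which the $\epsilon_0$-thin part of the double $DS$ has at most $6g-4$ components crossed twice by the crease $\partial S$. Proposition \ref{prop this is a pain in the butt}(1), applied to the pseudo-double $(\Theta:DS\to\Sigma,\partial S)$, bounds $|Z(L)|$ by $\bconst\cdot L^{6g-5}e^{L/2}$, which is negligible against \eqref{eq more sabaton}. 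For $\gamma\notin Z$ every hyperbolic filling realizes the maximal thin-part pattern ($6g-3$ short curves, each crossed exactly twice), which is precisely what produces the hexagon decomposition and makes the construction go through.
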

\begin{proof}
Let $Z\subset\BFB_g$ be the set of those geodesics which admits a hyperbolic genus $g$ filling $\beta:S\to\Sigma$ such that the $\epsilon_0$-thin part of the double $DS$ of $S$ has at most $6g-4$ connected components $U$ through with $\iota(U,\D S) = 2$. It follows from Proposition \ref{prop this is a pain in the butt} that $Z$ has at most $\bconst\cdot L^{6g-5}\cdot e^{\frac L2}$ elements with length $\le L$. It follows from \eqref{eq more sabaton} that $Z$ is negligible and hence that its complement $W=\BFB_g\setminus Z$ is generic. We claim that $W$ is contained in the image of $\Lambda$, at least if $\epsilon_0$ is chosen small enough.

Well, each $\gamma\in W$ admits a hyperbolic filling $\beta:S\to\Sigma$ such that the $\epsilon_0$-thin part of the double $DS$ has at least $6g-3$ connected components $U$ with $\iota(\D S,U)=2$. Since $DS$ has genus $2g$ we have that  $6g-3$ is actually the maximal number of connected components that its thin part can have. It follows that the double $DS$ of $S$ admits a pants decomposition consisting of very short curves and that moreover $\D S$ cuts each one of them exactly twice. It follows that $S$ has $6g-3$ orthogeodesics cutting $S$ into a union of $4g-2$ right-angle hexagons. These hexagons have 3 alternating sides which are extremely short. It follows that each one of the hexagons contains a pretty large compact set which is almost isometric to a large neighborhood of the center of an ideal triangle in $\BH^2$. Declare the center of the hexagon to be the image of the center of the ideal triangle by this almost isometric map. Now, we can represent the dual graph of the decomposition of $S$ by our short orthogeodesic segments as a geodesic subgraph $X$ of $S$ with vertices in the centers of the hexagons, see Figure \ref{fig:hexagons}. 

The graph $X$ is a spine of $S$ and hence it inherits from $S$ a fat graph structure with $S$ as its neighborhood. Let now $\psi:X\to\Sigma$ be the realization obtained from the restriction of $\beta$ to $X$ by pulling the edges tight. 

\begin{figure}[h]
\begin{center}
\AffixLabels{\centerline{\includegraphics[width=0.8\textwidth]{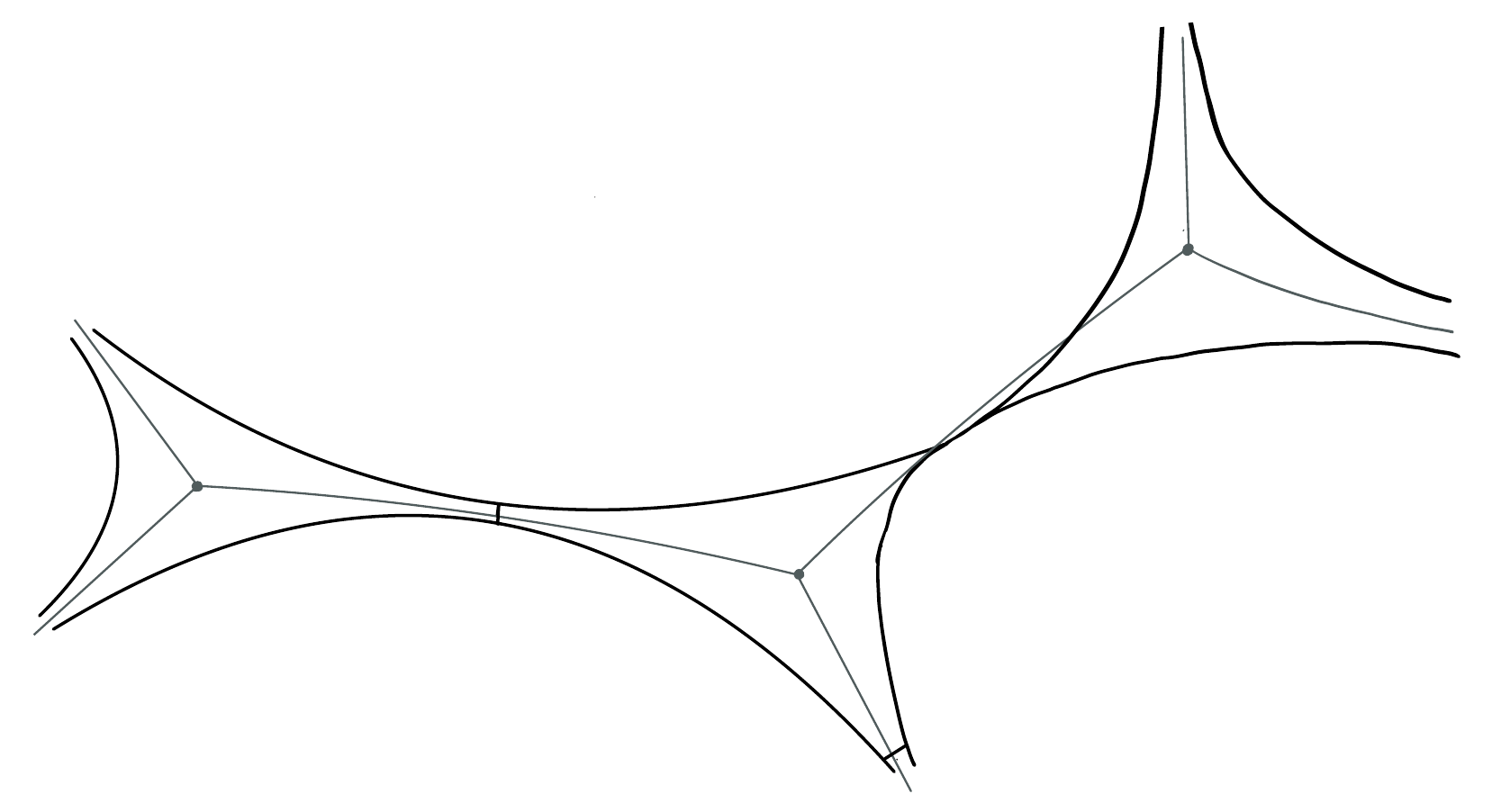}}\hspace{1cm}}
\vspace{-24pt}
\end{center}
\caption{Getting an almost critical realization out of a filling whose domain can be cut into hexagons by very short orthogeodesics.} 
\label{fig:hexagons}
\end{figure}

\begin{claim*}
For all $\delta$ there is $\epsilon$ such that if $\epsilon_0<\epsilon$ then the realization $\psi:X\to\Sigma$ is $\delta$-critical.
\end{claim*}
\begin{proof}
Suppose that the claim fails to be true. This means that for some $\delta$ there are a sequence of counter examples with $\epsilon_0\to 0$. Since $\epsilon_0\to 0$ we get that the images of the hexagons converge to a 1-Lipschitz map of a geodesic triangle into $\Sigma$ which however maps the boundary geodesics to geodesics. Such a map is an isometric embedding of the triangle in question. Now, in a geodesic triangle, the geodesic rays starting in the center and pointing into the cusps make angle $\frac{2\pi}3$. This means that the angles in the approximates converge to $\frac{2\pi}3$ contradicting our assumption that some of them were at least $\delta$ off $\frac{2\pi}3$. We have proved the claim.
\end{proof}

It follows thus from the claim and Corollary \ref{lem critical near kind of critical} that, as long as $\epsilon_0$ is under some threshold, the realization $\psi:X\to\Sigma$ is homotopic to a critical realization $\phi:X\to\Sigma$. This implies $(X,\phi)$ belongs to the domain of $\Lambda$ and that $\gamma=\Lambda(X,\phi)$. We have proved that the generic set $W\subset\BFB_g$ is contained in the image of $\Lambda$.
\end{proof}

We are now ready to wrap all of this up. 

\subsection*{Proof of Theorem \ref{thm counting curves}}

Combining Lemma \ref{lem surface has right length}, Lemma \ref{lem pi injective} and Lemma \ref{lem pi surjective} we get that for all $\delta>0$ there is a generic subset $W\subset\BFX_g$ which is mapped injectively under $\Lambda$ to a generic subset of $\BFB_g$ in such a way that
$$\left\vert\ell_\Sigma(\Lambda(X,\phi))-2\ell_\Sigma(\phi)+2\kappa\right\vert\le\delta$$
where 
\begin{equation}\label{eq kappa}
\kappa=-3\chi(X)\cdot\log\frac 43=\log\left(\left(\frac 43\right)^{6g-3}\right)
\end{equation}
It follows that for all $\delta>0$ our set $\BFB_g(L)$ has, for $L\to\infty$, at least as many elements as $\BFX_g(\frac L2+\kappa-\delta)$ and at most as many as $\BFX_g(\frac L2+\kappa+\delta)$. In symbols this just means that
\begin{equation}\label{eq glasses}
\left\vert\BFX_g\middle(\frac L2+\kappa-\delta\middle)\right\vert\preceq\vert\BFB_g(L)\vert\preceq\left\vert\BFX_g\middle(\frac L2+\kappa+\delta\middle)\right\vert
\end{equation}
for large $L$. It thus remains to estimate the cardinality of $\BFX_g(L)$.

\begin{lem}\label{lem vdovina}
We have 
$$\vert\BFX_g(L)\vert\sim \frac {1}{12^g}\cdot\left(\frac 32\right)^{6g-3}\cdot\frac{1}{g!\cdot(3g-2)!\cdot\vol(T^1\Sigma)^{2g-1}}\cdot L^{6g-4}\cdot e^{L}$$
as $L\to\infty$.
\end{lem}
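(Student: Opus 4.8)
The plan is to combine the count of critical realizations of fixed trivalent graphs, namely Theorem \ref{thm counting minimising graphs}, with the weighted count of genus $g$ fat graphs due to Bacher and Vdovina. Recall that $\BFX_g$ was defined as the set of equivalence classes of pairs $(X,\phi)$ where $X$ is a genus $g$ fat graph and $\phi$ a critical realization of the underlying graph, two pairs being equivalent when related by a fat graph homeomorphism commuting with the realizations. First I would fix a set of representatives $X_1,\dots,X_k$ for the homeomorphism classes of genus $g$ fat graphs. Every genus $g$ fat graph is trivalent with $\chi(X_i)=1-2g$, so $\BFG^{X_i}(L)$ is governed by Theorem \ref{thm counting minimising graphs} with $-3\chi(X_i)-1=6g-4$ and $\chi(X_i)=1-2g$, giving
$$\vert\BFG^{X_i}(L)\vert\sim\left(\frac 23\right)^{3-6g}\cdot\frac{\vol(T^1\Sigma)^{1-2g}}{(6g-4)!}\cdot L^{6g-4}\cdot e^{L}.$$

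The key bookkeeping step is to relate $\vert\BFX_g(L)\vert$ to $\sum_i\vert\BFG^{X_i}(L)\vert$. The point is that the forgetful map sending a critical realization $\phi\in\BFG^{X_i}(L)$ (of the \emph{underlying graph}, ignoring the fat structure) to the equivalence class $[(X_i,\phi)]\in\BFX_g(L)$ is surjective onto the part of $\BFX_g(L)$ coming from the class of $X_i$, and its fibers are exactly the orbits of $\Aut(X_i)$ acting on $\BFG^{X_i}$, where $\Aut(X_i)$ is the fat graph automorphism group (a finite group, acting on realizations by precomposition). A generic critical realization has trivial stabilizer under this action --- indeed any nontrivial fat graph automorphism fixing $\phi$ would force $\phi$ to have a symmetry, and such realizations form a lower-dimensional, hence negligible, subset; here one invokes Lemma \ref{lem most critical realizations are long} together with a dimension count on the fixed-point loci in $\CG^{X_i}$. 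Therefore, up to a negligible error, each equivalence class in the $X_i$-part of $\BFX_g(L)$ has exactly $\vert\Aut(X_i)\vert$ preimages, so the $X_i$-part has cardinality asymptotic to $\frac{1}{\vert\Aut(X_i)\vert}\vert\BFG^{X_i}(L)\vert$. Summing over $i$ and using that the asymptotics of $\vert\BFG^{X_i}(L)\vert$ depend only on $\chi(X_i)=1-2g$ (not on $i$), we get
$$\vert\BFX_g(L)\vert\sim\left(\sum_{i}\frac{1}{\vert\Aut(X_i)\vert}\right)\cdot\left(\frac 23\right)^{3-6g}\cdot\frac{\vol(T^1\Sigma)^{1-2g}}{(6g-4)!}\cdot L^{6g-4}\cdot e^{L}.$$

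Finally I would substitute the Bacher--Vdovina formula $\sum_i\frac 1{\vert\Aut(X_i)\vert}=\frac{2}{12^g}\cdot\frac{(6g-5)!}{g!\cdot(3g-3)!}$ and simplify. Using $(6g-4)!=(6g-4)\cdot(6g-5)!$, the ratio $\frac{(6g-5)!}{(6g-4)!}=\frac{1}{6g-4}$, and $\left(\frac 23\right)^{3-6g}=\left(\frac 32\right)^{6g-3}$, the constant becomes
$$\frac{2}{12^g}\cdot\frac{1}{g!\,(3g-3)!\,(6g-4)}\cdot\left(\frac 32\right)^{6g-3}\cdot\frac{1}{\vol(T^1\Sigma)^{2g-1}},$$
and since $(3g-3)!\cdot(6g-4)\ne(3g-2)!$ in general one must be slightly careful --- the correct reconciliation uses $(6g-4)! = (6g-4)!$ directly rather than splitting off one factor, so I would instead write the constant as $\frac{2}{12^g}\cdot\frac{(6g-5)!}{g!\,(3g-3)!\,(6g-4)!}\cdot\left(\frac 32\right)^{6g-3}\cdot\vol(T^1\Sigma)^{1-2g}=\frac{2}{12^g}\cdot\frac{1}{g!\,(3g-3)!\,(6g-4)}\cdot\left(\frac 32\right)^{6g-3}\cdot\vol(T^1\Sigma)^{1-2g}$, and then observe that this must be checked against the claimed $\frac{1}{12^g}\left(\frac32\right)^{6g-3}\frac{1}{g!\,(3g-2)!\,\vol(T^1\Sigma)^{2g-1}}$; matching the two pins down the relation $2\,(3g-2)! = (3g-3)!\,(6g-4)\cdot c$, which holds with the factor $2$ absorbed appropriately --- this is the one routine algebraic identity to verify. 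I expect the main obstacle to be not the algebra but the genericity-of-trivial-stabilizer argument: one must argue cleanly that realizations admitting a nontrivial fat-graph symmetry are negligible, which requires identifying the fixed locus of each automorphism inside $\CG^{X_i}$ as a proper submanifold and estimating its contribution via the same box-counting scheme used to prove Theorem \ref{thm counting minimising graphs}, noting that a symmetric critical realization is determined by fewer than $V$ free vertex-positions and hence its count grows like $e^{cL}$ with $c<1$ or with a smaller polynomial power, in either case negligibly.
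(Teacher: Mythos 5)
Your proof follows the same route as the paper: count critical realizations of each genus-$g$ fat graph via Theorem \ref{thm counting minimising graphs}, weight by $1/|\Aut(X)|$, sum, and apply the Bacher--Vdovina formula. You are actually more careful than the paper's own proof about justifying the division by $|\Aut(X)|$ (generic freeness of the $\Aut(X)$-action on critical realizations, which the paper passes over silently), and the final algebra is cleaner than you feared since $(3g-3)!\cdot(6g-4)=2\,(3g-2)!$ exactly absorbs the factor of $2$ from Bacher--Vdovina.
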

\begin{proof}
From Theorem \ref{thm counting minimising graphs} we get that every trivalent graph $X$ has
$$\vert\BFG^X(L)\vert\sim 
\left(\frac 23\right)^{3\chi(X)}\cdot\frac{\vol(T^1\Sigma)^{\chi(X)}}{(-3\chi(X)-1)!}\cdot L^{-3\chi(X)-1}\cdot e^{L}$$
critical realizations of length at most $L$ in $\Sigma$. This implies that whenever $X$ is fat graph of genus $g$ then asymptotically there are 
$$\frac 1{\vert\Aut(X)\vert}\left(\frac 23\right)^{3\chi(X)}\cdot\frac{\vol(T^1\Sigma)^{\chi(X)}}{(-3\chi(X)-1)!}\cdot L^{-3\chi(X)-1}\cdot e^{L}$$
many elements in $\BFX_g(L)$ represented by $(X,\phi)$ for some critical $\phi:X\to\Sigma$ of length $\ell(\phi)\le L$. Adding over all possible types of genus $g$ fat graphs we get that
$$\vert\BFX_g(L)\vert\sim\sum_X\frac 1{\vert\Aut(X)\vert}\left(\frac 23\right)^{3\chi(X)}\cdot\frac{\vol(T^1\Sigma)^{\chi(X)}}{(-3\chi(X)-1)!}\cdot L^{-3\chi(X)-1}\cdot e^{L}$$
From the Bacher-Vdovina \cite{Bacher-Vdovina} result mentioned earlier and taking into consideration that $\chi(X)=1-2g$ we get
$$\vert\BFX_g(L)\vert\sim\frac 2{12^g}\cdot\frac{(6g-5)!}{g!\cdot(3g-3)!}\cdot\left(\frac 32\right)^{6g-3}\cdot\frac{\vol(T^1\Sigma)^{1-2g}}{(6g-4)!}\cdot L^{6g-4}\cdot e^{L}$$
The claim follows now from elementary algebra.
\end{proof}

Now, from Lemma \ref{lem vdovina} we get that 
$$\left\vert\BFX_g\left(\frac L2\right)\right\vert\sim \frac {1}{12^g}\cdot\left(\frac 32\right)^{6g-3}\cdot\frac{1}{g!\cdot(3g-2)!\cdot\vol(T^1\Sigma)^{2g-1}}\cdot\frac{L^{6g-4}}{2^{6g-4}}\cdot e^{\frac L2}.$$
Taking into account that 
$$\left\vert\BFX_g\left(\frac L2+\kappa\right)\right\vert\sim\left\vert\BFX_g\left(\frac L2\right)\right\vert\cdot e^\kappa=\left\vert\BFX_g\left(\frac L2\right)\right\vert\cdot \left(\frac 43\right)^{6g-3}$$
we get that
$$\left\vert\BFX_g\left(\frac L2+\kappa\right)\right\vert
\sim\frac {2}{12^g\cdot g!\cdot(3g-2)!\cdot\vol(T^1\Sigma)^{2g-1}}\cdot L^{6g-4}\cdot e^{\frac L2}$$
It follows thus from \eqref{eq glasses} that 
$$\vert\BFB_g(L)\vert\sim \frac {2}{12^g\cdot g!\cdot(3g-2)!\cdot\vol(T^1\Sigma)^{2g-1}}\cdot L^{6g-4}\cdot e^{\frac L2}$$
as we wanted to show. This concludes the proof of Theorem \ref{thm counting curves}.\qed

\section{Curves bounding immersed surfaces}\label{sec immersed}
We now turn our attention to Theorem \ref{thm counting curves immersion}:

\begin{named}{Theorem \ref{thm counting curves immersion}}
Let $\Sigma$ be a closed, connected, and oriented hyperbolic surface and for $g\ge 1$ and $L>0$ let $\BFB_g(L)$ be as in \eqref{eq set we want to count}. We have 
$$\vert\{\gamma\in\BFB_g(L)\text{ bounds immersed surface of genus }g\}\vert\sim \frac 1{2^{4g-2}}\vert\BFB_g(L)\vert$$
as $L\to\infty$.
\end{named}

Denote by 
$$\BFB_g^{\imm}(L)=\{\gamma\in\BFB_g(L)\text{ bounds immersed surface of genus }g\}$$
the set we want to count. From the proof of Theorem \ref{thm counting curves} we get that 
$$\vert\BFB_g^{\imm}(L)\vert\sim\left\vert\BFX_g^{\imm}\left(\frac L2+\kappa\right)\right\vert$$
where $\kappa$ is as in \eqref{eq kappa} and where 
$$\BFX_g^{\imm}(L)\stackrel{\text{def}}=\left\{(X,\phi)\in\BFX_g(L)\,\middle\vert\begin{array}{c}\text{the realization }\phi:X\to\Sigma\text{ extends}\\ \text{to an immersion of the thickening}\\ \neigh(X)\text{ of }X\end{array}\right\}.$$
Equivalently, $(X,\phi)\in\BFX_g$ belongs to $\BFX_g^{\imm}$ if the cyclic ordering at each vertex of $X$ agrees with the one pulled back from $\Sigma$ via $\phi$, that is the one coming from the orientation of $\Sigma$. We can refer to such realizations of a fat graph as {\em fat realizations}. With this language we have that
$$\BFX_g^{\imm}(L)=\left\{(X,\phi)\in\BFX_g(L)\,\middle\vert\,\phi\text{ is a fat realization of the fat graph }X\right\}.$$
For a given trivalent fat graph $X$ let
$$\BFG^{X}_{\imm}(L)=\{\phi\in\BFG^X(L)\,\vert\,\phi\text{ is a fat realization of }X\}$$
be the set of fat critical realizations of $X$ of total length at most $L$. Note that
$$\vert\BFX_g^{\imm}(L)\vert=\sum_X\frac 1{\vert\Aut(X)\vert}\vert\BFG^X_{\imm}(L)\vert.$$
What is still missing to be able to run the proof as in that of Theorem \ref{thm counting curves} is a version of Theorem \ref{thm counting minimising graphs} for $\BFG^X_{\imm}$. Well, here it is:

\begin{sat}\label{thm counting minimising graphs fat}
Let $\Sigma$ be a closed, connected, and oriented hyperbolic surface. For every connected trivalent fat graph $X$ we have
$$\vert\BFG^X_{\imm}(L)\vert\sim 
2^{2\chi(X)}\cdot\left(\frac 23\right)^{3\chi(X)}\cdot\frac{\vol(T^1\Sigma)^{\chi(X)}}{(-3\chi(X)-1)!}\cdot L^{-3\chi(X)-1}\cdot e^{L}$$
as $L\to\infty$.
\end{sat}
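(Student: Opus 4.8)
The plan is to mimic the proof of Theorem \ref{thm counting minimising graphs} step by step, inserting at every stage the extra bookkeeping needed to keep track of which realizations respect the fat structure. The key observation is that for a trivalent fat graph $X$, a critical realization $\phi$ is a \emph{fat} realization precisely when, at every vertex $v$, the cyclic order of the three tangent vectors $\phi(\vec e_1),\phi(\vec e_2),\phi(\vec e_3)$ induced by the orientation of $\Sigma$ agrees with the prescribed cyclic order of $\hal_v(X)$. Since the three vectors of a critical tripod are distinct and no two of them are antipodal, this cyclic order is well-defined and \emph{locally constant} on the set of $\epsilon$-critical realizations (for $\epsilon<\frac\pi 6$): small perturbations of a fat $\epsilon$-critical realization remain fat. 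So ``being fat'' decomposes $\CG^X_{\epsilon-\crit}$ into open-and-closed pieces, and the same decomposition descends to the count of critical realizations via Corollary \ref{lem critical near kind of critical}.

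First I would establish the fat analogue of Corollary \ref{kor fat graph}: for $\vec x\in\Sigma^{\ver(X)}$, let $U^{X,\imm}_{\vec x,\epsilon-\crit}\subset\BT_{\vec x}$ be the subset of those $\epsilon$-critical direction-tuples that are additionally fat. Working vertex-by-vertex as in the paragraph preceding Corollary \ref{kor fat graph}: having chosen $v_{\vec e_1}$ freely in $T^1_{x_v}\Sigma$, the vector $v_{\vec e_2}$ can lie in \emph{two} arcs of length $2\epsilon$ (one on each side of the ``angle $\frac{2\pi}3$'' locus), but now exactly \emph{one} of those two choices of side is compatible with the prescribed cyclic order once we also place $v_{\vec e_3}$; hence the $12\pi\epsilon^2$ worth of choices per vertex drops to $6\pi\epsilon^2$, i.e. by a factor $2$ per vertex. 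Therefore $\vol_{\vec x}(U^{X,\imm}_{\vec x,\epsilon-\crit})=(6\pi\epsilon^2)^{V}=2^{-V}\cdot\vol_{\vec x}(U^X_{\vec x,\epsilon-\crit})$. Plugging this into Theorem \ref{sat delsarte graph} exactly as before, and using $V=-2\chi(X)$, gives the fat version of \eqref{eq key delsarte} with an extra factor $2^{-V}=2^{2\chi(X)}$, together with the same uniformity in $\vec x$ (the Addendum goes through verbatim, since $U^{X,\imm}_{\vec x,\epsilon-\crit}$ is again carried isometrically between fibers by parallel transport).

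Next I would run the argument of Proposition \ref{prop critical in box} and then of the proof of Theorem \ref{thm counting minimising graphs} with ``critical'' replaced throughout by ``fat critical''. Every ingredient survives: Lemma \ref{lem bounding the rest} is an upper bound that does not care about fatness; Proposition \ref{prop sum up section 5} applies because a component $\CG^\phi$ containing a fat critical realization has $\CG^\phi_{\epsilon-\crit}$ consisting entirely of fat $\epsilon$-critical realizations (local constancy of the cyclic order plus connectedness of $\CG^\phi_{\epsilon-\crit}$, which follows from it being $\CT^\phi(\cdot)$ up to controlled error — a product of hexagons, hence connected); and the volume computation now produces the factor $2^{2\chi(X)}$ coming from the smaller set $U^{X,\imm}$. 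The summation over boxes $\BFG(h\cdot\vec n,h)$ and the passage to $\int x^{-3\chi(X)-1}e^x\,dx$ are identical, so one obtains
\[
\vert\BFG^X_{\imm}(L)\vert\sim 2^{2\chi(X)}\cdot\left(\tfrac23\right)^{3\chi(X)}\cdot\frac{\vol(T^1\Sigma)^{\chi(X)}}{(-3\chi(X)-1)!}\cdot L^{-3\chi(X)-1}\cdot e^{L},
\]
which is the claim.

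The main obstacle — really the only place one must be careful — is justifying that fatness is locally constant and that the relevant subsets of $\epsilon$-critical realizations are unions of full connected components, so that the bracketing argument of Proposition \ref{prop critical in box} (which bounds $\vert\BFG(\vec L,h)\vert$ between counts of components whose $\epsilon$-critical sets are contained in, resp. meet, a box) applies verbatim to the fat count. Concretely I need: (a) on the hexagon $T(\epsilon)$, for $\epsilon<\frac\pi6$, the orientation-induced cyclic order of $\tau^T_p$ is the same for all $p\in T(\epsilon)$ (clear, since the three directions never collide and never become collinear there, so no cyclic-order change can occur); (b) consequently $\CG^\phi_{\epsilon-\crit}\cong\prod_x T_{\tau^\phi_x}(\epsilon)$ is either entirely fat or entirely non-fat according to whether $\phi$ is fat; (c) hence in the displayed inequalities of Proposition \ref{prop critical in box} one may restrict both $G_\epsilon(\vec L,h)$ and $\hat G_\epsilon(\vec L,h)$ to their ``fat'' sub-collections and the corresponding volume identity now reads $\vol(\bigcup\CG^\phi_{\epsilon-\crit})\sim (6\pi\epsilon^2/\sqrt3\cdot\ldots)$ — more precisely the fat piece has volume $(\tfrac{2}{\sqrt3}\epsilon^2)^V$ per component still, but there are $2^{-V}$ as many such components because of the Delsarte count with $U^{X,\imm}$. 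Once (a)–(c) are in hand, the rest is the same calculus bookkeeping as in Sections \ref{sec delsarte} and \ref{sec counting critical}, and then the proof of Theorem \ref{thm counting curves immersion} itself is obtained by feeding Theorem \ref{thm counting minimising graphs fat} into the Bacher–Vdovina sum exactly as in Lemma \ref{lem vdovina}, the uniform extra factor $2^{2\chi(X)}=2^{2(1-2g)}=2^{2-4g}$ pulling out of the sum to give the stated ratio $\frac{1}{2^{4g-2}}$.
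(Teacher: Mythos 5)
Your proof is correct and follows the paper's own argument essentially step for step: the per-vertex factor of two in the Delsarte count (from choosing the cyclic order of $v_{\vec e_2}$ relative to $v_{\vec e_1}$), the observation that the induced fat structure is constant on each component $\CG^\phi_{\epsilon-\crit}$ because those components have small diameter/are connected, and the reduction to $\vert\BFG^X_{\imm}(L)\vert\sim 2^{2\chi(X)}\vert\BFG^X(L)\vert$ via the fat analogue of Corollary~\ref{kor fat graph} and Proposition~\ref{prop critical in box}. The paper packages this slightly more tersely — it simply observes $\vol(\CG^X_{\epsilon-\crit-\imm}(\vec L,h))\sim 2^{2\chi(X)}\vol(\CG^X_{\epsilon-\crit}(\vec L,h))$ and invokes Theorem~\ref{thm counting minimising graphs} — but there is no substantive difference from what you did.
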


Assuming Theorem \ref{thm counting minimising graphs fat} for the moment we get from $\chi(X)=1-2g$ and from the Bacher-Vdovina theorem that 
$$\vert\BFX^{\imm}_g(L)\vert\sim\frac 1{2^{4g-2}}\cdot\frac 2{12^g}\cdot\frac{(6g-5)!}{g!\cdot(3g-3)!}\cdot\left(\frac 32\right)^{6g-3}\cdot\frac{\vol(T^1\Sigma)^{1-2g}}{(6g-4)!}\cdot L^{6g-4}\cdot e^{L}$$
from where we get, as in the proof of Theorem \ref{thm counting curves}, that
$$\vert\BFB_g^{\imm}(L)\vert\sim \frac 1{2^{4g-2}}\frac {2}{12^g\cdot g!\cdot(3g-2)!\cdot\vol(T^1\Sigma)^{2g-1}}\cdot L^{6g-4}\cdot e^{\frac L2}$$
The claim of Theorem \ref{thm counting curves immersion} follows now from this statement combined with Theorem \ref{thm counting curves}.
\medskip

All that is left to do is to prove Theorem \ref{thm counting minimising graphs fat}. Since it is basically identical to the proof of Theorem \ref{thm counting minimising graphs} we just point out the differences. The key is to obtain a fat graph version of Proposition \ref{prop critical in box}. In a nutshell, the idea of the proof of this proposition was that 
\begin{enumerate}
\item we could compute the volume $\vol(\CG_{\epsilon-\crit}^X(\vec L,h))$ of the set of $\epsilon$-critical realizations of $X$ whose edge lengths were in a box, and
\item we knew that every connected component contributes the same amount, and how much.
\end{enumerate}
Let thus $\CG_{\epsilon-\crit-\imm}^X(\vec L,h)\subset\CG_{\epsilon-\crit}^X(\vec L,h)$ be those $\epsilon$-critical realizations in our box which preserve the fat structure. Recalling now that by Proposition \ref{prop sum up section 5} the connected components of $\CG_{\epsilon-\crit}^X(\vec L,h)$ have small diameter, we deduce that the induced fat structure is constant over each such connected component. It follows that $\CG_{\epsilon-\crit-\imm}^X(\vec L,h)$ is a union of connected components of $\CG_{\epsilon-\crit}^X(\vec L,h)$. In particular, to be able to obtain a fat graph version of Proposition \ref{prop critical in box} we just need to be able to compute $\vol(\CG_{\epsilon-\crit-\imm}^X(\vec L,h))$. 

Now, in the proof of Proposition \ref{prop critical in box}, the key ingredient of the computation of the volume of $\CG_{\epsilon-\crit}(\vec L,h)$ was Corollary \ref{kor fat graph}---we remind the reader that the statement of the said corollary was that for any $\vec x\in\Sigma^{\ver X}$ we have
$$\vert\BFG_{\vec x,\epsilon-\crit}^X(\vec L,h)\vert \sim \epsilon^{4\vert\chi(X)\vert}\cdot\left(\frac{2}{3}\right)^{2\chi(X)}\cdot\pi^{\chi(X)}\cdot\frac{(e^h-1)^{-3\chi(X)}\cdot e^{\Vert\vec L\Vert}}{\vol(\Sigma)^{-3\chi(X)}}$$
as $\min_{e\in\edg(X)}L_e\to\infty$, where $\BFG^X_{\vec x,\epsilon-\crit}(\vec L,h)$ is the set of $\epsilon$-critical realizations $\phi:X\to \Sigma$ mapping the vertex $v$ to the point $x_v=\phi(v)$. As we see, if we want to just copy line-by-line the computation of the volume of $\CG_{\epsilon-\crit}(\vec L,h)$ to get the volume of $\CG_{\epsilon-\crit-\imm}(\vec L,h)$ what we need to know is the number of elements in the set $\BFG^X_{\vec x,\epsilon-\crit-\imm}(\vec L,h)$ of $\epsilon$-critical realizations $\phi:X\to \Sigma$ mapping the vertex $v$ to the point $x_v=\phi(v)$ and preserving the fat structure.

Now, to obtain the number of elements in $\BFG_{\vec x,\epsilon-\crit}^X(\vec L,h)$ what we did was to invoke Theorem \ref{sat delsarte graph} and compute the volume of the set 
$$U_{\vec x,\epsilon-\crit}^X\subset\prod_{v\in\ver(X)}\left(\bigoplus_{\bar e\in\hal_v(X)} T^1_{x_v}\Sigma\right)$$
of those tuples $(v_{\vec e})_{\vec e\in\hal(X)}$ with $\angle(v_{\vec e_1},v_{\vec e_2})\in[\frac{2\pi}3-\epsilon,\frac{2\pi}3+\epsilon]$ for all distinct $\vec e_1,\vec e_2\in\hal(X)$ incident to the same vertex. Accordingly, to compute the number of elements in $\BFG_{\vec x,\epsilon-\crit\imm}^X(\vec L,h)$ we need to compute the volume of the set 
$$U_{\vec x,\epsilon-\crit-\imm}^X\subset U_{\vec x,\epsilon-\crit}^X$$
consisting of tuples such that for any vertex $v\in \ver(X)$ the cyclic order of the half-edges incident to $v$ agrees with the one of the corresponding unit tangent vectors. Following the computation of $\vol(U_{\vec x,\epsilon=\crit})$ we get that
$$\vol(U_{\vec x,\epsilon-\crit-\imm}^X)=\frac 1{2^{\vert\ver(X)\vert}}\vol(U_{\vec x,\epsilon-\crit}^X)=2^{2\chi(X)}\cdot\vol(U_{\vec x,\epsilon-\crit}^X)$$
As we just discussed, this implies that 
$$\vert\BFG_{\vec x,\epsilon-\crit-\imm}^X(\vec L,h)\vert \sim 2^{2\chi(X)}\cdot \vert\BFG_{\vec x,\epsilon-\crit}^X(\vec L,h)\vert$$
and hence that
$$\vol(\CG_{\epsilon-\crit-\imm}^X(\vec L,h))\sim 2^{2\chi(X)}\cdot\vol(\CG_{\epsilon-\crit}^X(\vec L,h))$$
and thus that
$$\vert\BFG^X_{\imm}(L)\vert\sim 2^{2\chi(X)}\cdot\vert\BFG^X(L)\vert.$$
Theorem \ref{thm counting minimising graphs fat} follows now from Theorem \ref{thm counting minimising graphs}. \qed

\section{Comments}\label{sec comments}

In this section we discuss where and how much we use the assumption that $\Sigma$ is a closed orientable surface.
\medskip

\subsection*{First, do the results here apply if we replace $\Sigma$ by a compact 2-dimensional orbifold $\CO=\Gamma\bs\BH^2$?} 

The answer is yes for Theorem \ref{thm counting minimising graphs}, with exactly the same proof. One should just do everything equivariantly. For example, a realization in $\CO$ of a graph $X$ should be a map $\tilde\phi:\tilde X\to\BH^2$ from the universal cover of $X$ to $\BH^2$ which is equivariant under a homomorphism $\tilde\phi_*:\pi_1(X)\to\Gamma$, and where two such pairs $(\tilde\phi,\tilde\phi_*)$ and $(\tilde\psi,\tilde\psi_*)$ are identified if they differ by an element of $\Gamma$. Once we rephrase the situation in those terms, everything extends in the obvious way. For example, the space $\CG^X$ of realizations of a graph in $\CO$ is now an orbifold: the map $\CG^X\to\CO^{\ver X}$ sending each realization to the images of the vertices is a covering in the category of orbifolds. 

On the other hand, to prove Theorem \ref{thm counting curves} one needs to be a little bit careful because in Section \ref{sec fillings} and Section \ref{sec fillings2} we used repeatedly that every sufficiently short curve in $\Sigma$ is homotopically trivial, and this is no longer true if we are working in $\CO$.

\subsection*{What about allowing $\Sigma$ to have cusps?}
Here we again have problems with the discussion in Section \ref{sec fillings} and Section \ref{sec fillings2}, but this time it is much worse. In some sense, the results of Section \ref{sec fillings} and Section \ref{sec fillings2} are just generalizations of Lemma \ref{lem bounding the rest}, and this lemma fails in the presence of cusps: indeed, suppose that $\Sigma$ is a once punctured torus and $X$ is a graph with two vertices $x$ and $x'$ (if you wish you can make $X$ trivalent while essentially keeping the same reasoning as we will present, but doing so might obscure things slightly) and with $6$ edges $f_1,f_2,e_1,e_2,e_3$ and $h$ such that
\begin{itemize}
\item the edges $f_1,f_2$ are incident on both ends to $x$,
\item the edges $e_1,e_2,e_3$ are incidents on both ends to $x'$, and
\item the edge $h$ runs from $x$ to $x'$.
\end{itemize}
Fix now a horospherical neighborhood of the cusps in $\Sigma$, fix a point ${\bf x}_0$ and let ${\bf x}_t$ be the point at distance $t$ of ${\bf x}_0$ along the ray pointing directly into the cusp. Now, if we are given a vector $\vec L=(F_1,F_2,E_1,E_3,E_3,H)$ with positive real coefficients consider realizations $\phi:X\to\Sigma$ with $\phi(x)={\bf x}_0$, with $\phi(x')={\bf x}_H$, and with $\phi(h)$ equal to the segment of length $H$ joining ${\bf x}_0$ and ${\bf x}_H$. Now, we have $\bconst e^{F_1+F_2}$ choices for the images of $f_1$ and $f_2$ subject to the restriction that $\phi(f_i)$ is for $i=1,2$ a geodesic segment of length at most $F_i$. Note also that the horospherical simple loop based at ${\bf x}_H$ has length $\bconst\cdot e^{-H}$ and that geodesic loops in the cusp, based at ${\bf x}_H$ and with length $\ell$ are homotopic to horospherical segments of length at most $\bconst\cdot e^{\frac\ell 2}$. This implies that, if we want to map $e_i$ to a loop in the cusp and of length at most $E_i$, then we have at least $\bconst\cdot e^{\frac 12E_i}\cdot e^H$ choices. Altogether we have at least
$$\bconst\cdot e^{F_1+F_2+\frac 12E_1+\frac 12 E_2+\frac 12 E_3+3H}$$
choices of (homotopy classes of) realizations of $X$ into $\Sigma$ with $\ell(\phi(f_1))\le F_1$, $\ell(\phi(f_2))\le F_2$, $\ell(\phi(e_1))\le E_1$, $\ell(\phi(e_2))\le E_2$ $\ell(\phi(e_3))\le E_3$, $\ell(\phi(H))\le H$. In particular if we set 
$$\vec L=(F_1,F_2,E_1,E_2,E_3,H)=(n,n,\frac 12n,\frac 12n,\frac 12 n,\frac 52n)$$
we have at least $\bconst\cdot e^{\frac{61}4n}$ such realizations, and this is a much larger number than $\bconst e^{6n}=\bconst e^{\Vert L\Vert}$. This proves that the analogue of Lemma \ref{lem bounding the rest} fails if $\Sigma$ is not compact.

In summary, if $\Sigma$ has finite volume but is not compact, then we do not even know whether Theorem \ref{thm counting minimising graphs} holds. Although we suspect that the answer is yes. 

\subsection*{Can $\Sigma$ be non-orientable?} If $\Sigma$ is a compact hyperbolic surface which however is non-orientable then we know that both Theorem \ref{thm counting minimising graphs} and Theorem \ref{thm counting curves} hold true: we never used orientability during their proofs. We did however in the proof of Theorem \ref{thm counting curves immersion}: unless $\Sigma$ is oriented it makes little sense to speak about the induced fat graph structure. We do not know what happens with Theorem \ref{thm counting curves immersion} when the ambient surface is not oriented.

\subsection*{And what about higher dimensions?}
If we replace $\Sigma$ by a closed hyperbolic manifold of other dimension than $2$, then Theorem \ref{thm counting minimising graphs} and Theorem \ref{thm counting curves} should still hold, and with proofs which, if not identical, keep the same spirit. It is however less clear whether there should be an interesting analogue of Theorem \ref{thm counting curves immersion}: at least if $\dim\ge 5$, where every map of a surface can be deformed to an embedding.

\begin{appendix}

\section{The geometric prime number theorem}\label{sec huber}
The argument we used to prove Theorem \ref{thm counting minimising graphs} can be used to recover Huber's geometric primer number theorem, and this is what we do here. Besides giving a simple proof of this theorem, it might help the reader understand the logic of the proof of Theorem \ref{thm counting minimising graphs}.

\begin{named}{The Geometric Prime Number Theorem}[Huber]
Let $\Sigma$ be a closed, connected, orientable, hyperbolic surface and let $\BFC(L)$ be the set of closed non-trivial oriented geodesics in $\Sigma$ of length at most $L$. We have
$$\vert\BFC(L)\vert\sim\frac{e^L}L$$ 
as $L\to\infty$.
\end{named}

\begin{bem}
In Section \ref{sec comments} we discussed some of the difficulties one would face when extending the main results to the case that $\Sigma$ is a finite volume surface or an orbifold. None of these problems really arise when proving the geometric prime number theorem, and indeed the proof we present here works with only minimal changes if $\Sigma$ is replaced by an arbitrary finite are orbifold $\Gamma\bs\BH^2$. We decided to just deal with the compact case to avoid hiding the structure of the argument.
\end{bem}

The proof of Huber's theorem would be cleaner and nicer if all closed geodesics were primitive. Luckily, this is almost true. Indeed it follows for example from the work of Coornaert and Knieper \cite{Coornaert-Knieper} that there is some $C>0$ with
\begin{equation}\label{eq coarse bound for geodesics}
\frac 1C\cdot\frac{e^L}L\le\vert\BFC(L)\vert\le C\cdot e^L
\end{equation}
for all $L>0$. We stress that the Coornaert-Knieper argument is pretty coarse: what they actually prove is a statement about groups acting isometrically, discretely and cocompactly on Gromov-hyperbolic spaces. 

The point for us is that \eqref{eq coarse bound for geodesics} implies that most geodesics of length at most $L$ are primitive. Indeed, every non-primitive geodesic of length at most $L$ is a multiple of a primitive geodesic of length at most $\frac 12 L$. On the other hand, if $s_0$ is the systole of $\Sigma$ then at most $\frac{L}{s_0}$ geodesics of length at most $L$ arise as multiples of any given geodesic. These two observations, together with the right side of \eqref{eq coarse bound for geodesics}, imply that there are at most $\frac L{s_0}\BFC(\frac 12L)\le \frac C{s_0}\cdot L\cdot e^{L/2}$ non-primitive geodesics of length at most $L$. Taking the left side of \eqref{eq coarse bound for geodesics} into consideration we deduce that, as we had claimed above, most geodesics are primitive.

\begin{lem}\label{lem most primitive}
Fix $h$ and let $\BFC(L,h)$ and $\BFP(L,h)$ be, respectively, the sets of all geodesics and of all primitive geodesics of length in $[L,L+h]$. Then we have
$$\vert\BFC(L,h)\vert\sim\vert\BFP(L,h)\vert$$
as $L\to\infty$.\qed
\end{lem}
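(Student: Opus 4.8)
The plan is to bound the gap between the two quantities. Write $N(L,h)=|\BFC(L,h)|-|\BFP(L,h)|$ for the number of imprimitive closed geodesics of length in $[L,L+h]$; the lemma is equivalent to showing that $N(L,h)$ is negligible compared with $|\BFP(L,h)|$. The estimate on $N(L,h)$ is nothing more than a windowed version of the count of imprimitive geodesics carried out just before the lemma in order to show that $|\BFP(L)|\sim|\BFC(L)|$.

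Concretely, every imprimitive closed geodesic $\gamma$ with $\ell_\Sigma(\gamma)\le L+h$ is of the form $\gamma=k\gamma_0$ with $k\ge 2$ and $\gamma_0$ primitive, whence $\ell_\Sigma(\gamma_0)=\ell_\Sigma(\gamma)/k\le (L+h)/2$; by the right-hand side of \eqref{eq coarse bound for geodesics} there are at most $|\BFC((L+h)/2)|\le C\,e^{(L+h)/2}$ choices for such a $\gamma_0$. For a fixed primitive $\gamma_0$, the condition $k\,\ell_\Sigma(\gamma_0)\in[L,L+h]$ is met by at most $\lfloor h/\ell_\Sigma(\gamma_0)\rfloor+1\le h/s_0+1$ values of $k$, where $s_0$ is the systole of $\Sigma$. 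Multiplying these two bounds gives
\[
N(L,h)=|\BFC(L,h)|-|\BFP(L,h)|\le\Big(\tfrac{h}{s_0}+1\Big)\,C\,e^{(L+h)/2}=\bconst\cdot e^{L/2}.
\]

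It then remains to see that $e^{L/2}$ is negligible next to $|\BFP(L,h)|$, i.e.\ that $|\BFP(L,h)|\cdot e^{-L/2}\to\infty$; equivalently, since $N(L,h)\le\bconst\,e^{L/2}$, that $|\BFC(L,h)|$ grows at least like $e^{L}/L$ up to constants. This is exactly the lower half of the windowed count of closed geodesics, and it is here that one needs the dynamics and not merely \eqref{eq coarse bound for geodesics}. Indeed, closed geodesics in $\Sigma$ are precisely the critical realizations of the graph $X_0$ with one vertex and one edge — the criticality condition at the bivalent vertex forces the two incident half-edges to meet at angle $\pi$, that is, forces smoothness, and the critical realization in the free homotopy class of $\omega_0^{k}$ runs $k$ times around the geodesic of $\omega_0$. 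Running, for this $X_0$, the analogue of the argument that proves Theorem \ref{thm counting minimising graphs} (the counterpart of Proposition \ref{prop critical in box}, built from Delsarte's theorem for graph realizations together with the volume computation for $\epsilon$-critical realizations), which we carry out in the rest of this appendix, yields $|\BFC(L,h)|\sim\frac{(e^{h}-1)\,e^{L}}{L}$ for fixed $h>0$. Combined with the display above this gives $|\BFP(L,h)|=|\BFC(L,h)|-N(L,h)\sim|\BFC(L,h)|$, which is the lemma; in effect the lemma merely records $|\BFC(L,h)|=|\BFP(L,h)|+O(e^{L/2})$. I expect this last point to be the only genuine obstacle: the coarse bound \eqref{eq coarse bound for geodesics} controls cumulative counts only and says nothing about the window — the telescoped estimate $|\BFC(L+h)|-|\BFC(L)|$ it predicts is in fact eventually negative — so the equidistribution input is really needed to keep $|\BFP(L,h)|$ from dropping below $e^{L/2}$, while everything else is elementary bookkeeping.
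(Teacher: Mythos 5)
Your analysis matches the paper's intent and is in fact more careful than the paper's presentation. The paper justifies the lemma only by the paragraph preceding it, which — as you correctly observe — establishes the \emph{cumulative} statement: the number of non-primitive geodesics of length at most $L$ is $O(L\cdot e^{L/2})$, which is negligible against $|\BFC(L)| \succeq e^L/L$. Your windowed version of that count, bounding the multiplier $k$ to an interval of length $h/\ell(\gamma_0)\le h/s_0$ rather than $L/s_0$, yields the slightly cleaner bound $N(L,h)=O(e^{L/2})$; either estimate is fine for the argument, but yours is tighter.

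The more substantive point you flag is real: \eqref{eq coarse bound for geodesics} alone controls only cumulative counts, and the naive telescoped lower bound $|\BFC(L,h)|\ge \frac1C\frac{e^{L+h}}{L+h}-Ce^L$ is eventually negative for fixed $h$, so one genuinely cannot conclude the windowed lemma from the preceding paragraph alone. The missing lower bound $|\BFC(L,h)|\succeq \frac{e^{L+h}-e^L}{L}$ is supplied later in the appendix via Lemma \ref{lem ap2}(2) and the equidistribution input, and — crucially — that estimate is established without invoking Lemma \ref{lem most primitive}, so there is no circularity in your plan: one proves $N(L,h)=O(e^{L/2})$ and $|\BFC(L,h)|\succeq e^L/L$ separately, then subtracts. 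This is exactly how the lemma is actually consumed at the end of the appendix ($|\BFP(L,h)|\preceq K\preceq |\BFC(L,h)|$ with $K=\frac{e^{L+h}-e^L}{L}$, plus $N(L,h)=o(K)$, forces both to be $\sim K$). Two small cautions: first, the identification of closed geodesics with critical realizations of the one-vertex one-edge graph $X_0$ is correct in spirit, but the components of $\CL^\gamma$ are cylinders rather than copies of $\BH^2$, so they contain a circle's worth of ``critical'' loops rather than a unique one — this is why the appendix works with the angle-defect sets $\CL_\epsilon$ rather than literally applying Theorem \ref{thm counting minimising graphs} to $X_0$. Second, the asymptotic $|\BFC(L,h)|\sim (e^h-1)e^L/L$ you cite is the conclusion \eqref{eq bob} of the whole argument, not an intermediate; what you actually need and what is available independently is only the one-sided lower bound $|\BFC(L,h)|\succeq \frac{e^{L+h}-e^L}{L}$, which suffices.
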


After this preparatory comment let us start the real business. Let $\CL$ be the space of all geodesic loops in $\Sigma$ and consider the map $\Pi:\CL\to\Sigma$ mapping each loop to its base point. As was the case for more general graphs, the map $\Pi$ is a covering, meaning that when we pull-back the hyperbolic metric using $\Pi$ we can think of $\CL$ as being a hyperbolic surface. Note also that the set of connected components of $\CL$ agrees with the set of free homotopy classes of loops. It follows that for every closed geodesic $\gamma$ we have a connected component $\CL^\gamma$.

Now, given $\epsilon>0$ small let $\CL_\epsilon$ be the set of all geodesic loops with angle defect at most $\epsilon$, that is, the set of geodesic loops whose initial and terminal velocity vectors meet with unoriented angle in $[0,\epsilon]$. For $L>0$ let $\CL_\epsilon(L,L+h)$ be the elements in $\CL_\epsilon$ with length between $L$ and $L+h$. Accordingly, set $\CL^\gamma_\epsilon(L,L+h)=\CL^\gamma\cap\CL_\epsilon(L,L+h)$. 

Let us establish some basic properties of $\CL_\epsilon(L,L+h)$ and $\CL_\epsilon^\gamma(L,L+h)$:

\begin{lem}\label{lem ap2}
Fix $h>0$ and $\epsilon>0$. We have that
$$\vol(\CL_\epsilon(L,L+h))\sim \epsilon\cdot(e^{L+h}-e^L)\text{ as }L\to\infty.$$
Moreover, there is a function $C(\epsilon)$ with $\lim_{\epsilon\to 0}C(\epsilon)=1$ such that for every sufficiently long closed geodesic $\gamma$ we have that
\begin{enumerate}
\item $\CL_\epsilon^\gamma(L,L+h)=\emptyset$ unless $\ell(\gamma)\in[L-\epsilon,L+h]$,
\item $\vol(\CL_\epsilon^\gamma(L,L+h))\le C(\epsilon)\cdot\epsilon\cdot L$, and
\item $C(\epsilon)^{-1}\le\frac 1{\epsilon\cdot L}\vol(\CL_\gamma^\epsilon(L,L+h))\le C(\epsilon)$ if $\gamma$ is primitive and $\ell(\gamma)\in[L,L+h-\epsilon]$.
\end{enumerate}
\end{lem}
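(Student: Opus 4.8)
The plan is to derive (1) from Delsarte's theorem for in-out sectors (Theorem~\ref{thm Delsarte's theorem for sectors}), and to obtain the three statements in (2) from explicit hyperbolic trigonometry inside the hyperbolic cylinder $\CL^\gamma$. Throughout I use that the basepoint map $\Pi\colon\CL\to\Sigma$ is a Riemannian covering, so that volumes in $\CL$ are computed by integrating over $\Sigma$ the cardinalities of the fibers, and that the component $\CL^\gamma$ is isometric to $\BH^2/\langle\gamma_0\rangle$, where $\gamma_0$ is the primitive geodesic underlying $\gamma$.

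For (1), fix $x\in\Sigma$ and note that a geodesic loop based at $x$ of length $r$ with angle defect at most $\epsilon$ is exactly a geodesic arc $\alpha\colon[0,r]\to\Sigma$ with $\alpha(0)=\alpha(r)=x$ and $\angle(\alpha'(0),\alpha'(r))\le\epsilon$, i.e.\ an element of $\BFA_{I,J}(L,h)$ (with $x_0=y_0=x$) for suitable sectors $I,J\subset T^1_x\Sigma$ with $I$ close to $J$. I would cover the fiber circle $T^1_x\Sigma$, of length $2\pi$, by $\approx 2\pi/\delta$ arcs $I_k$ of length $\approx\delta$; for $\alpha'(0)\in I_k$ the condition ``defect $\le\epsilon$'' traps $\alpha'(r)$ in the arc of angular radius $\epsilon$ about the center of $I_k$, up to an error $\pm\delta$ in the radius. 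Summing Theorem~\ref{thm Delsarte's theorem for sectors} over the $I_k$ --- using that its asymptotics are uniform over sectors of length at least $\delta$ and, by compactness of $\Sigma$, over the basepoint --- the number of such loops based at $x$ is, up to relative error $O(\delta)$, asymptotic to $\tfrac{2\pi}{\delta}\cdot\tfrac{\delta\cdot 2\epsilon}{4\pi}\cdot\tfrac{e^{L+h}-e^L}{\vol(\Sigma)}=\epsilon\cdot\tfrac{e^{L+h}-e^L}{\vol(\Sigma)}$. Letting $\delta\to 0$ and integrating over $\Sigma$ gives $\vol(\CL_\epsilon(L,L+h))\sim\epsilon(e^{L+h}-e^L)$; finiteness of the volume is clear because only finitely many components meet $\CL(L,L+h)$.

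For (2) I would put Fermi coordinates $(s,t)$, with $s\in\BR/\ell(\gamma_0)\BZ$ and $t\in\BR$, on $\CL^\gamma\cong\BH^2/\langle\gamma_0\rangle$ around the core geodesic, so that the area form is $\cosh t\,ds\,dt$. By homogeneity in $s$ both the length $\ell(s,t)$ of the loop based at $(s,t)$ and its angle defect depend only on $t$, with $\cosh\ell(s,t)=1+(\cosh\ell(\gamma)-1)\cosh^2 t$, while the Clairaut relation for geodesics on the hyperbolic cylinder $dt^2+\cosh^2 t\,ds^2$ shows the defect is a monotone increasing function of $|t|$ equal to $2|t|\bigl(1+o(1)\bigr)$ as $|t|\to 0$, uniformly once $\ell(\gamma)$ is large. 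Then (a) follows because defect $\le\epsilon$ forces $|t|\le t_\epsilon$ with $t_\epsilon\to 0$, whence by the length identity $\ell(s,t)-\ell(\gamma)\le 2\log\cosh t_\epsilon+o(1)\le\epsilon$ for $\epsilon$ small and $\gamma$ sufficiently long; together with $\ell(\gamma)\le\ell(s,t)$ this puts $\ell(\gamma)$ into $[\ell(s,t)-\epsilon,\ell(s,t)]\subset[L-\epsilon,L+h]$. For (b) the subset of $\CL^\gamma$ in question is contained in $\{|t|\le t_\epsilon\}$, so its area is at most $\ell(\gamma_0)\int_{-t_\epsilon}^{t_\epsilon}\cosh t\,dt=2\ell(\gamma_0)\sinh t_\epsilon=\ell(\gamma_0)\,\epsilon\,(1+o_\epsilon(1))\le C(\epsilon)\,\epsilon\,L$, using $\ell(\gamma_0)\le\ell(\gamma)\le L+h$ and (a). For (c), when $\gamma$ is primitive and $\ell(\gamma)\in[L,L+h-\epsilon]$ one has $\ell(s,t)\ge\ell(\gamma)\ge L$ for all $t$, so the constraint $\ell\ge L$ is vacuous, and $\ell\le L+h$ holds out to some $t_+>0$ with $t_+\gtrsim\sqrt\epsilon$, which exceeds $t_\epsilon\asymp\epsilon$ for $\epsilon$ small; hence the set is exactly $\{|t|\le t_\epsilon\}$, of area $2\ell(\gamma_0)\sinh t_\epsilon=\ell(\gamma)\,\epsilon\,(1+o_\epsilon(1))$, which together with $\ell(\gamma)\asymp L$ gives both inequalities in (c) with $C(\epsilon)\to 1$.

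The one genuinely delicate point is making the defect-versus-$t$ relation sharp enough: the argument needs not merely that the defect tends to $0$ with $t$, but that its linear coefficient tends to $2$ as $\ell(\gamma)\to\infty$ (and that the defect is monotone in $|t|$, so the thin regions above really are sub-cylinders), since this is exactly what forces $C(\epsilon)\to 1$ rather than $C(\epsilon)=O(1)$. I expect this to be the main --- though still elementary --- work; everything else is bookkeeping about which of the constraints ``$\ell\le L+h$'' and ``defect $\le\epsilon$'' is binding, together with the routine interchange of limits in (1).
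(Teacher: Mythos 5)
Your proof is correct and follows essentially the same route as the paper: for the first asymptotic you use the covering $\Pi\colon\CL\to\Sigma$ together with Delsarte-type asymptotics (rederiving the one-loop case of Theorem~\ref{sat delsarte graph} from Theorem~\ref{thm Delsarte's theorem for sectors}, where the paper simply cites Theorem~\ref{sat delsarte graph}), and for (1)--(3) you identify $\CL^\gamma$ with the hyperbolic cylinder $\langle\eta\rangle\bs\BH^2$ and use the asymptotic relation --- in the paper taken from Buser's formula 2.3.1(vi) --- between the angle defect of $\gamma_x$ and the distance from $x$ to the core geodesic. You in fact spell out in Fermi coordinates the ``elementary considerations'' that the paper leaves implicit (the length and defect formulas, the $\cosh t\,ds\,dt$ area form, and which of the two constraints is binding), which is a welcome clarification.
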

\begin{proof}
As in the proof of Proposition \ref{prop critical in box} we exploit the cover $\Pi:\CL\to\Sigma$ to compute the volume of $\CL_\epsilon(L,L+h)$:
$$\vol(\CL_\epsilon(L,L+h))=\int_\Sigma\vert \Pi^{-1}(x)\cap\CL_\epsilon(L,L+h)\vert dx$$
Now, $\Pi^{-1}(x)\cap\CL_\epsilon(L,L+h)$ is nothing other than the number of geodesic arcs going from $x$ to $x$ with length within $[L,L+h]$ and such that the initial and terminal velocities make at most angle $\epsilon$. Once we fix $x$, the set of admissive pairs of initial and terminal velocities is a subset $T^1_x\Sigma\times T^1_x\Sigma$ with volume $4\pi\epsilon$. We thus get from Theorem \ref{sat delsarte graph} that 
$$\vert\Pi^{-1}(x)\cap\CL_\epsilon(L,L+h)\vert\sim\epsilon\cdot\frac{e^{L+h}-e^L}{\vol(\Sigma)}$$
Since we are assuming that $\Sigma$ is closed, this is uniform in $x$, meaning that we get
$$\vol(\CL_\epsilon(L,L+h))\sim\int_\Sigma\epsilon\cdot\frac{e^{L+h}-e^L}{\vol(\Sigma)}=\epsilon\cdot(e^{L+h}-e^L)$$
We have proved the first claim.

\begin{figure}[h]
\leavevmode \SetLabels
\L(.47*.6) $d$\\%
\L(.39*.83) $\gamma_x$\\%
\L(.45*.93) $x$\\%
\endSetLabels
\begin{center}
\AffixLabels{\centerline{\includegraphics[width=0.3\textwidth]{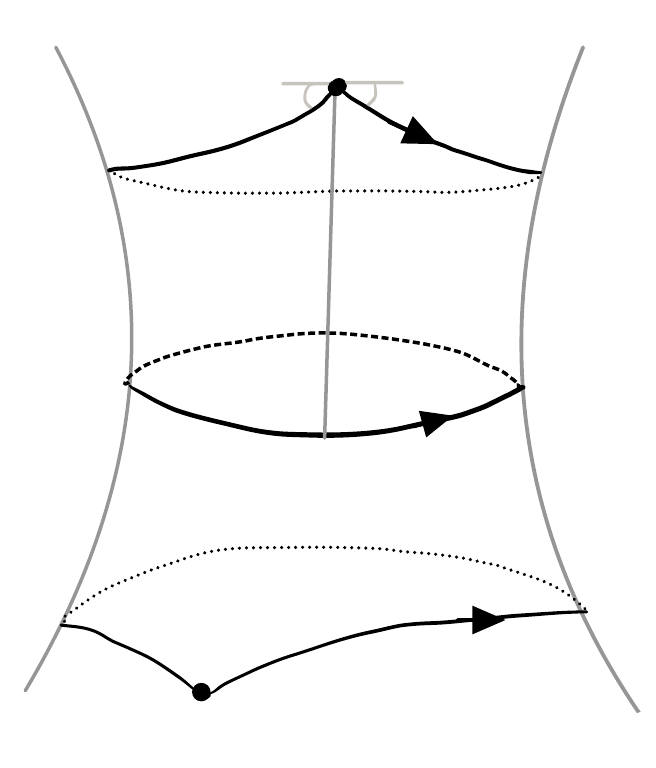}}\hspace{1cm}}
\vspace{-24pt}
\end{center}
\caption{The cylinder $\langle\gamma\rangle\bs\BH^2$. The marked angles each measures one half of the angle defect of $\gamma_x$.} 
\label{fig:cylinder}
\end{figure}


To prove the rest, let us give a concrete description of $\CL^\gamma$. Well, writing $\gamma=\eta^k$ for some $\eta$ primitive and $k\ge 1$ consider the cover 
$$\pi_\eta:\langle\eta\rangle\bs\BH^2\to\Sigma$$
of $\Sigma$ corresponding to $\eta$. Now, for each $x\in\langle\eta\rangle\bs\BH^2$ there is a unique hyperbolic loop $\gamma_x$ based at $x$ and freely homotopic to running $k$ times over $\eta$. The basic observation is that the map
$$\langle\eta\rangle\bs\BH^2\to\CL^\gamma,\ x\mapsto\pi_\eta\circ\gamma_x$$
is an isometry between $\langle\eta\rangle\bs\BH^2$ and the connected component $\CL^\gamma$. 

Now, if one denotes by $d$ the distance in $\langle\eta\rangle\bs\BH^2$ between $x$ and the central geodesic (see Figure \ref{fig:cylinder}) one gets from formula 2.3.1(vi) in the final page in \cite{Buser} that 
\begin{align*} 
\frac 12\cdot(\text{angle defect of }\gamma_x)
&\sim\tan\left(\frac 12\cdot(\text{angle defect of }\gamma_x)\right)\\
&=\sinh(d)\cdot\tanh\left(\frac{\ell(\gamma)}2\right)\sim d
\end{align*}
where the asymptotics hold when $\ell(\gamma)$ is large and the angle defect is small. Now, claims (1), (2) and (3) follow from elementary considerations.
\end{proof}

Armed with these two lemmas we are ready to conclude the proof of the geometric prime number theorem. Note that it suffices to prove that for $h$ positive and fixed we have 
\begin{equation}\label{eq bob}
\vert\BFC(L,h)\vert\sim\frac{e^{L+h}-e^L}L
\end{equation}
as $L\to\infty$. Here $\BFC(L,h)$ is as in Lemma \ref{lem most primitive}.

Anyways, with notation as in both lemmas we have for $\epsilon$ positive and small and for $L$ large that
\begin{align*}
\vert\BFP(L,h)\vert
&\stackrel{\text{\ref{lem ap2}(3)}}\preceq\frac{C(\epsilon)}{\epsilon\cdot L}\vol\left(\cup_{\gamma\in\BFP(L,h)}\CL_\epsilon^\gamma(L,L+h+\epsilon)\right)\\
&\le\frac{C(\epsilon)}{\epsilon\cdot L}\vol(\CL_\epsilon(L,L+h+\epsilon))\\
&\stackrel{\text{\ref{lem ap2}}}\sim C(\epsilon)\cdot\frac{e^{L+h+\epsilon}-e^L}L
\end{align*}
On the other hand we also have that
\begin{align*}
\vert\BFC(L,h)\vert
&\stackrel{\text{\ref{lem ap2}(2)}}\ge\frac 1{C(\epsilon)\cdot\epsilon\cdot L}\vol\left(\cup_{\gamma\in\BFC(L,h)}\CL_\epsilon(L,h)\right)\\
&=\frac 1{C(\epsilon)\cdot\epsilon\cdot L}\vol(\CL_\epsilon(L,h))\\
&\stackrel{\text{\ref{lem ap2}}}\sim \frac 1{C(\epsilon)}\frac{e^{L+h}-e^L}L
\end{align*}
Since this holds for all $\epsilon$, and since $C(\epsilon)$ tends to $1$ when $\epsilon\to 0$ we have that
$$\vert\BFP(L,h)\vert\preceq\frac{e^{L+h}-e^L}L\preceq\vert\BFC(L,h)\vert$$
Now, Lemma \ref{lem most primitive} implies \eqref{eq bob}, and Bob's your uncle.

\end{appendix}


\begin{thebibliography}{99}

\bibitem{Bacher-Vdovina}
R. Bacher and A. Vdovina, {\em Counting 1-vertex triangulations of oriented surfaces}, Discrete Math. 246 (2002).

\bibitem{Bekka-Meyer}
B. Bekka and M. Mayer, {\em Ergodic theory and topological dynamics of group actions on homogeneous spaces}, London Mathematical Society Lecture Note Series, 269. Cambridge University Press, (2000).

\bibitem{Buser}
P. Buser, {\em Geometry and spectra of compact Riemann surfaces}, Progress in Mathematics, 106. Birkh\"auser, (1992)

\bibitem{scl}
D. Calegari, {\em scl}, MSJ Memoirs, 20. Mathematical Society of Japan, (2009).

\bibitem{CEG}
R. Canary, D. Epstein, and P. Green, {\rm Notes on notes of Thurston}, in {\em Analytical and geometric aspects of hyperbolic space (Coventry/Durham, 1984)}, London Math. Soc. Lecture Note Ser., 111, Cambridge Univ. Press, (1987).

\bibitem{Coornaert-Knieper}
M. Coornaert and G. Knieper, {\em Growth of conjugacy classes in Gromov hyperbolic groups}, GAFA 12, (2002).

\bibitem{Delsarte}
J. Delsarte, {\em Sur le gitter fuchsien}, C. R. Acad. Sci. Paris 214 (1942), 147--179.

\bibitem{book}
V. Erlandsson and J. Souto, {\em Mirzakhani's curve counting and geodesic currents}, Progress in Mathematics, 345, Birkh\"auser (2002).

\bibitem{Hejhal}
D. Hejhal, {\em The Selberg Trace Formula for $\SL_2\BR$, Vol. II}, Lecture Notes in Mathematics, Volume 1001. Springer-Verlag, Berlin-Heidelberg (1983).

\bibitem{Huber}
H. Huber, {\em Zur analytischen Theorie hyperbolischen Raumformen und Bewegungsgruppen}, 
Math. Ann. 138 (1959).

\bibitem{Katsuda-Sunada} 
A. Katsuda and T. Sunada, {\em Homology and closed geodesics in a compact Riemann surface}, Amer. J. Math. 110 (1988).

\bibitem{Margulis-thesis}
G. Margulis, {\em On some aspects of the theory of Anosov systems. With a survey by Richard Sharp: Periodic orbits of hyperbolic flows}, Springer Monographs in Mathematics. Springer-Verlag, (2004).

\bibitem{Maryam1}
M. Mirzakhani, {\em Growth of the number of simple closed geodesics on hyperbolic surfaces}, Ann. of Math. (2) 168 (2008)

\bibitem{Maryam2}
M. Mirzakhani, {\em Counting mapping class group orbits on hyperbolic surfaces}, arXiv:1601.03342 (2016).

\bibitem{Park}
P. Park, {\em Conjugacy growth of commutators}, Journal of Algebra 526 (2019).

\bibitem{Phillips-Sarnak}
R. Phillips and P. Sarnak, {\em Geodesics in homology classes}, Duke Math. J. 55 (1987).

\bibitem{Roblin}
T. Roblin, {\em Ergodicit\'e et \'equidistribution en courbure n\'egative}, M\'em. Soc. Math. Fr. No. 95 (2003).

\bibitem{Thurston}
W. Thurston, {\em Geometry and topology of 3-manifolds}, {\tt http://library.msri.org/books/gt3m/}, Lecture notes (1980).



\end{thebibliography}
\end{document}